\documentclass[a4paper,10pt,leqno]{amsart}
	\setlength{\textheight}{\paperheight}
	\addtolength{\textheight}{-2in}
	\setlength{\textwidth}{\paperwidth}	
	\addtolength{\textwidth}{-2in}
	\calclayout
    \setlength{\marginparwidth}{2.1cm}

\usepackage{enumerate, amsmath, amsfonts, amssymb, amsthm, mathtools, thmtools,xparse,array,colortbl,hyperref,url,hypcap,tikz,xspace,accents,enumitem,letltxmacro}
\hypersetup{colorlinks=true, citecolor=darkblue, linkcolor=darkblue}

\usepackage[procnames]{listings}
\usepackage[colorinlistoftodos]{todonotes}
\usetikzlibrary{calc,through,backgrounds,shapes,matrix,math,trees}

\definecolor{darkblue}{rgb}{0.0,0,0.7} 
\newcommand{\darkblue}{\color{darkblue}} 
\def\graytablecolor{\color{black!40}}
\def\grayhline{\arrayrulecolor{black!40} \hline \arrayrulecolor{black} }

\numberwithin{equation}{section}

\newtheorem{theorem}{Theorem}[section]
\newtheorem{proposition}[theorem]{Proposition}
\newtheorem{corollary}[theorem]{Corollary}
\newtheorem{lemma}[theorem]{Lemma}
\newtheorem{conjecture}[theorem]{Conjecture}

\newtheorem{openproblem}[theorem]{Open Problem}

\theoremstyle{definition}
\newtheorem{definition}[theorem]{Definition}
\newtheorem{example}[theorem]{Example}

\theoremstyle{remark}
\newtheorem{remark}[theorem]{Remark}

\usepackage[capitalize]{cleveref}
\crefformat{footnote}{#2\footnotemark[#1]#3}
\crefformat{conjecture}{Conjecture~#2#1#3}
\crefname{figure}{Figure}{Figures}
\crefformat{openproblem}{Open Problem~#2#1#3}

\newlist{thmlist}{enumerate}{1}
\setlist[thmlist]{label=(\arabic{thmlisti}), ref=\thetheorem(\arabic{thmlisti}),noitemsep}
\addtotheorempostheadhook[theorem]{\crefalias{thmlisti}{theorem}}

\let\oldemph\emph
\newcommand{\defn}[1]{\oldemph{\darkblue #1}}
\let\emph\defn

\def\littlespaceforthedots{-1.3em}

\newcommand{\Q}{\mathcal{Q}}

\newcommand{\up}{\mathrm{U}}

\let\oldS\S
\renewcommand{\S}{S}
\newcommand{\wo}{w_\circ}

\newcommand{\NC}{\mathrm{NC}}
\newcommand{\Cat}{\mathrm{Cat}}
\newcommand{\Clus}{\mathrm{Clus}}

\newcommand{\inv}{\mathrm{inv}}

\newcommand{\tr}{\mathrm{tr}}

\newcommand{\GL}{\mathrm{GL}}

\newcommand{\bc}{\mathbf{c}}
\newcommand{\Irr}{\mathrm{Irr}}

\renewcommand{\u}{\mathbf{u}}

\newcommand{\Park}{\mathrm{Park}}
\renewcommand{\c}{\mathbf{c}}
\newcommand{\D}{\mathcal{D}}
\newcommand{\B}{\mathbf{B}}
\newcommand{\sw}{\mathsf{w}}

\newcommand{\su}{\mathbf{u}}
\newcommand{\se}{e}
\renewcommand{\ss}{s}
\newcommand{\s}{\mathbf{s}}
\def\sw{\mathbf{w}}
\newcommand{\Dm}{\mathcal{M}}
\renewcommand{\H}{\mathcal{H}}
\newcommand{\w}{\mathbf{w}}
\renewcommand{\mod}{\operatorname{mod}}
\newcommand{\qbinomOld}{\genfrac{[}{]}{0pt}{}}
\def\qbinom#1#2{\qbinomOld{#1}{#2}_q}
\def\qbinomx#1#2_#3{\qbinomOld{#1}{#2}_{#3}}
\def\CatW{\Cat(W)}
\NewDocumentCommand\CatpW{O{p}}{{\Cat_{#1}(W)}}

\def\CathW{\Cat_{h+1}(W)}
\def\cpbf{\mathbf{c}^p}
\def\cbf{\mathbf{c}}

\def\Sort{\mathrm{Sort}}
\def\skipset{\mathrm{skip}}
\def\eop{\operatorname{e}}
\def\dop{\operatorname{d}}
\def\up#1{u_{(#1)}} 
\def\esu{\eop_{\su}}
\def\dsu{\dop_{\su}}
\def\dsuv{\dop^{(v)}_{\su}}

\def\abf{\mathbf{a}}

\def\bw{\mathbf{w}}
\def\Deocp{\Dm_{e,\cpbf}(W)}
\def\Deocpv{\Dm^{(v)}_{e,\cpbf}(W)}
\NewDocumentCommand\Parkcp{O{p}}{{\mathcal{P}_{e,\c^{#1}}(W)}}
\def\Fbb{\mathbb{F}}
\def\F{\mathbb{F}}
\def\qint[#1]{[#1]_q}
\def\C{\mathbb{C}}
\def\Bcal{\mathcal{B}}
\def\conj#1#2{#1\cdot #2}
\def\wo{w_\circ}
\def\BR{\accentset{\circ}{R}}
\def\BRe{\BR_{e,\cpbf}}
\def\BRv{\BR^{(v)}}
\def\BRve{\BR^{(v)}_{e,\cpbf}}
\def\SL{\operatorname{SL}}

\def\xra#1{\xrightarrow{#1}}
\def\Z{\mathbb{Z}}
\def\Q{\mathbb{Q}}
\def\V{V}
\def\Ztnn{\Z_{\geq0}}
\def\Sfr{\mathfrak{S}}
\def\,{\kern 0.1em}

	\def\Zqq{\Z[q^{\pm 1}]}

\def\ellT{\ell_T}
\def\bu{\mathbf{u}}
\def\emptyword{{\pmb{\varnothing}}}
\def\Tor{H}
\def\invUncolored{\overline{\inv}} 

\def\Rv{R^{(v)}}
\def\Re{R^{(e)}}
	\def\Qv{Q^{(v)}}
\def\Dv{\D^{(v)}}
\def\De{\D^{(e)}}
\newcommand{\rank}{r} 
\def\Sn{\mathfrak{S}_n} 
\newcommand{\Deg}{\mathrm{Deg}}
\newcommand{\Feg}{\mathrm{Feg}}
\newcommand{\Sym}{\mathrm{Sym}}
\newcommand{\Udeg}{\mathrm{Udeg}}
\def\ellp{\tilde \ell}
\def\mh{kh} 
\def\m{k}
\def\weaklessdot{\lessdot_R} 
\def\weakleq{\leq_R}
\def\weakinterval[#1]{[#1]_R}
\def\figref#1(#2){Figure~\hyperref[#1]{\ref*{#1}(#2)}}
\def\OPref#1(#2){Open Problem~\hyperref[#1]{\ref*{#1}\eqref{#2}}}

\def\<{\langle}
\def\>{\rangle}
\def\Nbb{\mathbb{N}}
\def\tju_#1^#2{t_{#1}(#2)}
\def\tjuv_#1^#2{t^{(v)}_{#1}(#2)}
\def\invv{\inv^{(v)}}
\def\invev{\inv_e^{(v)}}
\def\DmFC{\Dm_{e,\cbf^{n+1}}(\Sn)} 
\def\DmFD{\Dm_{e,\cbf^{n-1}}(\Sn)} 
\def\BST{T_{\operatorname{BST}}}
\def\deo{\bu} 
\def\match{\operatorname{m}}
\def\alt{\operatorname{alt}}
\def\TNCalt{T^{\NC}_{\alt}}
\def\piv{\pi^{(v)}}
\def\Piv{\Pi^{(v)}}
\def\Pitv{{\tilde\Pi}^{\raisemath{-3pt}{(v)}}}

\def\Laq{\Lambda_q}
\def\Laqn{\Lambda_q^n}
\def\Par{\operatorname{Par}}
\def\la{\lambda}
\def\sbf{\mathbf{s}}
\def\chila{\chi_\la}
\def\SYT{\operatorname{SYT}}
\def\maj{\operatorname{maj}}
\def\Qbb{\mathbb{Q}}
\def\Nbb{\mathbb{N}}
\def\xrasim{\xrightarrow{\sim}}
\def\Jus{J(\u,s)}
\def\Jups{J(\u',s)}

\def\barm{\bar m}
\def\barww{\bar\w}
\def\baruu{\bar\u}
\def\baru{\bar u}
\def\cont{\operatorname{c}}
\def\CatpWq{\Cat_p(W;q)}

\newcommand{\dyck}[4]{
    \fill[white!25]  (#1) rectangle +(#2,#3);
    \fill[fill=white]
    (#1)
    \foreach \dir in {#4}{
        \ifnum\dir=0
        -- ++(1,0)
        \else
        -- ++(0,1)
        \fi
    } |- (#1);
    \draw[help lines] (#1) grid +(#2,#3);
    \draw[line width=.5pt,black!50] (#1) -- +(#2,#3);
    \coordinate (prev) at (#1);
    \foreach \dir in {#4}{
        \ifnum\dir=0
        \coordinate (dep) at (1,0);
        \else
        \coordinate (dep) at (0,1);
        \fi
        \draw[line width=1.5pt,line cap=round] (prev) -- ++(dep) coordinate (prev);
    };
}

\makeatletter
\g@addto@macro \normalsize {%
 \setlength\abovedisplayskip{10pt plus 2pt minus 2pt}%
 \setlength\belowdisplayskip{10pt plus 2pt minus 2pt}%
}
\makeatother

\linespread{1.25}

\def\atanA{26.565} 
\def\atanB{63.435} 
\def\loose{1.5}
\def\elbow{%
\scalebox{0.4}{%
\begin{tikzpicture}[xscale=0.1,yscale=0.2,baseline=(Z.base)]
\coordinate(Z) at (0,-2);
\fill[black,opacity=0.2] (0,0) circle (2.3);
\draw[line width=3pt,rounded corners=10,looseness=\loose] (-1,-2) to[out=\atanB,in=180-\atanA] (2,-1);
\draw[line width=3pt,rounded corners=10,looseness=\loose] (-2,1) to[out=-\atanA,in=-180+\atanB] (1,2);
\end{tikzpicture}%
}\xspace
}

\makeatletter
\newcommand{\raisemath}[1]{\mathpalette{\raisem@th{#1}}}
\newcommand{\raisem@th}[3]{\raisebox{#1}{$#2#3$}}
\makeatother

\def\stat{\operatorname{stat}}
\def\CatpSnq{\Cat_p(\Sn;q)}
\def\fvpn{f_{v,p,n+p}}
\def\ftvpn{\tilde f_{v,p,n+p}}

\begin{document}

\title{Rational Noncrossing Coxeter--Catalan Combinatorics}

\author[P.~Galashin]{Pavel Galashin}
\address[P.~Galashin]{Department of Mathematics, University of California, Los Angeles, 520 Portola Plaza,
Los Angeles, CA 90025}
\email{\href{mailto:galashin@math.ucla.edu}{galashin@math.ucla.edu}}

\author[T.~Lam]{Thomas Lam}
\address[T.~Lam]{Department of Mathematics, University of Michigan, 2074 East Hall, 530 Church Street, Ann Arbor, MI 48109-1043}
\email{\href{mailto:tfylam@umich.edu}{tfylam@umich.edu}}

\author[M.~Trinh]{Minh-T\^{a}m Trinh}
\address[M.~Trinh]{Massachusetts Institute of Technology, 77 Massachusetts Avenue, Cambridge, MA 02139}
\email{\href{mailto:mqt@mit.edu}{mqt@mit.edu}}

\author[N.~Williams]{Nathan Williams}
\address[N.~Williams]{University of Texas at Dallas, 800 W. Campbell Rd. Richardson, TX 75080-3021}
\email{\href{mailto:nathan.williams1@utdallas.edu}{nathan.williams1@utdallas.edu}}

\thanks{P.G.\ was supported by an Alfred P. Sloan Research Fellowship and by the National Science Foundation under Grants No.~DMS-1954121 and No.~DMS-2046915. T.L.\ was supported by Grants No.~DMS-1464693 and No.~DMS-1953852 from the National Science Foundation. M.T.\ was supported by the National Science Foundation under Grant No.~2002238. N.W.\ was partially supported by Simons Foundation Collaboration Grant No.~585380.}

\begin{abstract}
We solve two open problems in Coxeter--Catalan combinatorics. First, we introduce a family of rational noncrossing objects for any finite Coxeter group, using the combinatorics of distinguished subwords. Second, we give a type-uniform proof that these noncrossing Catalan objects are counted by the rational Coxeter--Catalan number, using the character theory of the associated Hecke algebra and the properties of Lusztig's exotic Fourier transform.  We solve the same problems for rational noncrossing parking objects.
\end{abstract}

\makeatletter
\@namedef{subjclassname@2020}{%
  \textup{2020} Mathematics Subject Classification}
\makeatother

\subjclass[2020]{
Primary: 
05A15. 
Secondary: 
05E05, 
05E10, 
20C08. 
}

\keywords{Coxeter--Catalan combinatorics, rational Catalan number, $q$-Catalan number, noncrossing partitions, parking functions, Hecke algebra, Coxeter group, rational Cherednik algebra}

\date{\today}

\maketitle

\section{Introduction}
\label{sec:introduction}
\subsection{\texorpdfstring{Rational $W$-Catalan numbers and $W$-nonnesting combinatorics}{Rational W-Catalan numbers and W-nonnesting objects}}
\label{sec:rat_nonnesting}

The Catalan number \[\Cat_{n} \coloneqq \frac{1}{n+1}\binom{2n}{n}=\frac{1}{2n+1} \binom{2n+1}{n}\] famously counts Dyck paths with $2n$ steps.
More generally, if $p$ is a positive integer coprime to $n$, then the \emph{rational Catalan number} $\Cat_{n,p} \coloneqq \frac{1}{p + n} \binom{p + n}{n}$ counts \emph{rational Dyck paths}: the lattice paths in a $p \times n$ rectangle that stay above the diagonal~\cite{bizley1954derivation,armstrong2013rational}.
For instance,~\Cref{fig:dyck35} shows that $\Cat_{5,3} = 7$.
Taking $p = n + 1$ recovers the classical case: $\Cat_{n,n+1} = \frac{1}{2n+1} \binom{2n+1}{n} = \Cat_n$.

\begin{figure}[htbp]
    \includegraphics{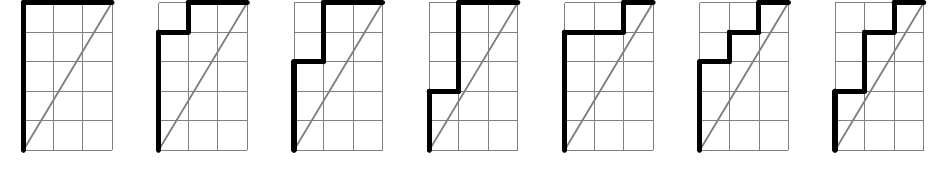}
    \caption{The rational Dyck paths counted by $\Cat_{5,3}$.}
    \label{fig:dyck35}
\end{figure}

Rational Dyck paths admit several generalizations that depend uniformly on an irreducible finite Weyl group $W$: 
\begin{itemize}
    \item for $p=h+1$, where $h$ is the Coxeter number of $W$, one can take antichains in the positive root poset or dominant regions of the Shi arrangement~\cite{shi1987sign, reiner1997non};
    \item for $p=\mh+1$, one can take certain $\m$-tuples of roots that encode dominant regions of the $\m$-Shi arrangement~\cite{athanasiadis2004generalized,athanasiadis2005refinement}; and
    \item for any $p$ coprime to $h$, one can take coroots inside a certain $p$-fold dilation of the fundamental alcove~\cite{haiman1994conjectures, kenneth1996euler, sommers1997family, sage2000geometry, cellini2002ad, thiel2016anderson, thiel2017strange}.
\end{itemize}
These generalizations are collectively known as \emph{nonnesting objects}, because when $W$ is the symmetric group $\Sn$ and $p = n + 1$, they admit natural bijections to the classical nonnesting partitions of $n$.

Henceforth, all reflection groups are finite, real, and irreducible.
Let $W$ be a Weyl group of rank $\rank$ and Coxeter number $h$.
(If $W = \Sn$, then $\rank = n - 1$ and $h = n$.)
For any integer $p$ coprime to $h$, the \emph{rational $W$-Catalan numbers}~\cite{haiman1994conjectures} are given by
\begin{align*}
    \CatpW \coloneqq \prod_{i = 1}^\rank \frac{p + e_i}{d_i},
\end{align*}
where the numbers $d_i$ are integers known as the \emph{degrees} of $W$, and the numbers $e_i = d_i - 1$ are known as the \emph{exponents} of $W$.
If $W = \mathfrak{S}_n$, then $d_i = i + 1$ and $e_i = i$, giving $\Cat_p(\mathfrak{S}_n) = \Cat_{n, p}$.
Together, \cite[Theorem 7.4.2]{haiman1994conjectures} and~\cite[Lemma 8.2]{thiel2016anderson} give a uniform proof that the families of nonnesting objects above are counted by $\CatpW$. 

For an arbitrary Coxeter group $W$ (in fact, for any well-generated complex reflection group) with Coxeter number $h$ and $p$ coprime to $h$, \cite{gordon2012catalan} extend the definition above to
\begin{equation}\label{eq:intro:CatpW}
    \CatpW \coloneqq \prod_{i = 1}^\rank \frac{p + (pe_i \mod{h})}{d_i}.
\end{equation}
If $W$ is a Weyl group, then multiplication by $p$ permutes the residues of the exponents modulo $h$ by \cite[Proposition 4.7]{springer1974regular} and \cite[Proposition 8.1.2]{stump2015cataland}, so the new definition of $\CatpW$ specializes to the previous one.  In the generality of Coxeter groups, however, a uniform definition of nonnesting objects has not been found.

\subsection{\texorpdfstring{Rational $W$-noncrossing combinatorics}{Rational W-noncrossing combinatorics}}
\label{sec:rat_noncrossing_combinatorics}

The Catalan numbers $\Cat_n$ count many other objects beyond Dyck paths---in particular, they also count the noncrossing partitions of $n$.  If $p = \mh + 1$, then several families of \emph{noncrossing objects} counted by $\Cat_p(W)$ can be defined uniformly for any finite Coxeter group $W$, including $W$-noncrossing partitions~\cite{reiner1997non,bessis2003dual,bessis2015finite,armstrong2009generalized}, generalizations of cluster exchange graphs for finite-type cluster algebras~\cite{fomin2003cluster,fomin2005generalized,ceballos2014subword}, and Coxeter-sortable elements~\cite{reading2007clusters,reading2007clusters,stump2015cataland}.
The $W$-noncrossing partitions can even be defined for well-generated complex reflection groups.

These families are of a very different nature from the nonnesting objects of \Cref{sec:rat_nonnesting}.
They are defined beyond crystallographic reflection groups, their definition depends on the choice of a Coxeter element, and they satisfy a recursive property called the Cambrian recurrence.
However:
\begin{enumerate}
    \item For any $W$, the uniform definition of rational noncrossing families for \oldemph{arbitrary} $p$ coprime to $h$ has been an open problem for roughly a decade.
    \item For any of the kinds of noncrossing families above, the proof of their \oldemph{uniform enumeration} by $\Cat_p(W)$ for all $W$ has been an open problem since their definition.
\end{enumerate}
For further discussion of these problems, see the summary report from the 2012 American Institute of Mathematics workshop on rational Catalan combinatorics~\cite[Sections 1.1--1.2]{aim2012}, as well as~\cite[Chapter 8]{stump2015cataland},~\cite[Section 7]{bodnar2016cyclic},~\cite[Section 8]{bodnar2019rational}, and~\cite[Section 1]{armstrong2013rational}.

We resolve both problems.
Our first result is the uniform definition of a rational noncrossing family for any Coxeter group $W$ (\Cref{dfn:Deogram}). 
Our second result is their enumeration (\Cref{thm:Deocp=CatpW}).


\subsection{\texorpdfstring{Rational $W$-noncrossing objects}{Rational W-noncrossing objects}}
\label{sec:rat_noncrossing}

Let $S \subseteq W$ be a system of simple reflections, and let $\cbf=(s_1, s_2, \dots, s_\rank)$ be an ordering of $S$, which we will call a \emph{Coxeter word}. 
Let $p$ be a positive integer coprime to $h$, and let $\cbf^p$ be the concatenation of $p$ copies of $\cbf$.
Thus $\cpbf=(s_1, s_2, \dots, s_m)$, where $s_i = s_{i - \rank}$ for all $i > \rank$, is a word of length $m = pr$.

Given a \emph{subword} $\su=(u_1, u_2, \cdots, u_m)$ of $\cpbf$, meaning $u_i \in \{s_i, e\}$ for every $i\in[m]\coloneqq \{1, 2, \dots, m\}$, we set $\up{i} \coloneqq u_1u_2\cdots u_i \in W$ for each $i$.
We write $e$ for the identity of $W$, and say that $\su$ is an \emph{$e$-subword} if $\up{m} = e$.
We say that $\su$ is \emph{distinguished}~\cite{deodhar1985some,marsh2004parametrizations} if $\up{i} \leq \up{i - 1} s_i$ in the weak order for all $i \in [m]$, where we set $\up0\coloneqq e$.
In other words, the symbol $s_i$ must be used in $\su$ if it decreases the length of $\up{i-1}$.
We write $\esu$ for the number of symbols of $\cpbf$ skipped in $\su$: that is, $\esu \coloneqq |\{i\mid\su_i=e\}|$.
The following definition is closely related to~\cite[Definition~9.3]{galashin2020positroids}; see also~\cite{kodama2013deodhar}.

\begin{definition}\label{dfn:Deogram}
    A \emph{maximal $\cpbf$-Deogram} is a distinguished $e$-subword $\su$ of $\cpbf$ for which $\esu = \rank$. The set of maximal $\cpbf$-Deograms is denoted $\Deocp$.
\end{definition}
  
See~\Cref{fig:rat7} for an example of~\Cref{dfn:Deogram}.
In general, any $e$-subword $\bu$ of $\cpbf$ satisfies $\esu \geq \rank$, as we show in \Cref{cor:e=n}.
Thus the maximal $\cpbf$-Deograms are precisely the distinguished $e$-subwords of $\cpbf$ that use the maximal possible number of symbols.

\begin{remark}\label{rmk:walks}
We can interpret maximal $\cpbf$-Deograms as certain \emph{closed walks} on $W$ on the Hasse diagram of the weak Bruhat order, or equivalently, on the directed Cayley graph of $(W, S)$.
The walk corresponding to an $e$-subword $\su$ of $\cpbf$ is the sequence of elements $(e = \up0, \up1, \dots, \up{m} = e)$: that is, the walk starts at $e$, and for each letter $s_i$, it either \emph{follows} the corresponding edge of the Cayley graph or \emph{stays} in place.

In this model, the distinguished condition on $\su$ becomes the condition that the walk must follow the edge labeled by $s_i$ whenever it points downward in weak order.
In particular, maximal $\cpbf$-Deograms correspond to distinguished closed walks starting and ending at $e$ with precisely $\rank$ stays.
\end{remark}

\begin{remark}\label{rmk:wiring}
For $W = \Sn$, we can also interpret maximal $\cpbf$-Deograms in terms of \emph{wiring diagrams}, as illustrated in \cref{fig:rat7_wiring}.
A maximal $\cpbf$-Deogram consists of $n - 1$ elbows \elbow inside the wiring diagram of $\c^p$ with the property that the resulting permutation is the identity, i.e., the left and the right endpoints of each wire have the same labels.

In this model, the distinguished condition becomes the condition that for each elbow $E$, the two participating wires intersect an even number of times to the left (or equivalently, to the right) of $E$.
We associate to $E$ a \emph{colored inversion} (\cref{def:inv}).
The inversion $(i \, j)$ records the left endpoints $i, j \in [n]$ of the two wires participating in $E$.
Its \emph{color}, indicated by the number of dots above $(i \, j)$, equals the number of intersection points to the left of $E$ between the two wires participating in $E$.
See also~\cite[Figure~5]{galashin2020positroids} and \cref{rmk:Markov}.
\end{remark}

\begin{theorem}\label{thm:Deocp=CatpW}
For any (irreducible, finite) Coxeter group $W$ of rank $\rank$ and Coxeter number $h$, Coxeter word $\cbf$, and (positive) integer $p$ coprime to $h$, we have
\begin{align*}
	|\Deocp|=\CatpW.
\end{align*}
\end{theorem}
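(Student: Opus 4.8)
The plan is to deform into the Iwahori--Hecke algebra $\mathcal{H} = \mathcal{H}_q(W)$, with quadratic relation $(T_s-q)(T_s+1)=0$, and evaluate a trace. Then $T_s^{-1} = q^{-1}T_s - (1-q^{-1})$, so for $v\in W$ the product $T_v T_s^{-1}$ equals $T_{vs}$ if $vs<v$, and equals $q^{-1}T_{vs}+(q^{-1}-1)\,T_v$ if $vs>v$. Multiplying $T_{s_1}^{-1}T_{s_2}^{-1}\cdots T_{s_m}^{-1}$ out from the left and recording the branchings reproduces exactly the distinguished-subword rule of \Cref{dfn:Deogram}: at a length-decreasing letter the letter is forced (weight $1$), while at a length-increasing letter one may use it (weight $q^{-1}$) or skip it (weight $q^{-1}-1$), and every skipped letter is necessarily of the latter kind. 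Hence
\[
	T_{s_1}^{-1}T_{s_2}^{-1}\cdots T_{s_m}^{-1} \;=\; \sum_{\su}\, q^{-\operatorname{up}(\su)}\,(q^{-1}-1)^{\esu}\; T_{\up{m}},
\]
the sum over all distinguished subwords $\su$ of $\cpbf$, where $\operatorname{up}(\su)$ counts the length-increasing letters used. Write $\tau$ for the canonical symmetrizing trace on $\mathcal{H}$ (so $\tau(T_w)=\delta_{w,e}$) and set $Y \coloneqq T_{s_1}^{-1}\cdots T_{s_\rank}^{-1}$, so that $T_{s_1}^{-1}\cdots T_{s_m}^{-1}=Y^p$. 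Taking the $T_e$-coefficient above and using that an $e$-subword satisfies $\operatorname{up}(\su) = (m-\esu)/2$, every resulting term equals $(1-q)^{\esu}q^{-(m+\esu)/2}$. By \Cref{cor:e=n} the minimal value of $\esu$ over distinguished $e$-subwords of $\cpbf$ is $\rank$, attained precisely on $\Deocp$, and all values of $\esu$ differ from $\rank$ by even amounts; since $m=p\rank$, it follows that $q^{\rank(p+1)/2}(1-q)^{-\rank}\,\tau(Y^p)$ is a Laurent polynomial in $q$ whose value at $q=1$ equals $|\Deocp|$.

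The next step is to compute $\tau(Y^p)$ spectrally. Since $(s_\rank,\dots,s_1)$ is a reduced word for a Coxeter element $c$, we have $Y = T_c^{-1}$ and $Y^p = T_c^{-p}$, so with $c_\chi=c_\chi(q)$ the Schur elements of $\mathcal{H}$ (known explicitly in every type, with $c_\chi(1)=|W|/\chi(1)$),
\[
	\tau(Y^p) \;=\; \sum_{\chi\in\Irr(W)} \frac{\chi(T_c^{-p})}{c_\chi},
	\qquad
	\chi(T_c^{-p}) \;=\; \sum_j \lambda_j^{-p},
\]
where $\lambda_1,\lambda_2,\dots$ are the eigenvalues of $T_c$ on the irreducible $\mathcal{H}$-module labelled by $\chi$. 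Here the key external input is the Hecke-algebra analogue of Springer's theorem on regular elements: a Coxeter element is $\zeta_h$-regular for $\zeta_h=e^{2\pi i/h}$, and the eigenvalues of $T_c$ on the $\chi$-module are explicit monomials in $q$ and $\zeta_h$ governed by the fake degree $\Feg_\chi(q)$; since $\gcd(p,h)=1$, $c^p$ is again regular, and $\chi(T_c^{-p})$ is thereby expressed through $\Feg_\chi$ evaluated at an explicit monomial in $q$ and $\zeta_h^{-p}$.

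Substituting these formulas, $q^{\rank(p+1)/2}(1-q)^{-\rank}\,\tau(Y^p)$ becomes a sum over $\Irr(W)$ that pairs, summand by summand, the ``fake-degree data'' of $\chi$ against the ``generic-degree data'' $c_\chi^{-1}$. This is precisely the setting of Lusztig's exotic Fourier transform, whose Fourier matrices intertwine fake and generic degrees family by family; using this, together with the explicit generic-degree and fake-degree tables, the character sum collapses to a single product. I would then match that product with the $q$-rational Catalan number $\CatpWq = \prod_{i=1}^{\rank}\bigl[\,p+(pe_i \bmod h)\,\bigr]_q\big/\bigl[\,d_i\,\bigr]_q$, up to a power of $q$; specializing $q=1$ yields
\[
	|\Deocp| \;=\; \CatpW \;=\; \prod_{i=1}^{\rank}\frac{p+(pe_i\bmod h)}{d_i}.
\]

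The reduction in the first two paragraphs is essentially formal bookkeeping with Deodhar's rule and the Hecke trace, so I expect the genuine difficulty to lie entirely in the last step: carrying out the Fourier-transform evaluation of the character sum uniformly across all families (including the ``exceptional'' ones that are not governed by ordinary induction), and, for the non-crystallographic groups $I_2(m)$, $H_3$, $H_4$, verifying that the collapsed product agrees with the defining formula~\eqref{eq:intro:CatpW} --- a check that can alternatively be made by hand from the degree tables if the uniform argument is delicate.
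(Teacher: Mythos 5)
Your first two paragraphs are correct, but they coincide with what the paper already does: expanding $T_{s_1}^{-1}\cdots T_{s_m}^{-1}$ and reading off the Deodhar branching is exactly \cref{cor:R=esu_dsu} combined with \cref{cor:Hecke=R}/\cref{cor:r-to-tau-minus}, and your spectral step (Schur elements plus the regular-element/fake-degree evaluation of $\chi_q$ on a power of $T_\c$) is \eqref{eq:schur-orthogonality} together with \cref{thm:periodic-springer} and \cref{cor:fake-degree-at-root-of-unity}. The genuine gap is in your final paragraph, and it is not just an omitted computation: the step ``the character sum collapses to a single product, which I then match with $\CatpWq$'' is precisely where the theorem's content lies, and the collapse you sketch produces the wrong product outside Weyl groups. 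After the exotic-Fourier swap one is left with $\sum_\chi q^{\ast}\Feg_\chi(q)\Deg_\chi(e^{2\pi ip/h})$, and the ``natural'' evaluation (generic degrees vanishing at the root of unity except on the exterior powers $\Lambda_k$, then Solomon's formula $\sum_k t^k\Feg_{\Lambda_k}(q)=\prod_i(1+tq^{e_i})$) collapses this to $\prod_i [p+e_i]_q/[d_i]_q$ up to a power of $q$. That equals $\CatpWq$ only when multiplication by $p$ permutes the exponents modulo $h$, i.e.\ for Weyl groups; for $I_2(5)$ with $p=2$ it gives $\tfrac{3\cdot 6}{2\cdot 5}$ at $q=1$, not even an integer, whereas the true count (and $\Cat_2(I_2(5))$) is $2$. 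So nothing in your sketch produces the $(pe_i \bmod h)$ exponents of \eqref{eq:intro:CatpW}, and ``explicit tables'' is not a harmless fallback either, since in the noncrystallographic types the vanishing pattern of $\Deg_\chi$ at $e^{2\pi ip/h}$ genuinely depends on $p$ (the generic degrees are irrational and the evaluation points are not all Galois-equivalent over $\Q_W$).

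The paper closes this gap with an ingredient your proposal never mentions: the rational Cherednik algebra. Gordon--Griffeth's theorem \eqref{eq:gg} identifies $\CatpWq$ with $(1,[L_{p/h}]_q)_W$, and Trinh's character formula \eqref{eq:trinh} rewrites that pairing as the sum $\sum_\chi q^{\ast}\Feg_\chi(q)\Deg_\chi(e^{2\pi ip/h})$ (this is \cref{cor:generic-degree-at-root-of-unity}); the exotic Fourier transform is then used only for the symmetric ``mix-and-match'' identity \eqref{eq:mix-and-match}, not to collapse anything to a product. In other words, the identification of the swapped character sum with the rational $q$-Catalan number is outsourced to \cref{thm:rca}, where the mod-$h$ reduction is built in; your route would have to reprove that identification, uniformly or case by case, and as written it would fail for $I_2(m)$, $H_3$, $H_4$. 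If you restrict to Weyl groups, your plan is viable and close in spirit to the paper's streamlined type-$A$ argument (\cref{thm:thmA}), but it does not prove the stated theorem for all finite Coxeter groups.
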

\noindent See \cref{fig:rat7,fig:rat7_wiring} for an example of~\Cref{thm:Deocp=CatpW}.
The proof occupies \Crefrange{sec:words}{sec:hecke-char}.

Even in the Catalan case $p = h+1$, all previous results on the enumeration of $W$-noncrossing objects relied on the classification of Coxeter groups. 
In \Cref{sec:noncrossing}, we show that the objects in $\Deocp$ are truly \oldemph{noncrossing} by showing that they are in natural uniform bijection with the three families of $W$-noncrossing objects mentioned in~\Cref{sec:rat_noncrossing_combinatorics}~\cite{armstrong2009generalized,stump2015cataland}.
Our work therefore provides the first uniform proof that each of these $W$-noncrossing families is counted by the \emph{$W$-Catalan numbers} $\CatW \coloneqq \CathW$.
In particular, our results give the first uniform proof that the number of clusters in a finite-type cluster algebra is counted by $\CatW$~\cite[Theorem 1.9]{fomin2003systems}.

We generalize this bijection between $\Deocp$ and the three families of $W$-noncrossing objects to the \emph{Fuss--Catalan} ($p = \mh+1$) setting.
Since the zeta polynomial of the noncrossing partition lattice counts the Fuss--Catalan noncrossing partitions~\cite[Proposition 9]{ed2004enumerative}, taking the leading coefficient of $\m$ in $\prod_{i = 1}^\rank \frac{\mh + d_i}{d_i}$ immediately gives a new uniform proof of the formula $\frac{\rank! h^\rank}{|W|}$ for the number of maximal chains in the noncrossing partition lattice~\cite{deligne1974letter,reading2008chains,michel2016deligne}.

\begin{remark}
For $W = \Sn$, \cref{thm:Deocp=CatpW} is comparable to \cite[Proposition 9.5]{galashin2020positroids}.
This result states that $\Cat_{n,p}$ counts maximal $f_{p,n+p}$-Deograms, where $f_{p,n+p}$ is a permutation (rather than a word) in the larger symmetric group ${\mathfrak{S}_{n+p}}$.
It would be interesting to give such an interpretation for other Weyl groups $W$, even for classical types.
See Open Problem~\ref{OP:combin} and \cref{rmk:Markov}.
\end{remark}

\begin{figure}
    \[\begin{array}{|cccc!{\graytablecolor\vrule}cccc!{\graytablecolor\vrule}cccc|}\hline
      \ss_1 & \ss_2 & \ss_3 & \ss_4 & \ss_1 &\ss_2 & \ss_3 &\ss_4 & \ss_1 &\ss_2 &\ss_3&\ss_4\\\hline
      &&&&&&&&&&& \\[\littlespaceforthedots] 
    \ss_1 & \ss_2 & (1\,4) & \ss_4 & \ss_1 &\ss_2 & (2\,5) &\ss_4 & \ss_1 &\ss_2 &(3\,4)&\ddot{(4\,5)}\\
    \ss_1 & \ss_2 & (1\,4) & \ss_4 & (2\,3) &\ss_2 & \ss_3 &(3\,4) & \ss_1 &(2\,5) &\ss_3 &\ss_4\\
    \ss_1 & \ss_2 & (1\,4) & (4\,5) & \ss_1 &\ss_2 & (2\,4) &\ss_4 & \ss_1 &\ss_2 &(3\,5) &\ss_4\\
    \ss_1 & (1\,3) & \ss_3 & \ss_4 & \ss_1 &\ss_2 & (2\,5) &\ss_4 & (1\,4) &\ss_2 &\ss_3 &(4\,5)\\
    \ss_1 &(1\,3) & \ss_3 & \ss_4 & \ss_1 &(2\,4) & \ss_3 &\ss_4 & \ddot{(1\,2)} &(2\,5) &\ss_3 &\ss_4\\
    (1\,2) & \ss_2 & \ss_3 &(2\,5)& \ss_1 &(1\,4) & \ss_3 &\ss_4 & \ss_1 &\ss_2 &(3\,5) &\ss_4\\
    (1\,2)& (2\,3) & \ss_3 & \ss_4 & \ss_1 &(1\,4) & \ss_3 &\ss_4 & \ss_1 &(2\,5) &\ss_3 &\ss_4\\ \hline
    \end{array}\]
\caption{For $W=\Sfr_5$, $\c = (s_1,s_2,s_3,s_4)$, and $p=3$, each row above depicts one of the seven maximal $\c^p$-Deograms in the set $\Deocp$ from~\Cref{dfn:Deogram}.
For each $\su \in \Deocp$, we have replaced the positions $i$ where $u_i=\se$ with the corresponding colored inversions from \cref{rmk:wiring}. Compare with~\Cref{fig:dyck35}.
}
\label{fig:rat7}
\end{figure}

\begin{figure}
\includegraphics[width=1.0\textwidth]{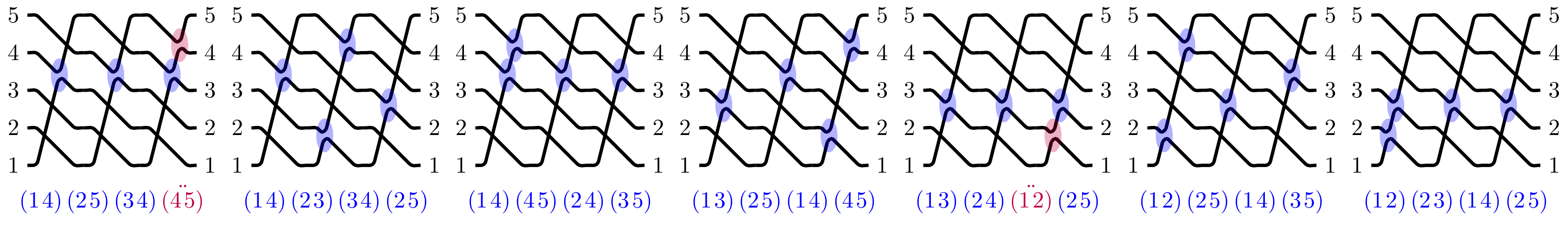}
\caption{Wiring diagrams for \cref{fig:rat7}, illustrating \cref{rmk:wiring}.}
\label{fig:rat7_wiring}
\end{figure}

\subsection{\texorpdfstring{Rational $W$-parking functions}{Rational W-parking functions}}

Let $\abf = (a_1, a_2, \dots, a_n)$ be a sequence of positive integers and $(b_1 \leq b_2 \leq \dots \leq b_n)$ its increasing rearrangement.
We say that $\abf$ is a \emph{parking function} if $b_i \leq i$ for all $i$. 
The number of parking functions of length $n$ is well known to be $(n + 1)^{n - 1}$ \cite{konheim1966occupancy}.

\begin{example}\label{ex:parking_example}
    For $n = 3$, the $16$ parking functions are given by
    \begin{align*}
        &(1, 1, 1),\ (1, 1, 2),\ (1, 2, 1),\ (2, 1, 1),\ (1, 1, 3),\ (1, 3, 1),\ (3, 1, 1),\ (1, 2, 2),\\
        &(2, 1, 2),\ (2, 2, 1),\ (1, 2, 3),\ (1, 3, 2),\ (2, 1, 3),\ (2, 3, 1),\ (3, 1, 2),\ (3, 2, 1).
    \end{align*}
\end{example}

\noindent For a Coxeter group $W$ with $h$, $\cbf$, and $p$ as above, we can use distinguished subwords to define rational $W$-analogues of parking functions that we call \emph{rational noncrossing $W$-parking objects}.
Recall from \cref{rmk:walks} that each $e$-subword $\su$ of $\cpbf$ gives rise to a closed walk $(e = \up0, \up1, \dots, \up{m} = e)$ in weak order, starting and ending at $e$.
To define our parking objects, we instead consider closed walks starting and ending at arbitrary $v \in W$.
More precisely, given $v \in W$, we say that a subword $\su$ of $\cpbf$ is \emph{$v$-distinguished} if we have $v\up{i} \leq v\up{i - 1} s_i$ for all $i$.

\begin{definition}\label{def:parking}
Given $v\in W$, a \emph{maximal $(\cpbf,v)$-Deogram} is a $v$-distinguished $e$-subword $\su$ of $\cpbf$ satisfying $\esu=\rank$.
Let $\Deocpv$ be the set of maximal $(\cpbf,v)$-Deograms, and let
\begin{align*}
	\Parkcp\coloneqq \bigsqcup_{v\in W} \Deocpv.
\end{align*}
\end{definition}

\noindent In the language of \cref{rmk:walks}, each element $\su \in \Parkcp$ gives rise to a distinguished closed walk $(v = v\up0, v\up1,\dots, v\up{m} = v)$ with precisely $\rank$ stays.
We note that the same subword $\su$ may belong to $\Deocpv$ for several different $v\in W$, in which case it gives rise to several different closed walks.
We consider these closed walks to be distinct elements of $\Parkcp$.

\begin{theorem}\label{thm:Parkcp=p^n}
For any Coxeter group $W$ of rank $\rank$ and Coxeter number $h$, Coxeter word $\cbf$, and integer $p$ coprime to $h$, we have
\begin{align}\label{eq:intro_Parkcp}
	\left|\Parkcp\right| = p^\rank.
\end{align}  
\end{theorem}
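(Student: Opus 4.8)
The plan is to reduce \Cref{thm:Parkcp=p^n} to a trace computation that runs in parallel with the proof of \Cref{thm:Deocp=CatpW}, the only difference being which linear functional on the group algebra one applies at the end. Concretely, first I would encode the walks of \Cref{rmk:walks} by operators on $\C[W]$: for a simple reflection $s$, let $R_s$ be right multiplication $w\mapsto ws$ and let $E_s$ be the diagonal projection onto $\{w:\ell(ws)>\ell(w)\}$, and set $\mathbf{T}_s(q)\coloneqq R_s+qE_s$ and $\mathbf{T}_\cbf(q)\coloneqq \mathbf{T}_{s_1}(q)\cdots\mathbf{T}_{s_\rank}(q)$. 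Expanding $\mathbf{T}_\cbf(q)^p$ as a sum over walks along $\cpbf=\cbf^p$---each letter $s$ contributing a ``follow the edge'' term $ws$ with coefficient $1$ from $R_s$, together with a ``stay'' term $w$ with coefficient $q$ from $qE_s$ whenever $\ell(ws)>\ell(w)$---one sees that the $v$-distinguished condition becomes precisely the requirement that stays occur only on upward steps. Hence the $(v,v)$-entry of $\mathbf{T}_\cbf(q)^p$ (in the basis $W$) equals $\sum_\su q^{\esu}$ summed over the $v$-distinguished $e$-subwords $\su$ of $\cpbf$, and therefore
\begin{align*}
	|\Parkcp|=[q^\rank]\,\tr_{\C[W]}\big(\mathbf{T}_\cbf(q)^p\big),\qquad\text{while}\qquad |\Deocp|=[q^\rank]\,\big(\mathbf{T}_\cbf(q)^p\big)_{e,e}=\CatpW.
\end{align*}
So \Cref{thm:Deocp=CatpW,thm:Parkcp=p^n} are the ``$(e,e)$-entry'' and ``full trace'' incarnations of one statement about $\mathbf{T}_\cbf(q)^p$, and the content of \Cref{thm:Parkcp=p^n} is that the diagonal sum collapses $\CatpW$ to the simpler $p^\rank$.

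Next I would evaluate this trace representation-theoretically, as in the proof of \Cref{thm:Deocp=CatpW}. Writing $\tr_{\C[W]}(X)=\sum_{\chi\in\Irr(W)}(\dim V_\chi)\,\chi(X)$ expresses $|\Parkcp|$ as a recombination of exactly the character values $\chi(\mathbf{T}_\cbf(q)^p)$ that the Catalan proof computes from the Hecke algebra and Lusztig's exotic Fourier transform (there the coefficients are Schur elements rather than dimensions). I would feed those values in and verify that the $q^\rank$-coefficient of $\sum_\chi(\dim V_\chi)\,\chi(\mathbf{T}_\cbf(q)^p)$ equals $p^\rank$. A convenient anchor is $q=0$: there $\mathbf{T}_\cbf(0)=R_c$ is right multiplication by the Coxeter element $c=s_1\cdots s_\rank$, whose eigenvalues on $\C[W]$ are the $h$-th roots of unity, each with multiplicity $|W|/h$, so $\tr(\mathbf{T}_\cbf(0)^p)=\#\{w:wc^p=w\}=0$ because $\gcd(p,h)=1$---consistent with the fact that every $e$-subword has $\esu\geq\rank$. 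The desired identity is then a statement about how these degenerate eigenvalues disperse through order $q^\rank$ under the deformation, and it is precisely here that the degrees $d_i$, the exponents $e_i$, and the coprimality of $p$ with $h$ enter. For $\rank=1$ (so $W=\mathfrak{S}_2$, $h=2$) it is the elementary fact that $[q^1]\tr(M^p)=p$ for $M=\mathbf{T}_{s_1}(q)=\left(\begin{smallmatrix}q&1\\1&0\end{smallmatrix}\right)$; in the Fuss case $p=\mh+1$ the answer $(\mh{+}1)^\rank$ matches the size of the finite torus $Q^\vee/(\mh{+}1)Q^\vee$, whose $W$-orbit decomposition should in turn recover \Cref{thm:Deocp=CatpW} and the Fuss--Catalan number.

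I expect the main obstacle to be this uniform trace evaluation: controlling the interaction between the local, length-monotonicity constraint (``stay only on upward steps'') and the global ``return to $v$'' constraint simultaneously for all finite Coxeter groups. This is essentially the difficulty already faced in \Cref{thm:Deocp=CatpW}, so I would expect the same machinery to settle it, the genuinely new work being the bookkeeping needed to pass from the Schur-element weighting of the characters to the dimension weighting. (One might instead hope for a direct bijection $\Parkcp\xrightarrow{\sim}\{1,\dots,p\}^\rank$; for $\rank=1$ the skipped position determines $v$ and realizes such a bijection, but for larger rank the $\rank$ skipped positions of an element of $\Parkcp$ need not distribute one per ``column'' of $\cpbf$, so a clean bijection is not apparent, and I would fall back on the trace computation.)
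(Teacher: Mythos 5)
Your combinatorial setup is sound: with $\mathbf{T}_s(q)=R_s+qE_s$ one indeed gets $(\mathbf{T}_{\cbf}(q)^p)_{v,v}=\sum_{\su}q^{\esu}$ over $v$-distinguished $e$-subwords of $\cbf^p$, hence $|\Parkcp|=[q^{\rank}]\,\tr(\mathbf{T}_{\cbf}(q)^p)$, and this is essentially the paper's own reduction: \cref{cor:Rv_u_w(q)_tau_-} together with the dual-basis argument in \cref{sec:q_parking} shows $\sum_{v\in W}\Rv_{e,\cbf^p}(q)=\sum_{\chi\in\Irr(W)}\dim(\chi)\,\chi_q(T_{\cbf^p})$, a regular-representation trace of the same kind. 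The genuine gap is that everything after this reduction---the part that actually proves the theorem---is left as ``feed those values in and verify.'' Evaluating the $q^{\rank}$-coefficient (equivalently, proving $\sum_v\Rv_{e,\cbf^p}(q)=(q-1)^{\rank}\qint[p]^{\rank}$, i.e.\ \cref{cor:parking}) is exactly where the nontrivial and partly non-uniform input enters: Springer's theorem \eqref{eq:chi_periodic} expressing $\chi_q(\sigma_{\cbf}^p)$ through fake degrees at $e^{2\pi ip/h}$; the specialization \eqref{eq:Feg(1)=Deg(1)=dim}; the exotic-Fourier-transform symmetry via \eqref{eq:mix-and-match_gen} to trade $\Feg_\chi(e^{2\pi ip/h})$ for $\Deg_\chi(e^{2\pi ip/h})$; and the block-theoretic evaluation \eqref{eq:Deg_chi_(-1)^k}--\eqref{eq:c(Lambda_k)}, which says $\Deg_\chi(e^{2\pi ip/h})$ vanishes except on the exterior powers $\Lambda_k$, where it equals $(-1)^k$ with $\cont(\Lambda_k)=h(\tfrac{\rank}{2}-k)$, after which the binomial theorem yields $p^{\rank}$. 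None of these ingredients is supplied or even identified in your plan; describing the passage from Schur-element weights to dimension weights as ``bookkeeping'' misplaces the difficulty, since the coprimality of $p$ and $h$ and the degrees of $W$ enter only through these character evaluations. The $q=0$ anchor and the rank-one matrix computation are consistency checks, not steps toward the uniform statement (vanishing at $q=0$ only shows $\esu\geq 1$, not $\esu\geq\rank$).

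A secondary, fixable issue: to borrow the character values from the proof of \cref{thm:Deocp=CatpW} you must first identify your operators with the Hecke algebra. Your $\mathbf{T}_s(q)$ does satisfy $\mathbf{T}_s(q)^2=q\,\mathbf{T}_s(q)+1$, and in rank one its traces match the paper's identity after the reparametrization $q=Q^{1/2}-Q^{-1/2}$ (e.g.\ $\tr(\mathbf{T}_s(q)^3)=q^3+3q$ becomes $Q^{3/2}-Q^{-3/2}=Q^{-3/2}\sum_v\Rv_{e,(s,s,s)}(Q)$), which suggests your transfer operators realize the regular representation of $\H_W$ in a normalized basis; but you verify neither the braid relations for the $\mathbf{T}_s(q)$ nor any other link between $\chi(\mathbf{T}_{\cbf}(q)^p)$ and $\chi_q(T_{\cbf^p})$, so the claim that your trace decomposes into ``exactly the character values the Catalan proof computes'' is unsubstantiated as stated. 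The paper avoids this by working from the outset with $\Rv_{e,\cbf^p}(q)=\sum_{\su}(q-1)^{\esu}q^{\dsuv}$ and the trace $\tau^-$, and then extracts the maximal Deograms as $\lim_{q\to1}(q-1)^{-\rank}\sum_v\Rv_{e,\cbf^p}(q)$, using that every $e$-subword of $\cbf^p$ has $\esu\geq\rank$ (\cref{prop:prod_refl} plus $\ell_T(c^p)=\rank$).
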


\noindent For $W = \Sn$ and $p = n + 1$, the right-hand side of~\eqref{eq:intro_Parkcp} becomes $(n + 1)^{n - 1}$.
See \cref{fig:bij_parking,fig:parking} for examples of~\Cref{thm:Parkcp=p^n}.

\begin{figure}
\includegraphics[width=1.0\textwidth]{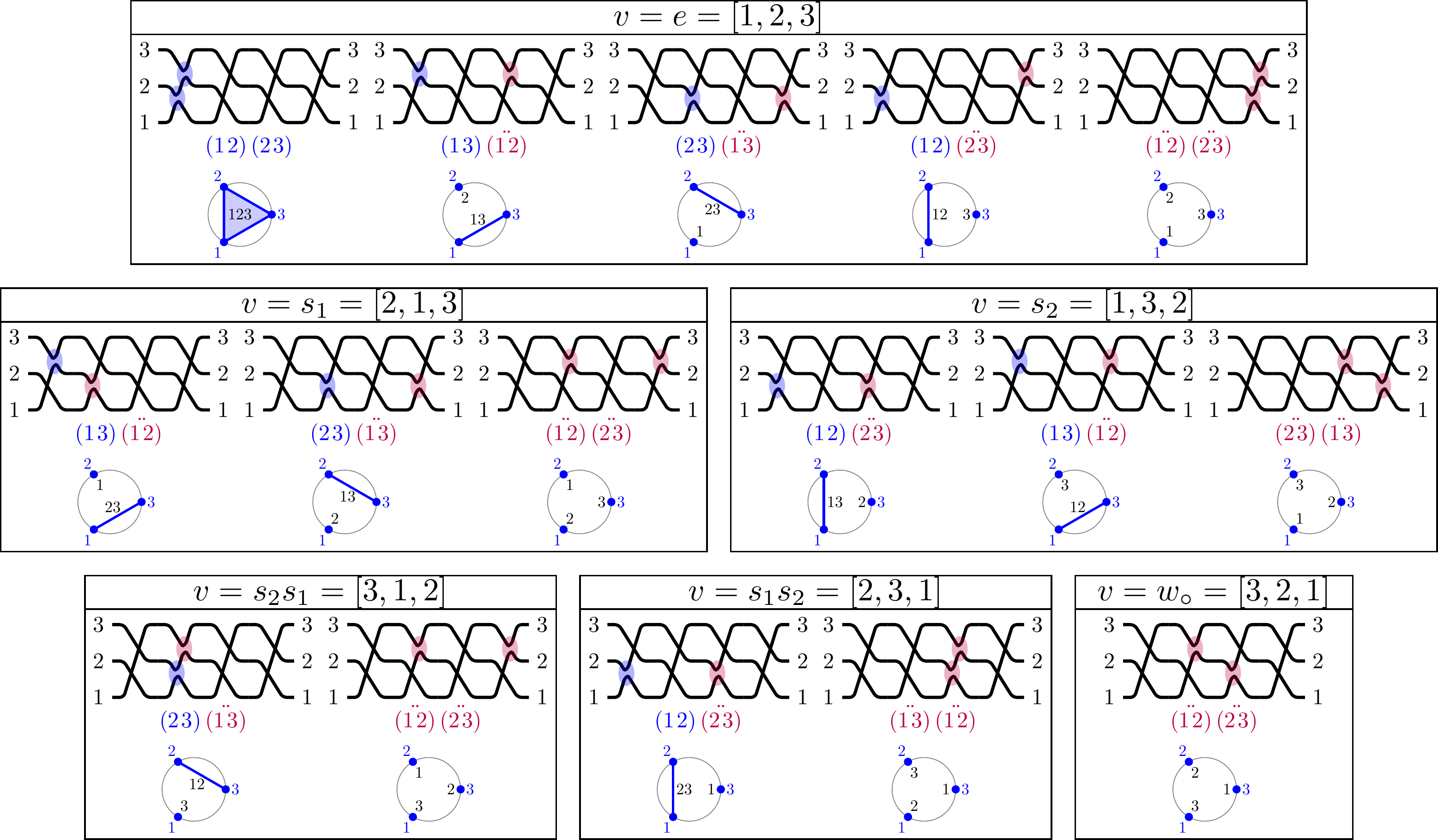}
\caption{\label{fig:bij_parking} The 16 elements of $\Parkcp$ for $W=\Sfr_3$, $\cbf = (s_1, s_2)$, and $p = 4$.
For each $v\in\Sfr_n$, we list the maximal $(v,\cbf^p)$-Deograms in the top row of the corresponding table.
They are shown together with their \emph{$v$-twisted colored inversions} defined in~\Cref{sec:combin:parking,sec:rational_parking}.
The bottom row of each table illustrates the bijection from \cref{sec:combin:parking} between the set $\mathcal{P}_{e,\c^{n+1}}(W)$ and the set of labeled noncrossing partitions.
Compare with \cref{ex:parking_example,fig:parking}.}
\end{figure}

\subsection{\texorpdfstring{$q$-Deformations and Hecke algebra traces}{q-Deformations and Hecke algebra traces}} \label{sec:hecke-algebra-traces}

We will deduce \cref{thm:Deocp=CatpW,thm:Parkcp=p^n} from stronger statements involving $q$-numbers: that is, polynomials of the form $\qint[a] \coloneqq 1 + q + \cdots + q^{a - 1}$.
Let
\begin{equation}\label{eq:CatpWq}
    \CatpWq \coloneqq \prod_{i=1}^r\frac{\qint[p + (pe_i \mod{h})]}{\qint[d_i]},
\end{equation}
the $q$-analogue of $\CatpW$ from~\eqref{eq:intro:CatpW}.

For any word $\w = (s_1, s_2, \ldots, s_m)$ and $u, v \in W$, let $\Dv_{u,\w}$ be the set of $v$-distinguished $u$-subwords of $\w$, not necessarily maximal.
For any $\su \in \Dv_{u, \w}$, recall from \Cref{sec:rat_noncrossing} that $\esu \coloneqq |\{i \in [m] \mid\su_i=e\}|$. 
We set $\dsuv \coloneqq |\{i\in [m] \mid v\up{i}<v\up{i-1}\}|$ and
\begin{align}\label{eq:r_sum}
    \Rv_{u,\w}(q) &= \sum_{\su \in \Dv_{u,\w}} (q-1)^{\esu} q^{\dsuv}.
\end{align}
We abbreviate $\D_{u,\w} = \De_{u,\w}$ and $R_{u,\w}(q) = \Re_{u,\w}(q)$. 
The polynomials $R_{u,\w}(q)$ are generalizations of the celebrated $R$-polynomials of Kazhdan--Lusztig \cite{KL1}.
In \cref{sec:words}, we define $\Rv_{u,\w}(q)$ by a recurrence, and then deduce the closed formula above from an analogous recurrence for $\Dv_{u,\w}$.
The $q$-deformations of \cref{thm:Deocp=CatpW,thm:Parkcp=p^n} are as follows:

\begin{theorem}[\cref{cor:equivalence,cor:parking}] \label{thm:intro:tr}
For any Coxeter group $W$ of rank $\rank$ and Coxeter number $h$, Coxeter word $\bc$, and integer $p$ coprime to $h$, we have
\begin{thmlist}
	\item\label{eq:intro:trq} $\phantom{\sum\limits_{v \in W} {}} R_{e, \bc^p}(q)=(q-1)^\rank \CatpWq,$
	\item\label{eq:intro:trq_sum} $\sum\limits_{v \in W} \Rv_{e, \bc^p}(q) = (q - 1)^\rank \qint[p]^\rank.$
\end{thmlist}
\end{theorem}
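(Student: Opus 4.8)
The plan is to realize both identities as traces in the Iwahori--Hecke algebra $\H=\H(W,S)$ over $\Zqq$, with standard basis $\{T_w\}_{w\in W}$ and quadratic relation $T_s^2=(q-1)T_s+qT_e$, and then to evaluate those traces through the character theory of $\H$. First, in \cref{sec:words} we give $\Rv_{u,\w}(q)$ a recursive definition: inspecting the last letter of the word $\w s$ yields
\[
\Rv_{u,\w s}(q)=\begin{cases}(q-1)\,\Rv_{u,\w}(q)+q\,\Rv_{us,\w}(q),&\ell(vus)>\ell(vu),\\ \Rv_{us,\w}(q),&\ell(vus)<\ell(vu),\end{cases}
\]
with $\Rv_{u,\emptyword}(q)=\delta_{u,e}$, and the closed formula \eqref{eq:r_sum} drops out of the parallel recurrence for the sets $\Dv_{u,\w}$ (this is also where one proves $\esu\ge\rank$ for $e$-subwords). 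Now put $A:=\bigl((q-1)T_e-T_{s_1}\bigr)\cdots\bigl((q-1)T_e-T_{s_\rank}\bigr)\in\H$. Since $(q-1)T_e-T_s=-q\,T_s^{-1}$ and $s_1\cdots s_\rank=c$ is the Coxeter element of $\bc$, we get $A=(-q)^\rank\,\overline{T_c}$, where $x\mapsto\overline{x}$ is the bar involution ($\overline{T_w}=T_{w^{-1}}^{-1}$, $\overline{q}=q^{-1}$); hence $A^p=(-q)^{p\rank}\,\overline{T_c^p}$. Combining the recurrence with the quadratic relation, an induction on $|\w|$ shows
\[
\sum_{u\in W}(-1)^{\ell(vu)}\,\Rv_{u,\w}(q)\,T_{vu}=(-1)^{\ell(v)}\,T_v\prod_{i=1}^{|\w|}\bigl((q-1)T_e-T_{s_i}\bigr);
\]
taking $\w=\cpbf$ and reading off the $T_v$-coefficient gives $\Rv_{e,\cpbf}(q)=\bigl[\,T_v\text{-coefficient of }T_vA^p\,\bigr]$, and summing over $v$ yields
\[
R_{e,\cpbf}(q)=\tau(A^p),\qquad \sum_{v\in W}\Rv_{e,\cpbf}(q)=\phi(A^p),
\]
where $\tau$ is the symmetrizing trace ($\tau(T_w)=\delta_{w,e}$) and $\phi(x):=\tr\bigl(h\mapsto hx\mid\H\bigr)$ is the character of the regular representation.

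Next, after base change to $\C(q)$, where $\H$ is split semisimple, one has $\tau=\sum_{\chi}\tfrac{1}{\Schi(q)}\,\chi_q$ with $\Schi(q)$ the Schur elements, and $\phi=\sum_{\chi}\chi(1)\,\chi_q$, so
\[
R_{e,\cpbf}(q)=(-q)^{p\rank}\sum_{\chi}\frac{\chi_q(\overline{T_c^p})}{\Schi(q)},\qquad \sum_{v\in W}\Rv_{e,\cpbf}(q)=(-q)^{p\rank}\sum_{\chi}\chi(1)\,\chi_q(\overline{T_c^p}).
\]
Since $\overline{T_c^p}=(T_{c^{-1}})^{-p}$, we have $\chi_q(\overline{T_c^p})=\sum_j\nu_{\chi,j}^{-p}$, where $\nu_{\chi,1},\dots,\nu_{\chi,\chi(1)}$ are the eigenvalues of $\rho_\chi(T_{c^{-1}})$. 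A Coxeter element is a $\zeta$-regular element for $\zeta=e^{2\pi i/h}$, so by Springer's theory of regular elements and its Hecke-algebra refinement (Springer, Lusztig, Brou\'e--Michel) these eigenvalues are explicit monomials of the shape $\zeta^{a}q^{b}$ governed by the fake-degree data of $\chi$; substituting them turns each identity into an explicit finite sum over $\Irr(W)$.

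It then remains to evaluate the two character sums. The Schur elements $\Schi(q)$ (equivalently, the generic degrees $P_W(q)/\Schi(q)$) and the fake-degree data entering $\chi_q(\overline{T_c^p})$ are intertwined precisely by Lusztig's exotic Fourier transform on the relevant families of characters; invoking its unitarity and its compatibility with the $a$-function and with the $\zeta$-twist, the weighted sum $\sum_\chi\chi_q(\overline{T_c^p})/\Schi(q)$ telescopes, and after the prefactor $(-q)^{p\rank}$ one obtains exactly $(q-1)^\rank\CatpWq$, which is the first identity of \cref{thm:intro:tr}; here the coprimality $\gcd(p,h)=1$ is what makes multiplication by $p$ permute the residues of the exponents modulo $h$ and keeps the product in the form of \eqref{eq:CatpWq}. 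The companion sum, with $\chi(1)$ in place of $1/\Schi(q)$, is simpler and collapses to $(q-1)^\rank\,\qint[p]^{\rank}$, which is the second identity. Setting $q=1$ then recovers \cref{thm:Deocp=CatpW,thm:Parkcp=p^n}.

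The hardest step will be the last one, and it must be carried out uniformly over \oldemph{all} finite Coxeter groups---including $I_2(m)$, $H_3$, and $H_4$, where generic degrees need not be rational and genuinely exotic Fourier transforms occur. Two points are delicate: (i) producing a uniform closed form for the eigenvalues of $\rho_\chi(T_c)$, which rests on the Hecke-algebra upgrade of Springer's regular-element theory rather than on its value at $q=1$; and (ii) the collapse of the weighted character sum, where the combinatorial structure of Lusztig's exotic Fourier transform---the partition of $\Irr(W)$ into families, the duality between generic and fake degrees, and its interaction with the $\zeta$-twist---is indispensable. Making the bookkeeping of $a$-functions, eigenvalue normalizations, and the residues of $pe_i$ modulo $h$ line up so that everything telescopes to the rational $q$-Catalan product $\CatpWq$ is the technical heart of the argument.
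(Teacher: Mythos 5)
Your reduction of both identities to Hecke-algebra traces is correct and is essentially the paper's route: your product $A=\bigl((q-1)T_e-T_{s_1}\bigr)\cdots\bigl((q-1)T_e-T_{s_\rank}\bigr)=(-q)^\rank D(T_c)$ and the induction expressing $T_vA^{p}$ in the $T$-basis with coefficients $\pm\Rv_{u,\bc^p}(q)$ is just a sign-normalized repackaging of \cref{prop:r_eq_trace} and \cref{cor:Rv_u_w(q)_tau_-,cor:r-to-tau-minus}, and your regular-representation trace $\phi=\sum_\chi\chi(1)\chi_q$ is an equivalent of the dual-basis argument in \cref{cor:parking}. Likewise, using Springer's regular-element theory to evaluate $\chi_q$ on the $p$th power of a Coxeter braid in terms of $\Feg_\chi(e^{2\pi ip/h})$ and $q^{\pm\frac{p}{h}\cont(\chi)}$ matches \cref{thm:periodic-springer,cor:fake-degree-at-root-of-unity}. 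Up to this point the proposal is sound.

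The genuine gap is in what you call the hardest step, which you assert rather than prove. The exotic Fourier transform does not evaluate the weighted character sum: in the paper it is used only to \emph{swap} fake and generic degrees, i.e.\ to pass from $\sum_\chi q^{-\frac{p}{h}\cont(\chi)}\Feg_\chi(e^{2\pi ip/h})\Deg_\chi(q)$ to $\sum_\chi q^{-\frac{p}{h}\cont(\chi)}\Feg_\chi(q)\Deg_\chi(e^{2\pi ip/h})$ via symmetry of $\{-,-\}_W$ and constancy of $\cont$ on families (identity \eqref{eq:mix-and-match}). There is no ``telescoping by unitarity''; the identification of the swapped sum with $(1-q)^\rank q^{-\rank p/2}\,\CatpWq$ is a separate and substantial input that your proposal never supplies. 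In the paper it comes from rational Cherednik algebra theory: Gordon--Griffeth's formula $(1,[L_{p/h}]_q)_W=\CatpWq$ together with the expansion of $[L_{p/h}]_q$ as $q^{\frac{\rank p}{2}}\sum_\chi q^{-\frac{p}{h}\cont(\chi)}\Deg_\chi(e^{2\pi ip/h})\,\chi\cdot[\Sym]_q$ (\cref{thm:rca}, yielding \cref{cor:generic-degree-at-root-of-unity}). Without this (or an equivalent evaluation) you have no mechanism connecting the character sum to the product $\prod_i \qint[p+(pe_i\bmod h)]/\qint[d_i]$. Similarly, for part (2) the ``collapse'' to $(q-1)^\rank\qint[p]^\rank$ requires knowing exactly which $\Deg_\chi(e^{2\pi ip/h})$ are nonzero, namely $\Deg_{\Lambda_k}(e^{2\pi ip/h})=(-1)^k$ for the exterior powers of the reflection representation and zero otherwise, which rests on the $\Phi_h$-block theory of $K\H_W$ (\cref{rem:simpler}), plus $\cont(\Lambda_k)=N-hk$ and the binomial theorem; none of this appears in your argument. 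As written, the decisive evaluations in both (1) and (2) are missing.
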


\noindent For $W=\Sn$, the right-hand side of~\Cref{eq:intro:trq} equals $(q-1)^{n-1} \Cat_{n,p}(q)$, where
\begin{align*}
  \Cat_{n,p}(q) \coloneqq \frac1{\qint[p+n]} \qbinom{p+n}{n},
\end{align*}
where $\qbinom{p+n}{n} \coloneqq \frac{\qint[p+n]!}{\qint[p]!\qint[n]!}$ and $\qint[m]! \coloneqq \qint[1]\qint[2]\cdots\qint[m]$.

\begin{example}\label{ex:r_sum}
For $W = \Sfr_3$ and $p=4$, we compute \eqref{eq:r_sum} and compare it with \Cref{eq:intro:trq_sum}:
\begin{align*}
(q -1)^{-\rank} \sum_{v \in W} \Rv_{e,\c^p}(q) 
	= 
		{\overbrace{\left(1 + q^2 + q^3 + q^4 + q^6\right)}^{v=e}}
		+ 2q {\overbrace{\left(1 + q^2+ q^4\right)}^{v\in\{s_1,s_2\}}}
		+ 2q^2 {\overbrace{\left(1 + q^2\right)}^{v\in\{s_1s_2,s_2s_1\}}} 
		+ q^3 {\overbrace{\left(1\right)}^{v=s_1s_2s_1}} = \qint[4]^2.
\end{align*}
The sets $\Deocpv$ are shown in~\Cref{fig:bij_parking,fig:parking}.
The $v=e$ piece of the sum recovers the rational $q$-Catalan number $\Cat_{4}(\Sfr_3;q)=\Cat_{3,4}(q)=1 + q^2 + q^3 + q^4 + q^6$ of \Cref{eq:intro:trq}.
\end{example}

The proofs given in \Crefrange{sec:words}{sec:hecke-char} require some background in the representation theory of Coxeter groups.

In \Cref{sec:hecke}, we recall that the group ring $\Z[W]$ can be deformed to a $\Z[q^{\pm 1}]$-algebra called the \emph{Hecke algebra} $\H_W$.
Every word $\sw$ in the simple reflections of $W$ gives rise to a corresponding element $T_{\sw}$ of the Hecke algebra. 
We will show that $R_{e,\w}(q)$ can be expressed in terms of the value of $T_{\sw}$ under a certain $\Z[q^{\pm 1}]$-linear trace.
For general $u, v \in W$, a similar result holds for $\Rv_{u,\w}(q)$.

In \Cref{sec:hecke-char}, we compare the trace to the right-hand sides of Theorem~\hyperref[thm:intro:tr]{\ref*{thm:intro:tr}(1--2)}. The key idea is to decompose the trace as a linear combination of the characters of the simple $\H_W$-modules. Using a theorem of Springer, we deduce that for a Coxeter word $\bc$, the trace of $T_{\bc^p}$ can be expressed as a linear combination of values of the form $\Feg_\chi(e^{2\pi i\frac{p}{h}})$, where $\chi$ runs over the irreducible characters of $W$ and $\Feg_\chi$ is a polynomial called the \emph{fake degree} of $\chi$.
On the other hand, using a result from \cite{trinh2021hecke}, we show that the right-hand side of \Cref{eq:intro:trq} can be expressed as a linear combination of values of the form $\Deg_\chi(e^{2\pi i\frac{p}{h}})$, where $\Deg_\chi$ is a polynomial called the \emph{unipotent} or \emph{generic degree} of $\chi$.

Although fake degrees and generic degrees originated in the work of Deligne and Lusztig on representations of finite groups of Lie type, they can be defined purely in terms of the structure of $\H_W$.
For the symmetric group $\Sn$, we have $\Feg_\chi = \Deg_\chi$ for every $\chi$.
But for general Coxeter groups, these polynomials are related by a nontrivial pairing $\{-, -\}_W$ on the set of irreducible characters $\Irr(W)$, discovered by Lusztig and known as the \emph{(truncated) exotic Fourier transform} (\Cref{thm:exotic-ft}). 
Ultimately, we show that \Cref{eq:intro:trq} is equivalent to a certain identity \eqref{eq:mix-and-match} for $\Feg_\chi$ and $\Deg_\chi$, that is a consequence of symmetry and block-diagonality properties of the exotic Fourier transform:
See parts~\hyperref[thm:exotic-ft2]{(2)}--\hyperref[thm:exotic-ft3]{(3)} of \cref{thm:exotic-ft}.

While these properties can be stated uniformly for all Coxeter groups, Lusztig's proofs of these properties are not uniform even for Weyl groups.
Moreover, there is no uniform definition of $\{-, -\}_W$ for general Coxeter groups.
See \cref{subsec:exotic-ft} for an extensive discussion of this issue.

\begin{remark}
As we explain in \Cref{subsec:exotic-ft}, $\{-, -\}_W$ arises as the restriction to $\Irr(W)$ of a pairing $\{-, -\}$ on a superset $\Udeg(W) \supseteq \Irr(W)$.
The pairing is the actual \emph{exotic Fourier transform}:
When $W$ is a Weyl group, $\{-, -\}$ is a precise nonabelian generalization of the usual Fourier transform on a finite abelian group.
The name ``truncated'' for $\{-, -\}_W$ comes from the preprint \cite{michel2022tower}, which appeared while our paper was in preparation.
\end{remark}

\subsection{Braid Richardson varieties}\label{sec:braid-rich-vari}

Our final goal is to introduce algebraic varieties $\BRv_{e, \w}$ whose point counts over a finite field of order $q$ recover the $q$-formulas above. These varieties appear in~\cite{deodhar1985some,marsh2004parametrizations,webster2007deodhar} when $\w$ is a reduced word of an element $w\in W$, in which case $\BR_{u,\w}$ becomes isomorphic to an \emph{open Richardson variety} $\BR_{u,w}$.  Related constructions appear in~\cite{mellit2019cell,casals2021positroid,trinh2021hecke}.  See \cite[Appendix~B]{trinh2021hecke} for further references.

Let $\Fbb$ be a field.
Fix a split, connected reductive algebraic group $G$ over $\Fbb$ with Weyl group $W$. 
Let $\Bcal$ be the \emph{flag variety} of $G$, i.e., the variety of all Borel subgroups of $G$.
The group $G$ acts on $\Bcal$ by conjugation:
If $g \in G$ and $B\in\Bcal$, then we set $\conj{g}{B} \coloneqq gBg^{-1}$.

Fix a pair of opposed $\Fbb$-split Borel subgroups $B_+ ,B_- \in \Bcal$, and set $H \coloneqq B_+ \cap B_-$.
We can identify $W$ with $N_G(H)/H$.
We write $\conj{w}{B_+} \coloneqq \conj{\dot w}{B_+}$, where $\dot w\in G$ is any lift of $w\in W$ to $N_G(H)$. 
For any two Borels $B_1, B_2 \in \Bcal$, there is a unique $w$ such that $(B_1,B_2)=(\conj g{B_+},\conj {gw}{B_+})$ for some $g\in G$.
In this case, we write $B_1 \xrightarrow{w} B_2$ and say that $(B_1,B_2)$ are in \emph{relative position $w$}.
For example, $B_+\xrightarrow{\wo}B_-$, where $\wo$ is the longest element of $W$, whereas $B_1 \xrightarrow{e} B_2$ if and only if $B_1=B_2$.

If $W=\Sn$, then we can take $G=\GL_n(\Fbb)$, the general linear group of $\Fbb^n$.
In this case, $\Bcal$ is the variety of \emph{complete flags}
\begin{align}
V_\bullet=(V_0\subset V_1\subset\cdots \subset V_n) \in \Fbb^n
\end{align}
where $\dim V_i=i$ for all $i$.
The relative position of two such flags $U_\bullet, V_\bullet$ is the unique permutation $w\in \Sn$ such that $\dim (U_i\cap V_j) = |\{1 \leq k \leq i \mid w^{-1}(k)\leq j\}|$ for all $i,j$.

For any $u \in W$ and any word $\w = (s_1,s_2,\dots,s_m) \in S^m$, not necessarily reduced, we will define an algebraic variety $\BR_{u,\w}$ over $\Fbb$.
When $u = e$, it is
\begin{equation}\label{eq:intro:BRe}
    \BR_{e, \w} = \left\{(B_1, \ldots, B_m) \in \Bcal^m \mid B_+ \xrightarrow{s_1} B_1  \xrightarrow{s_2} \cdots \xrightarrow{s_m} B_m \xleftarrow{\wo} B_-\right\}.
\end{equation}
More generally, for $v\in W$, let
\begin{equation}\label{eq:intro:BRve}
    \BRv_{e, \w} = \left\{(B_1, \ldots, B_m) \in \Bcal^m \mid \conj v{B_+}\xrightarrow{s_1} B_1  \xrightarrow{s_2} \cdots \xrightarrow{s_m} B_m \xleftarrow{v\wo} B_-\right\}.
\end{equation}
For a specific calculation, see \cref{ex:deodhar3}.
We show in \cref{sec:braidRich} that \cref{thm:intro:tr} has the following geometric interpretation.

\begin{theorem}\label{thm:intro:BR}
Suppose that $\Fbb=\Fbb_q$ is a finite field with $q$ elements, where $q$ is a prime power. 
Then for any Weyl group $W$ of rank $\rank$ and Coxeter number $h$, Coxeter word $\cbf$, and integer $p$ coprime to $h$, we have 
\begin{align*}
	\left|\BRe(\Fbb_q)\right| 
		&= \phantom{\sum\limits_{v \in W} {}} R_{e, \bc^p}(q) 
			= (q - 1)^\rank \CatpWq,\\
	\left|\bigsqcup_{v\in W} \BRve(\Fbb_q)\right| 
		&= \sum\limits_{v \in W} \Rv_{e, \bc^p}(q) 
			= (q - 1)^\rank \qint[p]^\rank.
\end{align*}
\end{theorem}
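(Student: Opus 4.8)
The plan is to split Theorem~\ref{thm:intro:BR} into a geometric input — a point count of braid Richardson varieties — and an algebraic input that is already available. The algebraic input is immediate: once we know
\[
\left|\BRe(\Fbb_q)\right| = R_{e,\bc^p}(q)
\qquad\text{and}\qquad
\left|\bigsqcup_{v\in W}\BRve(\Fbb_q)\right| = \sum_{v\in W}\Rv_{e,\bc^p}(q),
\]
the two displayed closed forms are exactly \cref{eq:intro:trq,eq:intro:trq_sum}. So the real task is the point count, which I would prove in the more flexible form
\[
\left|\BRv_{u,\w}(\Fbb_q)\right| = \Rv_{u,\w}(q)
\]
for an \emph{arbitrary} word $\w = (s_1,\dots,s_m)$ and arbitrary $u, v\in W$, and then specialize to $u=e$, $\w=\bc^p$.

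For this I would use a Deodhar-type stratification $\BRv_{u,\w} = \bigsqcup_{\su\in\Dv_{u,\w}} X_\su$ into locally closed pieces indexed by the $v$-distinguished $u$-subwords of $\w$, with $X_\su \cong \mathbb{G}_m^{\,\esu}\times\mathbb{A}^{\,\dsuv}$. Granting this, each piece has $(q-1)^{\esu}q^{\dsuv}$ points over $\Fbb_q$, and summing over $\su$ reproduces the closed formula \eqref{eq:r_sum} for $\Rv_{u,\w}(q)$. I would build the stratification by induction on the word length $m$, mirroring the recurrence that defines $\Dv_{u,\w}$ and $\Rv_{u,\w}(q)$ in \cref{sec:words}: forgetting the last flag $B_m$ presents $\BRv_{u,\w}$ as fibered over the analogous variety for $(s_1,\dots,s_{m-1})$, itself stratified by the relative position of the truncated configuration to $B_-$, and over each such stratum the set of admissible $B_m$ (those with $B_{m-1}\xrightarrow{s_m}B_m$ and the correct position to $B_-$) is obtained from a $\mathbb{P}^1$ by deleting a relative-position-dependent number of points, according to whether appending $s_m$ raises or lowers the relevant length and whether $s_m$ is forced in a distinguished subword — exactly the three cases of the combinatorial recurrence. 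This is the extension to (possibly non-reduced) words of Deodhar's decomposition of open Richardson varieties; compare \cite{deodhar1985some,marsh2004parametrizations,webster2007deodhar} and \cite[Section~9]{galashin2020positroids}. For the second identity one further notes that a fixed subword $\su$ may be $v$-distinguished for several $v\in W$, and each pair $(v,\su)$ contributes a separate stratum, matching the definition of $\Parkcp$.

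The step I expect to be the main obstacle is the fiberwise relative-position bookkeeping for \emph{non-reduced} $\w$: when $s_m$ does not decrease the relevant length, one must simultaneously control the positions of $(B_{m-1},B_m)$ and of $(B_m, B_-)$, and verify that the trichotomy of geometric fibers corresponds exactly to the recurrence — in particular that the ``forced'' fiber (a $\mathbb{P}^1$ minus all but one point) occurs precisely when the distinguished condition would be violated. The base case (the constraint $\conj{v}{B_+}\xrightarrow{s_1}B_1$) and the role of $v\wo$ versus $\wo$ require parallel care. Once $\left|\BRv_{u,\w}(\Fbb_q)\right| = \Rv_{u,\w}(q)$ is in place, setting $u=e$, $\w=\bc^p$, and invoking \cref{thm:intro:tr} finishes the proof; as a byproduct this exhibits $\BRe$ and $\bigsqcup_{v}\BRve$ as polynomial-count varieties, with Theorems~\ref{thm:Deocp=CatpW} and~\ref{thm:Parkcp=p^n} recovered from \cref{thm:intro:tr} by extracting the lowest power of $(q-1)$ and setting $q=1$.
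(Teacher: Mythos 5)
Your proposal is correct and follows essentially the same route as the paper: a Deodhar-type stratification of $\BRv_{u,\w}$ into pieces $(\Fbb^\ast)^{\esu}\times\Fbb^{\dsuv}$ indexed by $v$-distinguished subwords, comparison of the resulting point count with the closed formula for $\Rv_{u,\w}(q)$, and then an appeal to \cref{thm:intro:tr}. The only difference is presentational: the paper simply cites the stratification (\cref{thm:deodhar_sum}, noting Deodhar's argument extends to non-reduced words and to the $v$-twisted setting), whereas you sketch re-deriving it by induction on the word length mirroring the Deodhar recurrence, which is exactly the content of that cited argument.
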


\subsection{Future work}

A natural problem would be to generalize our work to the $(q,t)$-level in the spirit of~\cite{galashin2020positroids, trinh2021hecke}, where the point count on the left-hand side is replaced by the \emph{mixed Hodge polynomial} of the corresponding variety, and the right-hand side is replaced by the rational $(W,q,t)$-analogs of Catalan numbers and parking functions; see \cite{gordon2012catalan}.  

The dichotomy between $W$-nonnesting objects and $W$-noncrossing objects appears to be related to a nonabelian Hodge correspondence and a $P=W$ phenomenon for braid Richardson varieties; see \cite[Section~4.9]{trinh2020algebraic}.
We hope to return to this possibility in the future.

Another natural problem would be to extend the construction of rational noncrossing objects to well-generated complex reflection groups, which still have a well-defined rational Catalan number~\cite{gordon2012catalan}.

\subsection*{Acknowledgments}
We thank Theo Douvropoulos, Eric Sommers, and Dennis Stanton for helpful conversations regarding character computations.
We thank Thomas Gobet for pointing out the reference~\cite{dyer2001minimal}.
We thank Olivier Dudas and George Lusztig for responding to questions about the representation theory of Coxeter groups.

\section{\texorpdfstring{Type $A$ Combinatorics}{Type A Combinatorics}}

As a warm-up, we discuss the structure of Deograms in type $A$, and give bijections between maximal Deograms and well-known Catalan objects. 
Throughout this section, let $W=\Sn$.
Recall that $\Sn$ has rank $\rank=n-1$ and Coxeter number $h=n$.
We concentrate on the \emph{Fuss--Catalan} case $p=h+1=n+1$ and the \emph{Fuss--Dogolon} case $p=h-1=n-1$.
In both cases, the number of maximal $\cpbf$-Deograms is given by the classical Catalan number:
\begin{equation*}
    |\DmFC|=\Cat_n \quad\text{and}\quad |\DmFD|=\Cat_{n-1},
\end{equation*}
where $\Cat_n=\frac{1}{n+1}\binom{2n}{n}$ and $\cbf=(s_1,s_2,\dots,s_{n-1})$.

Throughout this section, we omit the proofs, leaving them as exercises for the interested reader.
In \cref{sec:noncrossing}, we will give bijections to known Catalan and parking objects for general Coxeter groups $W$ and integers $p=\mh + 1$ with $\m\geq 1$.

\subsection{\texorpdfstring{The case $p=n+1$: binary search trees, noncrossing matchings, and noncrossing partitions}{The case p=n+1: binary search trees, noncrossing matchings, and noncrossing partitions}}\label{sec:combin:FC}

Up to commutation moves, the braid word $\cbf^{n+1}$ can be decomposed as $\cbf^{n+1}=\bw'_\circ \cdot \cbf^\ast \cdot \bw''_\circ$, where $\cbf^\ast=s_{n-1}\cdots s_2s_1$, and
\begin{align*}
\bw'_\circ \coloneqq s_1\cdot (s_2s_1)\cdots (s_{n-1}\cdots s_2s_1)
\quad\text{and}\quad
\bw''_\circ \coloneqq (s_{n-1}\cdots s_2s_1)\cdot  (s_{n-1}\cdots s_2)\cdots s_{n-1}
\end{align*}
are two reduced words for $\wo$.
In the wiring diagram of $\cbf^{n+1}$, $\bw'_\circ$ forms an upright triangle on the left while $\bw''_\circ$ forms a downright triangle on the right; see \cref{fig:up_down_triangles}.

Following \cref{rmk:wiring}, we identify elements of $\DmFC$ with ways to insert $n-1$ elbows into the wiring diagram of $\cbf^{n+1}$.
Recall that to each elbow $E$ we associate a \emph{colored inversion} consisting of a reflection $(i\, j)$ and a \emph{color} $k\in\Z$.
Here, $i<j$ are the labels of the left endpoints of the two strands participating in $E$, and $k$ is the number of times these two wires intersect to the left of $E$.

\begin{lemma}
If $\cbf^{n+1}=\bw'_\circ \cdot \cbf^\ast \cdot \bw''_\circ$ as above, then in any maximal $\cbf^{n+1}$-Deogram $\deo$, none of the elbows of $\deo$ appears in $\cbf^\ast$.
The elbows appearing in $\bw'_\circ$ all have color $0$, and those appearing in $\bw''_\circ$ all have color $2$.
\end{lemma}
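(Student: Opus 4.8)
The plan is to argue throughout in the closed-walk/wiring-diagram model of Remarks~\ref{rmk:walks}--\ref{rmk:wiring}. First I would pass to the word $\w\coloneqq\bw'_\circ\cdot\cbf^\ast\cdot\bw''_\circ$ itself: a commutation move swaps two columns supported on disjoint heights, hence gives a bijection between the corresponding sets of distinguished subwords preserving the number $\esu$ of skipped letters, the down-step count, and every color; since $\cbf^{n+1}$ is commutation-equivalent to $\w$, it suffices to work with $\w$, whose factors $\bw'_\circ$, $\cbf^\ast$, $\bw''_\circ$ now occupy three disjoint consecutive blocks of columns. Fix $\deo\in\DmFC$ with walk $e=\up0\to\up1\to\cdots\to\up{m}=e$, $m=n^2-1$. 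Since exactly $\esu=\rank=n-1$ steps are stays, the closed walk makes $\binom n2$ up-moves and $\binom n2$ down-moves; equivalently $\deo$ has $n-1$ elbows and $n(n-1)$ crossings, and, since its net permutation is $e$, \emph{every pair of wires crosses an even number of times}. Finally I would record a symmetry: reversing the word and then applying the diagram automorphism $s_i\mapsto s_{n-i}$ fixes $\w$ (it interchanges the reduced words $\bw'_\circ\leftrightarrow\bw''_\circ$ and fixes $\cbf^\ast$), so it induces an involution on $\DmFC$ that interchanges the $\bw'_\circ$- and $\bw''_\circ$-blocks and interchanges ``number of crossings to the left of an elbow'' with ``number of crossings to its right.''

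\textbf{The two triangular blocks.} Restricted to the $\bw'_\circ$-block, $\deo$ is a distinguished subword of the reduced word $\bw'_\circ$ of $\wo$, and for a distinguished subword of a reduced word every elbow has color $0$: at an elbow the two participating wires are adjacent and are \emph{not} interchanged there, so the ``no-backtracking'' behaviour of distinguished subwords of reduced words \cite{deodhar1985some,marsh2004parametrizations} forces them to have crossed $0$ times to its left. This settles the elbows of $\deo$ lying in $\bw'_\circ$. Running this through the involution above, the corresponding statement for the $\bw''_\circ$-block becomes: every elbow there has $0$ crossings to its \emph{right}; hence the color of such an elbow equals the \emph{total} number of crossings between its two participating wires.

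\textbf{The crux.} What remains are two linked facts: (1) no elbow of $\deo$ lies in the $\cbf^\ast$-block; and (2) the two wires of any elbow in the $\bw''_\circ$-block cross exactly twice in $\deo$, so that by the previous paragraph that elbow has color $2$. I would attack both by a global analysis of crossing numbers. In the all-crossings word $\cbf^{n+1}$ precisely the $n-1$ pairs $\{1,j\}$ ($2\le j\le n$) cross an odd number of times (three times each), while the other $\binom{n-1}{2}$ pairs cross twice; in $\deo$ all $\binom n2$ crossing numbers are even and sum to $n(n-1)=2\binom n2$. Once one rules out a pair crossing $0$ or $\ge4$ times, this forces every pair to cross exactly twice, with each $\{1,j\}$ having lost one crossing and every other pair unchanged; and since $\cbf^\ast$ is a reduced word for a Coxeter element its block can carry at most $n-1$ crossings, with equality exactly when it has no elbow, while each reduced block $\bw'_\circ$, $\bw''_\circ$ carries at most one surviving crossing per pair once its elbow is removed --- so the $n-1$ elbows are forced one into $\bw'_\circ$ and one into $\bw''_\circ$ per pair $\{1,j\}$, none into the middle. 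I expect the delicate step, and the main obstacle, to be that an elbow inside $\bw'_\circ$ or $\cbf^\ast$ reroutes all wiring to its right and can both create and destroy crossings of other pairs; the count must therefore be organized in a rerouting-insensitive way --- tracking, for each pair $\{a,b\}$ and each of the three blocks, only the parity of ``$a$ lies below $b$'' (governed by $x_1\coloneqq\up{\binom n2}$ and $x_2\coloneqq\up{\binom n2+n-1}$) and combining it with the no-backtracking property inside each reduced block.
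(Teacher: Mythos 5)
Your preparatory reductions are fine and genuinely useful: commutation moves do preserve distinguishedness, skips, descents and colors; a maximal $\cbf^{n+1}$-Deogram does have $n-1$ elbows and $n(n-1)$ crossings with every pair of wires crossing an even number of times; and the reverse-and-relabel involution ($s_i\mapsto s_{n-i}$) is a correct symmetry exchanging the two triangular blocks and exchanging ``crossings to the left'' with ``crossings to the right'' of each elbow. The problem is the step you rest the triangular blocks on: it is \emph{not} true that every elbow of a distinguished subword of a reduced word has color $0$. Distinguishedness only forces the color to be even, and rerouting by earlier elbows can make it $2$. Concretely, take $W=\Sfr_4$ and the reduced word $\bw'_\circ=(s_1,s_2,s_1,s_3,s_2,s_1)$ itself: the subword $\su=(s_1,e,s_1,s_3,e,e)$ is distinguished (every skipped letter is skipped at an ascent: the partial products are $s_1,s_1,e,s_3,s_3,s_3$), yet its elbow in the last position has reflection $s_1^{(s_3)}=(1\,2)$ and color $2$, because $(1\,2)$ is also the reflection of the crossings at positions $1$ and $3$; in the wiring picture, wires $1$ and $2$ cross twice inside $\bw'_\circ$ and then meet at that elbow. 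So ``restriction of $\deo$ to $\bw'_\circ$ is a distinguished subword of a reduced word'' is not enough; the color-$0$ statement for the $\bw'_\circ$-block is true only because of the global constraints on $\deo$ (exactly $n-1$ skips in all of $\cbf^{n+1}$ and identity endpoint), which your local argument never invokes. The same rerouting phenomenon refutes your later sub-claim that each triangular block ``carries at most one surviving crossing per pair once its elbow is removed.''

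Beyond this, the part you yourself identify as the crux --- ruling out a pair of wires crossing $0$ or at least $4$ times, which is what would force every pair to cross exactly twice and pin the $n-1$ elbows one per pair $\{1,j\}$ into the two triangles --- is only announced as a plan (tracking parities in a ``rerouting-insensitive way''), not carried out; and since the color-$0$ input it was to be combined with is false as stated, the argument does not close even modulo that step. (For reference, the paper deliberately omits the proof of this lemma, so there is no written argument to measure against; but as written your proposal has a genuine gap at its central step.) A workable route must use the specific shape of the three blocks and the bound of $n-1$ total skips directly --- for instance, analyzing how a distinguished $e$-subword can traverse $\bw'_\circ\cdot\cbf^\ast\cdot\bw''_\circ$ position by position --- rather than appealing to properties of arbitrary distinguished subwords of reduced words, which are strictly weaker than what the lemma asserts.
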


\noindent An example is shown in the top row of \cref{fig:bij_FC}.

\begin{figure}
    \includegraphics[width=0.3\textwidth]{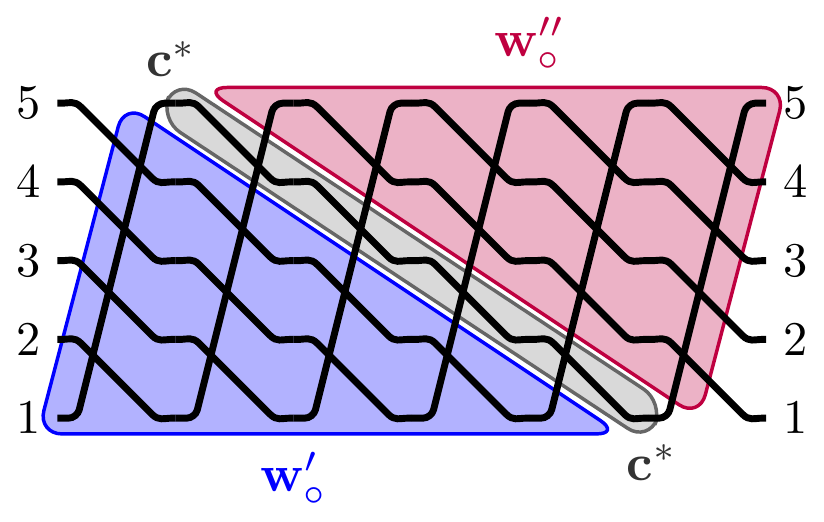}
    \caption{The decomposition $\cbf^{n+1}=\bw'_\circ \cdot \cbf^\ast \cdot \bw''_\circ$ from \cref{sec:combin:FC}.}
    \label{fig:up_down_triangles}
\end{figure}

\begin{figure}
    \includegraphics[width=1.0\textwidth]{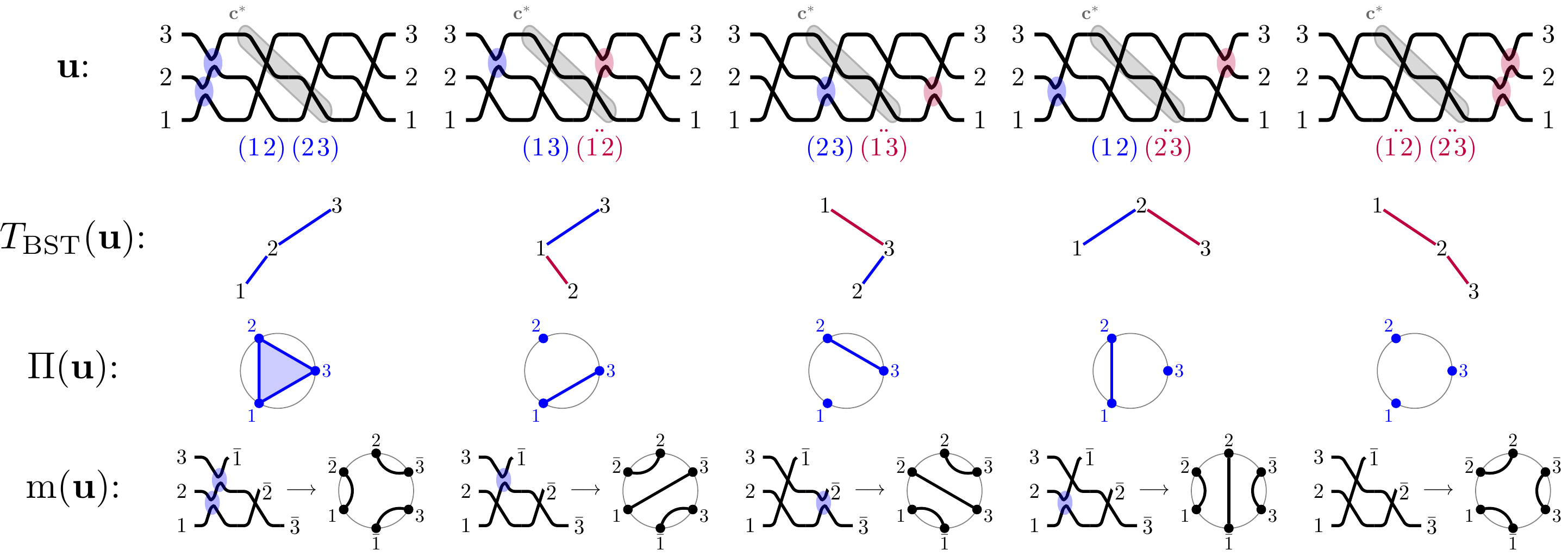}
    \caption{The bijections from \cref{sec:combin:FC}: maximal $\cbf^{n+1}$-Deograms (first row), binary search trees (second row), noncrossing partitions (third row), and noncrossing matchings (fourth row).}
    \label{fig:bij_FC}
\end{figure}

\subsubsection{Binary search trees}

To a maximal $\cbf^{n+1}$-Deogram $\deo$, we associate a binary tree $\BST(\deo)$ with vertex set $[n]$ as follows:
\begin{itemize}
    \item for every colored inversion $(i\, j)$ of $\deo$ of color $0$, $i$ is a left child of $j$ in $\BST(\deo)$; and
    \item for every colored inversion $\ddot{(i\, j)}$ of $\deo$ of color $2$, $j$ is a right child of $i$ in $\BST(\deo)$.
\end{itemize}
A binary tree $T$ with vertex set $[n]$ is a \emph{binary search tree} if, for any node $i$, the nodes in the left (resp., right) subtree of $i$ have labels less than (resp., greater than) $i$.
Such objects are in bijection with the binary trees on $n$ \oldemph{un}labeled vertices; see~\cite[Figure~1.3]{stanley2015catalan}.

\begin{proposition}
For each maximal $\cbf^{n+1}$-Deogram $\deo$, the binary tree $\BST(\deo)$ is a binary search tree.
The map $\deo\mapsto \BST(\deo)$ is a bijection between maximal $\cbf^{n+1}$-Deograms and binary search trees with vertex set $[n]$.
\end{proposition}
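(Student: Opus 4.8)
The plan is to combine the preceding Lemma with the recursive (Catalan) structure of both sides. By that Lemma, every elbow of a maximal $\cbf^{n+1}$-Deogram $\deo$ lies in one of the two triangular factors of the decomposition $\cbf^{n+1}=\bw'_\circ\cdot\cbf^\ast\cdot\bw''_\circ$: the elbows in $\bw'_\circ$ have color $0$ and contribute the edges of $\BST(\deo)$ that make a smaller label a left child, while those in $\bw''_\circ$ have color $2$ and contribute the edges that make a larger label a right child. Thus $\BST(\deo)$ is a digraph on $[n]$ with exactly $n-1$ parent-to-child edges; the first assertion is that this digraph is always a binary search tree.

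The crucial ingredient for that is a bookkeeping lemma describing, for a maximal Deogram, which two wires participate in each elbow as a function of the elbows lying to its left (cf.\ \cref{rmk:wiring}): reading the wiring diagram from left to right, each color-$0$ elbow of $\bw'_\circ$ joins two currently vertically adjacent wires, and the pair it joins is nested inside the pairs joined by earlier color-$0$ elbows in exactly the way dictated by the left-child relations of a search tree; dually for the color-$2$ elbows of $\bw''_\circ$. Granting this, a short induction, scanning the elbows of $\deo$ from left to right and using the $e$-subword condition to pin down a global root, shows that the $n-1$ resulting edges are the parent-to-child edges of a binary search tree: no vertex ever receives two left children or two right children, the digraph is connected and acyclic, and the nesting of participating pairs gives the search property, since a left (resp.\ right) child is always the smaller (resp.\ larger) of the two labels meeting at its elbow.

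For bijectivity I would construct the inverse map directly, organized by the Catalan recursion $\Cat_n=\sum_{a=1}^{n}\Cat_{a-1}\Cat_{n-a}$. Given a binary search tree $T$ on $[n]$ with root $a$, whose left subtree lies on $\{1,\dots,a-1\}$ and right subtree on $\{a+1,\dots,n\}$, the left-edges of $T$ prescribe a set of color-$0$ elbows to insert into $\bw'_\circ$ and the right-edges a set of color-$2$ elbows to insert into $\bw''_\circ$, with positions determined recursively by peeling off $a$ and recursing into the two subtrees. One then checks that the resulting subword $\su$ of $\cbf^{n+1}$ is an $e$-subword --- the in-order traversal of $T$ being $1,2,\dots,n$ is exactly what forces the underlying permutation to be the identity --- and that it is distinguished, because the recursive placement guarantees that every elbow inserted into $\bw'_\circ$ sees $0$ prior crossings of its two wires and every elbow inserted into $\bw''_\circ$ sees exactly $2$ (equivalently, each skipped letter occurs where right-multiplication by the corresponding simple reflection increases length). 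Since $|\DmFC|=\Cat_n$ by \cref{thm:Deocp=CatpW} and there are $\Cat_n$ binary search trees on $[n]$, it then suffices to check that $T\mapsto\su$ and $\deo\mapsto\BST(\deo)$ are mutually inverse, or even just that $\deo\mapsto\BST(\deo)$ is injective.

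The step I expect to be the main obstacle is the wiring-diagram bookkeeping of the second paragraph: carefully tracking, through the interleaved crossings and skips of a maximal Deogram, exactly which pair of wires meets at each elbow, and verifying that the resulting adjacency data is precisely the left- and right-child data of a binary search tree --- no more and no less. Once that lemma is in hand, the remaining steps are formal, using only the preceding Lemma and the Catalan recursion.
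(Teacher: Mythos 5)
The paper gives no proof to compare against: Section~2 states explicitly that all proofs there are omitted and left as exercises, so your proposal can only be judged on its own terms. Its overall strategy---use the preceding Lemma to split the elbows into color-$0$ edges in $\bw'_\circ$ (left children) and color-$2$ edges in $\bw''_\circ$ (right children), then build an inverse recursively following the Catalan recursion---is a reasonable and probably the intended route. However, as written there is a genuine gap: the entire combinatorial content of the proposition is concentrated in your ``bookkeeping lemma,'' which you state vaguely (``nested inside the pairs joined by earlier color-$0$ elbows in exactly the way dictated by the left-child relations'') and then explicitly defer, while everything downstream is prefaced by ``granting this.'' Without it you have not shown that no vertex acquires two left (or two right) children, that the digraph is acyclic and connected, or that the search property holds; likewise, in the inverse direction, the claims that the constructed subword is an $e$-subword, that it is distinguished, and that $\deo\mapsto\BST(\deo)$ is injective are asserted rather than proved. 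Note that injectivity itself is not automatic: \cref{rem:determined} recovers $\deo$ from the \emph{ordered} sequence $\inv_e(\deo)$, whereas $\BST(\deo)$ only records the underlying sets of color-$0$ and color-$2$ inversions, so one must also argue that the order of appearance is forced.

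To close the gap with tools already in the paper, the cleanest route is probably not raw wire-tracing but the following: by \cref{prop:prod_refl}, the reflections in $\invUncolored_e(\deo)$ multiply (left to right) to $c^{n+1}=c=(1\,2\,\cdots\,n)$; combined with the preceding Lemma and \cref{prop:even}, a maximal Deogram is equivalent to a minimal-length factorization of the long cycle into the color-$0$ block followed by the color-$2$ block, the two blocks being a noncrossing partition and its Kreweras complement (as the subsequent remark in that subsection indicates). The left-child/right-child data of a binary search tree on $[n]$ is exactly this pair, which is where the search property and the uniqueness of the order of appearance come from; \cref{rem:determined} then reconstructs $\deo$ and gives the inverse. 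Finally, invoking \cref{thm:Deocp=CatpW} for $|\DmFC|=\Cat_n$ is not circular (Section~2 is not used in its proof), but it is much heavier machinery than this elementary statement requires, and it does not spare you the injectivity argument anyway.
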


\noindent This bijection is illustrated in the first two rows of \cref{fig:bij_FC}.

\subsubsection{Noncrossing partitions}

Next, given a maximal $\cbf^{n+1}$-Deogram $\deo$, let $\pi(\deo)\in\Sn$ be the product of the reflections corresponding to the colored inversions of $\deo$ of color $0$, and let $\Pi(\deo)$ be the set partition of $[n]$ given by the cycles of $\pi(\deo)$.

\begin{proposition}
For any maximal $\cbf^{n+1}$-Deogram $\deo$, the set partition $\Pi(\deo)$ is a noncrossing partition of $[n]$.
The map $\deo\mapsto\Pi(\deo)$ is a bijection between maximal $\cbf^{n+1}$-Deograms and noncrossing partitions of $[n]$.
\end{proposition}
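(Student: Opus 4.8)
The plan is to identify $\Pi(\deo)$ with a combinatorial statistic of the binary search tree $\BST(\deo)$ and to prove that this statistic is a bijection from binary search trees on $[n]$ to noncrossing partitions of $[n]$; combined with the bijection $\deo\mapsto\BST(\deo)$ of the preceding proposition, this gives the result.

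First I would unwind $\Pi(\deo)$. By the construction of $\BST(\deo)$, the color-$0$ colored inversions of $\deo$ are exactly the transpositions $(i\,j)$ with $i$ a left child of $j$ — the ``left edges'' of $\BST(\deo)$. Since each vertex has at most one left child and is the left child of at most one vertex, these left edges split $[n]$ into disjoint maximal \textit{left branches} $v_0 > v_1 > \cdots > v_k$, where $v_{t+1}$ is the left child of $v_t$ and the chain decreases because left children carry smaller labels. As the edges of a path, the transpositions along one branch multiply, in whatever order they occur in $\deo$, to the single cycle $(v_0\,v_1\cdots v_k)$, while transpositions from distinct branches commute. Hence $\pi(\deo)$ is the permutation whose cycles are exactly the maximal left branches of $\BST(\deo)$, so $\Pi(\deo)$ is the partition of $[n]$ into these branches.

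Next I would exploit the fact that in a binary search tree on $[n]$ every subtree spans a contiguous interval. Along a maximal left branch $v_0 > \cdots > v_k$ this forces $v_k = \min I$, where $I$ is the interval spanned by the subtree rooted at $v_0$; each gap $\{v_{t+1}+1,\dots,v_t-1\}$ is exactly the interval spanned by the right subtree of $v_{t+1}$, hence a union of complete blocks of $\Pi(\deo)$; and the part of $I$ lying above $v_0$ is likewise a union of complete blocks. From this, noncrossing-ness of $\Pi(\deo)$ follows by induction on interval length, the block containing the minimum of an interval nesting over every other block it meets. For bijectivity I would write down the inverse map: given a noncrossing partition $P$ of $[n]$, build a binary search tree $T(P)$ recursively on intervals $I$ by taking the block of $P$ containing $\min I$, written increasingly as $c_0 < \cdots < c_k$, to be the left spine of $T(I)$ with root $c_k$; because $P$ is noncrossing, each gap interval $[c_t+1,c_{t+1}-1]$ and the tail $[c_k+1,\max I]$ is a union of complete blocks, so the recursion supplies the right children of $c_{t+1}$ and of $c_k$. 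One then verifies that $P \mapsto T(P)$ and $T \mapsto (\text{left-branch partition of }T)$ are mutually inverse, which together with the preceding proposition shows $\deo\mapsto\Pi(\deo)$ is a bijection onto the noncrossing partitions of $[n]$. Alternatively, one can finish by counting: the above already shows $\Pi(\deo)$ determines the left-edge set of $\BST(\deo)$, a binary search tree on $[n]$ is pinned down by its left edges, so $\deo\mapsto\Pi(\deo)$ is injective, and $|\DmFC|=\Cat_n$ equals the number of noncrossing partitions of $[n]$.

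The main obstacle is this last step: showing that the noncrossing condition is precisely what makes the reconstruction $T(P)$ well defined — equivalently, that a forest of decreasing left chains extends to a valid binary search tree exactly when the partition it determines is noncrossing — and checking that the two reconstructions are genuinely inverse. Everything else, namely the identification of $\pi(\deo)$ with the left-branch permutation and the interval structure of binary search trees, is routine bookkeeping.
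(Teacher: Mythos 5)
Your argument is correct, and it takes a genuinely different route from the paper. The paper leaves this proposition as an exercise, pointing instead to the uniform statement \cref{thm:bij_Dm_NC}: there, maximal $\c^{\mh+1}$-Deograms are identified with the even-colored subwords in $\Clus^{(2\m)}(W,\c)$, the known cluster bijection $\u\mapsto\inv_e(\u)$ of Reading and Stump--Thomas--Williams (\cref{thm:bijections}) carries these to $\leq_\c$-increasing colored reflection factorizations of the Coxeter element, and halving colors lands in $\NC^{(\m)}(W,\c)$; specializing to $W=\Sn$, $\m=1$, the color-$0$ part is an element of $[e,c]_T$, whose cycles give the classical noncrossing partition. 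You instead stay entirely in type $A$: using the preceding BST proposition, you identify the color-$0$ inversions with the left edges of $\BST(\deo)$, observe that the cycle supports of $\pi(\deo)$ are exactly the maximal left branches (the product of the edge-transpositions of a path is a single cycle on its vertex set in any order---note the cycle itself, though not its support, depends on the order, so your phrase ``the single cycle $(v_0\,v_1\cdots v_k)$'' is a harmless imprecision), and then derive noncrossingness from the interval property of subtrees of a binary search tree, with an explicit recursive inverse from noncrossing partitions back to BSTs. What each approach buys: yours is elementary and self-contained in type $A$ (modulo the BST proposition, which the paper also leaves unproved but which precedes the statement and is fair to invoke), and it makes the blocks of $\Pi(\deo)$ completely transparent as left branches; the paper's route requires the nontrivial external cluster machinery but works uniformly for all Coxeter groups and all Fuss parameters. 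Two small caveats: the step ``a binary search tree on $[n]$ is pinned down by its left edges'' deserves the one-line recursive justification you sketch for $T(P)$ (root $=$ maximum of the block of $1$, right subtrees confined to the gap intervals), and your counting shortcut silently relies on $|\DmFC|=\Cat_n$, i.e.\ on \cref{thm:Deocp=CatpW}, which the explicit inverse map avoids; neither is a gap, just points to spell out.
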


\noindent An example is illustrated the third row of \cref{fig:bij_FC}.
For a more general statement, see \cref{thm:bij_Dm_NC}.
Noncrossing partitions appear in~\cite{stanley2015catalan} as item~160.

\begin{remark}
Applying the construction above to the colored inversions of $\deo$ of color $2$ instead yields the Kreweras complement of $\Pi(\deo)$.
\end{remark}

\subsubsection{Noncrossing matchings}

Finally, let $\bw'_\circ$ be the wiring diagram of $\wo$ as above.
Label the left endpoints of $\bw'_\circ$ by $1,2,\dots,n$ bottom-to-top, and label the right endpoints by $\bar1,\bar2,\dots,\bar n$ top-to-bottom.
Let $[\bar n] \coloneqq \{\bar1,\bar2,\dots,\bar n\}$.
We shall consider noncrossing matchings (item~61 in~\cite{stanley2015catalan}) of the set $[n]\sqcup[\bar n]$ with respect to the cyclic ordering $(\bar1,1,\bar2,2,\dots,\bar n,n)$.
Given a maximal $\cbf^{n+1}$-Deogram $\deo$, let $\match(\deo):[n]\to[\bar n]$ be the map obtained by restricting $\deo$ to the $\bw'_\circ$-part of $\cbf^{n+1}$. 

\begin{proposition}
For every maximal $\cbf^{n+1}$-Deogram $\deo$, the map $\match(\deo)$ is a noncrossing matching of $[n]\sqcup[\bar n]$. The map $\deo\mapsto \match(\deo)$ is a bijection between maximal $\cbf^{n+1}$-Deograms and noncrossing matchings of $[n]\sqcup[\bar n]$.
\end{proposition}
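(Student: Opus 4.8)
The plan is to realise $\match(\deo)$ as the matching $M_{\pi(\deo)}\coloneqq\{\{i,\overline{\pi(\deo)(i)}\}:i\in[n]\}$ attached to the permutation $\pi(\deo)\in\Sn$ from the noncrossing-partition bijection above, and then to deduce the statement from that bijection together with a classical bijection between noncrossing partitions and noncrossing matchings.

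First I would make $\match(\deo)$ explicit. By the lemma above, no skip of a maximal $\cbf^{n+1}$-Deogram $\deo$ lies in the middle block $\cbf^\ast$, so $\match(\deo)$ is exactly the reduced wiring diagram of $\wo$ (coming from $\bw'_\circ$) in which the crossings at the skip positions of $\deo$ have been deleted — and, again by the lemma, every one of those skips is a color-$0$ inversion. Every wire still runs left to right, so $\match(\deo)$ is a well-defined perfect matching of $[n]\sqcup[\bar n]$ pairing each left endpoint with a right endpoint; write it as $i\mapsto\overline{\sigma(i)}$ for some $\sigma\in\Sn$. The crux is the identity $\sigma=\pi(\deo)$. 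For this I would start from the elementary observation that deleting from a reduced wiring diagram of $\wo$ the unique crossing between the wires with left endpoints $a<b$ realises the permutation $(a\,b)\cdot\wo$, and then delete the skip positions of $\deo$ one at a time, from right to left. The color-$0$ hypothesis controls the iteration: since the two wires meeting at a color-$0$ skip have not yet crossed in the partially smoothed diagram, the transposition one reads off there is the associated color-$0$ inversion, and the successive products telescope without cancellation, exactly as in the noncrossing-partition construction. Conjugating by $\wo$ once more — this is where the labelling of the right endpoints enters, $\bar i$ sitting at height $\wo(i)$ — turns the resulting formula into $\match(\deo)=M_{\pi(\deo)}$.

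It then remains to establish the purely combinatorial fact that $\pi\mapsto M_\pi$ restricts to a bijection from the noncrossing partitions of $[n]$, identified with those $\pi\in\Sn$ whose cycles, written increasingly, are pairwise noncrossing, onto the noncrossing matchings of $[n]\sqcup[\bar n]$ for the cyclic order $(\bar1,1,\bar2,2,\dots,\bar n,n)$. Surjectivity is automatic: in this alternating cyclic order one side of any chord of a noncrossing perfect matching contains an even number of points, so every chord joins a barred point to an unbarred one, whence every such noncrossing matching equals some $M_\pi$. That $M_\pi$ is noncrossing precisely when the increasing cycles of $\pi$ are pairwise noncrossing is a direct verification on pairs $i<j$, which I would organise by induction on the number of blocks of $\pi$ by peeling off a block that forms an interval. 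Composing the bijection $\deo\mapsto\pi(\deo)$ onto noncrossing partitions with $\pi\mapsto M_\pi$ gives both assertions of the proposition; as a sanity check, $|\DmFC|=\Cat_n$ indeed also counts the noncrossing matchings of a $2n$-cycle.

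The delicate point is the identity $\sigma=\pi(\deo)$ of the second paragraph: one has to keep track of how the successive color-$0$ smoothings compose inside the reduced wiring diagram of $\wo$, ensuring that neither the order nor cancellations of transpositions cause trouble. This is the same bookkeeping that underlies the noncrossing-partition bijection, so in practice I would prove the two facts in tandem. A possible alternative, avoiding the wiring-diagram surgery, is to transport a classical bijection between noncrossing partitions (or binary search trees) and noncrossing matchings along one of the bijections out of $\DmFC$ established above; but checking that the transported map agrees with $\deo\mapsto\match(\deo)$ again comes down to analysing the $\bw'_\circ$-block, so little is gained.
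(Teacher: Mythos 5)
This proposition is one of those whose proof the paper omits ("we omit the proofs, leaving them as exercises"; the general-$W$ counterparts are \cref{thm:bij_Dm_NC,thm:bijections}), so your proposal has to stand on its own. Its overall strategy --- identify $\match(\deo)$ with $M_{\pi(\deo)}$ and then invoke the classical correspondence between noncrossing partitions and noncrossing matchings of the alternating $2n$-cycle --- is viable, and your first step can be done more cleanly than by crossing-by-crossing surgery: apply \cref{prop:prod_refl} to the $\wo$-word $\bw'_\circ$ and the restriction of $\deo$ to it (whose skipped letters are exactly the color-$0$ elbows, by the lemma you cite). This gives at once that the ordered product of the color-$0$ reflections equals $\wo x^{-1}$, where $x$ is the element realized by the surviving crossings of $\bw'_\circ$; relabelling the right endpoints (the wire entering at height $i$ exits at height $x^{-1}(i)$, and $\bar j$ sits at height $\wo(j)$) yields $\match(\deo)=M_{\pi(\deo)}$. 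Your surgery sketch, by contrast, contains convention slips (smoothing the crossing of the wires with left endpoints $a<b$ composes with $(a\,b)$ on one specific side, and the last step is a composition with $\wo$, not a conjugation), precisely the bookkeeping you flag as delicate.

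The genuine gap is in the second half. Your combinatorial lemma characterizes noncrossingness of $M_\sigma$ by the condition that $\sigma$ has \emph{increasing} cycles whose underlying blocks are noncrossing, but the proposition you compose with only records the set partition $\Pi(\deo)$, i.e.\ the cycles of $\pi(\deo)$ as sets; nothing you cite or prove pins down the cyclic orientation of those cycles. If some cycle of $\pi(\deo)$ were traversed non-increasingly, $M_{\pi(\deo)}$ would cross (already $\sigma=(1\,3\,2)$ gives crossing chords $\{1,\bar 3\}$ and $\{3,\bar 2\}$), so as written neither the noncrossingness of $\match(\deo)$ nor your surjectivity argument closes. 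The missing fact is true and easy to add: applying \cref{prop:prod_refl} to the whole word $\c^{n+1}$ with $u=e$, the color-$0$ product $\pi(\deo)$ times the color-$2$ product equals $c^{n+1}=c=(1\,2\,\cdots\,n)$, and since the total number of reflection factors is $n-1=\ell_T(c)$, reflection lengths add, so $\pi(\deo)\leq_T c$; the classical description of the interval $[e,c]_T$ (Biane) then says exactly that $\pi(\deo)$ is the increasing-cycle permutation of the noncrossing partition $\Pi(\deo)$. With that step inserted, your argument goes through.
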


\noindent See the fourth row of \cref{fig:bij_FC}.

\subsection{\texorpdfstring{The case $p=n-1$: noncrossing alternating trees}{The case p=n-1: noncrossing alternating trees}}\label{sec:combin:DC}

We start with a structural result for maximal $\cbf^{n-1}$-Deograms, illustrated in the top row of \cref{fig:bij_DC}.

\begin{lemma}
In any maximal $\cbf^{n-1}$-Deogram $\deo$, each of the $n-1$ copies of $\cbf$ contains exactly one elbow.
All elbows of $\deo$ have color $0$.
\end{lemma}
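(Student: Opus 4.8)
The plan is to analyze the structure of a maximal $\cbf^{n-1}$-Deogram $\deo$ directly, using the interpretation from \cref{rmk:wiring}: $\deo$ consists of $n-1$ elbows inserted into the wiring diagram of $\cbf^{n-1}$ so that the resulting permutation is the identity, subject to the distinguished (even-crossings-to-the-left) condition. Since $p = n-1 < n$, the word $\cbf^{n-1}$ has $m = (n-1)^2$ letters, and $\esu = \rank = n-1$, so exactly $n-1$ of the $n-1$ blocks of $\cbf$ could in principle contain varying numbers of elbows; the first claim is precisely that they are distributed one per block. First I would establish an upper bound: a single copy of $\cbf = (s_1, s_2, \dots, s_{n-1})$ is a reduced word for the $n$-cycle $c = s_1 s_2 \cdots s_{n-1}$, and more relevantly, it is a reduced word whose associated wiring diagram has the ``staircase'' shape in which wire $1$ travels to the top and wires $2, \dots, n$ each shift down by one. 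I would argue that within a single reduced block, the distinguished condition forces: the first elbow encountered (reading left to right) sees zero prior crossings among its two wires (color $0$), and having two or more elbows in one block either violates the distinguished condition or the reducedness bookkeeping needed to return to the identity after only $n-1$ total blocks.

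The cleaner route is a counting/parity argument combined with the closed-walk picture of \cref{rmk:walks}. The walk $(e = \up0, \up1, \dots, \up m = e)$ has exactly $\rank = n-1$ stays, hence exactly $m - (n-1) = (n-1)^2 - (n-1) = (n-1)(n-2)$ follow-steps; since the walk is closed and each follow-step is either an up-step or a down-step in weak order, it has exactly $(n-1)(n-2)/2$ down-steps. On the other hand, passing through one full copy of $\cbf$ with no stays multiplies the current element on the right by the Coxeter element $c$; I would track $\ell(\up{i})$ across block boundaries and show that to have all $n-1$ stays absorbed across $n-1$ blocks while returning to $e$, each block must ``use up'' exactly one stay — a pigeonhole refined by the observation that a block with zero stays is a reduced word applied to the current element and strictly changes the relative configuration in a way that cannot be undone within the remaining blocks. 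Concretely, I would show that the element $\up{(n-1)k}$ reached after the first $k$ complete blocks satisfies $\up{(n-1)k} = (\text{Coxeter element})^{\,k}$ whenever those blocks contained no stays, and since $c$ has order $n > n-1$, we cannot return to $e$ this way; a symmetric argument at the right end (using $\bw''_\circ$) rules out all stays concentrating late. Interpolating, each block contains exactly one stay, i.e., exactly one elbow.

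For the color statement, I would argue by induction on the block index, left to right. In the first block the elbow's two wires have not crossed at all to its left (the diagram to the left of block $1$ is empty), so its color is $0$. For the inductive step, I would show that after removing the first $k-1$ elbows (each of color $0$) and the wiring to their left, the configuration entering block $k$ is again a ``staircase'' reduced block in which no two of the relevant wires have yet crossed an odd number of times before the position of the $k$-th elbow — the key point being that a color-$0$ elbow merges two wires that meet for the first time, so it does not create any new odd-crossing pairs downstream. Thus every elbow has color $0$.

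The main obstacle I anticipate is the first claim — rigorously ruling out a block with zero elbows or with two-or-more elbows. The pigeonhole gives ``on average one per block'' for free, but converting this into ``exactly one per block'' requires the order-of-the-Coxeter-element argument above to be made precise across block boundaries, carefully handling the interaction between stays and the length function $\ell(\up{i})$, and checking that a block carrying $\geq 2$ stays cannot be compensated by a stayless block elsewhere without the walk failing to close up. I would phrase this via the identity, valid block-by-block, relating $\up{(n-1)k}$ to a product of Coxeter-element-like factors twisted by which letters were skipped, and then a minimality argument on $\ell$. Once the one-elbow-per-block structure is in hand, the color-$0$ claim follows by the straightforward left-to-right induction sketched above.
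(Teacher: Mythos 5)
The paper never proves this lemma --- all of Section~2 is explicitly left as an exercise --- so your proposal has to stand on its own, and as written it does not yet close either claim. For the ``one elbow per copy of $\cbf$'' statement, the only concrete step you give is that a stayless initial run of $k$ blocks lands the walk at $c^k\neq e$ because $c$ has order $n>n-1$. That rules out nothing beyond the (already impossible) all-stayless configuration: it says nothing against, say, two elbows in the first block and none in the second, which is exactly the kind of distribution the lemma must exclude. The ``symmetric argument at the right end using $\bw''_\circ$'' is borrowed from the $p=n+1$ decomposition $\cbf^{n+1}=\bw'_\circ\cdot\cbf^\ast\cdot\bw''_\circ$ of \cref{sec:combin:FC}, which is not available for $\cbf^{n-1}$, and the final ``interpolating'' step --- which is where the whole content of the claim sits --- is not an argument at all, as you yourself acknowledge. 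A real proof needs a block-by-block structural analysis (e.g.\ tracking, inside one copy of $\cbf$, how the single rising wire and the skipped heights determine the block's action on heights, and then using that the overall permutation is the identity) rather than a pigeonhole refined only at the two ends.

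The color claim has a similar problem: the inductive invariant you propose (``no two relevant wires have crossed an odd number of times before the $k$-th elbow'') is precisely evenness of colors, which is automatic from distinguishedness (\cref{prop:even}) and holds for every $p$; yet for $p=n+1$ color-$2$ elbows genuinely occur (see \cref{sec:combin:FC} and \cref{fig:rat7}), so no argument at that level of precision can yield color $0$. You need the strictly stronger invariant that the two wires meeting at each elbow have never crossed before, and this is where $p=n-1<h$ must enter (for instance via the facts that two Deogram wires cross at most once per block, that the total number of crossings is $(n-1)(n-2)$, and that each pair crosses an even number of times since the final permutation is the identity). The base case is also misstated: the region to the left of an elbow at height $i$ in the first block is not empty --- it contains $i-1$ crossings --- the correct reason is that all of those crossings involve the rising wire and wires at heights below $i$, so the two wires forming the elbow have not met. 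Finally, ``a color-$0$ elbow merges two wires that meet for the first time, so it does not create any new odd-crossing pairs downstream'' is too vague to carry the induction: what must be controlled is which pairs of wires have crossed among the \emph{retained} crossings before a later elbow, not what the earlier elbows do.
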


\begin{figure}
    \includegraphics[width=1.0\textwidth]{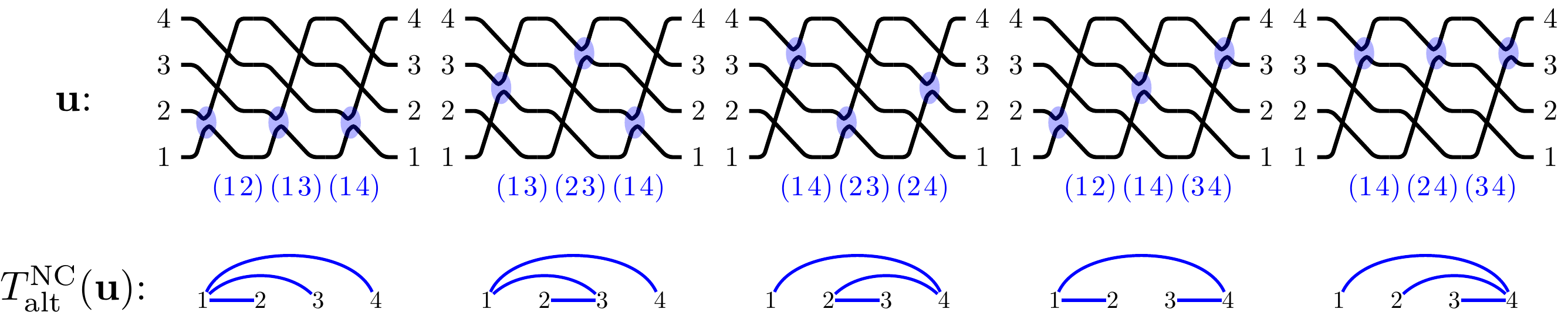}
    \caption{The bijection from \cref{sec:combin:DC}: maximal $\cbf^{n-1}$-Deograms (top row) versus noncrossing alternating trees (bottom row).}
    \label{fig:bij_DC} 
\end{figure}

Given a maximal $\cbf^{n-1}$-Deogram $\deo$, let $\TNCalt(\deo)$ be the tree with vertex set $[n]$ containing an edge $\{i,j\}$ for each colored inversion $(i\, j)$ of $\deo$.
A tree $T$ with vertex set $[n]$ is \emph{alternating} if, upon directing each edge $\{i,j\}$ of $T$ from the smaller number $i$ to the larger number $j$, we find that every vertex is either a source or a sink.
We say that $T$ is \emph{noncrossing} if we can draw $T$ in the plane, with the vertices on a line in increasing order and the edges in the closed half-plane above the line, such that no two edges cross.

\begin{proposition}
For every maximal $\cbf^{n-1}$-Deogram $\deo$, the tree $\TNCalt(\deo)$ is a noncrossing alternating tree. The map $\deo\mapsto \TNCalt(\deo)$ is a bijection between maximal $\cbf^{n-1}$-Deograms and noncrossing alternating trees with vertex set~$[n]$.
\end{proposition}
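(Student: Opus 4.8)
The plan is to make the wiring-diagram picture of Remark~\ref{rmk:wiring} completely explicit in this case and to read the tree off directly. By the preceding lemma, a maximal $\cbf^{n-1}$-Deogram $\deo$ has exactly one elbow in each of the $n-1$ blocks of $\cbf^{n-1}$, all of color $0$; let the unique elbow in block $k$ sit at the letter $s_{j_k}$, with $1\le j_k\le n-1$. First I would record, for each $k$, the permutation $\sigma_k\in\Sn$ describing which wire occupies each of the $n$ slots just after block $k$, so that $\sigma_0=e$. Since every letter of block $k$ other than $s_{j_k}$ is a crossing, block $k$ acts on $\sigma_{k-1}$ by a \emph{double bubble}: the wire in slot $1$ rises to slot $j_k$, the elbow at $s_{j_k}$ makes it bounce off the wire still occupying slot $j_k+1$, and that wire then rises to slot $n$. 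Concretely, in one-line notation $\sigma_k=(\sigma_{k-1}(2),\dots,\sigma_{k-1}(j_k),\,\sigma_{k-1}(1),\,\sigma_{k-1}(j_k+2),\dots,\sigma_{k-1}(n),\,\sigma_{k-1}(j_k+1))$; the colored inversion attached to the elbow in block $k$ is $\{\sigma_{k-1}(1),\sigma_{k-1}(j_k+1)\}$ (automatically of color $0$, as the lemma predicts); and the $e$-subword condition is exactly $\sigma_{n-1}=e$. Thus $\TNCalt(\deo)$ is the graph on $[n]$ with edge set $\{E_k\}_{k=1}^{n-1}$, where $E_k\coloneqq\{\sigma_{k-1}(1),\sigma_{k-1}(j_k+1)\}$.

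Next I would verify that $\TNCalt(\deo)$ is a noncrossing alternating tree. For the tree property, I would argue by induction on $k$ that, reading blocks left to right, the edge $E_k$ joins exactly one previously unseen vertex (for $k\ge 2$) to one already present---equivalently, the edges $E_1,\dots,E_{n-1}$ are distinct and each is a leaf of the forest built so far---so that $\TNCalt(\deo)$ is a spanning tree; this uses the color-$0$ hypothesis (the two wires meeting at an elbow have not yet crossed) together with the recursion and $\sigma_{n-1}=e$. For the alternating property, I would track a fixed wire $v$: it is the lower endpoint of the elbow in block $k$ (the wire sitting in slot $j_k$ after the bounce) precisely when $\sigma_{k-1}(1)=v$, and the upper endpoint precisely when $\sigma_{k-1}(j_k+1)=v$; the color-$0$ and distinguished conditions should force $v$ to play only one of these two roles throughout, with all of its elbow-partners lying consistently above it (so $v$ is a source) or consistently below it (so $v$ is a sink). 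For the noncrossing property, I would show that two edges $E_k,E_l$ with $k<l$ cannot interleave on the line $1<2<\dots<n$: an interleaving would mean the two wires bouncing at the elbow of block $l$ had already crossed before that elbow (a crossing forced by the earlier elbow $E_k$), contradicting color $0$.

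Finally, for bijectivity, since $|\DmFD|=\Cat_{n-1}$ (a consequence of \Cref{thm:Deocp=CatpW}, as noted at the start of the section) and the noncrossing alternating trees on $[n]$ are also counted by $\Cat_{n-1}$, it suffices to show the map is injective, but I would prefer to exhibit an explicit inverse. Given a noncrossing alternating tree $T$, I would describe a canonical peeling order of $T$: at each step the double-bubble recursion together with the alternating structure of $T$ determines which vertex must currently occupy slot $1$, hence which edge of $T$ to peel and which elbow position $j_k$ to record; one then checks that the resulting subword is distinguished, has all elbows of color $0$, and satisfies $\sigma_{n-1}=e$. The main obstacle is exactly this last step---pinning down the peeling order and proving it always produces a valid maximal Deogram, i.e.\ translating the combined ``noncrossing plus alternating'' condition on $T$ precisely into the slot-permutation recursion. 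A convenient way to organize the whole argument is to locate a statistic on $\DmFD$---for instance the sequence $(j_1,\dots,j_{n-1})$, or the sequence $(\sigma_0(1),\sigma_1(1),\dots,\sigma_{n-2}(1))$ of successive slot-$1$ wires---that is visibly equinumerous with both sides, reducing the problem to an identity about sequences.
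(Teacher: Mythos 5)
The paper itself omits the proof of this proposition (Section 2 states that all proofs there are left as exercises), so your proposal has to stand on its own. Your setup is correct and useful: with one color-$0$ elbow per block (the preceding lemma), the block-by-block ``double bubble'' recursion for the slot permutations $\sigma_k$ and the edge formula $E_k=\{\sigma_{k-1}(1),\sigma_{k-1}(j_k+1)\}$ are right, and your sketches for the alternating property (a top wire cannot descend fast enough to become a later bottom wire; a bottom wire becoming a later top wire would force a crossing with that elbow's bottom partner, contradicting color $0$) and for noncrossing are along completable lines. However, the induction you propose for the tree property rests on a false invariant: it is not true that each $E_k$ (for $k\ge 2$) attaches a leaf to the forest built so far. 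For $n=4$, $\cbf=(s_1,s_2,s_3)$, the subword $(s_1,s_2,e,\;e,s_2,s_3,\;s_1,e,s_3)$ of $\cbf^{3}$ (elbows at $s_3$ in block $1$, $s_1$ in block $2$, $s_2$ in block $3$) is a maximal $\cbf^{3}$-Deogram: it is a distinguished $e$-subword, and its skipped letters carry the colored reflections $(1\,4)$, $(2\,3)$, $(2\,4)$, all of color $0$. Here $E_1=\{1,4\}$, $E_2=\{2,3\}$, $E_3=\{2,4\}$: the edge $E_2$ is disjoint from $E_1$ (two new vertices), and $E_3$ joins two already-seen vertices. The image is still the noncrossing alternating tree $\{\{1,4\},\{2,3\},\{2,4\}\}$, but your inductive invariant fails, so the spanning-tree part of well-definedness needs a genuinely different argument (for instance, ruling out repeated edges and cycles directly from the color-$0$/wiring constraints, together with the closing condition $\sigma_{n-1}=e$ and the count of $n-1$ edges).

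The second gap is bijectivity, which you do not establish. The counting route is legitimately available ($|\DmFD|=\Cat_{n-1}$ follows from \cref{thm:Deocp=CatpW}, proved independently of this section, and noncrossing alternating trees on $[n]$ are classically counted by $\Cat_{n-1}$), but it still requires injectivity, and injectivity is not local: given only the edge set, the block order (hence the $j_k$) must be shown to be recoverable, and already in the example above the slot-$1$ wire at an intermediate step lies on more than one remaining edge, so the choice is only pinned down by the global condition $\sigma_{n-1}=e$. You yourself flag the peeling-order/inverse construction as the ``main obstacle'' and leave it unresolved. So as written the proposal proves neither that the map lands in noncrossing alternating trees (the tree-property argument fails as stated, and the alternating/noncrossing parts are sketches) nor that it is a bijection; the recursion and edge formula are a sound foundation, but both core claims remain to be proved, and the tree-property induction must be replaced rather than repaired.
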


\noindent See \cref{fig:bij_DC}. Noncrossing alternating trees appear in~\cite{stanley2015catalan} as item~62.
\begin{remark}
    Noncrossing alternating trees have already been related to Deograms in~\cite[Remark~9.7]{galashin2020positroids}. We do not have a direct bijection between these two classes of Deograms; see \OPref{OP:combin}(OP:combin3) below.
\end{remark}

\subsection{Parking functions}\label{sec:combin:parking}

Let $\Pi=\{B_1,B_2,\dots,B_k\}$ be a noncrossing partition of $[n]$.
We say that the tuple $\tilde \Pi=\{(B_1,L_1),(B_2,L_2),\dots,(B_k,L_k)\}$ is a \emph{labeled noncrossing partition}, or equivalently, a \emph{noncrossing parking function}, if:
\begin{itemize}
    \item $\{B_1,B_2,\dots,B_k\}$ is a noncrossing partition of $[n]$,
    \item $\{L_1,L_2,\dots,L_k\}$ is a set partition of $[n]$, which need not be noncrossing, and 
    \item $|B_i|=|L_i|$ for all $i=1,2,\dots,k$. 
\end{itemize}
In other words, to each part $B_i$ of $B$ we associate a set $L_i$ of $|B_i|$-many labels, so that each element of $[n]$ appears as a label exactly once.
Noncrossing parking functions are known to be in bijection with parking functions; see e.g.~\cite{edelman1980chain,armstrong2015parking}.

Let $v\in\Sn$.
Consider a maximal $(\cbf^{n+1},v)$-Deogram $\deo$.
In \cref{sec:rational_parking}, we associate to $\deo$ a set of \emph{$v$-twisted colored inversions}.
This means we again view $\deo$ as a way to insert $n-1$ elbows into the wiring diagram of $\cbf^{n+1}$, and for each elbow $E$, we consider a colored inversion $(i\, j)$ with color $k$ defined in the same way as above.
Note that $E$ has a bottom strand and a top strand.
Writing $i$ (resp., $j$) for the left endpoint of the bottom (resp., top) strand, we need no longer have $i<j$.
However, since $\deo$ is $v$-distinguished, we must have $v(i)<v(j)$.
We set the \emph{$v$-twisted color} $k'$ of $(i\, j)$ to be $k$ if $i<j$ and $k+1$ if $i>j$, and refer to the resulting pair $((i\, j),k')$ as the \emph{$v$-twisted colored inversion} of $\deo$.
The $v$-twisted colored inversions of the $16$ Deograms in $\mathcal{P}_{e,\cbf^{n+1}}(\Sn)$ are shown in \cref{fig:bij_parking}.

\begin{lemma}
For any maximal $(\cbf^{n+1},v)$-Deogram $\deo$, the $v$-twisted color of any elbow is either $0$ or $2$.
\end{lemma}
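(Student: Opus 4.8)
The plan is to split the claim into a parity statement, which I would settle by elementary wiring-diagram bookkeeping, and a boundedness statement, which I would reduce to a structural description of maximal Deograms.

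For the parity part, fix a maximal $(\cbf^{n+1},v)$-Deogram $\deo$ and one of its elbows $E$, occurring at a position $p$ in a copy of the simple reflection $\ss_\ell$. Just to the left of $E$, the two strands meeting at $E$ occupy the adjacent rows $\ell$ and $\ell+1$, and the standard rule for how a subword permutes rows --- the label in row $k$ after $p-1$ steps is $\up{p-1}(k)$ --- shows that the bottom strand (row $\ell$) has left endpoint $i=\up{p-1}(\ell)$ and the top strand (row $\ell+1$) has left endpoint $j=\up{p-1}(\ell+1)$; this is also where the relation $v(i)<v(j)$ comes from. These two strands start at rows $i$ and $j$ and arrive just to the left of $E$ at rows $\ell<\ell+1$, with the $i$-strand below the $j$-strand. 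Since the relative height of two strands toggles exactly when they cross each other, the color $k$ --- the number of crossings of these two strands strictly to the left of $E$ --- satisfies $k\equiv[i>j]\pmod 2$, so the $v$-twisted color $k'=k+[i>j]$ is even. Note that this holds for any $e$-subword, maximal or not; the $v$-distinguished hypothesis enters only via $v(i)<v(j)$, which is what makes the twist the right normalization.

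It remains to prove $k'\le 2$; together with $k'\ge 0$ and the parity above, this gives $k'\in\{0,2\}$. For this I would establish a structural normal form for maximal $(\cbf^{n+1},v)$-Deograms in the spirit of the $v=e$ case, the only consequence needed being that the two strands through any elbow cross \oldemph{at most twice} in the entire diagram. Granting this, the color $k$ of an elbow on strands $i,j$ is at most $2$: if $i<j$ then $k'=k\le 2$, and if $i>j$ then $k$ is odd by the parity statement and $\le 2$, forcing $k=1$ and $k'=2$. The dependence on $v$ enters only through the sign of $i-j$, i.e.\ through the twist; the crossing counts themselves are basepoint-independent.

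The structural step is the crux. In the $v=e$ case the bound is encoded in the (unproved) structural lemma at the beginning of \cref{sec:combin:FC}, attached to the decomposition $\cbf^{n+1}=\bw'_\circ\cdot\cbf^\ast\cdot\bw''_\circ$, which says the elbows avoid the central block $\cbf^\ast$ and have colors $0$ and $2$; the task is to prove a comparable statement for $v$-distinguished Deograms. I expect this to go through, because the counting behind the $v=e$ case is basepoint-free: a maximal $\cbf^{n+1}$-Deogram has $n-1$ stays, so the associated walk has $\binom n2$ up-steps and $\binom n2$ down-steps with all $n-1$ stays occurring at up-steps, and the distinguished condition is local and equivariant under left translation of the basepoint. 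If a direct normal-form argument becomes cumbersome, a fallback is induction on $n$: deleting the strand with left endpoint $n$ (or $1$) from a maximal $(\cbf^{n+1},v)$-Deogram should, after relabeling, yield a maximal Deogram for $\Sfr_{n-1}$, and one then tracks how the colors of the surviving elbows change under this deletion.
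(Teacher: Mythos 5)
Your parity half is correct and is essentially the bookkeeping the paper packages into \cref{rmk:wiring} and \cref{prop:even}: two wires that are adjacent at an elbow with the lower one labelled $i=\up{p-1}(\ell)$ and the upper one labelled $j=\up{p-1}(\ell+1)$ have crossed an even number of times to the left exactly when $i<j$, so the twisted color $k+[i>j]$ is automatically even; the $v$-distinguished condition only guarantees $v(i)<v(j)$, i.e.\ that this mechanical twist agrees with the twist by $\inv(v^{-1})$ used in \cref{sec:rational_parking}. That part is fine.

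The boundedness half, however, is a genuine gap. Everything hinges on your ``structural step,'' and you do not prove it; what you need (the color of every elbow, i.e.\ the number of crossings of its two wires \emph{to the left}, is at most $2$) is precisely the nontrivial input, and it is what the paper obtains without any wiring-diagram analysis: a maximal $(\c^{n+1},v)$-Deogram is in particular an $e$-subword of $\c^{h+1}=\c\w_\circ^2(\c)$ with exactly $\rank$ skips, hence an element of $\Clus^{(2)}(W,\c)$ (\cref{dfn:Clus}), and the bijection $\u\mapsto\inv_e(\u)$ of \cref{thm:bijections} forces the (untwisted) colors to lie in $\{0,1,2\}$ --- this is exactly how the uniform statement is proved inside \cref{thm:parking_bijection}, and note it uses no distinguishedness and no dependence on $v$, confirming your ``basepoint-free'' instinct but via cluster/subword-complex combinatorics rather than counting. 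Your counting heuristic does not close the gap: a maximal Deogram has $n(n-1)$ crossings distributed over $\binom n2$ pairs, which bounds the \emph{average} number of crossings per pair by $2$ but not each pair individually, so some further argument (e.g.\ the $v=e$ structural lemma of \cref{sec:combin:FC} adapted to general $v$, or an appeal to \cref{thm:bijections}) is indispensable. Moreover, the surrogate you propose to prove --- that the two wires through an elbow cross at most twice \emph{in the entire diagram} --- is strictly stronger than what is needed (only the left count matters) and is not obviously true, while the strand-deletion fallback is not worked out: deleting the wire with left endpoint $n$ does not visibly produce a subword of $(\c')^{n}$ for $\Sfr_{n-1}$ of the correct length, nor is it clear that maximality, the $e$-subword property, or $v$-distinguishedness survive the deletion. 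As written, the proposal proves evenness but not membership in $\{0,2\}$.
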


Let $\deo$ be a maximal $(\cbf^{n+1},v)$-Deogram.
Let $\piv(\deo) \in \Sn$ be obtained by multiplying all reflections $(i\, j)$ of $v$-twisted color $0$, and let $\Piv(\deo)=\{B_1,B_2,\dots,B_k\}$ be the set partition of $[n]$ given by the cycles of $\piv(\deo)$.
To each part $B_i$ of $\Piv(\deo)$, we associate a set of labels $L_i \coloneqq \{v(j)\mid j\in B_i\}$.
We denote the resulting noncrossing parking function by $\Pitv(\deo)=\{(B_1,L_1),(B_2,L_2),\dots,(B_k,L_k)\}$. 

\begin{proposition}
For any $v\in\Sn$ and maximal $(\cbf^{n+1},v)$-Deogram $\deo$, the tuple $\Pitv(\deo)$ is a noncrossing parking function.
The map $\deo\mapsto \Pitv(\deo)$ is a bijection between $\mathcal{P}_{e,\cbf^{n+1}}(\Sn)$ and the set of noncrossing parking functions.
\end{proposition}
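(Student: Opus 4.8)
The plan is to establish that $\deo\mapsto\Pitv(\deo)$ is well defined and injective, and then to deduce that it is a bijection by a cardinality count: by \cref{thm:Parkcp=p^n} with $p=n+1$ and $\rank=n-1$ we have $\bigl|\mathcal P_{e,\cbf^{n+1}}(\Sn)\bigr|=(n+1)^{n-1}$, which is exactly the number of noncrossing parking functions of $[n]$, these being in bijection with ordinary parking functions. One could instead build the inverse map by hand, but the count is cleaner; either way one has to reconstruct $\deo$ from its image. Throughout I would lean on the preceding structural lemma --- every elbow of a maximal $(\cbf^{n+1},v)$-Deogram has $v$-twisted color $0$ or $2$ --- and on the $v=e$ analogues already stated in \cref{sec:combin:FC}.

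For well-definedness, two of the three conditions are immediate: since $v\in\Sn$ is a bijection and $L_i=v(B_i)$, we have $|B_i|=|L_i|$, and $\{L_1,\dots,L_k\}$ partitions $[n]$ because $\{B_1,\dots,B_k\}$ does. The substance is that $\Piv(\deo)=\{B_1,\dots,B_k\}$ is noncrossing. Here I would split the $n-1$ elbows of $\deo$ using the lemma: those of $v$-twisted color $0$ are exactly the elbows of ordinary color $0$ (necessarily with bottom endpoint below top endpoint), and the rest have $v$-twisted color $2$. As in the $v=e$ case, I would show that relative to the decomposition $\cbf^{n+1}=\bw'_\circ\cdot\cbf^\ast\cdot\bw''_\circ$, the color-$0$ elbows all lie in the upright-triangle factor $\bw'_\circ$ and the color-$2$ elbows in the downright-triangle factor $\bw''_\circ$, with none in $\cbf^\ast$. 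Restricting $\deo$ to $\bw'_\circ$ then yields a noncrossing matching of its endpoints --- the $v$-distinguished condition translates into ``an even number of crossings lie to the left of each elbow'', which is precisely what forces the matching to be noncrossing, exactly as in the noncrossing-matching proposition of \cref{sec:combin:FC} --- and $\piv(\deo)$ is the permutation obtained by closing up this matching, so its cycles $\{B_i\}$ form a noncrossing partition. The twist by $v$ is inert at this stage; it enters only through the relabeling $B_i\mapsto L_i=v(B_i)$, which is allowed to be crossing on the label side.

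For injectivity, I would reconstruct $\deo$ from $\Pitv(\deo)=\{(B_i,L_i)\}$ in three steps. First, within each block of $\Piv(\deo)$ the color-$0$ elbows are nested, so the transpositions $(i\,j)$ occurring there all satisfy $v(i)<v(j)$; one checks this forces $v$ restricted to $B_i$ to be increasing, i.e.\ $v|_{B_i}$ is the order-preserving bijection onto $L_i$, and this pins down $v\in\Sn$ uniquely. Second, the color-$0$ reflections are then recovered from the noncrossing partition $\{B_i\}$ via its canonical nested factorization into transpositions, and the color-$2$ reflections from the Kreweras complement of $\{B_i\}$ (the general-$v$ shadow of the remark following the noncrossing-partition proposition in \cref{sec:combin:FC}). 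Third, the positions of all these elbows inside the wiring diagram of $\cbf^{n+1}$ are forced: given the prescribed colored reflections, the $v$-distinguished condition together with $\esu=\rank=n-1$ admits at most one placement. Hence $\deo\mapsto\Pitv(\deo)$ is injective, and with the count above it is a bijection.

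The main obstacle is the last step --- showing that the ``content'' of a maximal $(\cbf^{n+1},v)$-Deogram, namely $v$ together with the multiset of colored reflections, determines $\deo$ uniquely. This is the rigidity of distinguished subwords: once one fixes which colored reflection is realized at an elbow, the requirement that the walk be $v$-distinguished and skip exactly $\rank$ letters leaves no slack. Concretely I would run an induction on $n$, peeling off the outermost block of $\{B_i\}$, which corresponds to the leftmost elbow inside $\bw'_\circ$; this is the type-$A$ incarnation of the general Deogram/noncrossing-partition correspondence developed in \cref{sec:noncrossing} (cf.\ \cref{thm:bij_Dm_NC}). A secondary point to verify is that the $v$-twisted color convention ($k'=k$ if $i<j$ and $k'=k+1$ if $i>j$) is exactly calibrated so that $v(i)<v(j)$ for every $v$-twisted colored inversion $(i\,j)$, which is what makes the output a genuine noncrossing parking function rather than a twisted variant.
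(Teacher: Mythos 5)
There is a genuine gap at the heart of the well-definedness step. Your argument that $\Piv(\deo)$ is noncrossing transfers the $v=e$ structural lemma of \cref{sec:combin:FC} to arbitrary $v$: you assert that, relative to $\cbf^{n+1}=\bw'_\circ\cdot\cbf^\ast\cdot\bw''_\circ$, the twisted-color-$0$ elbows lie in $\bw'_\circ$, the twisted-color-$2$ elbows lie in $\bw''_\circ$, and no elbow lies in $\cbf^\ast$, and you then restrict to $\bw'_\circ$ to produce a noncrossing matching. For $v\neq e$ this structural claim is false, as the paper's own data in \cref{fig:parking} shows: for $W=\Sfr_3$, $\cbf=(s_1,s_2)$, $p=4$ (so $\bw'_\circ$ occupies positions $1$--$3$, $\cbf^\ast$ positions $4$--$5$, $\bw''_\circ$ positions $6$--$8$), the Deogram for $v=s_1$ with twisted inversions $(1\,3),\ddot{(1\,2)}$ has its color-$2$ elbow at position $3$, inside $\bw'_\circ$, and the Deogram for $v=s_1s_2s_1$ has both elbows at positions $4$--$5$, inside $\cbf^\ast$. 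So the ``restrict to $\bw'_\circ$'' mechanism breaks down, and the central claim that the cycles of $\piv(\deo)$ form a noncrossing partition is not established by your argument. This also undercuts the later steps that presuppose it (that the color-$0$ reflections form the canonical $\leq_\c$-increasing path factorization of a noncrossing partition, hence that $v$ is increasing on each block, and that the color-$2$ reflections give the Kreweras complement).

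For comparison, the paper does not argue inside the triangle decomposition at all: its (general) proof of \cref{thm:parking_bijection} identifies $\Dm^{(v)}_{e,\cbf^{\mh+1}}$ as the subset of $\Clus^{(2\m)}(W,\cbf)$ on which all $v$-twisted colors are even, invokes the known cluster/noncrossing-partition bijection (\cref{thm:bijections}, cf.\ \cref{thm:bij_Dm_NC}) to get a chain of noncrossing partitions together with inversion conditions on $v$, uses \cref{lem:nc_parabolic} to see that $v$ is the minimal coset representative (your ``increasing on blocks''), and handles your ``main obstacle''---rigidity of the subword given its colored reflections---not by a new induction but by the greedy reconstruction of \cref{rem:determined}, with surjectivity coming from the factorization \cref{lem:nc_factor} rather than from a cardinality count. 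Your injectivity-plus-counting frame (using \cref{thm:Parkcp=p^n} and the known count of noncrossing parking functions) is logically admissible, since that theorem is proved independently by Hecke-algebra methods, but it is heavier than, and different from, the paper's direct bijection; in any case the proposal as written does not yet prove well-definedness, so the bijection claim remains unproved.
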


\noindent See \cref{fig:bij_parking} for an example.
See \cref{thm:parking_bijection} for a uniform generalization to Coxeter groups and parameters of the form $p = \mh+1$.

\subsection{Open problems}

We conclude this section with several purely combinatorial bijective problems which do not easily follow from our results.
Many of them are closely related to the problem of finding a bijection between noncrossing and nonnesting  objects; see~\cite[Problem~1]{aim2012}.

Let $f_{p,n+p}\in\Sfr_{n+p}$ be the permutation sending $i\mapsto i+p$ for $1\leq i\leq n$ and $i\mapsto i-n$ for $n+1\leq i\leq n+p$.

\begin{openproblem}\label{OP:combin}
Let $p,n$ be two coprime positive integers.
\begin{enumerate}
	\item Find a bijection between $\Dm_{e,\cpbf}(\Sn)$ and the set of rational Dyck paths inside a $p\times n$ rectangle.
	\item Find a bijection between $\Dm_{e,\cpbf}(\Sn)$ and $\Dm_{e,(\cbf')^n}(\Sfr_p)$, where $\cbf$ is a Coxeter word in $\Sn$ and $\cbf'$ is a Coxeter word in $\Sfr_p$. 
	\item\label{OP:combin3} Find a direct bijection between $\Dm_{e,\cpbf}(\Sn)$ and the set of \emph{maximal $f_{p,n+p}$-Deograms} of~\cite[Definition~9.3]{galashin2020positroids}.
	\item Find a bijection between $\mathcal{P}_{e,\cpbf}(\Sn)$ and the set of \emph{rational parking functions} as defined in, e.g., \cite{armstrong2016rational}.
	\item Find a statistic $\stat$ on $\Dm_{e,\cpbf}(\Sn)$ and on $\mathcal{P}_{e,\cpbf}(\Sn)$ such that 
\begin{equation*}
  \CatpSnq=\sum_{\u\in\Dm_{e,\cpbf}(\Sn)} q^{\stat(\u)} \quad\text{and}\quad \qint[p]^{n-1}=\sum_{\u\in\mathcal{P}_{e,\cpbf}(\Sn)} q^{\stat(\u)};
\end{equation*}
cf. \cref{ex:r_sum}. More generally, do this for an arbitrary Coxeter group $W$.
\end{enumerate}
\end{openproblem}

\begin{remark}\label{rmk:Markov}
For~\OPref{OP:combin}(OP:combin3), one can give an indirect recursive bijection between our maximal $\cpbf$-Deograms and the maximal $f_{p,n+p}$-Deograms of~\cite[Definition~9.3]{galashin2020positroids} by applying a sequence of \emph{Markov moves}.
Namely, it is known that the braid word $\cpbf$ and the positive braid lift of $f_{p,n+p}$ give rise to the same link called the \emph{$(p,n)$-torus link}.
Moreover, these braids can be related to each other by a sequence of \emph{positive} Markov moves, i.e., braid moves and positive (de)stabilizations.
The associated braid varieties change in a predictable way (cf.~\cite{casals2021positroid}), and one can check that each positive Markov move induces a bijection on the associated sets of maximal Deograms.
The problem of finding a direct, non-recursive bijection remains open.
\end{remark}

\begin{remark}
Whereas maximal $\cpbf$-Deograms are in bijection with maximal $f_{p,n+p}$-Deograms, maximal $(\cbf^p,v)$-Deograms appear to be counted by other \emph{positroid Catalan numbers} \cite{galashinlam2021}, enumerating maximal $\fvpn$-Deograms for other permutations $\fvpn \in {\mathfrak S}_{n+p}$.
Explicitly, when $n<p$, the permutation $\fvpn$ corresponds to the bounded affine permutation $\ftvpn \coloneqq \tilde v \tilde f_{p,n+p} \tilde v^{-1}$, where
\begin{itemize}
\item	$\tilde f_{p,n+p}$ is the bounded affine permutation corresponding to $f_{p,n+p}$, and
\item 	$\tilde v:\Z\to\Z$ is an $(n+p)$-periodic affine permutation lifting $v$, sending $i\mapsto v(i)$ for $1\leq i\leq n$ and $i\mapsto i$ for $n+1\leq i\leq n+p$.
\end{itemize}
In particular, when the affine permutation $\ftvpn$ is not bounded, we conjecture that the set of $(\cbf^p,v)$-Deograms is empty.
\end{remark}

\section{Coxeter Groups}

Let $W$ be a finite Coxeter group: that is, a finite group for which we can find a subset $S \subset W$ and a group presentation
\begin{align}\label{eq:group-presentation}
    W = \left\langle s \in S \mid (st)^{m(s,t)} = 1\right\rangle
\end{align}
in which $m(s, t) \geq 1$ and $m(s, s) = 1$ for all $s, t \in S$.
We say that $W$ is \emph{irreducible} if and only if it is not a product of smaller Coxeter groups, yet also not the trivial group.
Henceforth, we always assume that $W$ is irreducible.

The \emph{rank} of $W$ is the integer $\rank \coloneqq |S|$.
We refer to elements of $S$ as \emph{simple reflections}.
For an arbitrary element $w \in W$, the \emph{length} $\ell(w)$ of $w$ is the smallest integer $m \geq 0$ such that $w$ can be expressed as a product of $m$ simple reflections, possibly with repetition.
There is a unique element of maximal length called the \emph{longest element}, which we denote by $\wo \in W$.
For $w\in W$ and $s\in S$, we write $ws < w$ if $\ell(ws) < \ell(w)$ and $ws > w$ if $\ell(ws) > \ell(w)$.
The \emph{weak order} on $W$ is the partial order formed by the transitive closure of these relations.

A (standard) \emph{Coxeter element} of $W$ with respect to $S$ is an element formed by taking the product over all simple reflections in some ordering.
It is known that all Coxeter elements are conjugate.
Their common order is called the \emph{Coxeter number} of $W$ and denoted $h$.

A (general) \emph{reflection} is an element of the form $s^u \coloneqq usu^{-1}$ for some $s\in S$ and $u\in W$.
We write $T$ for the set of all reflections: that is,
\begin{align*}
    T \coloneqq \{s^u\mid (s,u)\in S\times W\}.
\end{align*}
The \emph{reflection length} $\ellT(w)$ of $w$ is the smallest integer $m\geq0$ such that $w$ can be expressed as a product of $m$ general reflections.

Every Coxeter group admits a faithful representation on a (finite-dimensional) real vector space $\V$, which sends each reflection in $W$ to a hyperplane reflection in $\V$.
Such a representation is called a \emph{reflection representation} of $W$.
After possibly passing to a quotient, we can assume that the only $W$-invariant vector is zero: that is, $\V^W = 0$.
In this case, $\dim(V) = \rank$, and by a theorem of Chevalley, the ring of $W$-invariant polynomials on $\V$ is freely generated by $\rank$ homogeneous polynomials.

The \emph{degrees} of $W$ are the degrees $d_1 \leq d_2 \leq \cdots \leq d_\rank$ of these polynomials, which do not depend on the choice of reflection representation.
The \emph{exponents} of $W$ are the integers $e_i = d_i - 1$.
Recall from~\eqref{eq:intro:CatpW} that for any positive integer $p$ coprime to $h$, we set
\begin{align*}
\Cat_p(W) \coloneqq \prod_{i = 1}^\rank \frac{p + (pe_i \mod{h})}{d_i},
\end{align*}
where $0 \leq (pe_i \mod{h}) < h$ is the integer in that range congruent to $pe_i$ modulo $h$.

\section{Words and Subwords}\label{sec:words}
\subsection{Distinguished subwords}

A \emph{word} is any finite sequence $\w=(s_1, s_2,\ldots, s_m)$ of elements of $S$, possibly with repetition.
If $w = s_1 s_2 \cdots s_m$, then we refer to $\w$ as a \emph{$w$-word}, and if $m=\ell(w)$, then we say it is \emph{reduced}.  We say that a word $\c$ is a \emph{Coxeter word} if it is an ordering of $S$.

A \emph{subword} of $\bw$ is a sequence $\su=(u_1,u_2,\dots,u_m)$ in which $u_i\in\{s_i,e\}$ for all $i$.
For any such sequence, we set $\up{i} = u_1u_2\cdots u_i \in W$.
If $\up{m} = u$, then we refer to $\su$ as a \emph{$u$-subword} of $\bw$.

\begin{definition}[\cite{deodhar1985some,marsh2004parametrizations}]\label{def:distinguished}
Let $u \in W$. We say that a $u$-subword $\su$ of $\bw$ is \emph{distinguished} if $\up i\leq \up{i-1} s_i$ for all $i$. We write $\D_{u,\w}(W)$ (or $\D_{u,\w}$ for short) for the set of distinguished $u$-subwords of $\w$. For any $u$-subword $\su$ of $\w$, we write
\begin{align*}
        \esu &= |\{ i \in [m] \mid u_i = e\}|,\\
        \dsu &= |\{ i \in [m] \mid \up{i} < \up{i-1}\}|.
\end{align*}
We write $\D_{u,\w}^k \coloneqq \{\bu\in\D_{u,\w}\mid \esu=k\}$. In the special case where $k = \min_{\u \in \D_{u, \w}} \esu$, we write $\Dm_{u, \w}(W)=\Dm_{u, \w} \coloneqq \D_{u, \w}^k$. For $u=e$, the minimal value $k$ is given in \cref{thm:e=n} below.
\end{definition}


We give an equivalent characterization of distinguished subwords among the set of all subwords using reflections. A \defn{colored reflection} is a pair $(t,k)\in T\times \Ztnn$, i.e., a reflection $t$ decorated by a nonnegative integer $k$. Given a subword $\su=(u_1, u_2,\cdots,u_m)$ of a word $\w = (s_1, s_2, \ldots, s_m)$ and an index $j\in[m]$, we obtain a colored reflection
\begin{align}\label{eq:tju_dfn}
    \tju_j^\su \coloneqq (s_j^{\up j},k_j),\quad\text{where } \quad k_j \coloneqq \left|\left\{ 1 \leq i < j \mid s_i^{\up i} = s_j^{\up j} \text{ and } u_i\neq e \right\}\right|.
\end{align}
For brevity in examples, we may also record the color $k$ using $k$ dots above the reflection.  See \cref{rmk:wiring} for an alternative description of colored reflections in type $A$.

\begin{example}\label{ex:colorsa}
For $W = \mathfrak{S}_2 = \{e, s\}$ and $\u=(s,s,s)$, we have $\tju_1^\su=(s,0)=s$, $\tju_2^\su=(s,1)=\dot s$, and $\tju_3^\su=(s,2)=\ddot{s}$.
\end{example}


\begin{definition}\label{def:inv}
If $\w=(s_1,s_2,\dots,s_m)$ is a word and $\bu$ is a subword of $\w$, then we set
\begin{align*}
        \inv(\su) &\coloneqq \left(\tju_1^{\bu},\tju_2^{\bu},\dots,\tju_m^{\bu}\right).
\end{align*}
We write $\inv_e(\su)$ for the subsequence of $\inv(\su)$ obtained by restricting to the indices $j$ for which $u_j = e$.  We also write $\invUncolored(\su) \coloneqq \left(s_1^{\up1},s_2^{\up2},\dots,s_m^{\up m}\right)$ (resp., $\invUncolored_e(\su)$) for the sequence obtained from $\inv(\su)$ (resp., $\inv_e(\su)$) by forgetting the colors.
\end{definition}

\begin{proposition}\label{prop:even}
A subword $\su$ of a word $\bw$ is distinguished if and only if each colored reflection in $\inv_e(\su)$ has even color.
\end{proposition}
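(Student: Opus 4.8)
The plan is to reduce the proposition to a pointwise condition on the partial products $\up{j-1}$ --- namely, on which side of a reflecting hyperplane each $\up{j-1}$ lies --- and then to track how that side changes as one reads $\bw=(s_1,\dots,s_m)$ from left to right. First I would unwind the definition of ``distinguished.'' For each $i$, the two possible values of $\up i$, namely $\up{i-1}$ and $\up{i-1}s_i$, differ by right multiplication by the simple reflection $s_i$, so they are comparable in the weak order; the condition $\up i\leq\up{i-1}s_i$ is automatic when $u_i=s_i$, and when $u_i=e$ it is equivalent to $\ell(\up{i-1}s_i)>\ell(\up{i-1})$. Hence $\su$ is distinguished if and only if $\up{j-1}s_j>\up{j-1}$ for every $j$ with $u_j=e$. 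I would also record that, whether or not $u_j=e$, one has $s_j^{\up j}=\up{j-1}s_j\up{j-1}^{-1}$; write $t_j\coloneqq s_j^{\up j}$. Then $t_j$ is exactly the reflection with $t_j\up{j-1}=\up{j-1}s_j$, the color $k_j$ attached to $t_j$ in $\inv(\su)$ is $|\{\,1\le i<j\mid t_i=t_j,\ u_i\neq e\,\}|$, and $\inv_e(\su)$ consists precisely of the pairs $(t_j,k_j)$ over the $j$ with $u_j=e$.

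Next I would introduce a ``side'' function: for $w\in W$ and $t\in T$, set $\delta(w,t)=1$ if $\ell(tw)>\ell(w)$ and $\delta(w,t)=0$ otherwise; this is well defined since $\ell(tw)\neq\ell(w)$ for any reflection $t$. Geometrically $\delta(w,t)=1$ says that $w$ and $e$ lie on the same side of the reflecting hyperplane $H_t$ of $t$; in particular $\delta(e,t)=1$ for every $t$, and $\up{j-1}s_j>\up{j-1}$ is equivalent to $\delta(\up{j-1},t_j)=1$. The crux is the following one-wall lemma: for $w\in W$, $s\in S$, and $t\in T$, one has $\delta(ws,t)=\delta(w,t)$ if $t\neq wsw^{-1}$, and $\delta(ws,t)=1-\delta(w,t)$ if $t=wsw^{-1}$. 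I would prove it with roots: letting $\beta_t$ be the positive root of $t$, the standard length formula gives $\delta(w,t)=1\iff w^{-1}\beta_t\in\Phi^+$, and since the simple reflection $s=s_\alpha$ sends exactly the root $\alpha$ to a negative root (and $-\alpha$ to a positive root) while preserving signs of all other roots, the sign of $(ws)^{-1}\beta_t=s_\alpha(w^{-1}\beta_t)$ differs from that of $w^{-1}\beta_t$ exactly when $w^{-1}\beta_t=\pm\alpha$, i.e.\ exactly when $t=wsw^{-1}$. (Equivalently: the adjacent elements $w$ and $ws$ of the weak order are separated by the single reflecting hyperplane of $wsw^{-1}$.)

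Finally I would assemble the pieces. Fix $j$ with $u_j=e$ and put $t=t_j$. Reading the prefix $\up0=e,\up1,\dots,\up{j-1}$, each step passes from $\up{i-1}$ to $\up i\in\{\up{i-1},\up{i-1}s_i\}$, so by the one-wall lemma $\delta(\,\cdot\,,t)$ is unchanged at step $i$ unless $u_i=s_i$ and $\up{i-1}s_i\up{i-1}^{-1}=t_i=t$, in which case it flips. The number of steps $i\in\{1,\dots,j-1\}$ where this happens is exactly $k_j$, so $\delta(\up{j-1},t_j)\equiv\delta(e,t_j)+k_j\equiv 1+k_j\pmod 2$. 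Therefore $\up{j-1}s_j>\up{j-1}$ if and only if $k_j$ is even; combined with the first paragraph, $\su$ is distinguished if and only if $k_j$ is even for every $j$ with $u_j=e$, i.e.\ if and only if every colored reflection in $\inv_e(\su)$ has even color. The only real obstacle is the one-wall lemma --- the standard but slightly delicate input that moving to an adjacent element of the weak order flips the side of exactly one reflecting hyperplane; everything else is a parity count matching the definition of the colors.
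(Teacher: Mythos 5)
Your argument is correct: the reduction of ``distinguished'' to the condition $\up{j-1}s_j>\up{j-1}$ at the skipped positions, the identity $s_j^{\up j}=\up{j-1}s_j\up{j-1}^{-1}$, and the one-wall (hyperplane-crossing parity) lemma together give exactly the stated equivalence, and each step is sound for general finite Coxeter groups via the standard root-system criterion $\ell(tw)>\ell(w)\iff w^{-1}\beta_t\in\Phi^+$. The paper dismisses this proposition with ``follows directly from the definitions,'' so your write-up is simply the careful fleshing-out of the same underlying observation rather than a different route.
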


\begin{proof}
This follows directly from the definitions.
\end{proof}

\begin{example}\label{ex:rat7}
Let $W=\Sfr_5$, the Weyl group of type $A_4$, $c= s_1s_2s_3s_4$, and $\c = (s_1,s_2,s_3,s_4)$.
Then $\left|\Dm_{e,\c^3}\right|=7$, which is a rational $W$-Catalan number for $A_4$.
The $7$ elements of $\Dm_{e,\c^3}$ are illustrated in~\Cref{fig:rat7}.
Each element $\su$ gives a decomposition of $c^3$ as a product of reflections in $\invUncolored_e(\su)$.
For example, the bottom row in \cref{fig:rat7} decomposes $c^3=(1\,4\,2\,5\,3)$ as the product $(1\,2)(2\,3)(1\,4)(2\,5)$.
\end{example}

\begin{remark}\label{rem:determined}
We explain how to recover a subword $\su \in \D_{e,\w}$ from the corresponding sequence $\inv_e(\su)$ (cf.~\cite[Remark 3.5]{pilaud2015brick}).
Read the letters in $\w=(s_1, s_2,\ldots, s_m)$ from left to right.
For a given position $j$, tentatively set $u_j=s_j$ and compute $(s_j^{\up j},k)$, where $k$ is defined as in \eqref{eq:tju_dfn}.
If $(s_j^{\up j},k)$ is the next unread colored reflection in $\inv_e(\su)$, then we set $u_j=e$.
Otherwise, we keep $u_j=s_j$.
\end{remark}

\begin{proposition}\label{prop:prod_refl}
Let $\bw$ be a $w$-word, and let $\su$ be a $u$-subword of $\bw$. Then 
\begin{align*}
	\prod_{t \in \invUncolored_e(\su)} t = wu^{-1},
\end{align*}
where the product is taken from left to right.
\end{proposition}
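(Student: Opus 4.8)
The plan is to induct on the length $m$ of the word $\w$, peeling off the last letter $s_m$. Write $\w' = (s_1,\dots,s_{m-1})$, which is a $w'$-word for $w' = \up{m-1}$, and let $\su' = (u_1,\dots,u_{m-1})$ be the corresponding $u'$-subword of $\w'$, where $u' = \up{m-1}$. By the inductive hypothesis, $\prod_{t\in\invUncolored_e(\su')} t = w'(u')^{-1}$, with the product taken left to right. The base case $m=0$ is trivial: both sides equal $e$.

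For the inductive step I would split into two cases according to whether $u_m = s_m$ or $u_m = e$. If $u_m = s_m$, then $\up{m} = u' s_m$ and $w = w' s_m$, so $w u^{-1} = w' s_m (u' s_m)^{-1} = w'(u')^{-1}$; moreover position $m$ contributes nothing to $\invUncolored_e(\su)$, so $\invUncolored_e(\su) = \invUncolored_e(\su')$ and we are done by induction. If $u_m = e$, then $\up{m} = u'$ and $w = w' s_m$, so $w u^{-1} = w' s_m (u')^{-1}$. On the other side, $\invUncolored_e(\su)$ is $\invUncolored_e(\su')$ with the single new reflection $s_m^{\up m} = s_m^{u'} = u' s_m (u')^{-1}$ appended on the right. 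Thus
\begin{align*}
\prod_{t\in\invUncolored_e(\su)} t
&= \Bigl(\prod_{t\in\invUncolored_e(\su')} t\Bigr)\cdot u' s_m (u')^{-1}
= w'(u')^{-1}\cdot u' s_m (u')^{-1}
= w' s_m (u')^{-1}
= w u^{-1},
\end{align*}
as required.

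I do not expect any real obstacle here: the statement is essentially a telescoping identity, and the only thing to be careful about is the bookkeeping of which side of the product the new reflection is appended to (it must be the right, matching the left-to-right convention) and the fact that the colors recorded in $\inv_e$ are irrelevant since $\invUncolored_e$ forgets them. One subtlety worth a sentence in the writeup is that the claim does not require $\su$ to be distinguished — it holds for arbitrary subwords — so no appeal to \Cref{prop:even} or \Cref{def:distinguished} is needed. The mildly delicate point is simply verifying that $s_m^{\up m}$ equals $u' s_m (u')^{-1}$ using $\up{m} = u'$ in the skipped case, which is immediate from the definition $s_j^{\up j} = \up{j} s_j (\up{j})^{-1}$... wait, rather $s_j^{\up j} = \up{j}\, s_j\, (\up{j})^{-1}$ with $\up{m}=u'$ since $u_m=e$; this is the one place to state the convention explicitly and move on.
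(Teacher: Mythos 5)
Your induction is correct and is essentially the paper's own argument made explicit: the paper's proof simply notes that, directly from the definition of $\invUncolored_e(\su)$, the product of the conjugated reflections $s_j^{\up j}$ over skipped positions telescopes so that $\bigl(\prod_{t \in \invUncolored_e(\su)} t\bigr)^{-1}w = u$, which is exactly the computation your two cases ($u_m=s_m$ and $u_m=e$) carry out. One harmless slip: at the start you write that $\w'$ is a $w'$-word for $w'=\up{m-1}$, but you mean $w'=s_1\cdots s_{m-1}$ (the product of the letters of $\w'$, not of the subword); the rest of your argument uses the correct $w'$, so nothing breaks.
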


\begin{proof}
Indeed, it follows from \cref{def:inv} that $\left(\prod_{t \in \invUncolored_e(\su)} t\right)^{-1} w = u$.
\end{proof}

\begin{proposition}\label{thm:e=n}
Let $\bw=(s_1,s_2,\ldots,s_m)$ be a $w$-word.
Then 
\begin{equation*}
	\ell_T(w) = \min_{\bu \in \D_{e, \bw}} \esu.
\end{equation*}
\end{proposition}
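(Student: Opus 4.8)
The plan is to prove the two inequalities $\ell_T(w) \le \min_{\bu \in \D_{e,\bw}} \esu$ and $\ell_T(w) \ge \min_{\bu \in \D_{e,\bw}} \esu$ separately.

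For the inequality $\ell_T(w) \le \esu$ for \emph{any} $\bu \in \D_{e,\bw}$ (in fact, for any $e$-subword, distinguished or not): by \cref{prop:prod_refl} applied with $u = e$, we have $\prod_{t \in \invUncolored_e(\su)} t = w$, so $w$ is written as a product of $\esu$ reflections. Hence $\ell_T(w) \le \esu$ for every $\bu \in \D_{e,\bw}$, and so $\ell_T(w) \le \min_{\bu \in \D_{e,\bw}} \esu$.

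For the reverse inequality, I would argue by induction on the length $m$ of $\bw$, constructing a distinguished $e$-subword $\bu$ with $\esu = \ell_T(w)$. Write $\bw = (s_1, \ldots, s_m)$ and let $\bw' = (s_1, \ldots, s_{m-1})$, a $w'$-word where $w' = w s_m$. There are two cases according to whether $w s_m < w$ or $w s_m > w$ in the weak order (equivalently, whether $\ell(ws_m) < \ell(w)$). In the first case, set $u_m = e$; then $\bu' = (u_1, \ldots, u_{m-1})$ must be an $e$-subword of $\bw'$, and the distinguished condition at position $m$ reads $w = \up{m} \le \up{m-1} s_m = w' s_m$, which holds since $w' s_m = (w s_m) s_m = w$. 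We need $\ell_T(w) = \ell_T(w')$: this is the step that requires care. In the second case ($ws_m > w$), we have more freedom: either keep $u_m = s_m$ (forcing $\bu'$ to be an $e$-subword of $\bw'$ with the same $\esu$) — which works if $\ell_T(w s_m) = \ell_T(w)$ is not achievable cheaply — or set $u_m = e$, forcing $\bu'$ to be a $(w s_m)$-subword... wait, that does not give an $e$-subword. Let me restructure: in the second case we must keep $u_m = s_m$, so $\bu'$ is an $e$-subword of $\bw'$, $w' = w s_m$, and we want $\ell_T(w') = \ell_T(w) \pm$ controlled amount, recursing on $w'$; but then $\esu = \esu'$ and we would need $\ell_T(w') = \ell_T(w)$, which is false in general. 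So instead, in the second case one should \emph{also} allow $u_m = e$: then $\bu'$ is an $e$-subword of $\bw'$ with $w' = w$ as an element? No — $\up{m-1} = w'$ and we need $\up m = e$, i.e. $w' u_m = e$... The correct bookkeeping is: $\bu$ is an $e$-subword of $\bw$ iff $\bu'$ is a $(u_1 \cdots u_{m-1})$-subword, and the product condition forces $u_1 \cdots u_{m-1} \cdot u_m = e$. So if $u_m = e$ then $\bu'$ is an $e$-subword of $\bw'$; if $u_m = s_m$ then $\bu'$ is an $s_m$-subword of $\bw'$. To stay in the $e$-subword induction we should instead induct on a statement about $\D_{u,\bw}$ for general $u$, or — cleaner — use the known recurrence for $R$-polynomials / distinguished subwords alluded to in \cref{sec:words}: there is a recurrence expressing $\D_{u,\bw}$ in terms of $\D_{u,\bw'}$ and $\D_{us_m, \bw'}$ depending on whether $u s_m < u$. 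I would set up the induction to prove simultaneously that $\min_{\bu \in \D_{u,\bw}} \esu = \ell_T(w u^{-1})$ for all $u$, which self-strengthens the claim enough to push the recursion through; the two cases $u s_m < u$ and $u s_m > u$ in the distinguished-subword recurrence then match up with the elementary fact that $\ell_T(w')$ and $\ell_T(w' s_m)$ differ by exactly $1$ (since $s_m$ is a reflection) together with a parity/weak-order argument showing the distinguished choice realizes the smaller of the two.

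The main obstacle is the reverse inequality, and within it the control of reflection length: one must show that the \emph{forced} choices in a distinguished subword never cause $\esu$ to overshoot $\ell_T(w)$. Concretely, when $w s_m < w$ the distinguished condition forces $u_m = e$, which is "free" only if $\ell_T(w s_m) = \ell_T(w) - 1$; but a priori one could have $\ell_T(w s_m) = \ell_T(w) + 1$ even though $\ell(w s_m) = \ell(w) - 1$ (reflection length and Coxeter length behave very differently). The resolution is the classical fact (Carter, or the "strong exchange" viewpoint) that for a reflection $t$ and any $w$, $|\ell_T(wt) - \ell_T(w)| = 1$, \emph{and} that if $w t < w$ in \emph{absolute order} then $\ell_T(wt) = \ell_T(w) - 1$; the subtlety is relating weak-order descents to absolute-order descents, which is exactly where one uses that $s_m$ is \emph{simple}: if $\ell(w s_m) < \ell(w)$ then $w s_m < w$ in absolute order as well (since $w = (w s_m) s_m$ is a reduced-length-increasing product, hence a length-additive, hence $T$-length-additive-by-one factorization). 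So the key lemma to isolate and prove first is: \emph{if $s \in S$ and $\ell(ws) < \ell(w)$, then $\ell_T(ws) = \ell_T(w) - 1$.} Once that lemma is in hand, the induction closes routinely: in the forced case ($u_m = e$) we recurse on $\bw'$ with target $\ell_T(w) - 1$, and in the free case ($ws_m > w$, so $\ell_T(ws_m)$ is $\ell_T(w) \pm 1$) we pick $u_m = e$ if $\ell_T(ws_m) = \ell_T(w) - 1$ and $u_m = s_m$ if $\ell_T(ws_m) = \ell_T(w)+1$, in each case recursing on a word one letter shorter with the appropriately matched target, and the distinguished condition at position $m$ is checked directly from $\up{m-1} = w'$ and the weak-order relation. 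The base case $m = 0$ is trivial ($w = e$, empty subword, $\esu = 0 = \ell_T(e)$).
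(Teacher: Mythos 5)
Your first inequality is fine: \cref{prop:prod_refl} with $u=e$ gives $\ell_T(w)\leq\esu$ for every $e$-subword, exactly as in the paper. The reverse inequality, which is the real content, does not go through as you describe, for two concrete reasons. First, your ``key lemma'' is false: in $W=\Sfr_3$ take $w=s_1s_2s_1=(1\,3)$ and $s=s_1$; then $\ell(ws)=2<3=\ell(w)$, yet $ws=s_1s_2$ is a $3$-cycle, so $\ell_T(ws)=2=\ell_T(w)+1$. The purported justification (that a Coxeter-length-additive factorization is reflection-length-additive) is precisely the conflation of the two length functions you set out to avoid, and weak-order descents simply do not control absolute-order descents. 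Second, the strengthened induction hypothesis you invoke to close the recursion, namely $\min_{\bu\in\D_{u,\bw}}\esu=\ell_T(wu^{-1})$ for all $u$, is also false: the paper's remark following \cref{cor:e=n} records the counterexample $u=s_2$, $\bw=(s_1,s_2,s_3,s_2,s_1)$ in $\Sfr_4$, where $\ell_T(wu^{-1})=2$ but the minimum is $4$. Since stripping the last letter inevitably sends you from $e$-subwords of $\bw$ into $s_m$-subwords of the shorter word (this is exactly the Deodhar recurrence, \cref{prop:deodhar}, at $u=e$), and the general-$u$ statement fails, your induction cannot be closed in this form. You also misread the distinguished condition at position $m$: it constrains $u_m$ in terms of the partial product $\up{m-1}$ of the subword, not in terms of $w$ or $w'=s_1\cdots s_{m-1}$, so there is no ``forced case'' determined by whether $ws_m<w$.

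For contrast, the paper does not attempt a letter-by-letter construction at all. When $\bw$ is reduced, the statement is Dyer's theorem \cite[Theorem~1.3]{dyer2001minimal} --- this is the genuinely hard input, for which your proposal offers no substitute. When $\bw$ is not reduced, the paper shows that $\min_{\bu\in\D_{e,\bw}}\esu$ is invariant under braid and commutation moves (via the Hecke-algebra identity of \cref{cor:Hecke=R}), reduces to a word of the form $(\bw_1,s,s,\bw_2)$, and compares with the two shorter words $(\bw_1,s,\bw_2)$ and $(\bw_1,\bw_2)$, using $\ell_T(w)=\min\left(\ell_T(wt)+1,\ell_T(w)\right)$-type bookkeeping and induction on the length of the word. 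Any repair of your approach would need either Dyer's theorem or a correct replacement for the false general-$u$ strengthening.
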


\begin{proof}
When $\bw$ is a reduced word for $w$, the result follows from~\cite[Theorem~1.3]{dyer2001minimal}; see also~\cite{baumeister2014note}.
Suppose the word $\bw$ is not reduced.
Let $\ellp(\bw) \coloneqq \min_{\bu \in \D_{e, \bw}} \esu$.
By \cref{cor:Hecke=R}, proved independently in the next section, $\ellp(\bw)$ is invariant under applying braid and commutation moves to $\bw$. (See the proof of \cite[Proposition~7.2]{rietsch2008mirror} for an explicit bijection.) 
So we may assume that $\bw=(\bw_1,s,s,\bw_2)$ for some words $\bw_1,\bw_2$ and $s\in S$.

Let $\bw' \coloneqq (\bw_1,s,\bw_2)$ and $\bw''\coloneqq (\bw_1,\bw_2)$.
Let $w',w''\in W$ be the corresponding Weyl group elements.
We have $w=w''$ and $w'=wt$ for some reflection $t\in T$.
It follows that $\ell_T(w)=\min\left(\ell_T(w')+1,\ell_T(w'')\right)$.
On the other hand, if $\bu''=(\bu_1,\bu_2)\in\D_{e,\bw''}$ then $\bu\coloneqq(\bu_1,s,s,\bu_2)\in\D_{e,\bw}$ satisfies $\esu=\eop_{\bu''}$.
Similarly, if $\bu'=(\bu_1,x,\bu_2)\in\D_{e,\bw'}$, where $x\in\{e,s\}$, then either $\bu\coloneqq(\bu_1,x,e,\bu_2)$ or $\bu\coloneqq(\bu_1,e,x,\bu_2)$ is an element of $\D_{e,\bw}$ satisfying $\esu=\eop_{\bu'}+1$.
This shows $\ellp(\bw)\leq \min\left(\ellp(\bw')+1,\ellp(\bw'')\right)$.
Conversely, any element $\bu\in \Dm_{e,\bw}$ must be of the form $(\bu_1,s,s,\bu_2)$, $(\bu_1,s,e,\bu_2)$, or $(\bu_1,e,s,\bu_2)$, which implies $\ellp(\bw)\geq \min\left(\ellp(\bw')+1,\ellp(\bw'')\right)$.
By induction, we get $\ell_T(w)=\ellp(\bw)$.
\end{proof}

\begin{corollary}\label{cor:e=n}
If $\bc$ is a Coxeter word and $p$ an integer coprime to $h$, then
\begin{align*}
	\rank = \min_{\bu \in \D_{e, \bc^p}} \esu.
\end{align*}
\end{corollary}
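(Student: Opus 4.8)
The plan is to reduce \cref{cor:e=n} to a statement about reflection length and then invoke classical facts about Coxeter elements. Write $c \coloneqq s_1 s_2 \cdots s_\rank \in W$ for the (standard) Coxeter element determined by the Coxeter word $\bc = (s_1,\dots,s_\rank)$, so that $\bc^p$ is a $c^p$-word of length $p\rank$. Applying \cref{thm:e=n} to the word $\bc^p$ gives
\[
    \min_{\bu \in \D_{e,\bc^p}} \esu \;=\; \ellT\bigl(c^p\bigr),
\]
so it suffices to prove that $\ellT(c^p) = \rank$ whenever $p$ is coprime to $h$.

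For this I would use the standard identity $\ellT(w) = \dim \V - \dim \V^w$ for every element $w$ of a finite real reflection group acting on a reflection representation $\V$ with $\V^W = 0$: the inequality $\ellT(w) \geq \operatorname{codim}\V^w$ is immediate, since a product of $m$ reflections fixes an intersection of $m$ hyperplanes, while the reverse inequality is Carter's lemma, valid for all finite Coxeter groups. Taking $w = c^p$, it then remains to check that $\V^{c^p} = 0$, i.e.\ that $1$ is not an eigenvalue of $c^p$ on $\V$. By the classical theorem of Coxeter, the eigenvalues of $c$ on $\V$ are $\zeta^{e_1},\dots,\zeta^{e_\rank}$, where $\zeta \coloneqq e^{2\pi i/h}$ and $e_1,\dots,e_\rank$ are the exponents of $W$; hence the eigenvalues of $c^p$ are $\zeta^{p e_1},\dots,\zeta^{p e_\rank}$. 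Now $\zeta^{p e_j} = 1$ if and only if $h \mid p e_j$, and since $\gcd(p,h) = 1$ this would force $h \mid e_j$, which is impossible because $1 \leq e_j \leq h-1$. Therefore no eigenvalue of $c^p$ equals $1$, so $\V^{c^p} = 0$ and $\ellT(c^p) = \rank$, as required.

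I do not expect a genuine obstacle here: the argument is short once the two classical inputs are in place --- the identity $\ellT(w) = \operatorname{codim}\V^w$ and the eigenvalue description of a Coxeter element. If one wishes to minimize invoked results, the eigenvalue theorem can be trimmed to the bare facts that $c$ has order exactly $h$ (the defining property of the Coxeter number) and that $c$ fixes no nonzero vector, $\V^c = 0$; writing the eigenvalues of $c$ as $\zeta^{a_1},\dots,\zeta^{a_\rank}$ with $1 \le a_j \le h-1$, coprimality of $p$ and $h$ again yields $\zeta^{p a_j}\neq 1$ for all $j$, hence $\V^{c^p} = 0$. The only mild care needed is that all of these facts be cited in the generality of arbitrary finite (possibly non-crystallographic) Coxeter groups, including the types $H_3$, $H_4$, and $I_2(m)$; they are all available in the literature.
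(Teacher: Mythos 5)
Your proposal is correct and follows the same route as the paper: both reduce the corollary, via \cref{thm:e=n} applied to the word $\bc^p$, to the single fact that $\ell_T(c^p)=\rank$. The only difference is in how that fact is handled --- the paper simply cites \cite[Theorem~1.3]{reiner2017non} (noting that $c^p$ need not be conjugate to $c$ outside the Weyl-group case), whereas you prove it directly from the identity $\ell_T(w)=\rank-\dim \V^w$ for finite Coxeter groups together with Coxeter's eigenvalue theorem; your eigenvalue argument is valid uniformly in all types, including $H_3$, $H_4$, and $I_2(m)$, so it is a legitimate self-contained substitute for the citation.
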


\begin{proof}
Let $c \in W$ be the Coxeter element corresponding to $\bc$.
It is known that $\ell_T(c^p) = \rank$~\cite[Theorem 1.3]{reiner2017non}.
(If $W$ is a Weyl group, then $c^p$ is conjugate to $c$, but this is not necessarily true when $W$ is a Coxeter group that is not a Weyl group.)
Applying \cref{thm:e=n} to $w = \bc^p$ and $u = e$ shows that $\eop_{\bu} = \rank$ for all $\bu \in \Dm_{e, \bc^p}$.
\end{proof}

\begin{remark}
\Cref{prop:prod_refl} implies more generally that for a $w$-word $\bw\in S^m$ and $u\in W$, we have $\ell_T(wu^{-1})\leq \min_{\bu \in \D_{u, \bw}} \esu$.
However, the analog of \cref{thm:e=n} does not hold in this generality, as mentioned in~\cite[Remark~9.4]{galashin2020positroids}:
For $u=s_2$ and $\bw=(s_1,s_2,s_3,s_2,s_1)$ in $\Sfr_4$, we have $\ell_T(wu^{-1})=2$, but $\min_{\bu \in \D_{u, \bw}} \esu=4$. 
\end{remark}

\subsection{The Deodhar recurrence}

Henceforth, given a word $\w = (s_1, s_2, \dots, s_m)$ and $s \in S$, we write $\w\s \coloneqq (s_1, s_2, \dots, s_m, s)$. 
The distinguished subwords of $\w$ obey a certain recurrence due to Deodhar.

\begin{proposition}[{\cite[Lemma~5.2]{deodhar1985some}}]\label{prop:deodhar}
Let $\w$ be a word, let $u \in W$, and let $s\in S$.  Then for all $k$, we have a natural bijection
\begin{align*}
\D_{u,\w\s}^k \simeq 
	\begin{cases} 
	\D_{us,\w}^k &\text{if $us < u$}, \\ 
	\D_{us,\w}^k \sqcup  \D_{u,\w}^{k-1} &\text{if $us >  u$}. 
	\end{cases}
\end{align*}
\end{proposition}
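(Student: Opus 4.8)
The plan is to prove the bijection by analyzing, for a subword $\su$ of $\w\s = (s_1,\dots,s_m,s)$, what the last letter $u_{m+1}\in\{s,e\}$ can be, given that $\su$ is distinguished with $\up{m+1}=u$. Write $\su = (\su',u_{m+1})$, where $\su'=(u_1,\dots,u_m)$ is a subword of $\w$, and set $v \coloneqq \up m = u_1\cdots u_m$, so that $u = v\,u_{m+1}$. The distinguished condition for $\su$ is the distinguished condition for $\su'$ together with the single extra inequality $\up{m+1}\le \up m s = vs$; since $\esu = \eop_{\su'} + [u_{m+1}=e]$, matching the statistic $k$ amounts to bookkeeping this indicator. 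So the whole proposition reduces to understanding the constraint on $v$ imposed by ``$\su$ distinguished, $\up{m+1}=u$'' in each of the two cases.

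First I would handle the case $us < u$. Here $u_{m+1}=e$ would force $v=u$, but then the extra inequality reads $u \le us$, i.e. $us>u$, contradicting the hypothesis; so $u_{m+1}=s$ is forced, giving $v = us$. Conversely, if $\su'\in\D_{us,\w}^k$, then $v=us$ and we must check $v\le vs$, i.e. $us \le u$, which holds since $us<u$ means exactly $\ell(us)<\ell(u)$ hence $us\le us\cdot s = u$ in weak order — wait, more precisely $us \le u$ in weak order because $u = (us)s$ covers $us$ from above when $\ell(u)>\ell(us)$. Thus appending $s$ gives a well-defined element of $\D^k_{u,\w\s}$, and $\su\mapsto \su'$ is the desired bijection $\D^k_{u,\w\s}\simeq \D^k_{us,\w}$ with the statistic preserved (no $e$ is added).

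Next, the case $us > u$. If $u_{m+1}=s$, then $v = us$, the statistic is unchanged, and the extra inequality $v \le vs$ reads $us \le u$; since $\ell(u)<\ell(us)$ this says $us \le u$, which is automatic because $u = (us)s$ and $\ell((us)s) < \ell(us)$ — this is exactly the condition that $u$ lies below $us$ in weak order. So every $\su'\in\D_{us,\w}^k$ extends (uniquely, by appending $s$) to an element of $\D^k_{u,\w\s}$. If instead $u_{m+1}=e$, then $v=u$, the statistic goes up by one, and the extra inequality $v\le vs$ reads $u\le us$, which holds precisely because $us>u$. So every $\su'\in\D_{u,\w}^{k-1}$ extends (uniquely, by appending $e$) to an element of $\D^k_{u,\w\s}$. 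These two families are disjoint (distinguished by the last letter) and exhaust $\D^k_{u,\w\s}$, giving the claimed bijection $\D^k_{u,\w\s}\simeq \D^k_{us,\w}\sqcup\D^{k-1}_{u,\w}$.

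I expect the only real subtlety — the ``main obstacle'' — to be the precise manipulation of weak-order inequalities involving lifts by a single simple reflection: namely the elementary but easy-to-misstate fact that for $s\in S$ and $x\in W$, one has $x \le xs$ in weak order iff $\ell(xs)>\ell(x)$, and $xs\le x$ iff $\ell(xs)<\ell(x)$, together with the ``lifting''/exchange-type statement that $\up{m+1}\le \up m s$ is the \emph{only} new constraint when passing from $\su'$ to $\su$. Once these are pinned down, the four cases above are purely formal. I would also remark that the bijections are all ``identity on the first $m$ letters,'' so they are manifestly natural (functorial in $\w$), which is what the statement asserts.
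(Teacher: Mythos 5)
Your approach is the same as the paper's: you condition on the last letter $u_{m+1}\in\{s,e\}$, send $\su=(\su',u_{m+1})$ to $\su'$, note that the option $u_{m+1}=e$ is available only when $us>u$, and track $\esu$ accordingly; this is exactly the decomposition in the paper's proof, and the resulting bijection is the right one.

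However, one step is justified incorrectly, even though its conclusion is true. The only new constraint in passing from $\su'$ to $\su$ is the distinguished condition at position $m+1$, namely $\up{m+1}\leq \up{m}s$, which in your notation reads $u\leq vs$ --- not $v\leq vs$. When $u_{m+1}=s$ we have $\up{m+1}=\up{m}s$, so this condition is vacuous and there is nothing to verify. Because you instead verify $v\leq vs$, in the subcase $us>u$, $u_{m+1}=s$ you are led to assert that ``$us\leq u$ \dots\ is automatic''; this is false in the weak order, since $x\leq y$ forces $\ell(x)\leq \ell(y)$ while here $\ell(us)>\ell(u)$ (you have conflated $us\leq u$ with $u\leq us$). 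The subcases where an inequality genuinely needs checking are exactly those with $u_{m+1}=e$, where $v=u$ and your $v\leq vs$ coincides with the correct condition $u\leq us$; those you handle correctly, as you do the exclusion of $u_{m+1}=e$ when $us<u$. So the proof is essentially the paper's argument and is repaired by a single observation --- appending a letter that is actually used imposes no condition --- but as written, the justification of the $us>u$, $u_{m+1}=s$ case rests on a false weak-order inequality.
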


\begin{proof}
Let $\w=(s_1,s_2,\dots,s_m)$, $s\in S$, and $\bw\s \coloneqq (s_1,s_2,\dots,s_m,s)$.
Let $\bu$ be a distinguished $u$-subword of $\w\s$.
If $us < u$, then a distinguished subword of $\w\s$ cannot satisfy $\up{m} = \up{m + 1} = u$, so it must satisfy $\up{m} = us$.
This gives a bijection $\D_{u,\w\s}^k \simeq \D_{us, \w}^k$.
If instead $us > u$, then either $\up{m} = us$, in which case $\up{m + 1} = u$, or else $\up{m} = u$, in which case $\up{m + 1} = u$ as well.
This gives a bijection $\D_{u,\w\s}^k \simeq \D_{us,\w}^k \sqcup  \D_{u,\w}^{k-1}$.
\end{proof}

In analogy with this \emph{Deodhar recurrence} on distinguished subwords of $\w$, we define the \emph{$R$-polynomials} $R_{u,\bw}(q)$ for all $u \in W$ as follows.
For the empty word $\bw=\emptyword$, set
\begin{align}\label{eq:cor_R_rec_empty}
    R_{u,\emptyword}(q) \coloneqq  
    \begin{cases} 
        1 &\text{if $u = e$},\\ 
        0 & \text{if $u \neq e$}.
    \end{cases}
\end{align}
Assume that $\bw=(s_1,s_2,\dots,s_{m-1})$ is a word for which the polynomials $R_{u,\bw}(q)$ have already been defined.
Let $s\in S$ and $\bw\s \coloneqq (s_1,s_2,\dots,s_{m-1},s)$ as before.
Set
\begin{align}\label{eq:cor_R_rec}
    R_{u,\w\s}(q) \coloneqq  
    \begin{cases} 
        R_{us,\w}(q) &\text{if $us < u$},\\ 
        q R_{us,\w}(q) + (q-1) R_{u,\w}(q) & \text{if $us > u$}.
    \end{cases}
\end{align}
For reduced words $\w$, the polynomials $R_{u, \w}(q)$ were originally defined by Kazhdan--Lusztig in~\cite{KL1,KL2} using the Hecke algebra of $W$; see~\cref{prop:r_eq_trace}.

\begin{corollary}\label{cor:R=esu_dsu}
For each word $\bw$ and $u\in W$, we have
\begin{align}\label{eq:R=esu_dsu}
	R_{u,\w}(q) = \sum_{\su \in \D_{u,\w}} (q-1)^{\esu} q^{\dsu}.
\end{align}
In particular, we also have
\begin{align}\label{eq:Dm_limit}
	\lim_{q\to 1} \frac1{(q-1)^{\ell_T(w)}}{R_{e, \w}(q)} = \left|\Dm_{e, \w}\right|
\end{align}
by~\Cref{cor:e=n}.
\end{corollary}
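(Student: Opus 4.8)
The plan is to prove \eqref{eq:R=esu_dsu} by induction on the length $m$ of $\w$, arranging matters so that the Deodhar recurrence of \cref{prop:deodhar} on the combinatorial side matches the defining recurrence \eqref{eq:cor_R_rec} of the $R$-polynomials term by term. The base case $\w = \emptyword$ is immediate: $\D_{u,\emptyword}$ is empty unless $u = e$, in which case it consists of the empty subword $\su$ with $\esu = \dsu = 0$, so the right-hand side of \eqref{eq:R=esu_dsu} equals $1$ when $u = e$ and $0$ otherwise, in agreement with \eqref{eq:cor_R_rec_empty}.

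For the inductive step, write $\w\s = (s_1,\dots,s_{m-1},s)$, assume \eqref{eq:R=esu_dsu} holds for $\w$ and every group element, and recall that the bijection of \cref{prop:deodhar} sends a distinguished subword $\su = (u_1,\dots,u_m)$ of $\w\s$ to its prefix $\su' = (u_1,\dots,u_{m-1})$. Since positions $1,\dots,m-1$ carry the same data for $\su$ and $\su'$, the effect of the prefix map on $\esu$ and $\dsu$ can be read off from position $m$ alone. If $us < u$, every distinguished $u$-subword of $\w\s$ must have $u_m = s$ (otherwise the distinguished inequality at position $m$ would read $u \le us$, which fails), so $\up{m-1} = us$ and $\su'$ is a distinguished $us$-subword of $\w$; here $\esu = \eop_{\su'}$ since $u_m \ne e$, and position $m$ does not contribute to $\dsu$ because $\up{m} = u$ is longer than $\up{m-1} = us$, so $\dsu = \dop_{\su'}$. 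Summing over $\su' \in \D_{us,\w}$ and invoking the inductive hypothesis gives exactly $R_{us,\w}(q) = R_{u,\w\s}(q)$.

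If instead $us > u$, then \cref{prop:deodhar} splits $\D_{u,\w\s}$ into the subwords with $u_m = s$, mapping bijectively onto $\D_{us,\w}$, and those with $u_m = e$, mapping bijectively onto $\D_{u,\w}$. For the first family, $\esu = \eop_{\su'}$ while $\up{m} = u < us = \up{m-1}$, so position $m$ adds $1$ to $\dsu$; the contribution to the right-hand side of \eqref{eq:R=esu_dsu} is $\sum_{\su' \in \D_{us,\w}} (q-1)^{\eop_{\su'}} q^{\dop_{\su'} + 1} = q R_{us,\w}(q)$. For the second family, $u_m = e$ forces $\up{m} = \up{m-1} = u$, so $\esu = \eop_{\su'} + 1$ and $\dsu = \dop_{\su'}$, giving the contribution $(q-1) R_{u,\w}(q)$. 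Adding the two reproduces the second line of \eqref{eq:cor_R_rec}, which closes the induction.

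For the final assertion \eqref{eq:Dm_limit}, I would put $u = e$ in \eqref{eq:R=esu_dsu}, regroup the sum by the value of $\esu$, and use \cref{thm:e=n}, which says the smallest value attained is $\ell_T(w)$ and $\Dm_{e,\w} = \D_{e,\w}^{\ell_T(w)}$. Dividing by $(q-1)^{\ell_T(w)}$ and letting $q \to 1$, every term with $\esu > \ell_T(w)$ vanishes while every term with $\esu = \ell_T(w)$ tends to $1$, so the limit equals $|\Dm_{e,\w}|$. The only place demanding care is the statistic bookkeeping in the inductive step: one must line up the two possible shapes of a distinguished $u$-subword of $\w\s$ at position $m$ with the cases of \eqref{eq:cor_R_rec}, keeping straight when the extra factor attaches to $\esu$ (when $u_m = e$) rather than to $\dsu$ (when $u_m = s$ and $\up{m} < \up{m-1}$). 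Everything else is a formal consequence of \cref{prop:deodhar} and \cref{thm:e=n}.
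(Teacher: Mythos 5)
Your proof is correct and is essentially the argument the paper intends: the corollary is stated as an immediate consequence of matching the Deodhar recurrence (\cref{prop:deodhar}) term by term with the defining recurrence \eqref{eq:cor_R_rec} for the $R$-polynomials, with exactly the statistic bookkeeping you carry out, and the limit \eqref{eq:Dm_limit} follows as you say from the minimality statement of \cref{thm:e=n} (for a general word $\w$ this is indeed the right reference, rather than the special case \cref{cor:e=n}). Nothing is missing.
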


\begin{example}\label{ex:deodhar0}
Let $W = \mathfrak{S}_2 = \{e, s\}$.
Then~\eqref{eq:cor_R_rec} gives
\begin{align*}
	R_{e,\emptyword}(q) &= 1,\\
	R_{e,(s)}(q) &= q-1, & R_{s,(s)} &= 1,\\
	R_{e,(s,s)}(q) &= q^2-q+1, & R_{s,(s,s)}(q) &= q-1,\\
	R_{e,(s,s,s)}(q) & = (q-1)(q^2+1).
\end{align*}
On the other hand, $\D_{e,(s,s,s)}=\{(\se,\se,\se),(\se,\ss,\ss),(\ss,\ss,\se)\}$ and
\begin{align*}
	\eop_{(\se,\se,\se)} &=3, &\dop_{(\se,\se,\se)} &= 0,\\
	\eop_{(\se,\ss,\ss)} &= 1, &\dop_{(\se,\ss,\ss)} &= 1,\\
	\eop_{(\ss,\ss,\se)} &= 1, &\dop_{(\ss,\ss,\se)} &= 1.
\end{align*}
Therefore,
\begin{align*}
	\sum_{\su \in \D_{e,(s,s,s)}} (q-1)^{\esu} q^{\dsu} &= (q-1)^3 + 2(q-1)q = (q-1)(q^2+1),
\end{align*}
verifying the first claim of~\Cref{cor:R=esu_dsu}.

Moreover, $\Dm_{e,(s,s,s)} = \left\{(\se,\ss,\ss),(\ss,\ss,\se)\right\}$ and $\lim_{q \to 1} \frac{1}{q - 1} R_{u,(s,s,s)}(q) = 2 = \Cat(\mathfrak{S}_2)$, verifying the second claim of~\Cref{cor:R=esu_dsu}.
\end{example}

\subsection{The twisted Deodhar recurrence}

For any $v \in W$, there is a generalization of \cref{def:distinguished}:

\begin{definition}
We say that a subword $\bu$ of a word $\bw$ is \emph{$v$-distinguished} if $v\up i\leq v\up{i-1}s_i$ for each $i\in[m]$.
Generalizing $\dsu$, we write $\dsuv$ for the number of $i\in[m]$ such that $v\up{i} < v\up{i-1}$.

We write $\D^{(v)}_{u,\w}$ for the set of $v$-distinguished $u$-subwords of $\bw$.
As before, we write $\D^{(v),k}_{u,\w} \subseteq \D^{(v)}_{u,\w}$ for the subset of elements $\bu$ such that $\eop_{\bu} = k$.
In the special case where $k = \max_{\su \in \D^{(v)}_{u, \w}} \esu$, we write $\Dm^{(v)}_{u, \w} = \D^{(v),k}_{u,\w}$.
\end{definition}

\cref{prop:deodhar} generalizes to a bijection
\begin{align*}
\D_{u,\w\s}^{(v),k} \simeq 
\begin{cases}
	\D_{us,\w}^{(v),k} &\text{if $vus < vu$}, \\  
	\D_{us,\w}^{(v),k} \sqcup  \D_{u,\w}^{(v),k-1} & \text{if $vus > vu$}. 
\end{cases}
\end{align*}
As before, we define polynomials $\Rv_{u,\bw}(q)$ by induction.
Set $\Rv_{u,\emptyword}(q) \coloneqq R_{u,\emptyword}(q)$, and for any word $\bw$ and $s\in S$, set
\begin{align}\label{eq:Rv}
{\Rv_{u,\bw\s}}(q) =
\begin{cases} 
	{\Rv_{us,\w}}(q) &\text{if $vus < vu$}\\ 
	{q \Rv_{us,\w}(q) + (q-1) \Rv_{u,\w}(q)} &\text{if $vus > vu$}.
\end{cases}
\end{align}
Then \cref{cor:R=esu_dsu} generalizes to the identity 
\begin{align*}
{\Rv_{u,\w}}(q) = \sum_{\su \in \Dv_{u,\w}} (q-1)^{\esu} q^{\dsuv}.
\end{align*}

\section{The Hecke Algebra}\label{sec:hecke}
\subsection{Preliminaries}

As before, $W$ is an arbitrary finite Coxeter group and $S \subseteq W$ is a system of simple reflections.
Let $A = \Zqq$.
The \emph{Hecke algebra} of $(W, S)$ is the $A$-algebra $\H_W$ freely generated by symbols $T_w$ for $w \in W$, modulo the relations
\begin{align}\label{eq:Hecke_relation}
    T_w T_s =
    \begin{cases}
        q T_{ws} + (q - 1)T_w &\text{if $ws < w$},\\
        T_{ws} &\text{if $ws > w$},
    \end{cases}
\end{align}
for all $w \in W$ and $s \in S$.
The goal of this section is to relate the $R$-polynomials $R_{u, \w}(q)$ and their twisted versions $\Rv_{u, \w}(q)$ to the values of appropriate elements of $\H_W$ under certain $A$-linear traces.

The Hecke algebra specializes to $\Z[W]$, in the sense that there is a ring isomorphism $\H_W/(q - 1) \xrightarrow{\sim} \Z[W]$ that sends $T_w \mapsto w$ for all $w$.
It follows that $\H_W$ forms a free $A$-module with basis $\{T_w\}_{w \in W}$.
Furthermore, there is an involutive ring automorphism $D : \H_W \to \H_W$ defined by $D(q) = q^{-1}$ and $D(T_w) = T_{w^{-1}}^{-1}$ for all $w \in W$, so we find that $\{T_w^{-1}\}_{w \in W}$ forms another free $A$-basis of $\H_W$.
Note that $D$ is \oldemph{not} itself $A$-linear.

\subsection{\texorpdfstring{$R$-polynomials via the Hecke algebra}{R-polynomials via the Hecke algebra}}

For any word $\w=(s_1,s_2,\dots,s_m)$, we set $T_\w \coloneqq T_{s_1}T_{s_2}\cdots T_{s_m}$.
Note that if $\w$ is a reduced $w$-word, then $T_\w = T_w$.

\begin{proposition}\label{prop:r_eq_trace}
For any word $\w$ and $v \in W$, we have
\begin{align}\label{eq:D}
	T_v D(T_{\w}) = q^{\ell(v)} \sum_{u \in W} \Rv_{u,\w}(q^{-1}) q^{-\ell(vu)} T_{vu}.
\end{align}
\end{proposition}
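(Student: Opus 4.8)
The plan is to prove the identity by induction on the length $m$ of the word $\w=(s_1,\dots,s_m)$, running the induction against the recurrence \eqref{eq:Rv} that defines $\Rv_{u,\w}(q)$. When $\w=\emptyword$, the left-hand side is $T_v\,D(1)=T_v$, while on the right-hand side only the term $u=e$ survives (since $\Rv_{u,\emptyword}(q^{-1})=R_{u,\emptyword}(q^{-1})$ equals $1$ for $u=e$ and $0$ otherwise), and it equals $q^{\ell(v)}q^{-\ell(v)}T_v=T_v$. This settles the base case.

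For the inductive step, write $\w\s=(s_1,\dots,s_m,s)$ with $s\in S$. Then $T_{\w\s}=T_{\w}T_s$, and since $D$ is a ring automorphism, $D(T_{\w\s})=D(T_{\w})D(T_s)$. Because $s$ is a simple reflection, $D(T_s)=T_{s}^{-1}$; applying the quadratic relation from \eqref{eq:Hecke_relation} with $w=s$ gives $T_s^2=q+(q-1)T_s$, hence $T_s^{-1}=q^{-1}T_s+(q^{-1}-1)$. Consequently
\begin{align*}
  T_v\,D\bigl(T_{\w\s}\bigr)=q^{-1}\bigl(T_v\,D(T_{\w})\bigr)T_s+(q^{-1}-1)\bigl(T_v\,D(T_{\w})\bigr),
\end{align*}
and substituting the inductive hypothesis for $T_v\,D(T_{\w})$ rewrites the right-hand side as $q^{\ell(v)}$ times a linear combination, over $u\in W$, of the elements $\Rv_{u,\w}(q^{-1})q^{-\ell(vu)}T_{vu}T_s$ and $\Rv_{u,\w}(q^{-1})q^{-\ell(vu)}T_{vu}$.

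The remaining work is bookkeeping. Expand each $T_{vu}T_s$ via \eqref{eq:Hecke_relation}: it is $T_{vus}$ when $vus>vu$, and $qT_{vus}+(q-1)T_{vu}$ when $vus<vu$. Now fix $w\in W$ and collect the coefficient of $T_{vw}$ from all sources, namely $u=ws$ in the $T_{vu}T_s$ terms (where $us=w$), $u=w$ in those $T_{vu}T_s$ terms with $vws<vw$, and $u=w$ in the plain $T_{vu}$ terms; comparing powers of $q$ uses $\ell(vws)=\ell(vw)\pm1$ according to whether $vws>vw$ or $vws<vw$. After factoring out $q^{\ell(v)}q^{-\ell(vw)}$, one checks that the coefficient of $T_{vw}$ is exactly $q^{-1}\Rv_{ws,\w}(q^{-1})+(q^{-1}-1)\Rv_{w,\w}(q^{-1})$ when $vws>vw$, and exactly $\Rv_{ws,\w}(q^{-1})$ when $vws<vw$ --- which is precisely $\Rv_{w,\w\s}(q^{-1})$ by \eqref{eq:Rv}. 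This closes the induction. I expect the only real obstacle to be organizing this case analysis cleanly: the interplay between the side of $vw$ relative to $vws$, the reindexing $u\leftrightarrow us$, and the shift in $\ell(vu)$ must be tracked carefully so that the cancellations (in particular the $(q-1)$ and $(q^{-1}-1)$ contributions at $u=w$) collapse to the stated coefficients. There is no conceptual difficulty beyond this.
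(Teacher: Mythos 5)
Your proof is correct and follows essentially the same route as the paper: induction on the length of $\w$, checking the base case against \eqref{eq:cor_R_rec_empty} and matching the inductive step against the recurrence \eqref{eq:Rv} via the Hecke relation \eqref{eq:Hecke_relation}. The only (immaterial) difference is direction: you expand $D(T_s)=T_s^{-1}=q^{-1}T_s+(q^{-1}-1)$ and multiply the inductive-hypothesis expression for $T_vD(T_{\w})$ forward, collecting coefficients directly (with the cancellation at $u=w$ when $vws<vw$), whereas the paper writes $T_vD(T_{\w\s})$ with unknown coefficients, multiplies by $D(T_s)^{-1}=T_s$, and solves the resulting equations.
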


\begin{proof}
We induct on the length of $\w$.
The base case $\w = \emptyword$ is satisfied by \eqref{eq:cor_R_rec_empty}.
Suppose the result holds for $\w = (s_1, \ldots, s_m)$.
To prove it for $\w\s = (s_1, \ldots, s_m, s)$, write
\begin{align*}
	T_v D(T_{\w\s}) = q^{\ell(v)} \sum_{u\in W} \Qv_{u,\w\s}(q^{-1}) q^{-\ell(vu)} T_{vu}
\end{align*}
for some $\Qv_{u,\w}(q)\in \Zqq$.
Since $D(T_{\w\s}) = D(T_{\w})D(T_s)$, we compute using \eqref{eq:Hecke_relation} that
\begin{align*}
q^{-\ell(v)} T_v D(T_{\w})
	&=	q^{-\ell(v)} T_v D(T_{\w\s})D(T_s)^{-1}\\
	&=	\sum_{u\in W}
			\Qv_{u,\w\s}(q^{-1}) q^{-\ell(vu)} T_{vu} T_s\\
	&=	\sum_{\substack{x \in W \\ vxs < vx}}
			\Qv_{x,\w\s}(q^{-1}) q^{-\ell(vx)}
				q T_{vxs}
		+ \sum_{\substack{x \in W \\ vxs < vx}}
			\Qv_{x,\w\s}(q^{-1}) q^{-\ell(vx)}
				(q - 1)T_{vx}\\
	&\qquad
		+ \sum_{\substack{u\in W \\ vus > vu}} 
			\Qv_{u,\w\s}(q^{-1}) q^{-\ell(vu)} T_{vus}\\
	&=	\sum_{\substack{u\in W \\ vus < vu}} 
			\Qv_{u,\w\s}(q^{-1}) q^{-\ell(vus)} T_{vus}
		+ \sum_{\substack{u\in W \\ vus > vu}} 
			(q - 1) \Qv_{us,\w\s}(q^{-1})
				q^{-\ell(vus)} T_{vus}\\
	&\qquad
		+ \sum_{\substack{u\in W \\ vus > vu}} 
			q\Qv_{u,\w\s}(q^{-1})
				q^{-\ell(vus)} T_{vus}.
\end{align*}

At the same time, by the inductive hypothesis,
\begin{align*}
q^{-\ell(v)} T_v D(T_\w)
	= \sum_{u \in W} \Rv_{u,\w}(q^{-1}) q^{-\ell(vu)} T_{vu}
	= \sum_{u \in W} \Rv_{us,\w}(q^{-1}) q^{-\ell(vus)} T_{vus}.
\end{align*}

Equating coefficients, we find that:
\begin{enumerate}
\item	If $vus < vu$, then $\Qv_{u, \w\s}(q^{-1}) = \Rv_{us, \w}(q^{-1})$.

\item 	If $vus > vu$, then $(q - 1) \Qv_{us, \w\s}(q^{-1}) + q \Qv_{u, \w\s}(q^{-1}) = \Rv_{us, \w}(q^{-1})$.

\end{enumerate}
We observe that in case (2), $(vus)s < vus$, so by case (1), $\Qv_{us, \w\s}(q^{-1}) = \Rv_{u, \w}(q^{-1})$. Therefore, we can rewrite case (2) as:
\begin{enumerate}\setcounter{enumi}{1}
\item 	If $us > u$, then $\Qv_{u, \w\s}(q^{-1}) = q^{-1} \Rv_{us, \w}(q^{-1}) + (q^{-1} - 1) \Rv_{u, \w}(q^{-1})$.

\end{enumerate}
By \eqref{eq:Rv}, we deduce that $\Qv_{u,\w\s}(q) = R_{u,\w\s}(q)$ for all $u$, completing the induction.
\end{proof}

For reduced $\w$, the following result is usually taken to be the \oldemph{definition} of the $R$-polynomials $R_{u,\w}(q)$; cf.~\cite[(2.0.a)]{KL1}.

\begin{corollary}\label{cor:Hecke=R}
For any word $\w$ and $u \in W$, we have
\begin{align*}
	D(T_{\w}) = \sum_{u \in W} R_{u,\w}(q^{-1}) q^{-\ell(u)} T_u.
\end{align*}
\end{corollary}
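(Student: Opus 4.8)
The plan is to obtain \Cref{cor:Hecke=R} as the special case $v = e$ of \Cref{prop:r_eq_trace}. Setting $v = e$ in \eqref{eq:D} gives
\[
    D(T_{\w}) = \sum_{u \in W} R_{u,\w}(q^{-1}) q^{-\ell(u)} T_u,
\]
since $\ell(e) = 0$, $q^{\ell(v)} = 1$, $eu = u$, and $\Re_{u,\w}(q) = R_{u,\w}(q)$ by the notational convention fixed before the statement of \Cref{prop:r_eq_trace}. So the corollary is literally the $v = e$ instance of the proposition, and no real work is needed beyond substitution.

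If one instead wanted a self-contained proof not invoking the twisted version, I would simply rerun the induction of \Cref{prop:r_eq_trace} in the untwisted setting: induct on $\ell(\w)$, with base case $\w = \emptyword$ handled by \eqref{eq:cor_R_rec_empty}; for the inductive step, write $D(T_{\w\s}) = D(T_{\w})D(T_s)$, expand $D(T_s) = T_s^{-1}$ using the Hecke relation \eqref{eq:Hecke_relation} (equivalently $T_s^{-1} = q^{-1}T_s + (q^{-1}-1)T_e$), substitute the inductive expression for $D(T_{\w})$, regroup the sum according to whether $us < u$ or $us > u$, and match the resulting recursion against \eqref{eq:cor_R_rec} after the substitution $q \mapsto q^{-1}$. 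This is exactly the computation already carried out in the proof of \Cref{prop:r_eq_trace} with $v = e$ throughout.

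There is essentially no obstacle here: the content is entirely contained in \Cref{prop:r_eq_trace}, and the corollary is a one-line specialization. The only thing to be careful about is bookkeeping with the $q \leftrightarrow q^{-1}$ substitution and the powers of $q$ coming from $\ell(v)$ and $\ell(vu)$, but with $v = e$ these all trivialize. I would therefore present the proof as: ``Apply \Cref{prop:r_eq_trace} with $v = e$.''
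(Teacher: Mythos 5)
Your proposal is correct and matches the paper exactly: the paper presents \Cref{cor:Hecke=R} as an immediate consequence of \Cref{prop:r_eq_trace}, obtained by setting $v = e$ (so that $T_e$ is the identity, $q^{\ell(v)} = 1$, $vu = u$, and $\Re_{u,\w} = R_{u,\w}$ since the recursion \eqref{eq:Rv} with $v = e$ coincides with \eqref{eq:cor_R_rec}). No further argument is needed.
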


\subsection{Two traces} \label{sec:two_traces}

If $A$ is any commutative ring and $H$ is any $A$-algebra, then a \emph{trace} on $H$ is an $A$-linear map $\tau : H \to A$ such that $\tau(ab) = \tau(ba)$ for all $a, b \in H$.
Taking $A = \Zqq$ and $H = \H_W$, let $\tau^+, \tau^- : \H_W \to A$ be the traces defined $A$-linearly by:
\begin{align*}
\tau^\pm(T_w^{\pm1}) &\coloneqq
\begin{cases}
	1	&w = e,\\
	0	&w \neq e
\end{cases}
	\qquad\text{for $w\in W$.}
\end{align*}
We have the following identities \cite[(2.10)]{galashin2020positroids} for $u,v\in W$:
\begin{equation}\label{eq:trace_uvi}
\tau^\pm(T_u^{\pm1}T_{v^{-1}}^{\pm1})=
\begin{cases}
	q^{\pm \ell(u)} & \text{if $u=v$;}\\
	0 & \text{if $u\neq v$.}
\end{cases}
\end{equation}
So \cref{prop:r_eq_trace} implies:

\begin{corollary}\label{cor:Rv_u_w(q)_tau_-}
For any word $\w$ and $u, v \in W$, we have
    \begin{align}\label{eq:Rvu}
    \Rv_{u, \w}(q) = q^{\ell(v)} \tau^-(T_{v^{-1}}^{-1} T_{\w} T_{vu}^{-1}).
    \end{align}
\end{corollary}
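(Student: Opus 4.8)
The plan is to deduce the identity directly from \cref{prop:r_eq_trace}: I would apply the ring automorphism $D$ to~\eqref{eq:D} and then isolate a single coefficient using the orthogonality relations~\eqref{eq:trace_uvi}.

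First I would apply $D$ to both sides of~\eqref{eq:D}. Because $D$ is a ring automorphism with $D^2 = \mathrm{id}$, $D(q) = q^{-1}$, and $D(T_y) = T_{y^{-1}}^{-1}$, the left-hand side $T_v D(T_\w)$ becomes $D(T_v)\, D(D(T_\w)) = T_{v^{-1}}^{-1} T_\w$. On the right-hand side $D$ acts term by term: it sends the scalar $q^{\ell(v)}\Rv_{u,\w}(q^{-1})q^{-\ell(vu)} \in \Zqq$ to $q^{-\ell(v)}\Rv_{u,\w}(q)q^{\ell(vu)}$ — here one uses that $D$ restricts to $q \mapsto q^{-1}$ on scalars, so it carries $\Rv_{u,\w}(q^{-1})$ to $\Rv_{u,\w}(q)$ — and it sends $T_{vu}$ to $T_{(vu)^{-1}}^{-1}$. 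This would yield
\[
T_{v^{-1}}^{-1} T_\w \;=\; q^{-\ell(v)} \sum_{u' \in W} \Rv_{u',\w}(q)\, q^{\ell(vu')}\, T_{(vu')^{-1}}^{-1}.
\]

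Next, for a fixed $u \in W$, I would multiply both sides on the right by $T_{vu}^{-1}$ and apply $\tau^-$. By~\eqref{eq:trace_uvi} in its $\tau^-$ form, $\tau^-\!\bigl(T_{(vu')^{-1}}^{-1} T_{vu}^{-1}\bigr)$ vanishes unless $vu' = vu$, i.e.\ $u' = u$, in which case it equals $q^{-\ell(vu)}$ (using $\ell((vu)^{-1}) = \ell(vu)$). So every term of the sum but one drops out, leaving
\[
\tau^-\!\bigl(T_{v^{-1}}^{-1} T_\w\, T_{vu}^{-1}\bigr) \;=\; q^{-\ell(v)}\,\Rv_{u,\w}(q)\, q^{\ell(vu)}\, q^{-\ell(vu)} \;=\; q^{-\ell(v)}\,\Rv_{u,\w}(q),
\]
and multiplying through by $q^{\ell(v)}$ gives~\eqref{eq:Rvu}.

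The argument is essentially bookkeeping, and I do not anticipate a genuine obstacle; the one subtlety to be careful about is that applying $D$ to~\eqref{eq:D} is legitimate even though $D$ is not $\Zqq$-linear, since only its ring-homomorphism property is used — and it is exactly the non-linear action of $D$ on the scalars (replacing $\Rv_{u,\w}(q^{-1})$ by $\Rv_{u,\w}(q)$ and $q^{-\ell(vu)}$ by $q^{\ell(vu)}$) that turns the $q^{-1}$ of \cref{prop:r_eq_trace} into the $q$ of the corollary. As a sanity check, an equivalent route would instead multiply~\eqref{eq:D} on the right by $T_{(vu)^{-1}}$ and apply $\tau^+$, obtaining $\Rv_{u,\w}(q^{-1}) = q^{-\ell(v)}\tau^+\!\bigl(T_v D(T_\w) T_{(vu)^{-1}}\bigr)$, and then substitute $q \mapsto q^{-1}$ using the readily verified identity $\tau^-(D(x)) = \overline{\tau^+(x)}$ together with $D\bigl(T_v D(T_\w) T_{(vu)^{-1}}\bigr) = T_{v^{-1}}^{-1} T_\w\, T_{vu}^{-1}$.
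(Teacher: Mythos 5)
Your proposal is correct and is essentially the paper's argument: the paper likewise deduces \eqref{eq:Rvu} from \cref{prop:r_eq_trace} by isolating one coefficient via the orthogonality \eqref{eq:trace_uvi}, the only cosmetic difference being that the paper first multiplies \eqref{eq:D} by $q^{-\ell(v)}T_{(vu)^{-1}}$ and applies $\tau^+$, then converts to $\tau^-$ using $\tau^+\circ D = D\circ\tau^-$, whereas you apply $D$ to \eqref{eq:D} upfront and then use $\tau^-$ directly. Indeed, your closing ``sanity check'' route is exactly the proof given in the paper.
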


\begin{proof}
Right-multiply both sides of \eqref{eq:D} by $q^{-\ell(v)} T_{(vu)^{-1}}$ to get
\begin{align*}
\Rv_{u, \w}(q^{-1})
	&=	q^{-\ell(v)} \tau^+(T_v D(T_{\w}) T_{(vu)^{-1}})
	=	q^{-\ell(v)} \tau^+(D(T_{v^{-1}}^{-1} T_{\w} T_{vu}^{-1})).
\end{align*}
Then observe that $\tau^+ \circ D = D \circ \tau^-$.
\end{proof}

\begin{corollary}\label{cor:r-to-tau-minus}
For any word $\w$ and $u \in W$, we have
\begin{align}
	R_{u, \w}(q) &= \tau^-(T_{\w} T_u^{-1}).
\end{align}
\end{corollary}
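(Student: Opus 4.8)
The plan is to obtain \Cref{cor:r-to-tau-minus} as the $v = e$ specialization of \Cref{cor:Rv_u_w(q)_tau_-}, so that essentially no new work is required. First I would record the (entirely formal) observation that the twisted objects reduce to the untwisted ones when $v = e$: a subword $\su$ of $\w$ is $e$-distinguished precisely when it is distinguished, the statistic $\dsuv$ with $v = e$ is just $\dsu$, and hence the recurrence \eqref{eq:Rv} for $\Rv_{u,\w}$ with $v = e$ coincides term-by-term with the recurrence \eqref{eq:cor_R_rec} defining $R_{u,\w}$, the two base cases agreeing as well. Therefore $\Rv_{u,\w}(q) = R_{u,\w}(q)$ when $v = e$.

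Second, I would substitute $v = e$ into the identity \eqref{eq:Rvu} of \Cref{cor:Rv_u_w(q)_tau_-}. Since $\ell(e) = 0$, since $T_{e^{-1}}^{-1} = T_e$ is the unit of $\H_W$, and since $T_{eu}^{-1} = T_u^{-1}$, the right-hand side collapses to $\tau^-(T_\w T_u^{-1})$, yielding exactly the claimed identity $R_{u,\w}(q) = \tau^-(T_\w T_u^{-1})$.

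An alternative route, which I would use if I wanted an argument independent of the twisted machinery, is to proceed directly from \Cref{cor:Hecke=R}. Applying the involution $D$ to both sides of $D(T_\w) = \sum_{x \in W} R_{x,\w}(q^{-1})\, q^{-\ell(x)} T_x$ and using that $D$ is an involution with $D(q) = q^{-1}$ and $D(T_x) = T_{x^{-1}}^{-1}$ gives $T_\w = \sum_{x \in W} R_{x,\w}(q)\, q^{\ell(x)} T_{x^{-1}}^{-1}$. Then I would multiply on the right by $T_u^{-1}$, apply $\tau^-$, and invoke the orthogonality relation \eqref{eq:trace_uvi} together with $\ell(x) = \ell(x^{-1})$; only the term $x = u$ survives, and the powers of $q$ cancel, leaving $\tau^-(T_\w T_u^{-1}) = R_{u,\w}(q)$.

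There is no genuine obstacle here: the only points requiring care are the bookkeeping of inverses and lengths (the identities $\ell(u) = \ell(u^{-1})$ and $T_e = 1_{\H_W}$) in whichever derivation is chosen, and, in the first approach, making sure the $v = e$ instance of \eqref{eq:Rv} is genuinely \emph{identified} with \eqref{eq:cor_R_rec} rather than merely declared analogous to it.
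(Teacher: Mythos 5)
Your proposal is correct and matches the paper's (implicit) argument: the paper states this corollary without proof precisely because it is the $v=e$ specialization of \Cref{cor:Rv_u_w(q)_tau_-}, with $\Rv_{u,\w}$ at $v=e$ reducing to $R_{u,\w}$ and $\ell(e)=0$, $T_e=1$ collapsing the right-hand side, exactly as you say. Your alternative derivation from \Cref{cor:Hecke=R} via $D$ and the orthogonality relation \eqref{eq:trace_uvi} is also sound, but it is not needed.
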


\section{Characters of the Hecke Algebra}\label{sec:hecke-char}
\subsection{\texorpdfstring{Characters of $W$}{Characters of W}}

Let $W$ be a Coxeter group.
The goal of this section is to relate the traces from \cref{sec:hecke} to $q$-deformed rational $W$-Catalan numbers, by way of character-theoretic arguments inspired by \cite{trinh2021hecke}.
As a consequence, we will show that \Cref{thm:intro:tr} follows from the existence and properties of Lusztig's exotic Fourier transform.

For the convenience of the reader, CHEVIE~\cite{GH96} code for this section appears at~\cite{code}.  (Our proofs do not rely on any code.)
In type $A$, the objects and formulas below admit explicit interpretations in the world of symmetric functions, as we review in \cref{sec:sym_fcns}.

\begin{remark}
One can also prove \cref{thm:intro:tr} directly from the results in \cite{trinh2021hecke} together with results of K\'alm\'an and Gordon--Griffeth.
More precisely, it follows from combining \cref{cor:r-to-tau-minus}, the $W$-analogue of \cite[Proposition 3.1]{kalman2009meridian}, \cite[Corollary 8.6.2]{trinh2021hecke}, \cite[Corollary 11]{trinh2021hecke}, \cite[Corollary 13]{trinh2021hecke}, and \cite[Section~1.12]{gordon2012catalan}, in that order.
Below, we take a simpler approach that isolates the role of the exotic Fourier transform to the greatest extent possible.
We still rely on Gordon--Griffeth, but avoid relying on K\'alm\'an.
\end{remark}

Fix a subfield $\Q_W \subseteq \C$ over which every (complex) representation of $W$ is defined.
Let $\Irr(W)$ be the set of irreducible characters of $W$, and let $R_W$ be the representation ring of $W$, or equivalently, the ring generated by the class functions $\chi : W \to \Q_W$, for $\chi \in \Irr(W)$, under pointwise addition and multiplication.
We write $(-, -)_W : R_W \times R_W \to \Z$ for the multiplicity pairing on $R_W$, i.e., the symmetric bilinear pairing given by the identity matrix with respect to the $\Z$-basis $\{\chi\}_{\chi \in \Irr(W)}$.

We write $1$ and $\varepsilon$ for the trivial and sign characters of $W$, respectively.
Explicitly, $1(w) = 1$ and $\varepsilon(w) = (-1)^{\ell(w)}$ for all $w \in W$.

Fix a reflection representation $\V$ such that $\V^W = 0$, or equivalently, $\dim(V) = \rank$.
Let $\varsigma_i : W \to \Q_W$ be the character of the $i$th symmetric power of $\V$: that is, $\varsigma_i(w) = \tr(w \mid \Sym^i(\V))$.
Let
\begin{align*}
    {[\Sym]_q} &= \sum_i q^i \varsigma_i \in R_W[\![q]\!].
\end{align*}
The element $[\Sym]_q$ only depends on $W$ and not on the choice of the reflection representation $\V$.

\subsection{\texorpdfstring{Characters of $\mathcal{H}_W$}{Characters of the Hecke algebra}}

Let $K = \Q_W(q^{\pm \frac{1}{2}}) \supseteq A$.
The $K$-algebra
\begin{align*}
	K\H_W = K \otimes_A \H_W
\end{align*}
is known to be isomorphic to $K[W]$, the group algebra of $W$ over $K$; see~\cite[Theorem~7.4.6]{geck2000characters}.
In particular, they have the same representation theory:
Every $K\H_W$-module of finite $K$-dimension is a direct sum of simple $K\H_W$-modules, and the simple $K\H_W$-modules are in bijection with the simple $K[W]$-modules.
Moreover, the latter are in bijection with the irreducible representations of $W$, because by construction, every representation of a finite Coxeter group can be defined over $K$.

Recall the definition of trace from~\Cref{sec:two_traces}.
Every $K\H_W$-module $M$ of finite $K$-dimension defines a trace $\chi_M : K\H_W \to K$ called its \emph{character}: namely, 
\begin{align*}
    \chi_M(a) = \tr_K(a \mid M).
\end{align*}
Since $K\H_W$ is split semisimple, the character $\chi_M$ determines $M$ up to isomorphism.

We say that a trace $\tau : K\H_W \to K$ is \emph{symmetrizing} if the $K$-bilinear form on $K\H_W$ defined by $a \otimes b \mapsto \tau(ab)$ is nondegenerate.
In this case, the \emph{symmetrizer} of $\tau$ is the element $\Sigma(\tau) \in K\H_W \otimes_K K\H_W$ defined by
\begin{align*}
    \Sigma(\tau) = \sum_i e_i \otimes f_i,
\end{align*}
for any choice of ordered $K$-bases $(e_i)_i$, $(f_i)_i$ for $K\H_W$ that are dual to one another under the bilinear form.
We write $\bar{\Sigma}(\tau) \in K\H_W$ for the image of $\Sigma(\tau)$ under the multiplication map $K\H_W \otimes_K K\H_W \to K\H_W$.
This element is central in $K\H_W$.

We now state a version of \emph{Schur orthogonality} for $K\H_W$.  Let $\Irr(W)$ be the set of characters of simple $K[W]$-modules up to isomorphism.
Each $\chi \in \Irr(W)$ restricts to a class function $\chi : W \to \Q_W$.
At the same time, via the isomorphism $K\H_W \xrightarrow{\sim} K[W]$, we can pull back $\chi$ to the character of a simple $K\H_W$-module.
We denote the resulting character by $\chi_q : K\H_W \to K$. 
Schur orthogonality for $K\H_W$ says that for any symmetrizing trace $\tau : K\H_W \to K$, we have a decomposition
\begin{align}\label{eq:schur-orthogonality}
    \tau = \sum_{\chi \in \Irr(W)} \frac{1}{\mathbf{s}_\tau(\chi_q)}\chi_q,
\end{align}
where $\mathbf{s}_\tau(\chi_q) \in K$ is a scalar characterized by the property that $\bar{\Sigma}(\tau)$ acts by $\chi(e)\mathbf{s}_\tau(\chi_q)$ on any $K\H_W$-module with character $\chi_q$.
We say that $\mathbf{s}_\tau(\chi_q)$ is the \emph{Schur element} for $\chi_q$ with respect to $\tau$.
We can view its defining property as a version of Schur's lemma for the central element $\bar{\Sigma}(\tau) \in K\H_W$.

\subsection{The sign twist}

Abusing notation, let $\tau^+, \tau^- : K\H_W \to K$ denote the $K$-linear extensions of the $A$-linear traces from \cref{sec:hecke}.
It turns out that both are symmetrizing.
Namely, if we set
\begin{align*}
    \sigma_w &\coloneqq q^{-\frac{\ell(w)}{2}} T_w
\end{align*}
for all $w \in W$, then \eqref{eq:trace_uvi} becomes equivalent to:
\begin{align*}
\Sigma(\tau^\pm) &= \sum_{w \in W} \sigma_w^{\pm 1} \otimes \sigma_{w^{-1}}^{\pm 1}.
\end{align*}
Now we can relate the Schur elements of these traces.
In what follows, we will write $\mathbf{s}^\pm(\chi_q)$ in place of $\mathbf{s}_{\tau^\pm}(\chi_q)$ for clarity.
Recall that $\varepsilon$ is the sign character of $W$.

\begin{proposition}\label{prop:sign-twist}
For all $\chi \in \Irr(W)$, we have $\mathbf{s}^+(\chi_q) = \mathbf{s}^-((\varepsilon\chi)_q)$.
\end{proposition}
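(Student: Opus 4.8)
The plan is to relate the two traces $\tau^+$ and $\tau^-$ by means of a single ring automorphism of $K\H_W$ that implements the sign twist, and then to track what this automorphism does to symmetrizers and Schur elements. First I would introduce the \emph{sign involution} $\epsilon\colon K\H_W \to K\H_W$, defined $K$-linearly by $\epsilon(T_s) = -q\, T_s^{-1}$ for each $s \in S$ (equivalently $\epsilon(\sigma_s) = -\sigma_s^{-1}$, which is manifestly compatible with the braid and quadratic relations, so $\epsilon$ is a well-defined $K$-algebra automorphism). On the level of the group algebra $K[W]$, the specialization $q \to 1$ of $\epsilon$ is exactly the automorphism $w \mapsto \varepsilon(w)\,w$, so $\epsilon$ pulls back a simple module with character $\chi_q$ to one with character $(\varepsilon\chi)_q$. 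This is the key structural input.

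Next I would compute how $\epsilon$ interacts with $\tau^\pm$. Using $\epsilon(\sigma_w) = (-1)^{\ell(w)}\sigma_{w}^{-1}$ (which follows from the definition on generators together with multiplicativity, since $\ell$ is additive along reduced words), one checks directly on the basis $\{\sigma_w\}$ that $\tau^- \circ \epsilon = \tau^+$: indeed $\tau^-(\epsilon(\sigma_w)) = (-1)^{\ell(w)}\tau^-(\sigma_w^{-1})$, which is $1$ when $w = e$ and $0$ otherwise, i.e.\ exactly $\tau^+(\sigma_w)$. Consequently $\tau^-$ is symmetrizing if and only if $\tau^+$ is, and $\epsilon$ carries a pair of $\tau^+$-dual bases to a pair of $\tau^-$-dual bases; applying $\epsilon \otimes \epsilon$ to $\Sigma(\tau^+) = \sum_w \sigma_w \otimes \sigma_{w^{-1}}$ recovers $\Sigma(\tau^-) = \sum_w \sigma_w^{-1} \otimes \sigma_{w^{-1}}^{-1}$, consistent with the formulas already recorded in the paper. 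Hence $\epsilon(\bar\Sigma(\tau^+)) = \bar\Sigma(\tau^-)$ as central elements of $K\H_W$.

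Finally I would run Schur's lemma through $\epsilon$. Fix $\chi \in \Irr(W)$ and let $M$ be a simple $K\H_W$-module with character $\chi_q$. Then the twist $M^\epsilon$ (same underlying space, action precomposed with $\epsilon$) has character $(\varepsilon\chi)_q$. By definition of the Schur element, $\bar\Sigma(\tau^-)$ acts on $M^\epsilon$ by the scalar $\chi(e)\,\mathbf{s}^-((\varepsilon\chi)_q)$, using $\dim M^\epsilon = \dim M = \chi(e) = (\varepsilon\chi)(e)$. On the other hand, $\bar\Sigma(\tau^-) = \epsilon(\bar\Sigma(\tau^+))$, so its action on $M^\epsilon$ equals the action of $\bar\Sigma(\tau^+)$ on $M$, which by definition is the scalar $\chi(e)\,\mathbf{s}^+(\chi_q)$. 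Cancelling $\chi(e) \neq 0$ gives $\mathbf{s}^+(\chi_q) = \mathbf{s}^-((\varepsilon\chi)_q)$, as desired.

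The routine steps are the verification that $\epsilon$ respects the Hecke relations and the bookkeeping of signs in $\epsilon(\sigma_w)$; the one point deserving care — and the place I would be most careful — is the compatibility of module-twisting with the central-element computation, i.e.\ confirming that ``$\bar\Sigma(\tau^-)$ acting on $M^\epsilon$'' genuinely equals ``$\epsilon(\bar\Sigma(\tau^+)) = \bar\Sigma(\tau^-)$ acting there,'' and in particular that the scalar extracted is indexed by $(\varepsilon\chi)_q$ and not by $\chi_q$. This is really just unwinding the definition of a module twist, but it is the step where an index could silently slip, so I would write it out explicitly rather than leaving it to the reader.
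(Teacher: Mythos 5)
Your proof is correct, and it rests on the same key ingredient as the paper's: your involution $\epsilon$ is exactly the involution $\gamma^K$ of \cite{geck2000characters} that the paper invokes, and the crucial fact that the $\epsilon$-twist $M^{\epsilon}$ of a simple module with character $\chi_q$ has character $(\varepsilon\chi)_q$ is precisely \cite[Proposition~9.4.1(b)]{geck2000characters}, which the paper cites. The difference is only in how this fact is exploited: the paper substitutes the identity $\varepsilon(w)\chi_q(\sigma_{w^{-1}}^{-1}) = (\varepsilon\chi)_q(\sigma_w)$ into the dual-basis expression $\mathbf{s}^{\pm}(\chi_q) = \frac{1}{\chi(e)}\sum_{w\in W}\chi_q(\sigma_w^{\pm 1})\chi_q(\sigma_{w^{-1}}^{\pm 1})$, while you transport the symmetrizer along $\epsilon$ (using $\tau^-\circ\epsilon=\tau^+$, hence $\epsilon(\bar\Sigma(\tau^+))=\bar\Sigma(\tau^-)$) and compare the Schur-lemma scalars of $\bar\Sigma(\tau^+)$ acting on $M$ and of $\bar\Sigma(\tau^-)$ acting on $M^{\epsilon}$; this avoids quoting the character-sum formula for Schur elements at the cost of the (correct) verifications about $\tau^{\pm}$ and $\Sigma(\tau^{\pm})$. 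Two small repairs: the basis formula should read $\epsilon(\sigma_w)=\varepsilon(w)\,\sigma_{w^{-1}}^{-1}$ rather than $\varepsilon(w)\,\sigma_w^{-1}$, since inverting $\sigma_{s_1}\cdots\sigma_{s_m}$ reverses the word (none of your later steps are affected, as they only detect $w=e$ or sum over all of $W$, which is stable under $w\mapsto w^{-1}$); and your one-line $q\to 1$ justification that $M^{\epsilon}$ has character $(\varepsilon\chi)_q$ is the standard Tits-deformation argument but tacitly uses that the labelling $\chi\mapsto\chi_q$ is compatible with specialization, so to be airtight you should either spell that compatibility out or simply cite \cite[Proposition~9.4.1(b)]{geck2000characters} at that point, exactly as the paper does.
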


\begin{proof}
For all $\chi$, we have
\begin{align*}
	\mathbf{s}^\pm(\chi_q) &= \frac{1}{\chi(e)} \sum_{w \in W} \chi_q(\sigma_w^{\pm 1})\chi_q(\sigma_{w^{-1}}^{\pm 1}).
\end{align*}
Since $(\varepsilon\chi)(e) = \chi(e)$ and $\varepsilon^2 = 1$, it is enough to show that 
\begin{align*}
	\varepsilon(w)\chi_q(\sigma_{w^{-1}}^{-1}) = (\varepsilon \chi)_q(\sigma_w)
\end{align*}
for all $w \in W$ and $\chi \in \Irr(W)$.
Indeed, this is \cite[Proposition 9.4.1(b)]{geck2000characters}, once we observe that the $K$-algebra involution they call $\gamma^K$ is, in our notation, given by $\gamma^K(\sigma_w) = \varepsilon(w) \sigma_{w^{-1}}^{-1}$ for all $w \in W$.
\end{proof}

\subsection{\texorpdfstring{Periodic elements of $\mathcal{H}_W$}{Periodic elements of the Hecke algebra}}

Recall that $\wo$ denotes the longest element of $W$.
The definition below is adapted from a standard definition at the level of the positive braid monoid of $W$, which we will not need until \Cref{subsec:noncrossing-objects}.

\begin{definition}
For any word $\w = (s_1, \ldots, s_m)$, we set $\sigma_{\w} \coloneqq \sigma_{s_1} \sigma_{s_2} \cdots \sigma_{s_m} = q^{-\frac{m}{2}} T_{\w}$.
We say that $\w$ is \emph{periodic} if $\sigma_{\w}^m = \sigma_{w_\circ}^{2p}$ for some $p, m$ with $m \neq 0$.
In this case, we say that $\frac{p}{m}$ is the \emph{slope} of $\w$.
\end{definition}

\begin{example}
If $\bc$ is a Coxeter word, then $\sigma_{\bc}^p$ is periodic of slope $\frac{p}{h}$ for any integer $p$.
\end{example}

For all $\chi \in \Irr(W)$, the \emph{fake} and \emph{generic degrees} of $\chi$ are respectively
\begin{align}
\label{eq:fake-degrees}
	\Feg_\chi(q) &\coloneqq \frac{(\chi, [\Sym]_q)_W}{(1, [\Sym]_q)_W},\\
\label{eq:generic-degrees}
	\Deg_\chi(q) &\coloneqq \frac{\mathbf{s}^+(1_q)}{\mathbf{s}^+(\chi_q)},
\end{align}
where in \eqref{eq:fake-degrees}, we have extended $(-, -)_W$ to a pairing $R_W[\![q]\!] \times R_W[\![q]\!] \to \Z[\![q]\!]$ by linearity.
It turns out that $\Feg_\chi(q) \in \Z[q]$ and $\Deg_\chi \in \Q_W[q]$.
At $q=1$, both polynomials specialize to the \emph{degree} of $\chi$, i.e., the $\Z[q^{\pm 1}]$-dimension of the underlying $\H_W$-module:
\begin{equation}\label{eq:Feg(1)=Deg(1)=dim}
    \Feg_\chi(1)=\Deg_\chi(1)=\chi(1).
\end{equation}
For $\Feg_\chi(1)$, this follows from the discussion in~\cite[Section~2.5]{springer1974regular}, and for $\Deg_\chi(1)$, see~\cite[Section~8.1.8]{geck2000characters}.
In addition, $\mathbf{s}^+(1_q)$ is the Poincar\'e polynomial of $W$, which, by a formula of Bott--Solomon \cite{solomon1966orders}, can be written as
\begin{align}\label{eq:poincare-poly}
\mathbf{s}^+(1_q) &= \sum_{w \in W} q^{\ell(w)}
    =	\prod_{i = 1}^\rank \frac{1 - q^{d_i}}{1 - q}
    =	\frac{1}{(1 - q)^\rank (1, [\Sym]_q)_W}.
\end{align}
We will show that:
\begin{enumerate}
\item 	The values of fake degrees at roots of unity are related to the values of $\tau^\pm(\sigma_{\w})$ for periodic $\w$.

\item 	The values of generic degrees at roots of unity are related to $q$-deformed rational $W$-Catalan numbers.

\end{enumerate}
Recall that $T\subset W$ is the set of reflections.
In what follows, let $N \coloneqq |T| = \ell(w_\circ)$ and 
\begin{align}\label{eq:content}
    \cont(\chi) \coloneqq \frac{1}{\chi(e)} \sum_{t \in T} \chi(t).
\end{align}
Note that $\cont(1) = N$.
More generally, it turns out that $\cont(\chi) \in \Z$.

\begin{remark}\label{rem:content}
In \cite{trinh2021hecke}, the integer $\cont(\chi)$ was called the \emph{content} of $\chi$, because for $W = \Sn$, it is the content of the integer partition of $n$ corresponding to $\chi$. 
Explicitly, the \emph{content} of an integer partition $\la=(\la_1\geq\la_2\geq\cdots\geq0)$ is the sum 
$\cont(\la)\coloneqq\sum_{i=1}^{\infty}\sum_{j=1}^{\la_i} (j-i)$.
\end{remark}

\begin{theorem}[Springer]\label{thm:periodic-springer}
If $\chi \in \Irr(W)$ and $\w$ is a periodic word of slope $\nu \in \Q$, then
\begin{equation}\label{eq:chi_periodic}
	\chi_q(\sigma_{\w}) = q^{\nu \cont(\chi)} \Feg_\chi(e^{2\pi i\nu}).
\end{equation}
\end{theorem}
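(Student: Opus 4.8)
The plan is to reduce the statement to its $q=1$ shadow, which is Springer's classical theorem on regular elements, by exploiting the central element $\pi := \sigma_{\wo}^2 = q^{-N}T_{\wo}^2 \in K\H_W$. The three ingredients are: (i) $\pi$ is central in $K\H_W$; (ii) $\pi$ acts on the simple $K\H_W$-module $M_\chi$ affording $\chi_q$ by the explicit scalar $q^{\cont(\chi)}$ (this is the Hecke-algebra avatar of the full twist in the braid group of $W$, and is where the integer $\cont(\chi)$ of \cite{trinh2021hecke} enters; note that at $q=1$ this scalar is $1$, consistently with $\pi|_{q=1}=\wo^2=e$); and (iii) Springer's theorem \cite{springer1974regular} that a $\zeta$-regular element $w\in W$ satisfies $\chi(w)=\Feg_\chi(\zeta)$ for all $\chi\in\Irr(W)$.

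First I would use (i) and (ii) to control $\sigma_{\w}$ on $M_\chi$. Suppose $\w$ is periodic of slope $\nu=p/m$, so $\sigma_{\w}^m=\sigma_{\wo}^{2p}=\pi^p$; hence on $M_\chi$ the operator $q^{-\nu\cont(\chi)}\sigma_{\w}$ has $m$-th power $q^{-p\cont(\chi)}\pi^p=\mathrm{id}$. Therefore all its eigenvalues are $m$-th roots of unity, and $q^{-\nu\cont(\chi)}\chi_q(\sigma_{\w})$ is a sum of $\chi(e)$ of them. As a function of $q$ this quantity is a Laurent polynomial in $q^{1/N'}$ for a suitable $N'$ (since $\chi_q(\sigma_{\w})\in\Q_W[q^{\pm 1/2}]$ and $\nu\cont(\chi)$ may be fractional), yet it takes values in a finite set; a Laurent polynomial in $q^{1/N'}$ with finitely many values on the positive reals is constant. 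Call this constant $c_\chi$, so that $\chi_q(\sigma_{\w})=q^{\nu\cont(\chi)}\,c_\chi$ identically. (When $\nu\cont(\chi)\notin\tfrac12\Z$ this forces $c_\chi=0$, which turns out to be consistent — see below.)

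Next I would specialize at $q=1$. On the one hand $q^{\nu\cont(\chi)}\to 1$, so $c_\chi=\chi_q(\sigma_{\w})|_{q=1}$. On the other hand, under $\H_W/(q-1)\cong\Z[W]$ the element $\sigma_{\w}=q^{-m/2}T_{\w}$ specializes to $w:=s_1 s_2\cdots s_m\in W$ and $\chi_q$ specializes to $\chi$, so $c_\chi=\chi(w)$. Thus $\chi_q(\sigma_{\w})=q^{\nu\cont(\chi)}\chi(w)$, and it remains to identify $\chi(w)$ with $\Feg_\chi(e^{2\pi i\nu})$. For this one uses that $w$ is an $e^{2\pi i\nu}$-regular element of $W$: for $\w=\bc^p$ with $p$ coprime to $h$ this is Springer's statement that $c^p$ is $e^{2\pi i p/h}$-regular, and for a general periodic word it follows from the braid-group description of periodic elements (conjugacy of roots of the full twist, reducing $\nu=p/m$ to lowest terms). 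Springer's theorem then gives $\chi(w)=\Feg_\chi(e^{2\pi i\nu})$, and substituting $c_\chi=\Feg_\chi(e^{2\pi i\nu})$ completes the proof; in particular, when $\nu\cont(\chi)\notin\tfrac12\Z$ both sides vanish.

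I expect the genuine obstacle to be the last point: verifying that the underlying element $w=s_1\cdots s_m$ of an arbitrary periodic word is honestly $e^{2\pi i\nu}$-regular, so that Springer's theorem applies — this is exactly the content-rich input, whereas everything upstream (centrality of $\pi$, the root-of-unity argument, the $q=1$ specialization) is essentially formal. A secondary point needing care is pinning down the scalar in (ii) precisely: centrality gives \emph{some} scalar $\omega_\chi(q)$ for free, but one must know $\omega_\chi(q)=q^{\cont(\chi)}$ to make the exponent $\nu\cont(\chi)$ come out right (for the application $\w=\bc^p$ one can instead check this on the relevant modules directly, or on linear characters and then argue). Alternatively, one could bypass the $q=1$ reduction entirely by citing a Hecke-algebra-level form of Springer's theorem for periodic elements directly from the literature; the argument above has the advantage of isolating the single nonformal ingredient.
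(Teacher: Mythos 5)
Your proposal is correct, and it is essentially the paper's route made self-contained: the paper's proof of \cref{thm:periodic-springer} is literally the citation ``combine \cite[Corollary 9.2.2]{trinh2021hecke} with \cite[Theorem 4.2(v)]{springer1974regular},'' and your argument reconstructs the proof of the cited corollary (centrality of the full twist $\sigma_{\wo}^2$, the scalar $q^{\cont(\chi)}$ by which it acts, the rigidity argument showing $q^{-\nu\cont(\chi)}\chi_q(\sigma_{\w})$ is constant in $q$, and the $q=1$ specialization giving $\chi(w)$) before invoking the identical Springer input $\chi(w)=\Feg_\chi(e^{2\pi i\nu})$. What your version buys is transparency about where the content sits, and your two flagged points are exactly what the citations supply. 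On the first: the scalar $q^{\cont(\chi)}$ cannot be pinned down by testing linear characters alone (that determines nothing about higher-dimensional simples), but it does follow cleanly from a determinant computation on the simple module (the eigenvalues of $T_s$ are $q$ and $-1$ with multiplicities $(\chi(e)\pm\chi(s))/2$, and summing $\chi(s_i)$ over a reduced word for $\wo$ gives $\chi(e)\cont(\chi)$ by conjugacy-invariance), combined with the normalization $\pi|_{q=1}=\wo^2=e$; alternatively one cites it, as it is part of what \cite[Corollary 9.2.2]{trinh2021hecke} packages. On the second: note that the paper defines periodicity of $\w$ as an identity in $K\H_W$, not in the braid monoid, so deducing $e^{2\pi i\nu}$-regularity of $w$ from the braid-group classification of roots of the full twist needs an extra step; this is precisely the role of \cite[Corollary 9.3.6]{trinh2021hecke}, which the paper invokes in \cref{cor:fake-degree-at-root-of-unity}, while for the only case actually used here, $\w=\bc^p$, regularity of $c^p$ with regular eigenvalue $e^{2\pi ip/h}$ is classical, as you observe.
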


\begin{proof}
Combine \cite[Corollary 9.2.2]{trinh2021hecke} and \cite[Theorem 4.2(v)]{springer1974regular}.
\end{proof}

\begin{corollary}\label{cor:fake-degree-at-root-of-unity}
If $\w$ is a periodic $w$-word of slope $\nu \in \Q$, then
\begin{align*}
\tau^+(\sigma_{\w}) &= \frac{1}{\mathbf{s}^+(1_q)}
	\sum_{\chi \in \Irr(W)}
		q^{\nu \cont(\chi)}
		\Feg_\chi(e^{2\pi i\nu})
		\Deg_\chi(q),\\
\tau^-(\sigma_{\w}) &= \frac{\varepsilon(w)}{\mathbf{s}^+(1_q)}
	\sum_{\chi \in \Irr(W)}
		q^{-\nu \cont(\chi)}
		\Feg_\chi(e^{2\pi i\nu})
		\Deg_\chi(q).
\end{align*}
\end{corollary}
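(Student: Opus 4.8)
The strategy is to combine Schur orthogonality~\eqref{eq:schur-orthogonality} for the symmetrizing traces $\tau^\pm$ with Springer's formula (\Cref{thm:periodic-springer}) for the value of an irreducible character on a periodic element. First I would write $\tau^+ = \sum_{\chi \in \Irr(W)} \frac{1}{\mathbf{s}^+(\chi_q)} \chi_q$, evaluate both sides on $\sigma_{\w}$, and substitute $\chi_q(\sigma_{\w}) = q^{\nu\cont(\chi)} \Feg_\chi(e^{2\pi i \nu})$ from \Cref{thm:periodic-springer}. This already produces a sum $\sum_\chi \frac{1}{\mathbf{s}^+(\chi_q)} q^{\nu\cont(\chi)}\Feg_\chi(e^{2\pi i\nu})$; to match the stated form I would rewrite $\frac{1}{\mathbf{s}^+(\chi_q)} = \frac{1}{\mathbf{s}^+(1_q)} \cdot \frac{\mathbf{s}^+(1_q)}{\mathbf{s}^+(\chi_q)} = \frac{1}{\mathbf{s}^+(1_q)} \Deg_\chi(q)$, using the definition~\eqref{eq:generic-degrees} of the generic degree. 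Pulling $\frac{1}{\mathbf{s}^+(1_q)}$ out of the sum yields exactly the claimed formula for $\tau^+(\sigma_{\w})$.

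For $\tau^-(\sigma_{\w})$ I would run the same argument with $\tau^-$ in place of $\tau^+$: Schur orthogonality gives $\tau^-(\sigma_{\w}) = \sum_\chi \frac{1}{\mathbf{s}^-(\chi_q)}\chi_q(\sigma_{\w})$, and \Cref{thm:periodic-springer} again supplies $\chi_q(\sigma_{\w})$. The issue is to convert the Schur elements $\mathbf{s}^-(\chi_q)$ back into $\mathbf{s}^+$ quantities and generic degrees. Here \Cref{prop:sign-twist} does the job: $\mathbf{s}^-(\chi_q) = \mathbf{s}^+((\varepsilon\chi)_q)$. Reindexing the sum by $\chi \mapsto \varepsilon\chi$ (a bijection of $\Irr(W)$), one gets $\tau^-(\sigma_{\w}) = \sum_\chi \frac{1}{\mathbf{s}^+(\chi_q)}\,\chi'_q(\sigma_{\w})$ where $\chi' = \varepsilon\chi$; so I need to express $(\varepsilon\chi)_q(\sigma_{\w})$ in terms of $\chi_q(\sigma_{\w})$. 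Since $\w$ is a $w$-word of length $m$ with $w_\circ$-power $\sigma_{w_\circ}^{2p}$ under $\sigma_\w^{m'} = \sigma_{w_\circ}^{2p}$, and since for the sign representation $\varepsilon_q(\sigma_s) = -q^{\mp 1/2}\cdot q^{\pm1/2}$ — more precisely, tensoring by the sign character of $\H_W$ multiplies $\chi_q(\sigma_\w)$ by $\varepsilon(w)$ and sends $\nu \mapsto -\nu$ inside the fake degree (equivalently $e^{2\pi i\nu}\mapsto e^{-2\pi i\nu}$) and $\cont(\chi)\mapsto -\cont(\chi)$ via $\cont(\varepsilon\chi) = -\cont(\chi)$. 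Combining these substitutions with \Cref{thm:periodic-springer} turns the sum into $\frac{\varepsilon(w)}{\mathbf{s}^+(1_q)}\sum_\chi q^{-\nu\cont(\chi)}\Feg_\chi(e^{2\pi i\nu})\Deg_\chi(q)$, as claimed — note that after the reindexing the fake degree that appears is $\Feg_{\varepsilon\chi}(e^{-2\pi i\nu})$, which equals $\Feg_\chi(e^{2\pi i\nu})$ up to the understanding that fake degrees of $\varepsilon\chi$ and $\chi$ are related by $q \mapsto q^N/q^{\deg}$-type reciprocity; I would verify the precise normalization so the exponent signs land correctly.

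The main obstacle I anticipate is bookkeeping the sign-twist substitution in the $\tau^-$ computation: getting the three simultaneous changes $\varepsilon(w)$ prefactor, $\nu \to -\nu$ in the root of unity, and $\cont(\chi)\to -\cont(\chi)$ to be mutually consistent, and checking that $\Feg_{\varepsilon\chi}(e^{-2\pi i\nu}) = \Feg_\chi(e^{2\pi i\nu})$ holds at roots of unity with exactly the normalization~\eqref{eq:fake-degrees}. This requires the identity $\cont(\varepsilon\chi) = -\cont(\chi)$ (immediate from~\eqref{eq:content} since $\varepsilon(t) = -1$ for every reflection $t$, so $\sum_t (\varepsilon\chi)(t) = -\sum_t \chi(t)$) together with the known behavior of fake degrees under tensoring by $\varepsilon$, namely $\Feg_{\varepsilon\chi}(q) = q^N \Feg_\chi(q^{-1})$, which at $q = e^{2\pi i\nu}$ contributes the phase $e^{2\pi i N \nu}$ that must be absorbed correctly — I would double-check whether this phase is already accounted for in the convention of \Cref{thm:periodic-springer} or whether it modifies the exponent of $q$ in the stated formula. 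Everything else is a direct substitution into identities established earlier in the paper.
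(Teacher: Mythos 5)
Your first identity and the overall skeleton of the second are exactly the paper's: Schur orthogonality \eqref{eq:schur-orthogonality} for $\tau^{\pm}$, the rewriting $1/\mathbf{s}^+(\chi_q)=\Deg_\chi(q)/\mathbf{s}^+(1_q)$ from \eqref{eq:generic-degrees}, \cref{thm:periodic-springer}, the reindexing $\chi\mapsto\varepsilon\chi$ together with \cref{prop:sign-twist}, and $\cont(\varepsilon\chi)=-\cont(\chi)$. The first displayed formula is therefore fully proved by your argument.

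For the second formula, however, there is a genuine gap at precisely the step you flag but do not close: after applying \cref{thm:periodic-springer} to $(\varepsilon\chi)_q(\sigma_{\w})$ (note that this is applied with the \emph{same} slope $\nu$; no ``$\nu\mapsto-\nu$'' enters, the sign flip in the exponent comes only from $\cont(\varepsilon\chi)=-\cont(\chi)$), you still need the identity $\Feg_{\varepsilon\chi}(\zeta)=\varepsilon(w)\,\Feg_\chi(\zeta)$ with $\zeta\coloneqq e^{2\pi i\nu}$. Your heuristic that sign-twisting multiplies $\chi_q(\sigma_{\w})$ by $\varepsilon(w)$ is false at the Hecke level (the twist is implemented by the involution $\sigma_s\mapsto-\sigma_s^{-1}$, so already $\varepsilon_q(\sigma_s)=-q^{-1/2}\neq\varepsilon(s)\,1_q(\sigma_s)$); it is essentially the statement to be proved. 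Your fallback via the reciprocity $\Feg_{\varepsilon\chi}(q)=q^N\Feg_\chi(q^{-1})$ gives $\Feg_{\varepsilon\chi}(\zeta)=\zeta^N\Feg_\chi(\zeta^{-1})$; the phase is harmless, since comparing word lengths in the periodicity relation $\sigma_{\w}^d=\sigma_{w_\circ}^{2p}$ shows $\ell(\w)=2\nu N$ and hence $\zeta^N=(-1)^{\ell(\w)}=\varepsilon(w)$, but you are left with $\Feg_\chi(\zeta^{-1})$ in place of $\Feg_\chi(\zeta)$, and these agree only if $\Feg_\chi(\zeta)$ is real --- which is not a formal property of fake degrees at an arbitrary root of unity. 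The paper closes this by observing that periodicity of $\w$ forces $w$ to be a $\zeta$-regular element \cite[Corollary~9.3.6]{trinh2021hecke}, so that Springer's theorem \cite[Theorem~4.2]{springer1974regular} gives $\Feg_\chi(\zeta)=\chi(w)$ for every $\chi$, whence $\Feg_{\varepsilon\chi}(\zeta)=(\varepsilon\chi)(w)=\varepsilon(w)\chi(w)=\varepsilon(w)\Feg_\chi(\zeta)$ (and, in particular, all these values are real). You need to supply this regularity input, or an equivalent argument, to complete the proof of the $\tau^-$ identity.
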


\begin{proof}
The first identity follows from combining \eqref{eq:schur-orthogonality}, \eqref{eq:generic-degrees}, and~\cref{thm:periodic-springer}.
To get the second identity from the first, observe that
\begin{align*}
\frac{1}{\mathbf{s}^-(\varepsilon\chi_q)}
(\varepsilon \chi)_q(\sigma_{\w})
	&=	\frac{1}{\mathbf{s}^+(\chi_q)} (\varepsilon \chi)_q(\sigma_{\w})
		&&\text{by~\cref{prop:sign-twist}}\\
	&=	\frac{\Deg_\chi(q)}{\mathbf{s}^+(1_q)} (\varepsilon \chi)_q(\sigma_{\w})
		&&\text{by \eqref{eq:generic-degrees}}\\
	&=	\frac{\Deg_\chi(q)}{\mathbf{s}^+(1_q)} q^{\nu \cont(\varepsilon\chi)} \Feg_{\varepsilon\chi}(e^{2\pi i\nu})
		&&\text{by~\cref{thm:periodic-springer}}.
\end{align*}
We have $\cont(\varepsilon\chi) = -\cont(\chi)$ because $\varepsilon(t) = -1$ for all $t \in T$.
Moreover, $w$ is regular by \cite[Corollary 9.3.6]{trinh2021hecke}, so 
\begin{align*}
\Feg_{\varepsilon\chi}(e^{2\pi i\nu}) = (\varepsilon\chi)(w) = \varepsilon(w)\chi(w) = \varepsilon(w)\Feg_\chi(e^{2\pi i\nu})
\end{align*}
by \cite[Theorem 4.2]{springer1974regular}.
\end{proof}

\begin{remark}\label{rem:not_so_drastic}
When $W$ is a Weyl group of Coxeter number $h$, the right-hand sides of the identities in \cref{cor:fake-degree-at-root-of-unity} each simplify to a sum of precisely $h$ nonzero terms, as we now explain.
(See also \cite[Section 6]{reeder2019weyl}.)

Suppose that $\bc$ is a $c$-word.
For any $\chi \in \Irr(W)$ and $p$ coprime to $h$, we have $\Feg_\chi(e^{\frac{2\pi ip}{h}}) = \chi(c^p)$ by \oldS{2.7} and Theorem~4.2(v) of \cite{springer1974regular}, so it suffices to determine the number of $\chi$ for which $\chi(c^p)$ is nonzero.
Since $W$ is a Weyl group, $c^p$ is conjugate to the Coxeter element $c$ by \cite[Proposition~4.7]{springer1974regular}, allowing us to assume $p = 1$.
Let $C$ and $W \cdot c$ denote the centralizer and conjugacy class of $c$ in $W$, respectively.
Then $c$ generates $C$ by \cite[Corollary 4.4]{springer1974regular}, so by Schur orthogonality,
\begin{align*}
h = |C| = \frac{|W|}{|W \cdot c|} = \sum_{\chi \in \Irr(W)} 
	|\chi(c)|^2.
\end{align*}
Since $W$ is a Weyl group, we have $\Q_W = \Q$, which in turn implies that the values of $\chi(c)$ in the last expression are all rational integers.
But by direct inspection we can find at least $h$ irreducible characters $\chi$ for which $\chi(c)$ is nonzero.
So in the last expression above, we must have $\chi(c) = \pm 1$ for exactly $h$ irreducible characters $\chi$, and $\chi(c) = 0$ for all other $\chi$.
Alternatively,\footnote{We thank Eric Sommers for this argument.} one can deduce this conclusion in a case-free way from the fact that the trace of $c$ on any irreducible character is $1$, $0$, or $-1$~\cite[Theorem 1]{kostant1976macdonald}.
Altogether, we have shown that when $W$ is a Weyl group,
\begin{align*}
\{\chi \in \Irr(W): \chi(c) \neq 0\}|=| \{\chi \in \Irr(W): \Feg_\chi(e^{\frac{2\pi ip}{h}}) \neq 0\}|=h.
\end{align*}
The behavior of the values $\chi(c)$ for noncrystallographic Coxeter groups is a little more irregular:
\begin{itemize}
\item 	In type $H_3$, where $h = 10$, the value of $\chi(c)$ is nonzero for $8$ of the $10$ irreducible characters $\chi$.
\item 	In type $H_4$, where $h = 30$, it is nonzero for $24$ of the $34$ irreducible characters.
\item 	In types $I_2(2m-1)$ and $I_2(4m-2)$, it is nonzero for all irreducible characters.
		In type $I_2(4m)$, there is a single irreducible character for which it vanishes.
		(Note that the Coxeter number of $I_2(m)$ is equal to $2m$.)
\end{itemize}
\end{remark}

For any $\nu \in \Q$, let $L_\nu$ denote the simple spherical module of the rational Cherednik algebra of $W$ of central charge $\nu$.
Let $[L_\nu]_q \in R_W[q]$ be its graded character, normalized to be a polynomial in $q$ with nonzero constant term.
We will not give an exposition of rational Cherednik algebras here, as we will not need $L_\nu$ itself, but only a formula involving $[L_\nu]_q$. 
In what follows, recall from \eqref{eq:CatpWq} that
\begin{align*}
	\CatpWq \coloneqq \prod_{i=1}^r\frac{\qint[p + (pe_i \mod{h})]}{\qint[d_i]},
\end{align*}
where $[a]_q = 1 + q + \cdots + q^{a - 1}$.

\begin{theorem}\label{thm:rca}
    If $p$ is a positive integer coprime to $h$, then:
    \begin{thmlist}
    \item	
    		\cite[{Section~1.12}]{gordon2012catalan}
    		We have
    		\begin{align}\label{eq:gg}
    		(1, [L_{p/h}]_q)_W = \CatpWq.
    		\end{align}
    \item
    		\label{thm:rca2} 
    		\cite[Corollaries 11 and 13]{trinh2021hecke}
    		We have
    		\begin{align}\label{eq:trinh}
    		[L_{p/h}]_q = 
    		q^{\frac{\rank p}{2}}
    		\sum_{\chi \in \Irr(W)} 
    			q^{-\frac{p}{h} \cont(\chi)}\Deg_\chi(e^{\frac{2\pi ip}{h}}) \chi \cdot [\Sym]_q.
    		\end{align}
    		In particular, $\frac{\rank p}{2} - \frac{p}{h} \cont(\chi) \in \Z$ for all $\chi$ such that $\Deg_\chi(e^{\frac{2\pi ip}{h}}) \neq 0$.
    \end{thmlist}
\end{theorem}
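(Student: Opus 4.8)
The two parts are already in the literature: part~(1) is the content of Section~1.12 of \cite{gordon2012catalan}, and part~(2) is obtained by combining Corollaries~11 and~13 of \cite{trinh2021hecke}. So the plan is to recall why each holds and to check that the normalizations agree with ours, rather than to prove anything new.

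For part~(1): since $(-,-)_W$ is extended to $R_W[\![q]\!]$ by $\Z[\![q]\!]$-linearity, the left-hand side of \eqref{eq:gg} is the Hilbert series of the spherical module $e\,L_{p/h}$, the coefficient of $q^i$ being the dimension of the $W$-invariant subspace of the $i$-th graded piece of $L_{p/h}$. I would recall that $L_{p/h}$ is the simple head of the standard module $\Delta_{p/h}(1)=\C[\V]$ in category $\mathcal{O}$ for the rational Cherednik algebra of $W$ with parameter $\tfrac{p}{h}$; since $p$ is coprime to the Coxeter number $h$, the module $e\,L_{p/h}$ is finite-dimensional, and its graded dimension is computed in \cite[Section~1.12]{gordon2012catalan} to equal $\CatpWq$. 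Letting $q\to1$ then recovers $\CatpW$ of \eqref{eq:intro:CatpW}.

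For part~(2): the key point is that, for each $\chi\in\Irr(W)$, the class $\chi\cdot[\Sym]_q$ is, up to a single power of $q$, the graded $W$-character of the standard module $\Delta_{p/h}(\chi)=\chi\otimes\C[\V]$: indeed $\C[\V]$ has graded character $[\Sym]_q$, since every representation of a real reflection group is self-dual, and the lowest weight of $\Delta_{p/h}(\chi)$ is the value at $\chi$ of the $c$-function, which differs from its value at $1$ by $\tfrac{p}{h}\bigl(N-\cont(\chi)\bigr)$ because $\sum_{t\in T}t$ acts on $\chi$ by the scalar $\cont(\chi)$. Together with the normalization forcing $[L_{p/h}]_q$ to have nonzero constant term --- for which one uses the identity $N=\tfrac{\rank h}{2}$, so that the overall shift is $q^{pN/h}=q^{\rank p/2}$ --- this produces exactly the factors $q^{\rank p/2}$ and $q^{-(p/h)\cont(\chi)}$ in \eqref{eq:trinh}. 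Thus \eqref{eq:trinh} is an identity in the Grothendieck group of category $\mathcal{O}$ that writes $[L_{p/h}]$ as a combination of standards, and I would prove it as in \cite{trinh2021hecke}: the finite-dimensional module $L_{p/h}$ admits a resolution by standard modules, and the multiplicities are extracted by transporting the problem through the KZ functor to the Hecke algebra $\H_W$ specialised at $q=e^{2\pi ip/h}$, where they are governed by Schur elements and hence by generic degrees at that root of unity (cf.\ \eqref{eq:generic-degrees}). The closing assertion that $\tfrac{\rank p}{2}-\tfrac{p}{h}\cont(\chi)\in\Z$ whenever $\Deg_\chi(e^{2\pi ip/h})\neq0$ is then automatic, since the left-hand side of \eqref{eq:trinh} lies in $R_W[q]$.

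The main obstacle is part~(2): pinning down the precise multiplicities of the standard modules in a resolution of $L_{p/h}$, matching them with the values $\Deg_\chi(e^{2\pi ip/h})$, and tracking the internal $\Z$-grading through the $c$-function and the content --- this is the technical heart of \cite[Corollaries~11 and~13]{trinh2021hecke}. Part~(1), together with the reconciliation of normalizations, is comparatively routine once the rational Cherednik algebra background is in place.
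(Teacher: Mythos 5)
Your proposal is correct and matches the paper's treatment: both parts are imported from the cited literature (Gordon--Griffeth for \eqref{eq:gg}, and Corollaries 11 and 13 of \cite{trinh2021hecke} for \eqref{eq:trinh}), with the only genuine work being the reconciliation of normalizations via $N=\ell(\wo)=\tfrac{1}{2}\rank h$, which is exactly what the paper's proof does. Your additional sketch of the Cherednik-algebra background (standard modules, the shift by the $c$-function, the KZ functor) is consistent background rather than a different argument, and your closing observation that the integrality of $\tfrac{\rank p}{2}-\tfrac{p}{h}\cont(\chi)$ follows from $[L_{p/h}]_q\in R_W[q]$ is the intended reading of that clause.
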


\begin{proof}
We explain how to deduce part~\hyperref[thm:rca2]{(2)} from the results of~\cite{trinh2021hecke}. Observe that $\frac{1}{2} \rank h = \ell(w_\circ) = N$.
    So in the notation of \cite{trinh2021hecke}, the right-hand side of \eqref{eq:trinh} is the graded character $(q^{\frac{1}{2}})^{2N\frac{p}{h} - \rank} [\Omega_{p/h}]_q$.
    Given the way we normalize $[L_{p/h}]_q$ to be polynomial with nonzero constant term, \cite[Corollary~13]{trinh2021hecke} identifies this character with $[L_{p/h}]_q$.
\end{proof}

\begin{remark}\label{rem:simpler}
In analogy with~\Cref{rem:not_so_drastic}, the right-hand side of \eqref{eq:trinh} simplifies to a sum of precisely $r + 1$ nonzero terms, where $r$ is the rank of $W$.
If we write $\Lambda_k$ for the character of the $k$th exterior power of the reflection representation of $W$, then it turns out that:
\begin{equation}\label{eq:Deg_chi_(-1)^k}
	\Deg_\chi(e^{\frac{2\pi ip}{h}}) =
	\begin{cases}
	(-1)^k,
		&\text{if $\chi = \Lambda_k$ for some $0\leq k\leq \rank$;}\\
	0,
		&\text{otherwise}.
	\end{cases}
\end{equation}
By direct calculation,
\begin{equation}\label{eq:c(Lambda_k)}
	\cont(\Lambda_k) = N - hk = h\left(\frac{1}{2}r - k\right),
\end{equation}
which leads to the formula
\begin{align}\label{eq:simpler}
	{[L_{p/h}]_q} = \sum_{0 \leq k \leq r} (-q^p)^k \Lambda_k \cdot [\Sym]_q.
\end{align}
Note that Gordon--Griffeth themselves cite \cite[Proposition~4.2]{bessis2011cyclic}, which relies on \eqref{eq:simpler}.

We sketch the proof of \eqref{eq:Deg_chi_(-1)^k}, relying freely on background explained in \cite[\oldS{A.11}]{trinh2021hecke}.
First, by Theorem 6.6 and Remark 6.9 of \cite{bleher1997automorphisms}, the non-principal $\Phi_h$-blocks of $K\H_W$ all have defect $0$.
The principal $\Phi_h$-block of $K\H_W$ has defect $1$, and its Brauer tree is a line graph in which the vertices are the characters $\Lambda_k$ ordered by $k$.
(In non-crystallographic types, this result depends on the case-by-case methods of \cite{mueller1997decomposition}.)
Next, \cite[Lemma 10.8.2]{trinh2021hecke} shows that for $\chi$ in the blocks of defect $0$, we have $\Deg_\chi(e^{2\pi i\frac{p}{h}})=0$, whereas for $\chi = \Lambda_k$, we have $\Deg_\chi(e^{2\pi i\frac{p}{h}}) = (-1)^k$.
\end{remark}

\begin{corollary}\label{cor:generic-degree-at-root-of-unity}
If $p$ is a positive integer coprime to $h$, then
\begin{align}
    (1 - q)^\rank q^{-\frac{\rank p}{2}} \CatpWq
    &=  \frac{1}{\mathbf{s}^+(1_q)}
            \sum_{\chi \in \Irr(W)} 
                q^{-\frac{p}{h} \cont(\chi)}
                \Feg_\chi(q)
                \Deg_\chi(e^{\frac{2\pi ip}{h}}).
\end{align}
\end{corollary}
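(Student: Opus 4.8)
The plan is to extract the claimed identity by pairing the trivial character $1$ against the two halves of \Cref{thm:rca}. Applying the $\Z$-linear functional $(1,-)_W$, extended to $R_W[\![q]\!]$, to the right-hand side of \eqref{eq:trinh} and using \eqref{eq:gg} to evaluate the left-hand side, we obtain
\begin{align*}
\CatpWq = q^{\frac{\rank p}{2}} \sum_{\chi \in \Irr(W)} q^{-\frac{p}{h}\cont(\chi)}\, \Deg_\chi(e^{\frac{2\pi i p}{h}})\, (1,\chi\cdot[\Sym]_q)_W .
\end{align*}
Thus the whole corollary reduces to rewriting the coefficient $(1,\chi\cdot[\Sym]_q)_W$ in terms of fake degrees.

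The key step is the identity $(1,\chi\cdot[\Sym]_q)_W = \Feg_\chi(q)\,(1,[\Sym]_q)_W$. By the defining adjunction of the multiplicity pairing, $(1,\chi\cdot\psi)_W = (\chi^\vee,\psi)_W$ for $\psi\in R_W[\![q]\!]$; since the reflection representation $\V$ is self-dual, so is each $\Sym^i(\V)$, hence $[\Sym]_q$ is self-dual and $(\chi^\vee,[\Sym]_q)_W = ([\Sym]_q,\chi)_W = (\chi,[\Sym]_q)_W$ by symmetry of the pairing. (Alternatively, one may simply use that every irreducible character of a finite Coxeter group is real-valued, so that $\chi^\vee = \chi$.) The right-hand side $(\chi,[\Sym]_q)_W$ equals $\Feg_\chi(q)\,(1,[\Sym]_q)_W$ by the definition \eqref{eq:fake-degrees} of the fake degree. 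Substituting gives
\begin{align*}
\CatpWq = q^{\frac{\rank p}{2}}\,(1,[\Sym]_q)_W \sum_{\chi \in \Irr(W)} q^{-\frac{p}{h}\cont(\chi)}\, \Feg_\chi(q)\, \Deg_\chi(e^{\frac{2\pi i p}{h}}).
\end{align*}

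Finally, the Bott--Solomon product formula \eqref{eq:poincare-poly} supplies $(1,[\Sym]_q)_W = \frac{1}{(1-q)^\rank\,\mathbf{s}^+(1_q)}$. Inserting this and multiplying both sides by $(1-q)^\rank q^{-\frac{\rank p}{2}}$ yields exactly the stated identity. I do not expect a genuine obstacle here, since the two substantial ingredients---the Gordon--Griffeth evaluation of $(1,[L_{p/h}]_q)_W$ and Trinh's character formula for $[L_{p/h}]_q$---are already packaged in \Cref{thm:rca}; the only points needing care are the adjunction/self-duality argument that converts the $[\Sym]_q$-coefficient into a fake degree, and the bookkeeping of the prefactors $q^{\pm\frac{\rank p}{2}}$ and $(1-q)^{\pm\rank}$.
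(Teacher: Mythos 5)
Your proposal is correct and follows essentially the same route as the paper: pair $(1,-)_W$ against \eqref{eq:gg} and \eqref{eq:trinh}, use reality of characters to convert $(1,\chi\cdot[\Sym]_q)_W$ into $(\chi,[\Sym]_q)_W = \Feg_\chi(q)\,(1,[\Sym]_q)_W$ via \eqref{eq:fake-degrees}, and finish with \eqref{eq:poincare-poly}. The only difference is cosmetic (you phrase the reality step via self-duality/adjunction, the paper states $(1,\chi\cdot(-))_W=(\chi,-)_W$ directly), and your prefactor bookkeeping is accurate.
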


\begin{proof}
Since $W$ is a Coxeter group, every character $\chi \in \Irr(W)$ is defined over the real numbers.
This means $(1, \chi \cdot (-))_W = (\chi, -)_W$.
So combining \eqref{eq:gg} and \eqref{eq:trinh} gives
\begin{align*}
	q^{-\frac{\rank p}{2}} \CatpWq
	=   \sum_{\chi \in \Irr(W)} 
			q^{-\frac{p}{h} \cont(\chi)} \Deg_\chi(e^{\frac{2\pi ip}{h}}) (\chi, [\Sym]_q)_W.
\end{align*}
Multiplying both sides by $(1 - q)^\rank$, then invoking \eqref{eq:fake-degrees} and \eqref{eq:poincare-poly}, we get the result.
\end{proof}

\subsection{The exotic Fourier transform}\label{subsec:exotic-ft}

The following result is proved in \cite[Chapter 4]{lusztig1984characters}.

\begin{theorem}[Lusztig]\label{thm:exotic-ft}
    There is a pairing $\{-, -\}_W : \Irr(W) \times \Irr(W) \to \Q_W$ that satisfies the following conditions:
    \begin{thmlist}
        \item \label{thm:exotic-ft1} 
            For all $\chi \in \Irr(W)$, we have
            \begin{equation}\label{eq:Feg_in_terms_of_Deg}
                \Feg_\chi(q) = \sum_{\phi \in \Irr(W)} {\{\phi, \chi\}_W} \Deg_\phi(q).
            \end{equation}
        \item \label{thm:exotic-ft2}  For all $\phi, \chi \in \Irr(W)$, we have $\{\phi, \chi\}_W = \{\chi, \phi\}_W$.
        \item \label{thm:exotic-ft3}
            For all $\phi, \chi \in \Irr(W)$ such that $\{\phi, \chi\}_W \neq 0$, we have $\cont(\phi) = \cont(\chi)$.
    \end{thmlist}
\end{theorem}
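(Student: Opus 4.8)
The plan is to obtain all three properties from Lusztig's explicit construction of the nonabelian Fourier transform in \cite[Chapter~4]{lusztig1984characters}, treating the combinatorics of that construction as given input rather than re-deriving it.

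First I would recall Lusztig's partition of $\Irr(W)$ into \emph{families}, and \emph{define} $\{-,-\}_W$ by declaring that $\{\phi,\chi\}_W=0$ whenever $\phi$ and $\chi$ lie in distinct families, and that on each family $\mathcal{F}$ it is Lusztig's Fourier matrix attached to a finite group $\mathcal{G}_{\mathcal{F}}$: the characters in $\mathcal{F}$ are placed in bijection with the $\mathcal{G}_{\mathcal{F}}$-conjugacy classes of pairs $(x,\rho)$ with $x\in\mathcal{G}_{\mathcal{F}}$ and $\rho\in\Irr(C_{\mathcal{G}_{\mathcal{F}}}(x))$, and the entries of the matrix are given by Lusztig's explicit formula in these coordinates. (This $\{-,-\}_W$ is the restriction to $\Irr(W)$ of the Fourier transform on the larger set $\Udeg(W)$ from the Remark above.)

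Next I would verify the three properties in turn. Property~\hyperref[thm:exotic-ft1]{(1)} --- that $\Feg_\chi$ is the $\{-,-\}_W$-transform of the list of generic degrees $(\Deg_\phi)_\phi$ --- is one of the compatibilities that Lusztig builds into, and uses to pin down, the Fourier matrix; for Weyl groups it ultimately rests on Lusztig's classification of unipotent characters of finite reductive groups, in which the fake and unipotent (generic) degrees arise as the two natural families of polynomials related by $\{-,-\}_W$. Property~\hyperref[thm:exotic-ft2]{(2)}, symmetry, is then immediate, since Lusztig's Fourier matrices are manifestly symmetric in the coordinates $(x,\rho)$. For property~\hyperref[thm:exotic-ft3]{(3)}, block-diagonality with respect to families is part of the definition, so it remains only to observe that $\cont$ is constant on each family. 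I would record this either as a consequence of Lusztig's tabulation of families (for Weyl groups) together with the case-by-case treatments of the noncrystallographic types, or via the identity $\cont(\chi)=N-\mathbf{a}_\chi-\mathbf{A}_\chi$, where $\mathbf{a}_\chi$ and $\mathbf{A}_\chi$ denote respectively the smallest and largest exponents of $q$ occurring in $\Deg_\chi(q)$: combining the sign-twist symmetries $\Feg_{\varepsilon\chi}(q)=q^N\Feg_\chi(q^{-1})$ (Poincar\'e duality of the coinvariant algebra) and $\Deg_{\varepsilon\chi}(q)=q^N\Deg_\chi(q^{-1})$ (a standard property of generic degrees) with \eqref{eq:Feg_in_terms_of_Deg}, and using that the $\mathbf{a}$- and $\mathbf{A}$-functions are family invariants while sign twisting permutes families, one concludes that $\cont$ is constant on families.

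The hard part --- and the reason the result resists a uniform treatment --- is that Lusztig's group $\mathcal{G}_{\mathcal{F}}$ and his Fourier matrix are produced case by case from the classification of finite Coxeter groups: already for Weyl groups the verification of \hyperref[thm:exotic-ft1]{(1)}--\hyperref[thm:exotic-ft3]{(3)} is not type-independent, and for the noncrystallographic groups $I_2(m)$, $H_3$, and $H_4$ the pairing itself, as well as properties~\hyperref[thm:exotic-ft1]{(1)}--\hyperref[thm:exotic-ft3]{(3)}, must be supplied by separate computations (see the surrounding discussion). I therefore expect to use this theorem as a black box, emphasizing that no conceptual, type-independent construction of $\{-,-\}_W$ --- nor a uniform proof of its properties --- is presently available.
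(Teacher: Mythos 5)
Your proposal is correct and takes essentially the same route as the paper: both treat Lusztig's case-by-case construction from \cite[Chapter 4]{lusztig1984characters} as a black box, obtaining \hyperref[thm:exotic-ft1]{(1)} from \cite[Theorem 4.23]{lusztig1984characters}, \hyperref[thm:exotic-ft2]{(2)} from the symmetry of the Fourier matrices attached to families, and \hyperref[thm:exotic-ft3]{(3)} from block-diagonality together with $\cont(\chi) = N - a(\chi) - A(\chi)$. The only differences are cosmetic: symmetry is not quite ``manifest'' (the pairing $\{-,-\}_{\mathfrak{G}}$ is hermitian in general and one must note it is real-valued for the groups that actually occur), and where you derive constancy of $A$ on families from the sign-twist relation, the paper simply checks it case by case.
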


Let $q_1,q_2,q_3$ be arbitrary parameters and $\nu\in\Qbb$.
Then the identity
\begin{equation}\label{eq:mix-and-match_gen}
    \sum_{\chi \in \Irr(W)} q_1^{\nu \cont(\chi)} \Feg_\chi(q_2) \Deg_\chi(q_3) = \sum_{\chi \in \Irr(W)} q_1^{\nu \cont(\chi)} \Feg_\chi(q_3) \Deg_\chi(q_2)
\end{equation}
follows from \cref{thm:exotic-ft} via a double-summation argument.
We can now prove~\Cref{eq:intro:trq}.

\begin{corollary}\label{cor:equivalence}
Let $\bc$ be a Coxeter word, and let $p$ be a positive integer coprime to $h$.  Then we have $R_{e, \bc^p}(q) = (q - 1)^\rank \CatpWq$.
\end{corollary}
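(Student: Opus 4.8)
The plan is to run the left-hand side through the trace machinery of \Cref{sec:hecke} and then feed the resulting sum of fake degrees into the exotic Fourier transform. First I would rewrite $R_{e,\bc^p}(q)$ as a trace value: by \Cref{cor:r-to-tau-minus} with $u=e$ we have $R_{e,\bc^p}(q)=\tau^-(T_{\bc^p})$, and since $\sigma_{\bc^p}=q^{-\rank p/2}T_{\bc^p}$ this gives $R_{e,\bc^p}(q)=q^{\rank p/2}\,\tau^-(\sigma_{\bc^p})$. The word $\bc^p$ is periodic of slope $\nu=p/h$ and is a $c^p$-word for the Coxeter element $c$ associated to $\bc$, so the $\tau^-$-formula of \Cref{cor:fake-degree-at-root-of-unity} applies and expresses $R_{e,\bc^p}(q)$ as $\varepsilon(c^p)\,\mathbf{s}^+(1_q)^{-1}q^{\rank p/2}$ times $\sum_{\chi\in\Irr(W)}q^{-\frac{p}{h}\cont(\chi)}\Feg_\chi(e^{2\pi i p/h})\Deg_\chi(q)$.

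Next I would apply the mix-and-match identity \eqref{eq:mix-and-match_gen} — the formal consequence of the symmetry and block-diagonality of $\{-,-\}_W$ from \Cref{thm:exotic-ft} — specializing its parameters to $q_1=q$, $q_2=e^{2\pi i p/h}$, $q_3=q$ with $\nu=-p/h$. This swaps the arguments of $\Feg_\chi$ and $\Deg_\chi$ inside the sum, turning it into $\sum_{\chi\in\Irr(W)}q^{-\frac{p}{h}\cont(\chi)}\Feg_\chi(q)\Deg_\chi(e^{2\pi i p/h})$. By \Cref{cor:generic-degree-at-root-of-unity} this last sum equals $\mathbf{s}^+(1_q)\,(1-q)^\rank q^{-\rank p/2}\CatpWq$. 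Substituting back and cancelling the common factors of $q^{\rank p/2}$ and $\mathbf{s}^+(1_q)$ leaves $R_{e,\bc^p}(q)=\varepsilon(c^p)\,(1-q)^\rank\CatpWq$.

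Finally I would pin down the sign. Writing $c=s_1\cdots s_\rank$, which is reduced, gives $\varepsilon(c)=(-1)^\rank$, hence $\varepsilon(c^p)=(-1)^{\rank p}$; combined with $(1-q)^\rank=(-1)^\rank(q-1)^\rank$, the total sign is $(-1)^{\rank(p+1)}$. This is $+1$: if $p$ is odd it is immediate, and if $p$ is even then $\gcd(p,h)=1$ forces $h$ odd, which forces $\rank$ even since the exponents of $W$ pair up as $e_i+e_{\rank+1-i}=h$ (an odd $h$ cannot have a self-paired middle exponent). Therefore $R_{e,\bc^p}(q)=(q-1)^\rank\CatpWq$, as claimed.

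I expect no real obstacle here: all of the substance has been pushed into the cited inputs — Springer's theorem via \Cref{cor:fake-degree-at-root-of-unity}, the Gordon--Griffeth and Trinh formulas via \Cref{cor:generic-degree-at-root-of-unity}, and Lusztig's \Cref{thm:exotic-ft}. The only places demanding care are specializing the parameters in \eqref{eq:mix-and-match_gen} so that the $q^{-\frac{p}{h}\cont(\chi)}$ weights and the powers of $q^{\rank p/2}$ line up correctly, and the final parity bookkeeping for the sign; both are routine.
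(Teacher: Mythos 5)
Your proposal is correct and takes essentially the same route as the paper's proof: rewrite $R_{e,\bc^p}(q)$ via \cref{cor:r-to-tau-minus}, expand $\tau^-(\sigma_{\bc}^p)$ by \cref{cor:fake-degree-at-root-of-unity}, swap fake and generic degrees by the specialization of \eqref{eq:mix-and-match_gen} coming from \cref{thm:exotic-ft}, and conclude with \cref{cor:generic-degree-at-root-of-unity}. Your explicit parity check that $(-1)^{\rank(p+1)}=1$ (using that $p$ even forces $h$ odd and hence $\rank$ even) just spells out the sign step the paper handles tersely by writing the prefactor as $(-1)^{\rank}$, which agrees with $\varepsilon(c^p)=(-1)^{\rank p}$ for exactly the reason you give.
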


\begin{proof}
We show that the stated identity $R_{e, \bc^p}(q) = (q - 1)^\rank \CatpWq$ is equivalent to the identity
\begin{equation} \label{eq:mix-and-match}
	\sum_{\chi \in \Irr(W)} q^{-\frac{p}{h} \cont(\chi)} \Feg_\chi(e^{\frac{2\pi i p}{h}}) \Deg_\chi(q) = \sum_{\chi \in \Irr(W)} q^{-\frac{p}{h} \cont(\chi)} \Feg_\chi(q) \Deg_\chi(e^{\frac{2\pi ip}{h}}),
\end{equation}
which is a specialization of~\eqref{eq:mix-and-match_gen}.
Suppose that $\bc$ is a $c$-word.
Then $\varepsilon(c) = (-1)^\rank$, so
\begin{align}\label{eq:mix-match1}
	R_{e, \bc^p}(q) &=	q^{\frac{\rank p}{2}} \tau^-(\sigma_{\bc}^p) &&\text{by~\cref{cor:r-to-tau-minus}}\\
	\label{eq:mix-match2}
	&= \frac{(-1)^\rank q^{\frac{\rank p}{2}}}{\mathbf{s}^+(1_q)} \sum_{\chi \in \Irr(W)} q^{-\frac{p}{h} \cont(\chi)} \Feg_\chi(e^{2\pi i\frac{p}{h}}) \Deg_\chi(q) &&\text{by~\cref{cor:fake-degree-at-root-of-unity}}.
\end{align}
So the result follows from~\cref{cor:generic-degree-at-root-of-unity}.
\end{proof}

\subsubsection{Uniformity}

Lusztig's proof of \cref{thm:exotic-ft} is not uniform.
Below, we explain which parts can be made uniform and in which settings.

For every irreducible finite Coxeter group $W$, Lusztig defines
\begin{enumerate}
    \item[(I)] 	a finite set $\Udeg(W)$,
    \item[(II)] 	an embedding $\Irr(W) \hookrightarrow \Udeg(W)$, 
    \item[(III)]	an extension of the function $\chi \mapsto \Deg_\chi$ on $\Irr(W)$ to a function on $\Udeg(W)$,
    \item[(IV)] 	a pairing $\{-, -\} : \Udeg(W) \times \Udeg(W) \to \Q_W$, now called the \emph{nonabelian} or \emph{exotic Fourier transform}, satisfying conditions~\hyperref[thm:exotic-ft1]{(1)}--\hyperref[thm:exotic-ft3]{(3)} of \cref{thm:exotic-ft}.
\end{enumerate}
We take $\{-, -\}_W$ to be the restriction of $\{-, -\}$ to $\Irr(W) \times \Irr(W)$.

In \cite{lusztig1993appendix}, Lusztig gives a uniform characterization of (I)-(III) by a list of axioms, and proves that the axioms always admit a solution.
However, beyond Weyl groups, this proof uses case-by-case arguments.
The definition of (IV) beyond Weyl groups also uses case-by-case arguments.
For dihedral types, it is constructed uniformly in \cite{lusztig1994exotic}, and for type $H_4$, it is constructed in \cite{malle1994appendix}.
For type $H_3$, the details are scattered in the literature; see \cite[Remark~7.5.4]{trinh2021hecke}.
We do not know a definition of the restricted pairing $\{-, -\}$ that is uniform for Coxeter groups and independent of (I)-(II).

For Weyl groups, there are two ways to define $\{-, -\}$.
In what follows, suppose that $\Fbb_q$ is a finite field of order $q$, and that $G$ is a split, connected reductive algebraic group over $\Fbb_q$ with Weyl group $W$. 

In \cite[Chapter 4]{lusztig1984characters}, Lusztig defines the data (I)-(IV) as follows; see also \cite[Section~8.2]{trinh2021hecke}.
For (I)-(III), the constructions are uniform in $G$.
\begin{enumerate}
    \item[(I)]
        $\Udeg(W)$ is the set of \emph{unipotent} irreducible representations of $G(\Fbb_q)$.
        The definition of a unipotent representation uses Deligne--Lusztig varieties \cite[Section~7.8]{deligne1976representations}.
        
    \item[(II)]
        The embedding $\Irr(W) \hookrightarrow \Udeg(W)$ sends each character $\chi$ to a representation $\rho_\chi$ called the corresponding \emph{unipotent principal series representation}.
        
    \item[(III)]
        For general $\rho \in \Udeg(W)$, the number $\Deg_\rho(q) \coloneqq \rho(1)$ turns out to be a polynomial in $q$ that is independent of the prime power and recovers $\Deg_\chi$ when $\rho = \rho_\chi$.
        
    \item[(IV)]
        First, for any finite group $\mathfrak{G}$, Lusztig uniformly defines a set $\mathfrak{M}(\mathfrak{G})$ and a hermitian unitary pairing $\{-, -\}_\mathfrak{G} : \mathfrak{M}(\mathfrak{G}) \times \mathfrak{M}(\mathfrak{G}) \to \C$.

		For each $\chi \in \Irr(W)$, let $a(\chi)$ be the largest power of $q$ dividing $\Deg_\chi(q)$.
		The subsets of $\Irr(W)$ on which $a$ is constant are called \emph{families}.
		For each family $\mathcal{F} \subseteq \Irr(W)$, Lusztig defines
        \begin{enumerate}
            \item 	a finite group $\mathfrak{G}_\mathcal{F}$, in a uniform manner;
            \item \label{item:IVb} an embedding $\mathfrak{M}(\mathfrak{G}_\mathcal{F}) \hookrightarrow \Udeg(W)$, in a case-by-case manner, such that as we run over families $\mathcal{F}$, the disjoint union of these embeddings is a bijection.
            
        \end{enumerate}
        The pairing $\{-, -\}$ on $\Udeg(W)$ is defined as the block sum of the pairings $\{-, -\}_{\mathfrak{G}_\mathcal{F}}$ on the subsets $\mathfrak{M}(\mathfrak{G}_\mathcal{F})$.
        The embeddings above are chosen in such a way that condition~\hyperref[thm:exotic-ft1]{(1)} of \cref{thm:exotic-ft} is guaranteed; see \cite[Theorem 4.23]{lusztig1984characters}. 
        Moreover, for the groups $\mathfrak{G}$ that actually get assigned to families, it turns out that $\{-, -\}_\mathfrak{G}$ is real-valued, hence symmetric orthogonal.
        (It further turns out that $\{-, -\}_\mathfrak{G}$ takes values in $\Q_W$.)
        
\end{enumerate}
In this approach, the construction of $\{-, -\}$ is not uniform, but conditions~\hyperref[thm:exotic-ft1]{(1)} and~\hyperref[thm:exotic-ft2]{(2)} of \cref{thm:exotic-ft} follow immediately and uniformly once it is shown that \cite[Theorem 4.23]{lusztig1984characters} is satisfied for the right choice of embeddings in~\hyperref[item:IVb]{(IV)(b)}.
As for condition~\hyperref[thm:exotic-ft3]{(3)}: \cite[Section~4.21]{broue1997certains} shows uniformly that $\cont(\chi) = N - a(\chi) - A(\chi)$ for all $\chi \in \Irr(W)$, where $A(\chi) = \deg_q \Deg_\chi(q)$.
One can check case-by-case that $A$ is also constant on families, and therefore, so is $c$, which verifies condition~\hyperref[thm:exotic-ft3]{(3)}.

Each irreducible character $\chi \in \Irr(W)$ also uniformly defines a virtual representation $R_\chi$ called the corresponding \emph{almost-character}.
The actual statement of \cite[Theorem 4.23]{lusztig1984characters} says that $(\varrho, R_\chi)_{G(\Fbb_q)} = \Delta(\varrho) \{\varrho, \rho_\chi\} $ for all $\chi \in \Irr(W)$ and $\varrho \in \Udeg(W)$, where $\Delta : \Udeg(W) \to \{\pm 1\}$ is defined case-by-case in~\cite[Section~4.14]{lusztig1984characters}.
However, $\Delta(\rho) = 1$ when $\rho$ is a principal series representation.
This means that if we only care about the restricted pairing $\{-, -\}_W$ on $\Irr(W)$, then we can take 
\begin{align*}
    \{\phi, \chi\}_W \coloneqq (\rho_\phi, R_\chi)_{G(\Fbb_q)}
\end{align*}
as a uniform \oldemph{definition}.
In this approach, condition~\hyperref[thm:exotic-ft1]{(1)} of \cref{thm:exotic-ft} immediate, but there is no uniform proof of conditions~\hyperref[thm:exotic-ft2]{(2)} or~\hyperref[thm:exotic-ft3]{(3)}.

\begin{remark}
Let us similarly address the uniformity of the proofs of the results we need from \cite{trinh2021hecke}.
The only place affected in our work is \Cref{thm:rca2}, which relies on:
    \begin{itemize}
        \item \cite[Corollary 11]{trinh2021hecke}, which in turn relies on a Lemma 10.6.1 stating that $	\Deg_{\varepsilon\chi}(e^{2\pi i\nu}) = (-1)^{2N\nu} \Deg_\chi(e^{2\pi i\nu})$.

            The proof of the lemma relies on $\{-, -\}_W$ by way of \cref{thm:exotic-ft1}.
            It seems possible that a proof avoiding the exotic Fourier transform can be found.
            Note that in our application, where $\nu = p/h$ for $p$ coprime to $h$, the sign on the right-hand side disappears.
            
        \item \cite[Corollary 13]{trinh2021hecke}, which follows from the results about the $\Phi_h$-block theory of $K\H_W$ that we mentioned in \Cref{rem:simpler}.
        The characterization of the principal block in terms of defect is proved uniformly for Weyl groups in \cite[Theorem 6.6]{bleher1997automorphisms}, but there does not appear to be a uniform proof for general Coxeter groups.
        
    \end{itemize}
\end{remark}

\subsection{\texorpdfstring{The $q$-parking count}{The q-parking count}}\label{sec:q_parking}

We now prove~\Cref{eq:intro:trq_sum}. 

\begin{corollary}\label{cor:parking}
Let $\bc$ be a Coxeter word, and let $p$ be a positive integer coprime to $h$.  Then we have $\sum_{v \in W} \Rv_{e, \bc^p}(q)=(q-1)^r[p]_q^r$.
\end{corollary}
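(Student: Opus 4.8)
Here is my proposed approach.

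\medskip

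The plan is to mirror the proof of \cref{cor:equivalence}: rewrite the left-hand side as a Hecke-algebra trace, observe that summing over $v\in W$ assembles the symmetrizer of $\tau^-$, use its centrality together with Schur orthogonality to collapse the sum to the regular character of $K\H_W$, and finally evaluate that character on $T_{\bc^p}$ via Springer, the exotic Fourier transform, and the $\Phi_h$-block computation of \cref{rem:simpler}. Throughout, the point is that the sum over $v$ replaces $\Deg_\chi(q)$ by $\chi(e)$ inside the relevant character sum.

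First I would apply \eqref{eq:Rvu} with $u=e$ to write $\Rv_{e,\bc^p}(q)=q^{\ell(v)}\tau^-(T_{v^{-1}}^{-1}T_{\bc^p}T_v^{-1})$, sum over $v$, and use cyclicity of $\tau^-$ to move $T_{v^{-1}}^{-1}$ to the right. Since $q^{\ell(v)}T_v^{-1}T_{v^{-1}}^{-1}=\sigma_v^{-1}\sigma_{v^{-1}}^{-1}$ and $\Sigma(\tau^-)=\sum_{w\in W}\sigma_w^{-1}\otimes\sigma_{w^{-1}}^{-1}$, this yields $\sum_{v\in W}\Rv_{e,\bc^p}(q)=\tau^-\!\big(T_{\bc^p}\,\bar\Sigma(\tau^-)\big)$. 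Now $\bar\Sigma(\tau^-)$ is central and acts on the simple $K\H_W$-module with character $\chi_q$ by the scalar $\chi(e)\mathbf{s}^-(\chi_q)$; since $\tau^-$ is symmetrizing, plugging this into the Schur decomposition $\tau^-=\sum_\chi \mathbf{s}^-(\chi_q)^{-1}\chi_q$ of \eqref{eq:schur-orthogonality} makes the Schur elements cancel, giving $\sum_{v\in W}\Rv_{e,\bc^p}(q)=\sum_{\chi\in\Irr(W)}\chi(e)\,\chi_q(T_{\bc^p})$, i.e.\ the value of the regular character of $K\H_W$ at $T_{\bc^p}$.

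Next, because $\sigma_{\bc}^p$ is periodic of slope $p/h$, Springer's \cref{thm:periodic-springer} gives $\chi_q(T_{\bc^p})=q^{\rank p/2}q^{(p/h)\cont(\chi)}\Feg_\chi(e^{2\pi ip/h})$, so $\sum_{v}\Rv_{e,\bc^p}(q)=q^{\rank p/2}\sum_\chi\chi(e)\,q^{(p/h)\cont(\chi)}\Feg_\chi(e^{2\pi ip/h})$. Writing $\chi(e)=\Deg_\chi(1)$ by \eqref{eq:Feg(1)=Deg(1)=dim} and applying the exotic-Fourier-transform identity \eqref{eq:mix-and-match_gen} with $(q_1,q_2,q_3,\nu)=(q,e^{2\pi ip/h},1,p/h)$ swaps the character sum into $q^{\rank p/2}\sum_\chi\chi(e)\,q^{(p/h)\cont(\chi)}\Deg_\chi(e^{2\pi ip/h})$. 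By the $\Phi_h$-block computation recalled in \cref{rem:simpler} (equations \eqref{eq:Deg_chi_(-1)^k} and \eqref{eq:c(Lambda_k)}), only the exterior-power characters $\chi=\Lambda_k$ survive, with $\Deg_{\Lambda_k}(e^{2\pi ip/h})=(-1)^k$, $\Lambda_k(e)=\binom{\rank}{k}$, and $\cont(\Lambda_k)=N-hk$; using $N=\ell(\wo)=\tfrac12\rank h$ the exponent $\tfrac{\rank p}{2}+\tfrac ph\cont(\Lambda_k)$ becomes $\rank p-pk$, so the sum is $\sum_{k=0}^{\rank}\binom{\rank}{k}(-1)^k q^{\rank p-pk}=q^{\rank p}(1-q^{-p})^{\rank}=(q^p-1)^{\rank}$, and $q^p-1=(q-1)[p]_q$ gives the claimed $(q-1)^r[p]_q^r$.

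I expect the main obstacle to be the second step: realizing that $\sum_{v}q^{\ell(v)}T_{v^{-1}}^{-1}T_{\bc^p}T_v^{-1}$, after cyclic rotation, is exactly $T_{\bc^p}$ times the symmetrizer $\bar\Sigma(\tau^-)$, so that Schur's lemma for this central element converts the entire $v$-sum into the regular-character sum $\sum_\chi\chi(e)\,\chi_q(T_{\bc^p})$. Once that identity is in hand, the remaining argument is a routine variant of \cref{cor:equivalence}, relying on the same inputs (Springer's theorem, the symmetry of the exotic Fourier transform via \eqref{eq:mix-and-match_gen}, and the block-theoretic vanishing of generic degrees at $e^{2\pi ip/h}$ away from the exterior powers).
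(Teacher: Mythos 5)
Your proposal is correct and follows essentially the same route as the paper: rewrite $\sum_v \Rv_{e,\bc^p}(q)$ as $\tau^-$ applied to $T_{\bc^p}$ times the dual-basis sum $\sum_v q^{\ell(v)}T_v^{-1}T_{v^{-1}}^{-1}$, collapse it to $\sum_\chi \chi(e)\,\chi_q(T_{\bc^p})$, and then finish with Springer's theorem, the symmetry identity \eqref{eq:mix-and-match_gen}, and \eqref{eq:Deg_chi_(-1)^k}--\eqref{eq:c(Lambda_k)} plus the binomial theorem. The only cosmetic difference is that where the paper cites the dual-bases identity from Geck--Pfeiffer, you make the same step explicit by recognizing the sum as $\bar\Sigma(\tau^-)$ and invoking its defining scalar action $\chi(e)\,\mathbf{s}^-(\chi_q)$, which cancels the Schur elements in \eqref{eq:schur-orthogonality}.
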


\begin{proof}
Applying \cref{cor:Rv_u_w(q)_tau_-} and~\eqref{eq:schur-orthogonality}, we find
\begin{equation*}
	\sum_{v \in W} \Rv_{e, \bc^p}(q)= \tau^- \left(\sum_{v\in W}q^{\ell(v)}T_{v^{-1}}^{-1} T_{\cpbf} T_{v}^{-1}\right)= \sum_{\chi\in\Irr(W)} \frac1{\s^-(\chi_q)} \sum_{v\in W}\chi_q\left(q^{\ell(v)}T_{v^{-1}}^{-1} T_{\cpbf} T_{v}^{-1}\right).
\end{equation*}
By~\eqref{eq:trace_uvi}, the bases $(T_v^{-1})_{v\in W}$ and $(q^{\ell(v)} T_{v^{-1}}^{-1})_{v\in W}$ are dual to each other with respect to $\tau^-$.
Applying the second display equation on page 226 of \cite{geck2000characters}, we see that the above sum simplifies to
\begin{equation*}
	\sum_{v \in W} \Rv_{e, \bc^p}(q)=\sum_{\chi\in\Irr(W)}\dim(\chi)\cdot  \chi_q(T_{\cpbf}).
\end{equation*}
(In the notation of~\oldemph{loc.\ cit.}, we are taking $\phi$ to be the operator by which $T_{\cpbf}$ acts on the $\H_W$-module of character $\chi_q$.)
By~\eqref{eq:chi_periodic}, \eqref{eq:Feg(1)=Deg(1)=dim}, and~\eqref{eq:mix-and-match_gen},
\begin{equation*}
q^{\frac{rp}{2}}\sum_{\chi\in\Irr(W)}  q^{\frac ph \cont(\chi)} \Deg_\chi(1)  \Feg_\chi(e^{\frac{2\pi ip}h})
	= q^{\frac{rp}{2}} \sum_{\chi\in\Irr(W)}  q^{\frac ph \cont(\chi)} \Deg_\chi(e^{\frac{2\pi ip}h})  \Feg_\chi(1).
\end{equation*}
By~\eqref{eq:Deg_chi_(-1)^k}--\eqref{eq:c(Lambda_k)}, \eqref{eq:Feg(1)=Deg(1)=dim}, and the formula $\dim(\Lambda_k)=\binom rk$, this becomes
\begin{equation*}
	q^{\frac{rp}{2}} \sum_{0\leq k\leq r} q^{p(r/2-k)} \dim(\Lambda_k)  (-1)^k
	= q^{rp} \sum_{0\leq k\leq r} (-1)^k \binom{r}{k}  q^{-pk}
	= q^{rp} (1-q^{-p})^r 
	= (q-1)^r[p]_q^r. \qedhere
\end{equation*}
\end{proof}

\subsection{\texorpdfstring{Explicit computations in type $A$}{Explicit computations in type A}}

In this subsection, we specialize the results above to $W=\Sn$.

\subsubsection{Symmetric functions}
\label{sec:sym_fcns}
We refer the reader to~\cite{stanleyEC2} for background on symmetric functions.
Let $\Laq$ be the ring of symmetric functions over the field $\Qbb(q)$.
For $n\geq0$, let $\Laqn$ denote the subspace of degree-$n$ homogeneous polynomials.
It has a basis of \emph{Schur functions} $s_\la$, indexed by the set $\Par(n)$ of partitions $\la\vdash n$.
The set $\Irr(\Sfr_n)$ can be identified canonically with $\Par(n)$.
For $\la\vdash n$, we write $\chila\in\Irr(\Sfr_n)$ to denote the corresponding character.
The trivial character $1$ corresponds to the single-row partition $\la=(n)$, while the sign character $\varepsilon$ corresponds to the single-column partition $\la=(1,1,\dots,1)$.
The scalar product $(-,-)_{\Sfr_n}$ corresponds to the \emph{Hall inner product} $\<-,-\>$ on $\Laq$.
The Schur functions form an orthonormal basis with respect to $\<-,-\>$.

First, we claim that the inverses of the Schur elements $\sbf^+(\chila)$ can be given in terms of the \emph{principal specializations} of the Schur functions $s_\lambda$:
\begin{equation}\label{eq:schur=prin}
    \frac1{\sbf^+(\chila)}=(1-q)^n\cdot s_\la(1,q,q^2,\dots).
\end{equation}
An explicit formula for the right-hand side (the \emph{$q$-hook length formula}) can be found in~\cite[Corollary~7.21.3]{stanleyEC2}.
Specifically, for a partition $\la=(\la_1\geq\la_2\geq\dots\geq0)$, let 
\begin{equation*}
    |\la|\coloneqq\sum_i\la_i \quad\text{and}\quad  b(\la)\coloneqq\sum_i (i-1)\la_i.
\end{equation*}
View $\la$ as a Young diagram in English notation, and let $h(u)$ denote the \emph{hook-length} of a box $u\in\la$. Then~\cite[Corollary~7.21.3]{stanleyEC2} reads
\begin{equation*}
    s_\la(1,q,q^2,\dots)=\frac{q^{b(\la)}}{(1-q)^n \prod_{u\in\la}\qint[h(u)]}.
\end{equation*}
The left-hand side of~\eqref{eq:schur=prin} is computed in~\cite[Theorem~10.5.2]{geck2000characters}.
Comparing the two sides gives the proof of~\eqref{eq:schur=prin}.

We can now compute the generic degrees $\Deg_{\chila}(q)$.
For the trivial character $\chi_{(n)}=1$, \eqref{eq:schur=prin} yields 
\begin{equation}\label{eq:sbf(q)=n!}
    \frac1{\sbf^+(1)}=(1-q)^n\cdot s_{(n)}(1,q,q^2,\dots)=\frac1{\qint[n]!},
\end{equation}
in agreement with~\eqref{eq:poincare-poly}.
Applying~\cite[Corollary~7.21.5]{stanleyEC2}, we find that 
\begin{equation*}
    \Deg_{\chila}(q)=\frac{q^{b(\la)}\qint[n]!}{\prod_{u\in\la}\qint[h(u)]}=\sum_{T\in\SYT(\la)} q^{\maj(T)}.
\end{equation*}
Here, $\SYT(\la)$ is the set of standard Young tableaux of shape $\la$ and $\maj(T)$ is the \emph{major index} of $T$, defined as the sum of all $i$ such that $i+1$ appears in a lower row of $T$ than $i$.
As expected from \eqref{eq:Feg(1)=Deg(1)=dim}, $\Deg_{\chila}(1)$ equals the dimension $\left|\SYT(\la)\right|$ of the irreducible representation of $\Sn$ corresponding to $\chila$.

Next, we compute the fake degrees.
By~\cite[Exercise~7.73]{stanleyEC2}, we have
\begin{equation}\label{eq:Symq_expansion}
	[\Sym]_q=\sum_{\la\vdash n} s_\la(1,q,q^2,\dots) s_\la.
\end{equation}
As an immediate consequence,
\begin{align*}
    (\chila,[\Sym]_q)_{\Sfr_n}=s_\la(1,q,q^2,\dots).
\end{align*}
This implies that the fake degrees coincide with the generic degrees: $\Feg_{\chila}(q)=\Deg_{\chila}(q)$.
The exotic Fourier transform $\{-, -\}_{\Sfr_n}$ therefore coincides with the scalar product $(-,-)_{\Sfr_n}$, i.e., for $\phi,\chi\in\Irr(\Sfr_n)$, we have $\{\phi, \chi\}_{\Sfr_n}=1$ if $\phi=\chi$ and $\{\phi, \chi\}_{\Sfr_n}=0$ otherwise.

\begin{remark}
Using the Cauchy identity~\cite[Theorem~7.12.1]{stanleyEC2}, one can check that the right-hand side of~\eqref{eq:Symq_expansion} equals $h_n[X/(1-q)]$, where the square brackets denote the plethysm~\cite[Defintion~A.2.6]{stanleyEC2}.
More generally, one can show that the operation $\chi\mapsto\chi\cdot [\Sym]_q$ on class functions corresponds to the plethystic substitution $f\mapsto f[X/(1-q)]$ on symmetric functions.
\end{remark}

\begin{conjecture}
Let $W=\Sn$. Then~\eqref{eq:mix-and-match_gen} gives the following explicit sum:
    \[\frac{1}{[n]!_q} \sum_{w \in W} q_1^{\nu \cont(\mathrm{shape}(\mathrm{RSK}(\mathrm{Foata}^{-1}(w)))} q_2^{\mathrm{maj}(w)} q_3^{\ell(w)}\]
    where $\cont$ is given explicitly in \cref{rem:content}, $\mathrm{maj}$ is the usual major index in $\Sn$, $\mathrm{RSK}$ denotes the usual Robinson-Schensted insertion, and $\mathrm{Foata}: \Sn \to \Sn$ is Foata's bijection~\cite{foata1978major}. 
\end{conjecture}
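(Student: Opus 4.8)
The plan is to reduce the asserted identity to a purely combinatorial statement about $\Sn$ and then prove that statement bijectively, using the Robinson--Schensted correspondence together with Foata's bijection.

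\textbf{Reduction to type $A$ combinatorics.} First I would feed in the type $A$ dictionary from \cref{sec:sym_fcns}: for $W=\Sn$ one has $\Feg_{\chila}(q)=\Deg_{\chila}(q)=\sum_{T\in\SYT(\la)}q^{\maj(T)}$, one has $\cont(\chila)=\cont(\la)$ (the content of $\la$, as in \cref{rem:content}), one has $\mathbf{s}^+(1_q)=\qint[n]!$, and the exotic Fourier transform $\{-,-\}_{\Sn}$ is the identity. In particular the two sides of \eqref{eq:mix-and-match_gen} already coincide termwise in type $A$, since $\Feg_{\chila}=\Deg_{\chila}$ makes each side equal to $\sum_{\la\vdash n}q_1^{\nu\cont(\la)}\Feg_{\chila}(q_2)\Feg_{\chila}(q_3)$, which is visibly symmetric in $q_2\leftrightarrow q_3$; so the content of the conjecture is the \emph{combinatorial meaning} of this common value. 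Since the displayed conjecture carries the factor $\frac{1}{\qint[n]!}=\frac{1}{\mathbf{s}^+(1_q)}$ that is built into the traces of \cref{sec:hecke} and into the normalizations of \cref{sec:hecke-char}, the conjecture is equivalent to the (unnormalized) combinatorial identity
\[
	\sum_{\la\vdash n}q_1^{\nu\cont(\la)}\Bigl(\sum_{T\in\SYT(\la)}q_2^{\maj(T)}\Bigr)\Bigl(\sum_{S\in\SYT(\la)}q_3^{\maj(S)}\Bigr)
	=\sum_{w\in\Sn}q_1^{\nu\cont(\mathrm{shape}(\mathrm{RSK}(\mathrm{Foata}^{-1}(w))))}\,q_2^{\maj(w)}\,q_3^{\ell(w)}.
\]

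\textbf{RSK on the left-hand side.} Write $w\mapsto(P(w),Q(w))$ for the Robinson--Schensted bijection $\Sn\xrasim\bigsqcup_{\la\vdash n}\SYT(\la)\times\SYT(\la)$, which preserves the common shape, and recall Sch\"utzenberger's descent theorem $\Des(Q(w))=\Des(w)$, $\Des(P(w))=\Des(w^{-1})$, whence $\maj(w)=\maj(Q(w))$ and $\maj(w^{-1})=\maj(P(w))$. Then the left-hand side above equals $\sum_{w\in\Sn}q_1^{\nu\cont(\mathrm{shape}(\mathrm{RSK}(w)))}q_2^{\maj(w^{-1})}q_3^{\maj(w)}$, and applying $w\mapsto w^{-1}$ (which fixes the RSK shape and swaps the two major indices) rewrites this as $\sum_{w\in\Sn}q_1^{\nu\cont(\mathrm{shape}(\mathrm{RSK}(w)))}q_2^{\maj(w)}q_3^{\maj(w^{-1})}$. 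So the conjecture collapses to a single shape-refined equidistribution: the triple $\bigl(\mathrm{shape}(\mathrm{RSK}(w)),\maj(w),\maj(w^{-1})\bigr)$ on $\Sn$ should have the same joint distribution as $\bigl(\mathrm{shape}(\mathrm{RSK}(\mathrm{Foata}^{-1}(w))),\maj(w),\ell(w)\bigr)$.

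\textbf{Foata's bijection, and the main obstacle.} The essential input is Foata's bijection $\phi=\mathrm{Foata}$ in its Foata--Sch\"utzenberger refinement: $\phi$ may be chosen so that $\inv(\phi(v))=\maj(v)$ and $\Des(\phi(v)^{-1})=\Des(v^{-1})$ for all $v\in\Sn$. Substituting $v=\mathrm{Foata}^{-1}(w)$ on the right-hand side converts $\ell(w)=\inv(\phi(v))$ into $\maj(v)=\maj(Q(v))$ and converts the shape factor into $\mathrm{shape}(\mathrm{RSK}(v))$, leaving one to prove, for each fixed recording tableau $Q$ of shape $\la$, that the multiset $\{\maj(\phi(v))\mid Q(v)=Q\}$ equals $\{\maj(P)\mid P\in\SYT(\la)\}$, which is in turn $\{\maj(v^{-1})\mid Q(v)=Q\}$. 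The inverse-descent invariance of $\phi$ pins down $\maj(\phi(v)^{-1})$ on the nose, while $\maj$ and $\inv$ are equidistributed on each dual Knuth class and on each RSK shape class; the job is to organize these facts into the stated equality of multisets. I expect this final matching to be the main obstacle, because the standard Foata bijection preserves neither RSK shapes nor dual Knuth classes, so propagating $\mathrm{shape}(\mathrm{RSK}(\mathrm{Foata}^{-1}(w)))$ through the chain of substitutions is delicate. One likely needs to replace $\phi$ by a shape-compatible variant (for instance in the spirit of Killpatrick's bijection, of Haglund--Loehr--Remmel, or of the coplactic/dual-equivalence structure on standard tableaux), or else to adjust the precise form of the statement --- in particular the placement of $\maj(w)$ versus $\maj(w^{-1})$ and the use of $\mathrm{Foata}^{-1}$ rather than $\mathrm{Foata}$. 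I would first nail down the exact statement by hand for $\Sn$ with $n=3,4$ (and with the CHEVIE code at \cite{code}) before attempting the general bijection.
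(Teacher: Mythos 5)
The paper offers no proof to compare against: this statement is recorded as a conjecture and left open, so your proposal can only be judged on its own terms. Your reduction is sound as far as it goes. In type $A$ the two sides of \eqref{eq:mix-and-match_gen} coincide termwise because $\Feg_{\chila}=\Deg_{\chila}=\sum_{T\in\SYT(\la)}q^{\maj(T)}$, and via RSK together with Sch\"utzenberger's descent correspondence ($\maj(w)=\maj(Q(w))$, $\maj(w^{-1})=\maj(P(w))$) the common value of \eqref{eq:mix-and-match_gen} is $\sum_{v\in\Sn}q_1^{\nu\cont(\mathrm{shape}(\mathrm{RSK}(v)))}q_2^{\maj(v^{-1})}q_3^{\maj(v)}$; substituting $w=\mathrm{Foata}(v)$ in the conjectured sum correctly turns $\ell(w)$ into $\maj(v)$ and $\maj(w)$ into $\maj(\phi(v))$. (The prefactor $1/[n]!_q$ is not literally present in \eqref{eq:mix-and-match_gen} --- note that $q$ is not even among $q_1,q_2,q_3$ --- so your decision to treat it as a normalization and work with the unnormalized identity is a reasonable reading, but it is an interpretive choice rather than something you derived.)

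The genuine gap is the one you yourself flag: after these substitutions the conjecture is exactly the joint equidistribution of $\bigl(\mathrm{shape}(\mathrm{RSK}(v)),\maj(v),\maj(\phi(v))\bigr)$ with $\bigl(\mathrm{shape}(\mathrm{RSK}(v)),\maj(v),\maj(v^{-1})\bigr)$, and nothing you invoke establishes it. The Foata--Sch\"utzenberger refinement controls $\Des(\phi(v)^{-1})$, hence $\maj(\phi(v)^{-1})$, not $\maj(\phi(v))$, and $\phi$ preserves neither RSK shape nor dual Knuth classes; consequently your proposed multiset identity for each fixed recording tableau $Q$ (which would indeed suffice, being a refinement of what is needed) is itself an open combinatorial statement at least as strong as the conjecture. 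So the proposal is a correct and useful translation of the conjecture into an equidistribution problem about Foata's bijection and RSK, but it does not close it; your own closing remark --- that one likely needs a shape-compatible substitute for $\phi$ or an adjustment of the statement --- is an accurate description of what remains, which is precisely why the paper states this as a conjecture rather than a theorem.
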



\subsubsection{A streamlined proof of \cref{thm:intro:tr}}

Below, we reprove \cref{thm:intro:tr} in type $A$ by direct calculation, avoiding the machinery needed for the previous proof.
Note that the type-$A$ case of \Cref{eq:intro:trq} is a specialization of V.~Jones's formula for the HOMFLYPT polynomial of the $(n, p)$-torus knot \cite[Theorem~9.7]{jones1987hecke}.
Explicitly, in the notation of \oldemph{loc.\ cit.}, $\CatpWq$ is the $q$-coefficient of the smallest power $\lambda$ in Jones's formula.

Let $W=\Sn$.
Recall that $r=n-1$, that the exponents and degrees of $\Sn$ are given by $e_i=d_i-1=i$ for $1 \leq i \leq n - 1$, that the Coxeter number is $h=d_{n-1}=n$, and that the irreducible representations of $\Sn$ are indexed by partitions $\lambda \vdash n$.

By~\Cref{rem:simpler}, we have $\Deg_{\chi_\lambda}(e^{\frac{2\pi i p}{h}}) = 0$ unless $\chi_\lambda$ is the character of an exterior power of the reflection representation, or equivalently, unless $\lambda$ is the hook partition $(n-k,1^{k})$ for some $k$.
Following \oldemph{loc.\ cit.}, we write $\Lambda_{k}=\chi_{(n-k,1^{k})}$, so that $\dim(\Lambda_{k}) = \binom{n-1}{k}$ and
\begin{equation}\label{eq:typeA_Feg=Deg=qbin}
    \Feg_{\Lambda_{k}}(q)=\Deg_{\Lambda_{k}}(q)=q^{\binom{k+1}{2}}\qbinom{n-1}{k}.
\end{equation}
Evaluations of $q$-binomial coefficients at roots of unity are well known; see e.g.~\cite{sagan1992congruence} and references therein.
In particular, for $p$ coprime to $n$ and $\zeta=e^{\frac{2\pi ip}{n}}$, we get
\begin{equation}\label{eq:typeA_Deg_chi_(-1)^k}
    \Feg_{\Lambda_{k}}(\zeta)=\Deg_{\Lambda_{k}}(\zeta)=\zeta^{\binom{k+1}{2}}\qbinomx{n-1}{k}_\zeta=(-1)^k,
\end{equation}
in agreement with~\eqref{eq:Deg_chi_(-1)^k}.

By~\eqref{eq:c(Lambda_k)} (or, alternatively, by \cref{rem:content}), we have
\begin{equation}\label{eq:typeA_cont(La)}
    \cont(\Lambda_k)=\binom{n}{2}-kn=\frac{n(n-2k-1)}{2}.
\end{equation}
We now prove a more explicit version of \cref{thm:intro:tr} in type $A$.

\begin{theorem} \label{thm:thmA}
Let $W=\Sn$.
For $p$ coprime to $h=n$, we have
\begin{align*} 
	R_{e,\c^p}(q)&=(q-1)^{n-1} \frac{1}{\qint[n+p]}\qbinom{n+p}{n}
		\qquad\text{and}\\
	\sum_{v \in \Sn} \Rv_{e, \bc^p}(q)&=(q-1)^{n-1} [p]_q^{n-1}.
\end{align*}
\end{theorem}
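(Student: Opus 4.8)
The plan is to read off both identities directly from the Hecke-algebra trace calculus of \cref{sec:hecke,sec:hecke-char}, exploiting the fact that in type $A$ the fake degrees and generic degrees coincide and are explicit $q$-binomial coefficients; this makes the exotic Fourier transform of \cref{thm:exotic-ft} (and the Cherednik-algebra input \cref{thm:rca}) unnecessary, since there is nothing to ``mix and match''.

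First I would record that $\bc^p$ is a periodic $c^p$-word of slope $\nu = p/n$, and that $\varepsilon(c^p) = (-1)^{n-1}$ (because $\gcd(p,n)=1$ forces $p$ odd whenever $n$ is even, so $(n-1)p \equiv n-1 \bmod 2$). Since $\bc^p$ has length $(n-1)p$, we have $T_{\bc^p} = q^{(n-1)p/2}\sigma_{\bc}^p$, so combining \cref{cor:r-to-tau-minus} with \cref{cor:fake-degree-at-root-of-unity} gives
\[
  R_{e,\bc^p}(q) = q^{\frac{(n-1)p}{2}}\tau^-(\sigma_{\bc}^p)
  = \frac{(-1)^{n-1}}{\mathbf{s}^+(1_q)}\,q^{\frac{(n-1)p}{2}}\sum_{\lambda\vdash n} q^{-\frac{p}{n}\cont(\chi_\lambda)}\Feg_{\chi_\lambda}(e^{\frac{2\pi ip}{n}})\,\Deg_{\chi_\lambda}(q).
\]
By \eqref{eq:typeA_Deg_chi_(-1)^k} (equivalently \eqref{eq:Deg_chi_(-1)^k}) only the hook characters $\Lambda_k = \chi_{(n-k,1^k)}$, $0\le k\le n-1$, survive, with $\Feg_{\Lambda_k}(e^{\frac{2\pi ip}{n}}) = (-1)^k$; and $\Deg_{\Lambda_k}(q) = q^{\binom{k+1}{2}}\qbinom{n-1}{k}$ by \eqref{eq:typeA_Feg=Deg=qbin}, $\cont(\Lambda_k)=\binom n2 - kn$ by \eqref{eq:typeA_cont(La)}, and $\mathbf{s}^+(1_q) = \qint[n]!$ by \eqref{eq:sbf(q)=n!}. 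A one-line check gives $\frac{(n-1)p}{2} - \frac{p}{n}\cont(\Lambda_k) = pk$, so the sum collapses to
\[
  R_{e,\bc^p}(q) = \frac{(-1)^{n-1}}{\qint[n]!}\sum_{k=0}^{n-1}(-1)^k\,q^{\,pk+\binom{k+1}{2}}\qbinom{n-1}{k}.
\]

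Next I would apply the $q$-binomial theorem $\prod_{i=0}^{m-1}(1+q^i t) = \sum_{k=0}^{m} q^{\binom k2}\qbinom mk t^k$ with $m = n-1$ and $t = -q^{p+1}$; since $\binom k2 + k(p+1) = \binom{k+1}{2} + pk$, the alternating sum above is $\prod_{j=1}^{n-1}(1-q^{p+j}) = (-1)^{n-1}(q-1)^{n-1}\prod_{j=1}^{n-1}\qint[p+j]$. Dividing by $\qint[n]!$ and cancelling the signs yields
\[
  R_{e,\bc^p}(q) = (q-1)^{n-1}\frac{\prod_{j=1}^{n-1}\qint[p+j]}{\qint[n]!} = (q-1)^{n-1}\frac{\qint[n+p-1]!}{\qint[n]!\,\qint[p]!} = (q-1)^{n-1}\frac{1}{\qint[n+p]}\qbinom{n+p}{n},
\]
which is the first identity. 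For the second identity, the shortest route is to cite \cref{cor:parking} with $\rank = n-1$. Alternatively, one can run the same computation in ``parking'' form: as in the proof of \cref{cor:parking}, $\sum_{v\in\Sn}\Rv_{e,\bc^p}(q) = \sum_{\lambda\vdash n}\dim(\chi_\lambda)(\chi_\lambda)_q(T_{\bc^p})$, and by \cref{thm:periodic-springer} applied to $\sigma_{\bc}^p$ together with the hook-character input and $\dim(\Lambda_k)=\binom{n-1}{k}$, this equals $\sum_{k=0}^{n-1}\binom{n-1}{k}(-1)^k q^{p(n-1-k)} = q^{p(n-1)}(1-q^{-p})^{n-1} = (q-1)^{n-1}\qint[p]^{n-1}$ by the ordinary binomial theorem.

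There is no genuine obstacle; the only thing requiring care is the bookkeeping of the three powers of $q$ — the prefactor $q^{(n-1)p/2}$ from \cref{cor:r-to-tau-minus}, the content exponent $q^{-\frac{p}{n}\cont(\cdot)}$ from Springer's theorem in \cref{cor:fake-degree-at-root-of-unity}, and the factor $q^{\binom{k+1}{2}}$ inside $\Deg_{\Lambda_k}$ — so that the surviving alternating sum is visibly an instance of the (Gauss) $q$-binomial theorem.
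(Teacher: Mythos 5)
Your proposal is correct and follows essentially the same route as the paper's streamlined type-$A$ proof: reduce $R_{e,\c^p}(q)$ to a hook-character sum via \cref{cor:r-to-tau-minus} and \cref{cor:fake-degree-at-root-of-unity}, substitute the explicit type-$A$ data \eqref{eq:typeA_Feg=Deg=qbin}--\eqref{eq:typeA_cont(La)} and \eqref{eq:sbf(q)=n!}, and finish with the $q$-binomial theorem, while the parking identity uses the same Schur-orthogonality/dual-basis reduction from \cref{sec:q_parking} and the ordinary binomial theorem. The only cosmetic difference is that you keep the character sum in the form $\Feg_\chi(e^{2\pi ip/n})\Deg_\chi(q)$ rather than swapping the arguments as in the paper's display, which is immaterial in type $A$ since $\Feg_{\chi_\lambda}=\Deg_{\chi_\lambda}$.
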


\begin{proof}
Write $\zeta=e^{\frac{2\pi ip}{n}}$.
We compute:
\begin{align*}
R_{e,\c^p}(q)
	&= \frac{(-1)^\rank q^{\frac{\rank p}{2}}}{\mathbf{s}^+(1_q)} \sum_{\chi \in \Irr(\Sn)} q^{-\frac{p}{h} \cont(\chi)} \Feg_\chi(q) \Deg_\chi(\zeta)  
		&& \text{by~\eqref{eq:mix-match1}--\eqref{eq:mix-match2}}\\
	&= \frac{(-1)^{n-1} q^{\frac{(n-1)p}{2}}}{[n]_q!} \sum_{k=0}^{n-1} q^{-\frac{p}{n} \cont(\Lambda_k)} \Feg_{\Lambda_k}(q) \Deg_{\Lambda_k}(\zeta)
		&& \text{by~\eqref{eq:Deg_chi_(-1)^k} and~\eqref{eq:sbf(q)=n!}}\\
	&= \frac{(-1)^{n-1} q^{\frac{(n-1)p}{2}}}{[n]_q!} \sum_{k=0}^{n-1} q^{\frac{-p(n-2k-1)}{2}} q^{\binom{k+1}{2}}\qbinom{n-1}{k} (-1)^k
		&& \text{by~\eqref{eq:typeA_Feg=Deg=qbin}--\eqref{eq:typeA_cont(La)}}\\
	&= \frac{(-1)^{n-1}}{[n]_q!} \sum_{k=0}^{n-1} q^{\binom{k}{2}}\qbinom{n-1}{k} (-1)^k q^{(p+1)k}\\
	&=\frac{1}{[n]_q!} \prod_{i=1}^{n-1}(q^{p+i}-1) = (q-1)^{n-1}\frac{1}{[n+p]}\qbinom{n+p}{n}
		&&\text{by the $q$-binomial theorem.}
\end{align*}
Similarly, we compute:
\begin{align*}
\sum_{v \in \Sn} \Rv_{e, \bc^p}(q)
	&= q^{\frac{rp}{2}} \sum_{\chi \in \Irr(\Sn)} \dim(\chi) q^{\frac{p}{h} \cont(\chi)} \Feg_\chi(\zeta)
		&& \text{by~\Cref{sec:q_parking}}\\
	&= q^{(n-1)p} \sum_{k=0}^{n-1} (-1)^k \binom{n-1}{k} q^{-pk} 
		&&\text{by~\eqref{eq:typeA_cont(La)} and~\eqref{eq:typeA_Deg_chi_(-1)^k}}\\ 
	&= q^{(n-1)p} (1-q^{-p})^{n-1}
		&& \text{by the binomial theorem}\\
	&= (q-1)^{n-1} [p]_q^{n-1}.
		&& \qedhere
\end{align*}
\end{proof}

\section{Braid Richardson Varieties}\label{sec:braidRich}

Let $\Fbb, G, \Bcal, B_+, B_-, \Tor$ be defined as in \cref{sec:braid-rich-vari}.
That is:
\begin{itemize}
    \item 	$\Fbb$ is a field,
    \item 	$G$ is a split, connected reductive algebraic group over $\Fbb$ with Weyl group $W$, 
    \item 	$\Bcal$ is the flag variety of $G$,
    \item 	$B_+$ and $B_-$ are opposed $\Fbb$-split Borels, and
    \item 	$\Tor \coloneqq B_+\cap B_-$, a split maximal torus of $G$.
\end{itemize}
Recall that for any $(B, B') \in \Bcal^2$, the notation $B \xrightarrow{w} B'$ means $(B, B')$ are in relative position $w$.  For a fixed Borel $B$, the set $\{B' \in \Bcal \mid B \xrightarrow{w} B'\}$ is isomorphic as an algebraic variety to an affine space of dimension $\ell(w)$.  In particular, if $B$ is an $\Fbb_q$-point of $\Bcal$ (where $q$ is a prime power), then this set contains $q^{\ell(w)}$ $\Fbb_q$-points of $\Bcal$.

\begin{definition}
Let $\w = (s_1,s_2,\ldots, s_m) \in S^m$ and fix $u \in W$.  Define the \defn{braid Richardson variety} by
\begin{equation*}
	\BR_{u,\w} = \left\{(B_+ = B_0 \xrightarrow{s_1} B_1  \xrightarrow{s_2} \cdots \xrightarrow{s_m} B_m \xleftarrow{u w_\circ} B_-) \mid B_i \in \mathcal{B}\text{ for all $i$} \right\}.
\end{equation*}
Note that $\BR_{u,\w}$ is nonempty whenever $\bw$ admits at least one $u$-subword.
\end{definition}

We now take $\Fbb = \Fbb_q$, a finite field of $q$ elements.
The following relation between braid Richardson varieties and $R$-polynomials will be proved after \cref{thm:deodhar_sum}.
\begin{proposition}\label{prop:R=|BR|}
    For all words $\bw\in S^m$ and all $u\in W$, 
    \begin{equation*}
          R_{u,\w}(q) = \left|\BR_{u,\mathbf{w}}(\Fbb_q)\right|.
    \end{equation*}
\end{proposition}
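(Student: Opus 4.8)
The plan is to prove $R_{u,\w}(q) = |\BR_{u,\w}(\F_q)|$ by induction on the length $m$ of the word $\w$, mirroring the Deodhar recurrence~\eqref{eq:cor_R_rec} that defines the $R$-polynomials. First I would establish the base case $\w = \emptyword$: here $\BR_{u,\emptyword}$ is the set of pairs with $B_+ \xleftarrow{u\wo} B_-$, i.e., it is a point if $u\wo = \wo$ (equivalently $u = e$) and empty otherwise, matching~\eqref{eq:cor_R_rec_empty}.

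For the inductive step, write $\w = \w' s$ with $\w' = (s_1,\dots,s_{m-1})$. I would consider the forgetful map $\BR_{u,\w's}(\F_q) \to \{(B_0,\dots,B_{m-1}) \mid B_+ \xrightarrow{s_1}\cdots\xrightarrow{s_{m-1}} B_{m-1}\}$ that drops the last flag $B_m$, and analyze the fiber over a point where $B_{m-1}$ is in relative position $y$ to $B_-$ (so $B_{m-1} \xrightarrow{y w_\circ^{-1}}\cdots$; more precisely $B_m \xleftarrow{u\wo} B_-$ forces a relation on $B_m$). The key local computation is the standard $\SL_2$-type dichotomy for the relative position of a chain $B_{m-1} \xrightarrow{s} B_m$ with $B_m$ constrained to be in a fixed relative position to $B_-$: depending on whether multiplying by $s$ on the relevant side increases or decreases length, the fiber is either a single point or an affine line $\mathbb{A}^1$ minus a point (contributing $q-1$), together with a disjoint point. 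Concretely, this recovers exactly the two cases of~\eqref{eq:cor_R_rec}: when $us < u$ the count of the chain extension is governed by $R_{us,\w'}(q)$, and when $us > u$ it splits as $q\,R_{us,\w'}(q) + (q-1)R_{u,\w'}(q)$. Counting $\F_q$-points is multiplicative over such fibrations into affine spaces and tori, so $|\BR_{u,\w's}(\F_q)|$ satisfies the same recurrence as $R_{u,\w's}(q)$, completing the induction.

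An alternative, and perhaps cleaner, route I would seriously consider is to go through \cref{thm:deodhar_sum} (referenced as the result after which this proposition is proved): presumably that theorem gives a stratification of $\BR_{u,\w}$ indexed by the distinguished subwords $\su \in \D_{u,\w}$, with the stratum attached to $\su$ isomorphic to $(\F_q^\times)^{\esu} \times \F_q^{\dsu}$ (a product of $\esu$ copies of $\mathbb{G}_m$ and $\dsu$ copies of $\mathbb{A}^1$), exactly in the spirit of Deodhar's original decomposition~\cite{deodhar1985some} and of~\cite{marsh2004parametrizations}. Granting such a stratification, the point count is immediate:
\[
  |\BR_{u,\w}(\F_q)| = \sum_{\su \in \D_{u,\w}} (q-1)^{\esu} q^{\dsu} = R_{u,\w}(q),
\]
where the last equality is precisely \cref{cor:R=esu_dsu}. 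This reduces the proposition to citing \cref{thm:deodhar_sum} and \cref{cor:R=esu_dsu}, which is the approach I would expect the paper to take given the phrasing ``will be proved after \cref{thm:deodhar_sum}''.

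The main obstacle in either approach is the same local geometric lemma: verifying that extending a flag chain by one step $B_{m-1} \xrightarrow{s} B_m$ under the constraint imposed by $B_m \xleftarrow{u\wo} B_-$ produces exactly the two cases matching the Deodhar recurrence, and in particular getting the lengths/dimensions right (the $q$ versus $q-1$ bookkeeping, and which side the multiplication by $s$ acts on when one keeps track of both the $B_+$-side relative positions $s_i$ and the $B_-$-side relative position $u\wo$). This is where one must be careful that the combinatorial condition ``$us < u$ vs.\ $us > u$'' is the shadow of the geometry, and where the $\wo$ twist in the definition~\eqref{eq:intro:BRe} of $\BR_{e,\w}$ enters; once that lemma is pinned down, everything else is a routine induction or a direct appeal to the stratification in \cref{thm:deodhar_sum}.
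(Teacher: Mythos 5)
Your second route is exactly the paper's argument: the paper proves \cref{prop:R=|BR|} simply by comparing the point count of the stratification in \cref{thm:deodhar_sum}, namely $\BR_{u,\w} = \bigsqcup_{\su \in \D_{u,\w}} \BR_{\su,\w}$ with $\BR_{\su,\w}(\Fbb_q) \simeq (\Fbb_q^\ast)^{\esu} \times \Fbb_q^{\dsu}$ as in \eqref{eq:deodhar_sum_F_q}, against the formula \eqref{eq:R=esu_dsu} of \cref{cor:R=esu_dsu}. Your first, inductive route is also sound, but the local $\SL_2$-type fiber analysis it requires is essentially the content of Deodhar's decomposition itself, so it amounts to reproving \cref{thm:deodhar_sum} rather than citing it.
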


\begin{example}\label{ex:deodhar3}
We continue \cref{ex:deodhar0}. Let $G=\SL_2$ with $W=\{e,s\}$.  Then $|\Bcal(\F_q)|=q+1$. Let us denote the elements of $\Bcal(\F_q)$ by $\Bcal(\F_q)=\{B_0=B_+,B_1,B_2,\dots,B_q=B_-\}$. Then $B_i\xra{e} B_i$ and $B_i\xra{s} B_j$ for $i\neq j$. By analyzing which Borel subgroups are equal to $B_-$, we compute that
\begin{align*}
\BR_{e,(s,s,s)} = \left\{
	\begin{array}{rl}
    (B_+ \xrightarrow{s} B_i \xrightarrow{s} B_j \xrightarrow{s} B_k \xleftarrow{s} B_-) 
    	&\text{for $1\leq i\leq q-1$ and $0\leq j,k \leq q-1$ with $i\neq j\neq k$},\\
    (B_+ \xrightarrow{s} B_- \xrightarrow{s} B_i \xrightarrow{s} B_j\xleftarrow{s} B_-) 
    	&\text{for $0\leq i,j\leq q-1$ with $i\neq j$},\\
    (B_+ \xrightarrow{s} B_i \xrightarrow{s} B_- \xrightarrow{s} B_j\xleftarrow{s} B_-)
    	&\text{for $1\leq i\leq q-1$ and $0\leq j\leq q-1$} \end{array}\right\}.
\end{align*}
Thus $\BR_{e,(s,s,s)}(q) = (q-1)^3+2q(q-1)=(q-1)(q^2+1).$
\end{example}

The following result appears in~\cite{deodhar1985some} (see also \cite{marsh2004parametrizations,webster2007deodhar}) for reduced words $\bw$, but the argument in~\cite{deodhar1985some} extends to the case where $\bw$ is arbitrary.

\begin{theorem}[\cite{deodhar1985some,marsh2004parametrizations,webster2007deodhar}] \label{thm:deodhar_sum}
    Let $W$ be a Weyl group.  For a $u$-subword $\su$ of $\bw=(s_1,s_2,\ldots,s_m)$, let
        \[\BR_{\u,\w} = \left\{ (B_0 \xrightarrow{s_1} B_1  \xrightarrow{s_2} \cdots \xrightarrow{s_m} B_m\xleftarrow{u w_\circ} B_-) \mid B_- \xrightarrow{\up{i} w_\circ} B_i \text{ for $i \in \{0,1,\ldots,m\}$} \right\}.\]
    Then
    \begin{equation}\label{eq:deodhar_sum_F_q}
        \BR_{u,\w} = \bigsqcup_{\su \in \D_{u,\w}} \BR_{\su,\w}\quad \text{with}\quad \BR_{\su,\w}(\F) \simeq (\F^\ast)^{\esu} \times \F^{\dsu}.
    \end{equation}
\end{theorem}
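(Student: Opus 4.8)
The plan is to rerun Deodhar's argument from \cite{deodhar1985some}, which is stated for a reduced word $\w$, and to check that reducedness is never used (the hypothesis that $W$ is a Weyl group enters only through the ambient algebraic group $G$ and its flag variety $\Bcal$). I would induct on $m$, the length of $\w$. The base case $m=0$ is immediate: $\BR_{u,\emptyword}$ is a single point when $u=e$ and is empty otherwise, while the only distinguished $e$-subword of the empty word is the empty one, for which $\esu=\dsu=0$.

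For the decomposition, the basic input is the standard transition rule for relative positions: if $B\xrightarrow{s}B'$ with $s$ a simple reflection and $B_-$ is any Borel with $\mathrm{relpos}(B_-,B)=z$, then $\mathrm{relpos}(B_-,B')\in\{z,zs\}$; the value is forced to equal $zs$ exactly when $zs>z$, whereas when $zs<z$ both values occur and, as $B'$ ranges over the $\mathbb{P}^1$ of Borels with $B\xrightarrow{s}B'$, the value $zs$ is attained at exactly one point, namely where this $\mathbb{P}^1$-fibre meets the Schubert cell $C_{zs}(B_-)$ (that it meets $C_{zs}(B_-)$ in exactly one point, uniformly in $B$, is the standard fact that the ``moving'' Schubert cell is a section of the corresponding $\mathbb{P}^1$-fibration). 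Given $(B_0,\ldots,B_m)\in\BR_{u,\w}$, put $z_i\coloneqq\mathrm{relpos}(B_-,B_i)$, so $z_0=\mathrm{relpos}(B_-,B_+)=w_\circ$ and, by the transition rule, $z_i\in\{z_{i-1},z_{i-1}s_i\}$. Recording $u_i\coloneqq e$ in the first case and $u_i\coloneqq s_i$ in the second builds the unique subword $\su=(u_1,\ldots,u_m)$ of $\w$ for which the given point lies in $\BR_{\su,\w}$, and the defining constraint of $\BR_{u,\w}$ pins $z_m$ so that $\up m=u$, i.e.\ $\su$ is a $u$-subword. Using $\ell(w_\circ w)=\ell(w_\circ)-\ell(w)$, the ``forced'' case $z_{i-1}s_i>z_{i-1}$ is equivalent to $\up{i-1}s_i<\up{i-1}$, and in that case $u_i=s_i$ is forced; this is exactly the condition defining a distinguished subword (compare \Cref{prop:deodhar}). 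Hence $(B_0,\ldots,B_m)\mapsto\su$ sorts $\BR_{u,\w}$ into the pieces $\BR_{\su,\w}$ with $\su\in\D_{u,\w}$, which gives $\BR_{u,\w}=\bigsqcup_{\su\in\D_{u,\w}}\BR_{\su,\w}$.

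For the fibre structure, write $\w=\w'(s)$ with $\w'=(s_1,\ldots,s_{m-1})$ and $s=s_m$, and let $\su=(u_1,\ldots,u_m)$ with truncation $\su'=(u_1,\ldots,u_{m-1})$. Forgetting $B_m$ gives a morphism $\BR_{\su,\w}\to\BR_{\su',\w'}$ whose fibre over $(B_0,\ldots,B_{m-1})$ is $\{B_m:B_{m-1}\xrightarrow{s}B_m,\ \mathrm{relpos}(B_-,B_m)=z_m\}$ for the value $z_m$ prescribed by $\su$. The transition rule yields exactly three cases, matching the local contributions to the two statistics: (i) if $u_m=s_m$ with $\up{m-1}s<\up{m-1}$ (a descent, contributing $1$ to $\dsu$), then $z_{m-1}s>z_{m-1}$ and $z_m=z_{m-1}s$ is the forced value, so the fibre is the whole affine line $\{B_m:B_{m-1}\xrightarrow{s}B_m\}\cong\mathbb{A}^1$; (ii) if $u_m=s_m$ with $\up{m-1}s>\up{m-1}$ (neither a skip nor a descent), then $z_{m-1}s<z_{m-1}$ and $z_m=z_{m-1}s$ is the ``descending'' value, so the fibre is the single point of that affine line lying on $C_{z_{m-1}s}(B_-)$; and (iii) if $u_m=e$ (a skip, contributing $1$ to $\esu$), then distinguishedness forces $\up{m-1}s>\up{m-1}$, hence $z_{m-1}s<z_{m-1}$ and $z_m=z_{m-1}$, so the fibre is $\{B_m:B_{m-1}\xrightarrow{s}B_m\}\cong\mathbb{A}^1$ with the single point lying on $C_{z_{m-1}s}(B_-)$ removed, i.e.\ a copy of $\F^\ast$. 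Each of these fibrations is trivial after the explicit change of coordinates built from the root subgroup attached to $s_m$, exactly as in \cite{deodhar1985some}. Since $\esu=\eop_{\su'}+[u_m=e]$ and $\dsu=\dop_{\su'}+[\up m<\up{m-1}]$, the inductive hypothesis $\BR_{\su',\w'}(\F)\simeq(\F^\ast)^{\eop_{\su'}}\times\F^{\dop_{\su'}}$ upgrades to $\BR_{\su,\w}(\F)\simeq(\F^\ast)^{\esu}\times\F^{\dsu}$; in particular every $\BR_{\su,\w}$ is nonempty, and \eqref{eq:deodhar_sum_F_q} follows.

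I expect the main obstacle to be the two refinements of the transition rule in the case $z_{i-1}s_i<z_{i-1}$: that the ``descending'' value $z_{i-1}s_i$ is attained at exactly one point of the $\mathbb{P}^1$-fibre, uniformly over $(B_0,\ldots,B_{i-1})$ (so that the ``point'' and ``$\F^\ast$'' fibres are genuinely of the asserted type rather than larger or empty), together with the triviality of the three fibrations, which is what makes the final statement an isomorphism of varieties over an arbitrary field $\F$ and not merely an equality of $\F_q$-point counts. Both are handled by Deodhar's explicit parametrisation via the root subgroups $U_{\pm\alpha_{s_i}}$; this is routine but notationally heavy, and is the one place where care is needed to confirm that dropping the reducedness of $\w$ changes nothing.
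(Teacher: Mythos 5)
Your overall strategy is the right one, and it is exactly what the paper does: the paper simply cites Deodhar (and Marsh--Rietsch, Webster--Yakimov) for reduced words and observes that the argument never uses reducedness, which is precisely the induction you rerun. Your three-case fiber analysis (full affine line at a descent step, a single point at an ascent step that uses the letter, $\F^\ast$ at a skip) and the root-subgroup trivialization are the correct geometric core, and the statistics $\esu$, $\dsu$ are matched to the cases correctly.

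However, there is a genuine inconsistency in your relative-position bookkeeping, at the step where the geometry is matched to $\D_{u,\w}$. With the paper's convention, if $\mathrm{relpos}(B_-,B_{i-1})=z_{i-1}$ and $B_{i-1}\xrightarrow{s_i}B_i$, then $\mathrm{relpos}(B_-,B_i)\in\{z_{i-1},\,z_{i-1}s_i\}$: the relative position changes by \emph{right} multiplication by $s_i$. The conditions defining $\BR_{\su,\w}$ read $z_i=\up{i}w_\circ$, so a step with $u_i=s_i$ would require $z$ to change by right multiplication by $w_\circ s_iw_\circ=:s_i^\ast$, not by $s_i$; hence the point you construct does not lie in $\BR_{\su,\w}$ for the subword you record unless $s_i^\ast=s_i$. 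Relatedly, your equivalence ``$z_{i-1}s_i>z_{i-1}\iff\up{i-1}s_i<\up{i-1}$'' invokes $\ell(w_\circ w)=\ell(w_\circ)-\ell(w)$, a \emph{left}-multiplication identity, so it presumes $z_{i-1}=w_\circ\up{i-1}$ rather than the stated $z_{i-1}=\up{i-1}w_\circ$; and under that left-handed reading the terminal condition $z_m=uw_\circ$ pins $\up{m}=w_\circ uw_\circ$, not $u$. So under no single convention are all three of your claims (the point lies in $\BR_{\su,\w}$; forced steps correspond to $\up{i-1}s_i<\up{i-1}$; $\up{m}=u$) simultaneously valid, except when $w_\circ$ is central; what your argument literally produces is an indexing by distinguished $w_\circ uw_\circ$-subwords of $\w$ (equivalently, $u$-subwords of $(s_1^\ast,\ldots,s_m^\ast)$). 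Concretely, in type $A_2$ with $\w=(s_1)$ and $u=s_1$, the unique Borel $B_1$ with $B_+\xrightarrow{s_1}B_1$ and nongeneric position relative to $B_-$ satisfies $\mathrm{relpos}(B_-,B_1)=w_\circ s_1=s_2w_\circ$, not $s_1w_\circ$. The fix is to fix one convention throughout, e.g.\ require $B_i\in B_-\,\up{i}\,B_+/B_+$, i.e.\ $\mathrm{relpos}(B_-,B_i)=w_\circ\up{i}$, with the matching boundary condition $\mathrm{relpos}(B_-,B_m)=w_\circ u$ (note the theorem as printed is itself sensitive to this left/right choice, and the discrepancy is invisible in the cases the paper actually uses, namely $u=e$ and $\SL_2$); once the conventions cohere, your induction, the Deodhar recurrence comparison (\Cref{prop:deodhar}), and the fiber analysis go through verbatim for arbitrary, not necessarily reduced, $\w$.
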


Thus, \cref{prop:R=|BR|} follows by comparing~\eqref{eq:deodhar_sum_F_q} with~\eqref{eq:R=esu_dsu}.
We may therefore interpret \cref{thm:deodhar_sum} as a geometric incarnation of \cref{cor:R=esu_dsu}.
Applying a similar argument to the variety $\BRve$ defined in~\eqref{eq:intro:BRve}, we find for all $v\in W$ and all integers $p$ that
\begin{equation}\label{eq:Re=|BRe|}
    \left|\BRe(\F_q)\right| = R_{e,\c^p}(q) \quad\text{and}\quad \left|\BRve(\F_q)\right|=\Rv_{e,\c^p}(q).
\end{equation}
\cref{thm:intro:BR} then follows from \cref{thm:intro:tr}.

\section{Noncrossing Combinatorics}\label{sec:noncrossing}

For Weyl groups, the uniformly-defined rational \oldemph{nonnesting} Coxeter--Catalan objects from \Cref{sec:rat_nonnesting} are counted by $\CatpW$.
As reviewed in~\Cref{sec:rat_noncrossing}, it has been an open problem to give a uniform definition of a rational \oldemph{noncrossing} family counted by $\CatpW$.

There are three previously-defined families of noncrossing Coxeter--Catalan objects, which are all in uniform bijection with each other.
Each of these families has a generalization to the Fuss--Catalan ($p=\mh+1$) and Fuss--Dogolon ($p=\mh-1$) levels of generality.
In this section, for simplicity, we will only treat the Fuss--Catalan case.

Fix a Coxeter word $\c = (s_1, s_2, \ldots, s_\rank)$.
We will review the three noncrossing families, then prove that the elements of $\Dm_{e,\c^p}$ are naturally rational noncrossing objects by giving direct bijections between $\Dm_{e,\c^{\mh+1}}$ and the Coxeter--Fuss--Catalan noncrossing families.

\subsection{Noncrossing Objects and Bijections}\label{subsec:noncrossing-objects}

A Coxeter group $W$ with system of simple generators $S$ defines a corresponding \emph{positive braid monoid} $\B^+_W$, equipped with a generating set $\mathbf{S}$ in bijection with $S$.
For all $s \in S$, we write $\s \in \mathbf{S}$ to denote the corresponding generator.
As a monoid, $\B^+_W$ is freely generated by $\mathbf{S}$ modulo the braid relations
\begin{align*}
    \overbrace{\mathbf{s}\mathbf{t}\mathbf{s} \cdots }^{m(s, t)} = \overbrace{\mathbf{t}\mathbf{s}\mathbf{t} \cdots }^{m(s, t)}
\end{align*}
for \oldemph{distinct} $s, t \in S$, for the same integers $m(s, t)$ as in \eqref{eq:group-presentation}.
Thus there is a surjective homomorphism of monoids $\B^+_W \to W$ that sends $\s \mapsto s$.
Note that it factors through a surjective homomorphism of rings $\Z[q^{\pm 1}][\B^+_W] \to \H_W$, where $\H_W$ is the Hecke algebra from \Cref{sec:hecke}.

For any word $\w = (s_1, s_2, \dots, s_m)\in S^m$, we abuse notation by again writing $\w$ to denote the product $\s_1\s_2\cdots\s_m\in\B^+_W$.
In the case where $\w$ is a reduced $w$-word for some $w \in W$, this product only depends on $w$, not on $\w$.

The \defn{weak order} $(\B^+_W,\weakleq)$ on $\B^+_W$ is the lattice formed by the transitive closure of the relation $\weaklessdot$ defined by $\w \weaklessdot \w\s$ for $\s \in \mathbf{S}$ and $\w\in\B^+_W$.
The map $w \mapsto \w$, where $\w\in \B^+_W$ is a reduced word for $w$, defines a canonical lift from $W$ onto the weak order interval $\weakinterval[e, \w_\circ] \subset \B^+_W$.
Here, $\w_\circ$ is the lift of the longest element $\wo \in W$, also known as the \defn{half-twist}.

\begin{figure}[htbp]
    \[
    \arraycolsep=3.2pt
    \begin{array}{|cccccccc|cc|cccccccc|}
        \multicolumn{8}{c}{\Sort^{(2)}(\mathfrak{S}_3,\c)} & \multicolumn{2}{c}{\NC^{(2)}(\mathfrak{S}_3,\c)}& \multicolumn{8}{c}{\Clus^{(2)}(\mathfrak{S}_3,\c)}
        \\ \hline
        (1\,2) & (2\,3) & \cdot & \cdot & \cdot & \cdot& \cdot & \cdot & \big((1\,2), & (2\,3) \big) & (1\,2) & (2\,3) & s_1 & s_2 & s_1 & s_2& s_1 & s_2 \\
        s_1 & (1\,3) & \dot{(1\,2)} & \cdot & \cdot & \cdot& \cdot & \cdot & \big((1\,3), & \dot{(1\,2)}\big)& s_1 & (1\,3) & \dot{(1\,2)} & s_2 & s_1 & s_2& s_1 & s_2\\
        s_1 & (1\,3) & s_1 & \cdot & \ddot{(1\,2)} & \cdot& \cdot & \cdot &  \big((1\,3), & \ddot{(1\,2)}\big) & s_1 & (1\,3) & s_1 & s_2 & s_1 & \ddot{(1\,2)} &s_1 & s_2\\
        (1\,2) & s_2 & \cdot & \dot{(2\,3)} & \cdot & \cdot& \cdot & \cdot & \big((1\,2), & \dot{(2\,3)}\big) & (1\,2) & s_2 & s_1 & s_2 & \dot{(2\,3)} & s_2& s_1 & s_2\\
        (1\,2) & s_2 & \cdot & s_2 & \cdot & \ddot{(2\,3)}& \cdot & \cdot & \big((1\,2),  & \ddot{(2\,3)}\big)&(1\,2) & s_2 & s_1 & s_2 & s_1 & s_2 & s_1 & \ddot{(2\,3)}\\
        s_1 & s_2 & (2\,3) & \dot{(1\,3)} & \cdot & \cdot & \cdot & \cdot & \big((2\,3), & \dot{(1\,3)}\big)&s_1 & s_2 & (2\,3) & \dot{(1\,3)} & s_1 & s_2 & s_1 & s_2\\
        s_1 & s_2 & (2\,3) & s_2 & \cdot & \ddot{(1\,3)} & \cdot & \cdot &\big((2\,3), & \ddot{(1\,3)}\big) &s_1 & s_2 & (2\,3) & s_2 & s_1 & s_2 & \ddot{(1\,3)} & s_2\\
        s_1 & s_2 & s_1 & \dot{(1\,2)} & \dot{(2\,3)} & \cdot & \cdot & \cdot & \big(\dot{(1\,2)}, & \dot{(2\,3)}\big)&s_1 & s_2 & s_1 & \dot{(1\,2)} & \dot{(2\,3)} & s_2 & s_1 & s_2\\
        s_1 & s_2 & s_1 & s_2 & \dot{(1\,3)} & \ddot{(1\,2)}& \cdot & \cdot & \big(\dot{(1\,3)}, & \ddot{(1\,2)}\big) &s_1 & s_2 & s_1 & s_2 & \dot{(1\,3)} & \ddot{(1\,2)}& s_1 & s_2\\
        s_1 & s_2 & s_1 & \dot{(1\,2)} & s_1 & \cdot & \ddot{(2\,3)} & \cdot & \big(\dot{(1\,2)}, & \ddot{(2\,3)}\big)&s_1 & s_2 & s_1 & \dot{(1\,2)} & s_1 & s_2 & s_1 & \ddot{(2\,3)}\\
        s_1 & s_2 & s_1 & s_2 & s_1 & \dot{(2\,3)} & \ddot{(1\,3)} & \cdot & \big(\dot{(2\,3)}, & \ddot{(1\,3)}\big) &s_1 & s_2 & s_1 & s_2 & s_1 & \dot{(2\,3)} & \ddot{(1\,3)} & s_2\\
        s_1 & s_2 & s_1 & s_2 & s_1 & s_2& \ddot{(1\,2)} & \ddot{(2\,3)} & \big(\ddot{(1\,2)}, & \ddot{(2\,3)}\big) & s_1 & s_2 & s_1 & s_2 & s_1 & s_2& \ddot{(1\,2)} & \ddot{(2\,3)}\\ \hline
    \end{array}\]
    \caption{Let $\c=(s_1,s_2)$.
    {\it Left:} the 12 $\c$-sortable elements in $\Sort^{(2)}(\mathfrak{S}_3,\c)$.
    In each row, we have replaced the position of the leftmost $s_j$ ($j=1,2$) not appearing in $\w(\c)$ with the corresponding colored reflection in the skip tuple.
    {\it Middle:} The 12 elements in $\NC^{(2)}(\mathfrak{S}_3,\c)$.
    {\it Right:} The 12 elements of $\Clus^{(2)}(\mathfrak{S}_3,\c)$.
    We have replaced the positions $i$ where $u_i=e$ with the corresponding colored reflection in $\inv_e(\u)$.}
    \label{fig:fuss_examples}
\end{figure}

\subsubsection{Sortable Elements}\label{sec:sort}

The first family of noncrossing objects we review are the Coxeter-sortable elements, introduced at the Coxeter--Catalan level of generality by Reading~\cite{reading2006cambrian,reading2007sortable,reading2007clusters} and extended to the Fuss--Catalan level by Stump, Thomas, and Williams~\cite{stump2015cataland}.

Let $\c^\infty \in S \times S \times \cdots$ be the infinite sequence formed by repeated concatenations of $\c$.
The \defn{$\c$-sorting word} $\w(\c)$ of $\w\in \B^+_W$ is the lexicographically-first subword of $\w$ occurring in $\c^\infty$.
We write $\w(\c,i)$ for the word formed by restricting $\w(\c)$ to the $i$th copy of $\c$ in $\c^\infty$.

\begin{definition}[{\cite{reading2007clusters,stump2015cataland}}]\label{dfn:c-sortable}
An element $\w \in \B^+_W$ is \defn{$\c$-sortable} if $\w(\c,i+1)$ is a subword of $\w(\c,i)$ for all $i$.
We write $\Sort^{(\infty)}(W,\c)$ for the set of all such $\w$.
We also write
\begin{align*}
	\Sort^{(m)}(W,\c) \coloneqq \{ \w \in \Sort^{(\infty)}(W,\c) : \w \weakleq \w_\circ^m\}.
\end{align*}
\end{definition}

\begin{example}
The 12 elements of $\Sort^{(2)}(\mathfrak{S}_3,(s_1,s_2))$ are illustrated in \figref{fig:fuss_examples}(left) together with their \emph{skip tuples}, defined as follows.
For $1 \leq j \leq n-1$, let $(t_j,k_j)$ be the colored reflection in $\inv(\w(\c))$ corresponding to the leftmost simple reflection $s_j$ that does not appear in $\w(\c)$.
We define the \defn{skip tuple} of $\w$ to be the collection $\skipset_\c(\w)$ of all $(t_j,k_j)$, ordered by when they appear in $\inv(\w(\c))$.
\end{example}

By~\cite{speyer2009powers}, the element $\w_\circ$ is $\c$-sortable for any $\c$.  For example, for $\mathfrak{S}_4$, we have
\begin{align*}
\w_\circ(s_1,s_2,s_3) &= (\underbrace{s_1,s_2,s_3}_{\w_\circ(\c,1)},\underbrace{s_1,s_2,\cdot}_{\w_\circ(\c,2)},\underbrace{s_1,\cdot,\cdot}_{\w_\circ(\c,3)},\cdots)\\
	\text{and}\qquad
\w_\circ(s_2,s_1,s_3) &= (\underbrace{s_2,s_1,s_3}_{\w_\circ(\c,1)},\underbrace{s_2,s_1,s_3}_{\w_\circ(\c,2)},\cdot,\cdot,\cdot,\cdots).
\end{align*}

\subsubsection{Noncrossing Partitions}\label{sec:noncrossing_partitions}

The second noncrossing family we review are the noncrossing partitions, introduced in the Coxeter--Catalan level of generality by Bessis~\cite{bessis2003dual} and extended to the Fuss--Catalan level by Armstrong~\cite{armstrong2009generalized}.

The \defn{absolute order} $\leq_T$ is the partial order on $W$ induced by $\ell_T$.
That is, $u \leq_T w$ if and only if $\ell_T(u)+\ell_T(u^{-1}w) = \ell_T(w)$.
The covering relations in this poset are therefore of the form $u\lessdot_T w$ whenever $u^{-1}w\in T$, and we label the corresponding edge $u\to w$ of the Hasse diagram of $(W,\leq_T)$ by $t\coloneqq u^{-1}w$.
See \cref{fig:absolute}.

\begin{figure}
    \includegraphics[width=0.3\textwidth]{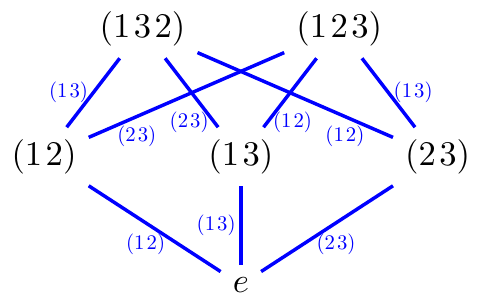}
\caption{\label{fig:absolute}The Hasse diagram of the absolute order $(\Sfr_3,\leq_T)$, with edges labeled by reflections.}
\end{figure}
 
The \defn{($W$-)noncrossing partitions} are defined to be the elements of the absolute order interval $\NC(W,c)\coloneqq [e,c]_T$.
Observe that each element of $T$ appears exactly once as an inversion in the $\c$-sorting word for $\w_\circ$.
This gives rise to a total order on $T$:
For two reflections $t_1,t_2\in T$, we write $t_1\leq_\c t_2$ if and only if $t_1$ appears before $t_2$ in $\inv(\w_\circ(\c))$.  
The poset $\NC(W,c)$ is known to be \emph{EL-shellable} with respect to the ordering on $T$ given by $\leq_\c$, which amounts to the following statement.

\begin{proposition}[{\cite[Proposition 4.1.4]{stump2015cataland}}]
Every noncrossing partition has a unique $\leq_\c$-increasing factorization into reflections.
In other words, for each $\pi\in[e,c]_T$, there exists a unique $m$-tuple $(t_1,t_2,\dots,t_m)\in T^m$, where $m=\ell_T(\pi)$, such that $\pi=t_1t_2\dots t_m$ and $t_1\leq_\c t_2\leq_\c\dots\leq_\c t_m$.
\end{proposition}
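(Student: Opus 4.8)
The plan is to recognize the statement as the assertion that the edge labelling of the Hasse diagram of $(W,\leq_T)$ restricted to $\NC(W,c)=[e,c]_T$, in which the edge $u\lessdot_T w$ receives the reflection $u^{-1}w\in T$, is an EL-labelling for the order $\leq_\c$ (it suffices to check the weaker property that every interval has a unique rising maximal chain). Under the standard dictionary, the maximal chains of $[e,\pi]_T$ are exactly the reduced $T$-words $\pi=t_1t_2\cdots t_m$ with $m=\ell_T(\pi)$, because prefixes of reduced $T$-words are precisely the elements $\leq_T\pi$, so each such word is a genuine saturated chain. Thus the proposition says that each interval $[e,\pi]_T$ has a unique $\leq_\c$-weakly-increasing maximal chain. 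The one general fact about $\leq_\c$ I would record first, and use for uniqueness, is that since $\leq_\c$ lists the reflections $T$ in order of their appearance as inversions of the reduced word $\w_\circ(\c)$ of $\wo$, it is a reflection order in the sense of Dyer; in particular, for every rank-$2$ (dihedral) reflection subgroup $W'\leq W$, the restriction of $\leq_\c$ to the reflections of $W'$ is one of the two reflection orders of $W'$, i.e.\ the reflections of $W'$ are listed ``in angular order.''

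For existence I would run the greedy algorithm. Given $e\neq\pi\leq_T c$, let $r(\pi)\in T$ be the $\leq_\c$-minimal reflection with $r(\pi)\leq_T\pi$; then $r(\pi)^{-1}\pi\leq_T\pi\leq_T c$ has reflection length $m-1$, and recursing on $r(\pi)^{-1}\pi$ produces a reduced $T$-word $\pi=r(\pi)\,t_2\cdots t_m$. The only thing to check is that this word is weakly increasing, i.e.\ the monotonicity statement that every reflection $t'\leq_T r(\pi)^{-1}\pi$ satisfies $r(\pi)\leq_\c t'$. If not, pick such a $t'$ with $t'<_\c r(\pi)$; by minimality of $r(\pi)$ we have $t'\not\leq_T\pi$. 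But a direct length count (using $\ell_T=\operatorname{codim}\operatorname{Fix}$) gives $\ell_T\!\big(r(\pi)t'\big)=2$ and $\ell_T\!\big((r(\pi)t')^{-1}\pi\big)=m-2$, hence $r(\pi)t'\leq_T\pi$; and in the dihedral subgroup $W'=\langle r(\pi),t'\rangle$ every reflection lies below every nontrivial rotation, so $t'\leq_T r(\pi)t'\leq_T\pi$, a contradiction. This establishes existence.

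For uniqueness I would peel off common prefixes: if $\pi=t_1\cdots t_m=t_1'\cdots t_m'$ are two weakly increasing reduced $T$-words, comparing them at the first index where they differ reduces everything to showing that the first letter of any weakly increasing reduced $T$-word of $\pi$ must equal $r(\pi)$. When $m\leq 2$ this is immediate from the rank-$2$ analysis: the reflections $\leq_T\pi$ generate a dihedral reflection subgroup $W_\pi$ in which $\pi$ is a nontrivial rotation, $\leq_\c$ restricts to a reflection order of $W_\pi$, and the $\leq_\c$-increasing factorization of a rotation in a dihedral group into two reflections always begins with the $\leq_\c$-least reflection of $W_\pi$, which is $r(\pi)$. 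The general $m$ is the step I expect to be the main obstacle: one must genuinely prove that the labelling is an EL-labelling, not merely analyze $2$-element and rank-$2$ pieces. I would carry this out by following the interval-by-interval verification of Athanasiadis--Brady--Watt and of Reading (in type $A$, Bj\"orner--Edelman--Ziegler), combining the lattice property of $\NC(W,c)$ with the dihedral analysis of its rank-$2$ subintervals; an alternative route is to use transitivity of the Hurwitz action of the braid group on the reduced $T$-words of $\pi\leq_T c$ to control which reflections can occur as first letters.

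Since the end result is precisely the EL-shellability of $\NC(W,c)$, one may instead simply cite \cite[Proposition~4.1.4]{stump2015cataland}; the only point that then needs comment is that the total order $\leq_\c$ used there agrees with the one defined above from $\w_\circ(\c)$, which holds because both are read off reduced words of $\wo$ (equivalently, both are reflection orders refining the same ``$\c$-compatible'' data).
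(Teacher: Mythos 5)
The paper does not actually prove this statement: it is imported verbatim as a known result, namely the EL-shellability of $\NC(W,c)$ with respect to $\leq_\c$, with the proof delegated to \cite[Proposition~4.1.4]{stump2015cataland}. So the fallback in your final paragraph --- simply citing that proposition, after noting that the order used there agrees with the order read off $\inv(\w_\circ(\c))$ --- is exactly the paper's route, and nothing more is done in the text you were asked to reprove.

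Regarding your attempted self-contained argument: the greedy existence step is sound (the length count $\ell_T(r(\pi)t')=2$, $\ell_T\bigl((r(\pi)t')^{-1}\pi\bigr)=m-2$ and the observation $t'\leq_T r(\pi)t'$ are correct), but the uniqueness step has a genuine gap, and it sits precisely where you declare the matter ``immediate.'' Knowing that $\leq_\c$ restricts to an angular (Dyer) order $q_1<_\c\cdots<_\c q_l$ on the reflections of the dihedral subgroup $W_\pi$ does \emph{not} imply that every $\leq_\c$-increasing length-two factorization of a rotation begins with $q_1$: a rotation $\rho$ of $W_\pi$ has a unique increasing factorization if and only if $\rho=q_1q_l$, and otherwise it has several, with varying first letters. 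Concretely, in $\Sfr_3$ with $(1\,2)<_\c(1\,3)<_\c(2\,3)$, the rotation $(1\,3\,2)$ factors increasingly both as $(1\,2)\cdot(1\,3)$ and as $(1\,3)\cdot(2\,3)$; only $c=(1\,2\,3)=(1\,2)\cdot(2\,3)$ behaves as you claim. So the dihedral assertion you invoke is false for general rotations, and what must actually be proved is that every rank-two $\pi\leq_T c$ is, for the restricted order, the product of the $\leq_\c$-smallest and $\leq_\c$-largest reflections below it. That rank-two compatibility between $\leq_\c$ (built from $\w_\circ(\c)$) and the Coxeter element is exactly the local EL-condition verified by Athanasiadis--Brady--Watt and, in this form, in \cite{stump2015cataland}; it is the same obstacle you defer at general $m$, already present at $m=2$. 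As written, then, your sketch does not constitute an independent proof: either carry out that rank-two compatibility explicitly, or rely on the citation, which is what the paper does.
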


\noindent For instance, choosing $\c=(s_1,s_2)$ in $\Sfr_3$, we have $(1\,2)\leq_\c (1\,3)\leq_\c (2\,3)$.
The interval $[e,c]_T$ consists of all elements of $\Sfr_3\setminus\{s_2s_1\}$, and each of them indeed has a unique $\leq_\c$-increasing factorization into reflections, as \cref{fig:absolute} illustrates.

Generalizing a construction of Edelman in type $A$~\cite{edelman1980chain}, Armstrong~\cite{armstrong2009generalized} defined the Fuss--Catalan analogue of noncrossing partitions to be $\m$-multichains $\pi_1 \leq_T \cdots \leq_T \pi_\m$ in $\NC(W,c)$, recovering $\NC(W,c)$ for $\m=1$.
For $0 \leq i \leq \m$, define $\delta_i \coloneqq \pi_i^{-1}\pi_{i+1}$, with the convention that $\pi_0 = e$ and $\pi_{\m+1}=c$. 
Factoring each $\delta_i$ into reflections using $\leq_\c$ as above, it is convenient to think of these multichains as a factorization of $c$ into colored reflections (with colors $0, 1,\ldots, \m$ corresponding to the factors $\delta_0, \delta_1, \ldots,\delta_\m$), such that the reflections in any color increase with respect to $\leq_\c$.

\begin{definition}[{\cite{armstrong2009generalized,stump2015cataland}}]\label{dfn:NC^m}
Given $\m\in\Nbb$, we write
\begin{align*}
\NC^{(\m)}(W,\c) \coloneqq 
	\left\{ \big((t_1,k_1),(t_2,k_2),\ldots,(t_\rank,k_\rank)\big) \in (T\times \Z)^r \
		\middle| \begin{array}{c}t_1t_2\cdots t_\rank = c, \\ 0\leq k_1 \leq \cdots \leq k_\rank \leq \m, \text{ and}\\ \text{if } k_i=k_{i+1} \text{ then } t_i\leq_\c t_{i+1}\end{array}
	\right\}.
\end{align*}
\end{definition}

\noindent
The 12 elements of $\NC^{(2)}(\mathfrak{S}_3,(s_1,s_2))$ are illustrated in \figref{fig:fuss_examples}(middle).

\subsubsection{Clusters}\label{sec:clust}

The third noncrossing family we review are the clusters, introduced in the Coxeter--Catalan level of generality by Fomin and Zelevinsky~\cite{fomin2003cluster} and extended to the Fuss--Catalan level in several different guises, by several different authors~\cite{fomin2005generalized,tzanaki2008faces,stump2015cataland}.
We present a definition using the notation of~\Cref{sec:words}.
Let $\c\w_\circ^\m(\c)$ be the $\c$-sorting word of $\c\w_\circ^\m$ (which is just $\c$ followed by the $\c$-sorting word of $\w_\circ^\m$).

\begin{definition}[{\cite{fomin2005generalized,ceballos2014subword,stump2015cataland}}]\label{dfn:Clus}
We write
\begin{align*}
\Clus^{(\m)}(W,\c)\coloneqq 
	\{\text{$w_\circ^\m$-subwords $\bu$ of $\c\w_\circ^\m(\c)$ satisfying $\esu=\rank$}\}.
\end{align*}
\end{definition}

\noindent
The $12$ elements $\u\in\Clus^{(2)}(\mathfrak{S}_3,(s_1,s_2))$ and their sets $\inv_e(\u)$ are illustrated in \figref{fig:fuss_examples}(right).

Since $w_\circ$ is an involution, $w_\circ^\m$ must be either $e$ or $w_\circ$, depending on the parity of $\m$.
Thus, $\Clus^{(\m)}(W,\c)$ contains all $\wo^\m$-subwords of $\c\w_\circ^\m(\c)$, not necessarily distinguished, that skip exactly $\rank$ letters.
These conditions are very similar to the ones in \cref{dfn:Deogram}, and we make this similarity precise in the proof of \cref{thm:bij_Dm_NC} below.

\subsubsection{Bijections and Enumeration}

Recall that previously, the three families of noncrossing objects defined in~\Cref{sec:sort,sec:noncrossing_partitions,sec:clust} had been enumerated case by case, using combinatorial models and computer calculations:

\begin{theorem}[{\cite{reading2007clusters,stump2015cataland}}]\label{thm:enumeration}
For all $\m\in\Nbb$, non-uniform arguments show that
\begin{align*}
\left|\Sort^{(\m)}(W,\c)\right| = \left|\NC^{(\m)}(W,\c) \right| = \left|\Clus^{(\m)}(W,\c) \right| = \CatpW[\mh+1].
\end{align*}
\end{theorem}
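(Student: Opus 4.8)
\smallskip
\noindent\textbf{Proof proposal.}
The plan is to route \cref{thm:enumeration} through \cref{thm:Deocp=CatpW}. Take $p=\mh+1$: it is coprime to $h$, and $(pe_i\mod{h})=e_i$, so $\CatpW[\mh+1]=\prod_{i=1}^\rank\frac{\mh+d_i}{d_i}$, and \cref{thm:Deocp=CatpW} gives $|\Dm_{e,\c^{\mh+1}}(W)|=\CatpW[\mh+1]$. Since the three families $\Sort^{(\m)}(W,\c)$, $\NC^{(\m)}(W,\c)$, $\Clus^{(\m)}(W,\c)$ are already known to be in uniform bijection with one another (see~\cite{reading2007clusters,stump2015cataland}), it then suffices to construct, uniformly, a single bijection between $\Dm_{e,\c^{\mh+1}}(W)$ and any one of them. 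I would target $\NC^{(\m)}(W,\c)$, keeping $\Clus^{(\m)}(W,\c)$ as an alternative route; the bijection to $\NC^{(\m)}$ is the content of \cref{thm:bij_Dm_NC} below.

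The first step is the combinatorial dictionary between maximal Deograms and colored factorizations of the Coxeter element. Given $\su\in\Dm_{e,\c^{\mh+1}}(W)$, \cref{prop:prod_refl} shows that the reflections recorded by $\invUncolored_e(\su)$ multiply left to right to $c^{\mh+1}=c$ in $W$ (as $c$ has order $h$); by \cref{cor:e=n} there are exactly $\rank$ of them, and since $\ell_T(c)=\rank$ this is a $T$-reduced factorization, so the $\rank$ reflections are pairwise distinct. By \cref{prop:even}, every colored reflection in $\inv_e(\su)$ has \emph{even} color, and a short computation using the braid identity $\c^h=\w_\circ^2$ in $\BpW$ shows that these colors lie in $\{0,2,4,\dots,2\m\}$. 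I would then send $\su$ to the $\rank$-tuple obtained by halving all colors and re-sorting the colored reflections by color, breaking ties by $\leq_\c$; its shape matches that of an element of $\NC^{(\m)}(W,\c)$ from \cref{dfn:NC^m}. The substance of the argument is to prove that (i) this re-sorting leaves the product unchanged, so it is still $c$ and the tuple really lies in $\NC^{(\m)}(W,\c)$, and (ii) every element of $\NC^{(\m)}(W,\c)$ is obtained exactly once. The key input for (i)--(ii) is the EL-shellability of $\NC(W,c)$ with respect to $\leq_\c$ (\cite[Proposition~4.1.4]{stump2015cataland}): it supplies uniqueness of the $\leq_\c$-increasing factorization of each step $\delta_i=\pi_i^{-1}\pi_{i+1}$ of a multichain, to be matched against the reconstruction of $\su$ from $\inv_e(\su)$ in \cref{rem:determined}. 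The type-$A$ shadow of this map is precisely $\su\mapsto\Pi(\su)$ from \cref{sec:combin:FC}, which serves as the template.

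A second, more ``subword-theoretic'' route runs through $\Clus^{(\m)}(W,\c)$, whose definition (\cref{dfn:Clus}) is already of the same kind as \cref{dfn:Deogram}. Here I would use that $R_{e,\w}(q)$, and hence by \cref{cor:R=esu_dsu} the count $|\Dm_{e,\w}|$, depends only on the positive braid underlying $\w$ (this is \cref{cor:Hecke=R}, since $R$-polynomials are invariant under braid and commutation moves). Using $\c^h=\w_\circ^2$ again, $\c^{\mh+1}$ and $\c\w_\circ^{2\m}$ represent the same braid, so one may replace $\c^{\mh+1}$ by the $\c$-sorting word of $\c\w_\circ^{2\m}$; then, since $\w_\circ^\m(\c)$ is a reduced word for the braid $\w_\circ^\m$ and $\wo^2=e$ in $W$, a ``skip-pushing'' argument peels a final reduced block spelling $\wo^\m$ off each distinguished $e$-subword, identifying such subwords (skipping $\rank$ letters) with $\wo^\m$-subwords of $\c\w_\circ^\m(\c)$ skipping $\rank$ letters, i.e.\ with $\Clus^{(\m)}(W,\c)$. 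The delicate point is that cluster subwords need \emph{not} be distinguished, so this identification cannot be a plain inclusion and has to be arranged as an explicit bijection.

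The step I expect to be the main obstacle is exactly (i)--(ii) above (equivalently, the skip-pushing identification in the $\Clus$ approach): proving, \oldemph{without the classification of finite Coxeter groups}, that the Deodhar--subword bookkeeping of colors in a maximal $\cpbf$-Deogram is compatible with the Cambrian / EL-shelling structure of $\NC(W,c)$---concretely, that re-sorting colored reflections by color and $\leq_\c$ preserves the product $c$ and is a bijection onto $\NC^{(\m)}(W,\c)$. This is precisely the point at which all earlier treatments resorted to case-by-case verification, so turning it into a uniform argument---leaning on \cref{prop:prod_refl}, \cref{prop:even}, \cref{rem:determined}, the identity $\c^h=\w_\circ^2$, and EL-shellability---is the heart of the proof.
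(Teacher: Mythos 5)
The paper does not actually prove \cref{thm:enumeration}: it is stated as a recollection of prior work, with the case-by-case enumeration cited from \cite{reading2007clusters,stump2015cataland}, and the paper's own contribution is the subsequent \emph{uniform} proof of these equalities, obtained by combining \cref{thm:Deocp=CatpW} with the (cited, already uniform) bijections of \cref{thm:bijections} and with \cref{thm:bij_Dm_NC}. Your proposal is essentially that chain: $|\Dm_{e,\c^{\mh+1}}|=\CatpW[\mh+1]$ by \cref{thm:Deocp=CatpW} (your coprimality and $(pe_i\mod h)=e_i$ checks are correct), plus a bijection $\Dm_{e,\c^{\mh+1}}\simeq\NC^{(\m)}(W,\c)$, plus \cref{thm:bijections}. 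As a derivation of the displayed equalities this is sound in outline and non-circular (nothing in \Crefrange{sec:words}{sec:hecke-char} uses \cref{thm:enumeration}), though note it proves the equalities by a uniform argument rather than the literal statement, which records the earlier non-uniform one.

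Where your plan diverges from the paper, it makes the key step harder than necessary, and partly misdiagnoses it. The bijections among the three families are uniform in the literature; only the count was case-by-case, so the compatibility of the colored-reflection bookkeeping with the $\leq_\c$/EL structure does not need to be reproved---it can be cited. The paper's proof of \cref{thm:bij_Dm_NC} is short precisely because it uses the \emph{doubled} Fuss parameter: since $\w_\circ^2=\c^h$ in $\B^+_W$, the $\c$-sorting word of $\c\w_\circ^{2\m}$ is literally $\c^{\mh+1}$, so $\Clus^{(2\m)}(W,\c)$ is the set of all $e$-subwords of $\c^{\mh+1}$ skipping $\rank$ letters, and by \cref{prop:even} the maximal Deograms are exactly its even-color subset; applying the cited bijection $\u\mapsto\inv_e(\u)$ onto $\NC^{(2\m)}(W,\c)$ and halving colors lands in $\NC^{(\m)}(W,\c)$. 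In particular no re-sorting of colored reflections is needed (re-sorting noncommuting reflections would in general change the product; what the cited bijection guarantees is that the colors are already weakly increasing in word order, with $\leq_\c$ on ties), and the bound ``colors in $\{0,2,\dots,2\m\}$'' is likewise subsumed by that citation rather than by a separate short computation. Your second route through $\Clus^{(\m)}(W,\c)$ compares with the wrong word: $\c\w_\circ^{\m}(\c)$ has length $\rank+\m\,\ell(\wo)$, not $\rank(\mh+1)$, so the proposed ``skip-pushing'' identification would be a genuinely new and delicate argument; with parameter $2\m$ the identification is a plain inclusion of subword sets and the difficulty disappears. None of this is fatal---the bijection you want exists and is \cref{thm:bij_Dm_NC}---but as written the heart of your argument aims at reproving a citable uniform result by a more fragile method instead of the paper's short reduction.
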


\noindent
Our main result in this section is their \oldemph{uniform} enumeration.

\begin{theorem}[{\cite{reading2007clusters,stump2015cataland}}]\label{thm:bijections}
We have the following uniform bijections:
\begin{itemize}
\item $\w \mapsto \skipset_\c(\w)$ is a bijection $\Sort^{(\m)}(W,\c) \xrasim \NC^{(\m)}(W,\c)$.
\item $\u \mapsto \inv_e(\u)$ is a bijection $\Clus^{(\m)}(W,\c) \xrasim \NC^{(\m)}(W,\c)$.
\end{itemize}
In particular, arguments uniform for all Coxeter groups $W$ show that
\begin{equation*}
	\left|\Sort^{(\m)}(W,\c)\right| = \left|\NC^{(\m)}(W,\c) \right| = \left|\Clus^{(\m)}(W,\c) \right|.
\end{equation*}
\end{theorem}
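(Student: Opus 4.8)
The plan is to prove the two displayed bijections separately, by a common strategy, and then read off the equicardinality statement. In each case the map records the colored reflections sitting at the ``gap'' positions of the combinatorial object: for a cluster $\u\in\Clus^{(\m)}(W,\c)$ these are the $\rank$ skipped letters, recorded as $\inv_e(\u)$, and for a $\c$-sortable element $\w$ they are the first omissions of each simple reflection from the $\c$-sorting word $\w(\c)$, recorded as $\skipset_\c(\w)$. For each such map I would (i) verify that the recorded tuple $\big((t_1,k_1),\dots,(t_\rank,k_\rank)\big)$ satisfies the three conditions defining $\NC^{(\m)}(W,\c)$ — namely $t_1t_2\cdots t_\rank=c$, the color inequalities $0\le k_1\le\cdots\le k_\rank\le\m$, and $t_i\le_\c t_{i+1}$ whenever $k_i=k_{i+1}$ — and (ii) exhibit an explicit inverse. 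The uniform tools available for (i)--(ii) are \cref{prop:prod_refl} for the product identity, the EL-shellability of $\NC(W,c)$ (equivalently, uniqueness of the $\le_\c$-increasing reflection factorization of a noncrossing partition) for the order conditions, the reconstruction recipe of \cref{rem:determined} for inverting the $\inv$ map, and the theorem of \cite{speyer2009powers} that $\wo$, hence $\wo^\m$, is $\c$-sortable, so that the $\c$-sorting word $\wo^\m(\c)$ decomposes into nested layers.

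For $\u\mapsto\inv_e(\u)\colon\Clus^{(\m)}(W,\c)\to\NC^{(\m)}(W,\c)$, the identity $t_1t_2\cdots t_\rank=c$ is immediate from \cref{prop:prod_refl}: the word $\c\wo^\m(\c)$ represents $c\cdot\wo^\m$ in $W$, and $\u$ is an $\wo^\m$-subword, so the product of the reflections at its skipped positions is $c\wo^\m(\wo^\m)^{-1}=c$. For the color and order conditions I would analyze the block decomposition $\c\wo^\m(\c)=\c\cdot\wo^\m(\c,1)\cdot\wo^\m(\c,2)\cdots$, where the leading $\c$ is block $0$ and the layers are nested, $\wo^\m(\c,i+1)\subseteq\wo^\m(\c,i)\subseteq\c$; within a single layer, colored reflections are read off in $\le_\c$-increasing order by the very definition of $\le_\c$ from $\inv(\wo(\c))$. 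The inverse map reconstructs, block by block via \cref{rem:determined}, the unique $\wo^\m$-subword of $\c\wo^\m(\c)$ realizing a prescribed tuple, and one verifies it skips exactly $\rank$ letters; the two constructions are then manifestly mutually inverse.

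The argument for $\w\mapsto\skipset_\c(\w)\colon\Sort^{(\m)}(W,\c)\to\NC^{(\m)}(W,\c)$ runs in parallel. By \cref{dfn:c-sortable}, a $\c$-sortable $\w\weakleq\wo^\m$ has $\c$-sorting word $\w(\c)=\w(\c,1)\w(\c,2)\cdots$ with $\w(\c,i+1)\subseteq\w(\c,i)$, and $\w\weakleq\wo^\m$ forces $\w(\c,i)\subseteq\wo^\m(\c,i)$, so $\w(\c)$ is a ``sub-staircase'' of $\wo^\m(\c)$; the skip tuple records, for each $j$, the first layer in which $s_j$ drops out, decorated by the colored reflection of $\inv(\w(\c))$ at that spot. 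Nestedness again forces the recorded reflections to be $\le_\c$-increasing within each layer and the layer indices to be weakly increasing, while $\c$-sortability of $\wo^\m$ bounds the contributing layers by $\m$; the product identity $t_1\cdots t_\rank=c$ follows either from the Cambrian recursion for sortable elements matched against the recursion for the interval $[e,c]_T$, or from the compatibility of $\skipset_\c$ with the cluster construction of the previous paragraph via \cref{prop:prod_refl}. The inverse grows a $\c$-sorting word layer by layer, retaining each $s_j$ until its prescribed skip is reached; the result is $\c$-sortable by construction and lies weakly below $\wo^\m$ because the skips are bounded by $\m$.

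The main obstacle, in both halves, is the bookkeeping identifying the abstract ``color'' $k_i$ of \cref{dfn:NC^m} with the layer in which the corresponding gap occurs — and deriving from this that the colors automatically lie in $\{0,1,\dots,\m\}$ (a priori, for a cluster that need not be distinguished, they could be larger) and are weakly increasing. This is the one step where a case analysis might seem to intervene, through the need to control how many layers $\wo^\m(\c)$ can have and how the $\rank$ gaps distribute among them; but this is handled uniformly by \cite{speyer2009powers} and by the Cambrian recursion for sortable elements and clusters of \cite{reading2007clusters,stump2015cataland}, so the classification is not used. Once both maps are shown to be bijections, their composite gives the uniform chain $|\Sort^{(\m)}(W,\c)|=|\NC^{(\m)}(W,\c)|=|\Clus^{(\m)}(W,\c)|$; identifying this common value with $\CatpW[\mh+1]$ remains the non-uniform content of \cref{thm:enumeration}.
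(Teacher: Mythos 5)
First, note that the paper does not prove this statement at all: it is imported verbatim from \cite{reading2007clusters,stump2015cataland} (the theorem header carries the citation, and Section 7.2 begins ``Assuming \Cref{thm:bijections}\dots''). So the benchmark here is the literature proofs, and your proposal would have to actually reprove Reading's sortable-to-noncrossing bijection and the cluster/subword-complex bijection of Ceballos--Labb\'e--Stump and Stump--Thomas--Williams. As written, it does not: it is a correct-looking roadmap whose every hard step is deferred. Concretely, for the map $\u \mapsto \inv_e(\u)$ the only step you carry out is the product identity via \cref{prop:prod_refl} (which is fine); the assertion that ``within a single layer, colored reflections are read off in $\leq_\c$-increasing order by the very definition of $\leq_\c$'' does not hold on its face, because the reflections $s_j^{\up j}$ attached to skipped positions are conjugated by partial products $\up{j}$ that depend on all earlier skips, so they are not the letters of $\inv(\wo(\c))$ at those positions. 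Likewise, identifying the color $k_j$ of \eqref{eq:tju_dfn} (which counts earlier non-skipped occurrences of the same reflection) with the ``layer index,'' and showing the colors are weakly increasing and bounded by $\m$ with the $\leq_\c$ tie-breaking rule, is exactly the substance of the cited theorem; nestedness of the layers alone does not give it, and the known uniform arguments go through an induction via Cambrian rotation and restriction to parabolic quotients (the same mechanism the paper develops in Sections 7.3--7.4 for its Deogram analogue), which you invoke by name but never run.

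The sortable half has the same shape of gap, only more so: the claim that the skip reflections multiply to $c$ ``follows either from the Cambrian recursion \dots or from the compatibility of $\skipset_\c$ with the cluster construction'' is precisely Reading's theorem (that cover/skip reflections of a sortable element give a $\leq_\c$-increasing factorization of a noncrossing partition, and that this map is bijective), and the asserted compatibility with the cluster map is itself unproved. The inverse constructions (``reconstruct block by block via \cref{rem:determined}'' and ``grow a $\c$-sorting word layer by layer'') are stated without verifying that the reconstructed subword skips exactly $\rank$ letters and is a $\wo^\m$-subword, respectively that the grown word is the $\c$-sorting word of a $\c$-sortable element lying below $\wo^\m$ in weak order; these well-definedness checks are where the actual induction lives. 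So the proposal does not contain a wrong idea so much as a missing proof: it restates the conditions to be verified and names the tools (Speyer's theorem, EL-shellability, Cambrian recursion) but executes none of the steps that make the cited results nontrivial. In the context of this paper, the correct move is the one the authors make --- cite \cite{reading2007clusters,stump2015cataland} --- unless you are prepared to reproduce their inductive arguments in full.
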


\noindent See \cref{fig:fuss_examples} for an example.

\subsection{Rational Noncrossing Partitions}\label{sec:rational_noncrossing}

Assuming \Cref{thm:bijections}, we prove that $\Dm_{e,\c^{\mh+1}}$ naturally forms a noncrossing Fuss-Catalan family by giving a uniform bijection from its elements to noncrossing partitions.
This implies that the more general sets $\Dm_{e,\c^{p}}$ should be considered rational noncrossing families.

\begin{theorem}\label{thm:bij_Dm_NC}
Fix $\m \in \mathbb{N}$.
Then there is a uniform bijection 
\begin{align*}
	\Dm_{e,\c^{\mh+1}} &\simeq \NC^{(\m)}(W,\c) \\
		\su &\mapsto \big((t,i) : (t,2i) \in \inv_e(\su)\big).
\end{align*} 
\end{theorem}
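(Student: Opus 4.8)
The plan is to deduce the statement from \cref{thm:bijections} by routing through the cluster model. Since $\u\mapsto\inv_e(\u)$ is already a bijection $\Clus^{(\m)}(W,\c)\xrightarrow{\sim}\NC^{(\m)}(W,\c)$ by \cref{thm:bijections}, it is enough to construct a bijection
\[
	\Phi\colon\ \Dm_{e,\c^{\mh+1}}\ \longrightarrow\ \Clus^{(\m)}(W,\c)
\]
that halves colors: if $\su\in\Dm_{e,\c^{\mh+1}}$ and $\inv_e(\su)=\big((t_1,2i_1),\dots,(t_\rank,2i_\rank)\big)$ --- the colors being even since $\su$ is distinguished, by \cref{prop:even} --- then $\inv_e(\Phi(\su))=\big((t_1,i_1),\dots,(t_\rank,i_\rank)\big)$. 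Composing $\Phi$ with the bijection of \cref{thm:bijections} then yields precisely the map $\su\mapsto\big((t,i):(t,2i)\in\inv_e(\su)\big)$ and transports all the defining conditions of $\NC^{(\m)}(W,\c)$: the identity $t_1\cdots t_\rank=c$, the inequalities $0\le i_1\le\cdots\le i_\rank\le\m$, and the $\leq_\c$-tie-breaking rule. (The identity $t_1\cdots t_\rank=c$ can also be checked directly: by \cref{prop:prod_refl} the reflections of $\invUncolored_e(\su)$ multiply in order to $c^{\mh+1}$, which equals $c$ in $W$ since the Coxeter element has order $h$, and $\ell_T(c)=\rank$ by \cref{cor:e=n}, so this is a reduced reflection factorization of $c$.) Injectivity of the composite is in any case immediate from \cref{rem:determined}, which reconstructs $\su$ from $\inv_e(\su)$.

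To build $\Phi$ I would begin from the full-twist relation $\c^h=\w_\circ^2$ in the positive braid monoid $\BpW$, which gives the braid identity $\c^{\mh+1}=\c\cdot\w_\circ^{\m}(\c)^2$, where $\w_\circ^{\m}(\c)$ is the $\c$-sorting word of $\w_\circ^{\m}$ (of length $\m N$, $N=\ell(\wo)$) and $\c\,\w_\circ^{\m}(\c)$ is exactly the word used in \cref{dfn:Clus}. Since braid and commutation moves induce a bijection on distinguished subwords preserving $\esu$ (as in the proof of \cref{thm:e=n}), modulo keeping track of how these moves act on colored inversions, we may work with the word $\c\,\w_\circ^{\m}(\c)\,\w_\circ^{\m}(\c)$. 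The heart of the matter --- making precise the ``similarity'' noted after \cref{dfn:Clus} --- is that the second copy of $\w_\circ^{\m}(\c)$ acts as a \emph{doubling device}: a distinguished $e$-subword of $\c\,\w_\circ^{\m}(\c)^2$ with the minimal number $\rank$ of skips is determined by, and determines, its restriction to the prefix $\c\,\w_\circ^{\m}(\c)$, which is an arbitrary $\wo^{\m}$-subword of that prefix with $\rank$ skips, i.e.\ an element of $\Clus^{(\m)}(W,\c)$; and the colored reflections recorded by the (forced, unique) distinguished completion on the second copy are the $\wo$-conjugates, with colors raised by one, of those recorded on the prefix, so the net effect on each kept colored reflection is to double its color. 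For $\m=1$, where $\w_\circ(\c)$ is a reduced word for $\wo$, this is transparent: for the full subword one has $\inv(\w_\circ(\c)^2)=\big((t_1,0),\dots,(t_N,0),(t_1^{\wo},1),\dots,(t_N^{\wo},1)\big)$. I would prove the general case either by induction on $\m$ using the Deodhar recurrence (\cref{prop:deodhar}), or directly, via the observation that appending a \emph{used} letter never violates the distinguished condition, whereas \emph{skipping} a letter does so precisely when it would be a right descent; this makes the distinguished completion of a prescribed prefix-subword unique and manifestly reversible, which simultaneously produces $\Phi$, the color-doubling property, the fact that $\Phi(\su)\in\Clus^{(\m)}(W,\c)$, and bijectivity.

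The main obstacle is this doubling step. The subtlety is that cluster subwords are \emph{not} distinguished in general (cf.\ the remark after \cref{dfn:Clus}), so $\Phi$ is \emph{not} the naive ``keep the prefix, fill the second copy'' map; one has to construct the distinguished completion greedily (or recursively through \cref{prop:deodhar}) and verify \emph{together} that it has exactly $\rank$ skips, all inside the prefix, that it is genuinely distinguished, that the construction is bijective onto $\Clus^{(\m)}(W,\c)$, and that it multiplies each recorded color by $2$. The remaining ingredients --- the braid identity $\c^h=\w_\circ^2$, \cref{prop:prod_refl}, $\ell_T(c)=\rank$, and the reconstruction of a subword from its colored-inversion sequence in \cref{rem:determined} --- are routine with what is already in place. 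If one prefers to avoid the cluster model altogether, the monotonicity $0\le i_1\le\cdots\le i_\rank\le\m$ and the $\leq_\c$-tie-breaking can instead be extracted directly from the distinguished condition together with the minimality $\esu=\rank$ of \cref{cor:e=n}, with \cref{thm:bijections} then used only to recognize the output as $\NC^{(\m)}(W,\c)$.
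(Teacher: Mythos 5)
Your reduction to \cref{thm:bijections} is the right instinct, but the ``doubling device'' lemma on which your map $\Phi$ rests is false, and with it the claim that every distinguished $e$-subword of $\c\,\w_\circ^{\m}(\c)\,\w_\circ^{\m}(\c)$ with $\rank$ skips has all of its skips in the prefix and restricts there to an element of $\Clus^{(\m)}(W,\c)$. Take $W=\Sfr_3$, $\c=(s_1,s_2)$, $\m=1$, so $\c\,\w_\circ(\c)\,\w_\circ(\c)=(s_1,s_2,s_1,s_2,s_1,s_1,s_2,s_1)$. The subword $\u=(s_1,s_2,s_1,s_2,s_1,e,s_2,e)$ is a distinguished $e$-subword with $\eop_{\u}=2=\rank$: the partial products before positions $6$ and $8$ are $s_2$ and $e$, so both skips are ascents, and the total product is $e$. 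Both skips lie in the \emph{second} copy of $\w_\circ(\c)$, and the restriction to the prefix $\c\,\w_\circ(\c)$ is the fully used subword, with product $s_1s_2s_1s_2s_1=s_2\neq\wo$ and zero skips, hence not an element of $\Clus^{(1)}(W,\c)$. (This is exactly where the first Deogram of \cref{fig:a21}, the one with $\inv_e=\big(\ddot{(1\,2)},\ddot{(2\,3)}\big)$, lands under the braid move on the last three letters relating $\c^{4}$ to $\c\,\w_\circ(\c)^2$, so the failure is not an artifact of a bad choice of braid-move bijection.) Consequently the intended $\Phi$ given by ``restrict to the prefix, complete the second copy greedily'' does not exist: neither the ``all skips in the prefix'' step nor the forced skip-free completion survives. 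A secondary issue, which you flag but underestimate, is that braid moves do not preserve $\inv_e$ (in the example the colored inversions change from $\big(((1\,2),2),((2\,3),2)\big)$ to $\big(((1\,3),2),((1\,2),2)\big)$), so even with a corrected bijection you would still owe an argument that the composite is the stated color-halving map computed on $\c^{\mh+1}$ itself.

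The repair --- and the paper's actual argument --- is to work at Fuss level $2\m$ rather than $\m$, which removes the need for braid moves or any completion device. Since $\w_\circ^{2}=\c^{h}$ in $\B^+_W$, the $\c$-sorting word of $\c\,\w_\circ^{2\m}$ is literally $\c^{\mh+1}$, so by \cref{dfn:Clus} the set $\Clus^{(2\m)}(W,\c)$ is precisely the set of $e$-subwords of $\c^{\mh+1}$ with exactly $\rank$ skips; by \cref{prop:even}, $\Dm_{e,\c^{\mh+1}}$ is the subset of those whose colors in $\inv_e$ are all even. Applying \cref{thm:bijections} at level $2\m$, i.e.\ the bijection $\Clus^{(2\m)}(W,\c)\xrasim\NC^{(2\m)}(W,\c)$, and then halving the even colors --- a bijection onto $\NC^{(\m)}(W,\c)$ compatible with the monotonicity and $\leq_\c$ tie-breaking conditions of \cref{dfn:NC^m} --- yields the theorem, and the resulting map is exactly $\su\mapsto\big((t,i):(t,2i)\in\inv_e(\su)\big)$. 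In short, the halving should be performed on the noncrossing-partition side; insisting on landing in $\Clus^{(\m)}$ at the subword level is what forces the false lemma. Your auxiliary observations via \cref{prop:prod_refl}, $\ell_T(c^{\mh+1})=\rank$, and \cref{rem:determined} are correct but become unnecessary once the level-$2\m$ identification is in place.
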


\begin{proof}
Observe that $\wo^2=e$ in $W$ and $\w_\circ^{2}=\c^h$ in $\B^+_W$.
In particular, the $\c$-sorting word for $\c\w_\circ^{2\m}$ is just $\c\w_\circ^{2\m}(\c)=\c^{\mh+1}$.
By \cref{dfn:Clus}, $\Clus^{(2\m)}(W,\c)$ consists of $e$-subwords of $\c^{\mh+1}$ that skip exactly $\rank$ letters.
Therefore, by~\Cref{prop:even}, $\Dm_{e,\c^{\mh+1}}$ is exactly the subset of $\Clus^{(2\m)}(W,\c)$ consisting of those subwords $\u$ such that each colored reflection in $\inv_e(\u)$ has even color.
By~\Cref{thm:bijections}, the map $\u \mapsto \inv_e(\u)$ is a bijection $\Clus^{(2\m)}(W,\c) \xrasim \NC^{(2\m)}(W,\c)$.  Therefore, by \cref{dfn:NC^m}, $\Dm_{e,\c^{\mh+1}}$ is in bijection with sequences $\big((t_1,2k_1),(t_2,2k_2),\ldots,(t_\rank,2k_\rank)\big)$ satisfying
\begin{itemize}
\item 	$t_1t_2\cdots t_\rank=c$, 
\item 	$0\leq k_1 \leq \cdots \leq k_\rank \leq \m$, and 
\item 	$t_i\leq_\c t_{i+1}$ whenever $k_i=k_{i+1}$.
\end{itemize}
Such sequences are in bijection with $\NC^{(\m)}(W,\c)$ by ``halving the colors,'' i.e., by the map that sends $\big((t_1,2k_1),(t_2,2k_2),\ldots,(t_\rank,2k_\rank)\big)\mapsto \big((t_1,k_1),(t_2,k_2),\ldots,(t_\rank,k_\rank)\big)$.
\end{proof}

\subsection{Cambrian rotation}\label{sec:rotation}

Recall that $\c=(s_1,s_2,\dots,s_\rank)$.
Let $\c' = (s_2, \ldots, s_\rank, s_1)$.
The \emph{Cambrian rotation} is a bijection $\Sort^{(\m)}(W,\c)\xrasim \Sort^{(\m)}(W,\c')$ (equivalently, $\NC^{(\m)}(W,\c)\xrasim \NC^{(\m)}(W,\c')$ or $\Clus^{(\m)}(W,\c)\xrasim \Clus^{(\m)}(W,\c')$) satisfying certain properties that enable inductive arguments; see~\cite{stump2015cataland} for background. 
Cambrian rotation is a distinguishing feature of noncrossing families~\cite{armstrong2013uniform}.
The goal of this subsection is to develop an analogous bijection for maximal $\c^{\mh+1}$-Deograms.

\begin{lemma}
Let $\w=(s_1,s_2,\dots,s_m)$ be a word and let $\w'\coloneqq (s_2,\dots,s_m,s_1)$.
Then there is a bijection 
\begin{equation}\label{eq:Deo_rotation}
	\D_{e,\w}\xrasim \D_{e,\w'}.
\end{equation}
preserving the statistics $\dop(\cdot)$, $\eop(\cdot)$.
In particular, it restricts to a bijection $\Dm_{e,\w}\xrasim \Dm_{e,\w'}$.
\end{lemma}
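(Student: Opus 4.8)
The plan is to avoid constructing an explicit map and instead reduce the lemma to an equality of cardinalities, exploiting the fact that for $e$-subwords the statistic $\dop$ is already determined by $\eop$. So the first step is the elementary observation: if $\su=(u_1,\dots,u_m)$ is \oldemph{any} $e$-subword of a word of length $m$, then $\dsu=(m-\esu)/2$. Indeed, the walk $(e=\up0,\up1,\dots,\up m=e)$ is closed; each of its $m-\esu$ non-stationary steps is of the form $\up{i}=\up{i-1}s_i$, which changes the length by exactly $\pm1$; and since the walk returns to its start, the number of length-decreasing steps equals the number of length-increasing steps, so each equals $(m-\esu)/2$. As $\dsu$ counts exactly the length-decreasing steps, $\dsu=(m-\esu)/2$. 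Consequently a bijection $\D_{e,\w}\xrasim\D_{e,\w'}$ that preserves $\eop$ automatically preserves $\dop$, and it is enough to produce, for each $k$, a bijection $\D_{e,\w}^{k}\xrasim\D_{e,\w'}^{k}$.

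The next step is to extract the numbers $\lvert\D_{e,\w}^{k}\rvert$ from the $R$-polynomial. By \cref{cor:R=esu_dsu} and the previous paragraph,
\[
R_{e,\w}(q)=\sum_{\su\in\D_{e,\w}}(q-1)^{\esu}q^{\dsu}=\sum_{k}\bigl\lvert\D_{e,\w}^{k}\bigr\rvert\,(q-1)^{k}q^{(m-k)/2},
\]
the sum over those $k$ with $m-k$ even, which are the only ones that occur. The polynomials $(q-1)^{k}q^{(m-k)/2}$ appearing here have pairwise distinct degrees $(m+k)/2$, hence are linearly independent over $\Q$; so the integers $\lvert\D_{e,\w}^{k}\rvert$ are the coefficients of $R_{e,\w}(q)$ in this basis, and likewise for $\w'$. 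Thus it remains only to prove $R_{e,\w}(q)=R_{e,\w'}(q)$.

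For this I would invoke the Hecke algebra: by \cref{cor:r-to-tau-minus}, $R_{e,\w}(q)=\tau^-(T_{\w})$; since $\w'=(s_2,\dots,s_m,s_1)$ and $T_{s_1}$ is a unit in $\H_W$, we have $T_{\w'}=T_{s_1}^{-1}T_{\w}T_{s_1}$, so the trace property of $\tau^-$ gives $R_{e,\w'}(q)=\tau^-(T_{s_1}^{-1}T_{\w}T_{s_1})=\tau^-(T_{\w})=R_{e,\w}(q)$. Comparing coefficients yields $\lvert\D_{e,\w}^{k}\rvert=\lvert\D_{e,\w'}^{k}\rvert$ for all $k$; choosing a bijection $\D_{e,\w}^{k}\xrasim\D_{e,\w'}^{k}$ for each $k$ and taking the disjoint union gives a bijection $\D_{e,\w}\xrasim\D_{e,\w'}$ preserving $\eop$, hence $\dop$. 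Finally $\w$ and $\w'$ represent the conjugate elements $w=s_1s_2\cdots s_m$ and $w'=s_1ws_1$, so $\ell_T(w)=\ell_T(w')$; by \cref{thm:e=n} the common minimal value of $\esu$ is $\ell_T(w)$ on both sides, whence $\Dm_{e,\w}=\D_{e,\w}^{\ell_T(w)}$ is carried onto $\Dm_{e,\w'}=\D_{e,\w'}^{\ell_T(w)}$, as required.

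I do not anticipate a real obstacle; the only point that deserves care is the identity $\dsu=(m-\esu)/2$ of the first paragraph, which must be stated for \oldemph{all} $e$-subwords and rests only on the fact that right multiplication by a simple reflection changes length by exactly one. I would note in passing that the naive cyclic-shift map $\su=(u_1,\dots,u_m)\mapsto(u_2,\dots,u_m,u_1)$ is visibly a bijection on $e$-subwords preserving $\esu$, but when $u_1\neq e$ it need not send \oldemph{distinguished} subwords to distinguished ones, which is why I prefer the counting argument above.
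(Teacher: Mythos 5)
Your argument is correct, and every step checks out: the identity $\dop_{\bu}=(m-\eop_{\bu})/2$ for $e$-subwords is valid (closed walk, each non-stay changes length by exactly $\pm1$), so preserving $\eop$ forces preservation of $\dop$; the polynomials $(q-1)^k q^{(m-k)/2}$ have distinct degrees, so \cref{cor:R=esu_dsu} lets you read off each $|\D^k_{e,\w}|$ from $R_{e,\w}(q)$; and \cref{cor:r-to-tau-minus} together with $T_{\w'}=T_{s_1}^{-1}T_{\w}T_{s_1}$ and the trace property of $\tau^-$ gives $R_{e,\w}(q)=R_{e,\w'}(q)$, with the restriction to $\Dm$ handled by \cref{thm:e=n} and conjugation-invariance of $\ell_T$. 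There is also no circularity, since the Hecke-algebra results you invoke are established before this lemma and do not depend on it. However, this is a genuinely different route from the paper's: the paper constructs an \emph{explicit} bijection, namely the plain cyclic shift when $u_1=e$, and, when $u_1=s_1$, a surgery on the subsequence of positions whose uncolored reflection equals $s_1$ (tracking how colors change under conjugation by $s_1$ and reducing to the rank-one case $\Sfr_2$). What the explicit construction buys is the additional property, stated immediately after the lemma and used in the Cambrian-recurrence discussion, that the image $\bu'$ satisfies $u'_m=u_1$; your existence-only bijection proves the lemma as literally stated but would not support that follow-up claim or give a concrete rotation operator to iterate. What your approach buys is brevity and a conceptual explanation (conjugation-invariance of the trace $\tau^-$) for why rotation invariance must hold, plus the pleasant observation that for $e$-subwords the statistic $\dop$ is determined by $\eop$, which the paper does not make explicit. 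Your closing remark that the naive shift fails to preserve distinguishedness when $u_1\neq e$ is exactly the difficulty the paper's construction is designed to circumvent.
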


\begin{proof}
We describe the bijection.
Choose a word $\u=(u_1,u_2,\dots,u_m)\in\D_{e,\w}$. 
Let $s=s_1$ and $\u'\coloneqq(u_2,\dots,u_m,u_1)$.

Assume that $u_1=e$.
Then we claim that $\u'$ is distinguished, i.e., that $\u'\in \D_{e,\w'}$.
Indeed, we have $u'_{(i)}=u_{(i+1)}$ for all $0\leq i\leq m-1$, and moreover $u'_{(m-1)}=u'_{(m)}=e$, which implies $\u'\in\D_{e,\w'}$. 

Now assume that $u_1=s$.
We first treat the case where $W=\Sfr_2$, where we have $s_i=s$ for all $i\in[m]$.
Since $u_1=s$, we have $u_2=s$ because $\u$ is distinguished.
Thus, $\u=(s,s,u_3,\dots,u_m)$, which we send to $\u''\coloneqq(u_3,\dots,u_m,s,s)$.
The map $\u\mapsto \u''$ is the desired bijection~\eqref{eq:Deo_rotation}.

Finally, we treat the general case.
Let 
\begin{align*}
\Jus &\coloneqq\{j\in[m]\mid s_j^{\up j}=s\}\\
	\text{and}\qquad
\Jups &\coloneqq\{j\in[m]\mid s_{j}^{u'_{(j)}}=s\}.
\end{align*}
For $j\in \Jus$, let $k_j$ be the color of the corresponding reflection in $\inv(\u)$ defined by \eqref{eq:tju_dfn}.
For $j\in\Jups$, we similarly write $k'_{j}$ for the corresponding color. 

Note that $1\in\Jus$ and $m\in\Jups$.
We thus have $\Jus=\{1=j_1<j_2<\cdots<j_{\barm}\}$, and we claim that similarly, $\Jups=\{j'_1<\cdots<j'_{\barm}=m\}$, where $j'_i=j_{i+1}-1$ for $i\in[\barm-1]$.
Indeed, this holds because the reflections in $\inv(\u')$ are obtained from those in $\inv(\u)$ by conjugation by $s$.
Under such conjugation, the color changes if and only if the reflection itself was equal to $s$, in which case the color decreases by one.
In other words, we have $k'_{j_i-1}=k_{j_i}-1$ for $i\in[2,\barm]$. 

Consider the word $\barww\coloneqq (s,s,\dots,s)$, where $s$ occurs $\barm$ times, and the subword $\baruu\coloneqq(\baru_1,\dots,\baru_{\barm})$ given by $\baru_i=e$ if $u_{j_i}=e$ and $\baru_i=s$ otherwise.
Let $\baruu''\coloneqq(\baru''_1,\dots,\baru''_{\barm})$ be the result of applying the above bijection for $\Sfr_2$ to $\baruu$. 

Let $\u''=(u''_1,u''_2,\dots,u''_m)$ be the subword of $\w$ defined as follows.
For $j\notin \Jups$, set $u''_j\coloneqq u'_j$.
For $j=j'_i\in\Jups$, let $u''_j\coloneqq e$ if $\baru''_i=e$ and $u''_j\coloneqq w'_j$ otherwise.
Once again, one checks that the map $\u\mapsto \u''$ gives the desired bijection. 
\end{proof}

Let $\c'\coloneqq (s_2,\dots,s_\rank,s_1)$.
Then the lemma above gives a bijection $\Dm_{e,\cbf^{\mh+1}}\xrasim \Dm_{e,(\cbf')^{\mh+1}}$.
This bijection has the following property:
It sends an element $\bu=(u_1,u_2,\dots,u_m)\in\Dm_{e,\cbf^{\mh+1}}$ to an element $\bu'=(u_1',u_2',\dots,u_m')\in\Dm_{e,(\cbf')^{\mh+1}}$ satisfying $u'_m=u_1$.

\subsection{Cambrian and Deodhar recurrences}\label{sec:cambrian_recurrence}

Our next goal is to show that the subset of $\bu\in \Dm_{e,\cbf^{\mh+1}}$ satisfying $u_1=e$ is in bijection with $\Dm_{e,(\cbf'')^{\mh''+1}}$, where $\c''=(s_2,\ldots, s_\rank)$ is a Coxeter word for the parabolic subgroup $W_{\langle s_1 \rangle}$ of $W$ generated by $S \setminus \{s_1\}$.
This will match the Cambrian recurrence on noncrossing families described in \cite[Section~4]{stump2015cataland}.

Let $c''\coloneqq s_1c$ be the associated Coxeter element of $W_{\<s_1\>}$, and let $h''$ be the Coxeter number of $W_{\<s_1\>}$.
Suppose that $\bu\in \Dm_{e,\cbf^{\mh+1}}$ starts with $u_1=e$.
Let $\big((t_1,k_1),(t_2,k_2),\ldots,(t_\rank,k_\rank)\big)\in\NC^{(\m)}(W,\c)$ be the $\m$-noncrossing partition assigned to $\bu$ under the bijection of \cref{thm:bij_Dm_NC}.
Then we have $(t_1,k_1)=(s_1,0)$.
By \cite[Proposition~4.3]{stump2015cataland}, the subset of $\NC^{(\m)}(W,\c)$ that satisfies $(t_1,k_1)=(s_1,0)$ is in bijection with the set $\NC^{(\m)}(W_{\<s_1\>},\c'')$, which is itself in bijection with $\Dm_{e,(\cbf'')^{\mh''+1}}$ by \cref{thm:bij_Dm_NC}.

The \defn{Cambrian recurrence} on $\Dm_{e,\cbf^{\mh+1}}$ is the modification of the Cambrian recurrence that performs the map from $\Dm_{e,\cbf^{\mh+1}}$ to $\Dm_{e,(\cbf')^{\mh+1}}$ from~\eqref{eq:Deo_rotation} when a subword $\bu\in \Dm_{e,\cbf^{\mh+1}}$ does not start with $u_1=e$, and performs the map above from $\Dm_{e,\cbf^{\mh+1}}$ to $\Dm_{e,(\cbf'')^{\mh''+1}}$ when $\bu$ starts with $u_1=e$.

\begin{remark}\label{rem:cambrian_equals_deodhar}
To perform the Cambrian recurrence on $\Dm_{e,\w}$, we split the set of all $\bu \in \Dm_{e,(\cbf)^{\mh+1}}$ into subsets satisfying $u_1=e$ and $u_1=s_1$.
This same split---without the descent to the parabolic subgroup made possible by the reflection-factorization properties of a Coxeter element---occurs in the Deodhar recurrence (\cref{prop:deodhar}).
Thus the Deodhar recurrence may be seen as a generalization of the Cambrian rotation from collections of Fuss--Catalan objects to the more general sets $\D_{u,\w}^k$.
\end{remark}

\begin{figure}
    \[ \begin{array}{|cccccccc|cc|}
        \multicolumn{8}{c}{\Dm_{e,\c^{h+1}}} & \multicolumn{2}{c}{\NC^{(1)}(W,\c)}\\ \hline
           &&&&&&&&& \\[\littlespaceforthedots]
        \ss_1 & \ss_2 & \ss_1 & \ss_2 & \ss_1 & \ss_2 & \ddot{(1\,2)} & \ddot{(2\,3)} & \big(\dot{(1\,2)}, & \dot{(2\,3)}\big) \\
        \ss_1 & \ss_2 & (2\,3) & \ss_2 & \ss_1 & \ss_2 & \ddot{(1\,3)} & \ss_2 & \big((2\,3), & \dot{(1\,3)}\big) \\
        \ss_1 & (1\,3) & \ss_1 & \ss_2 & \ss_1 & \ddot{(1\,2)} & \ss_1 & \ss_2& \big((1\,3), & \dot{(1\,2)}\big) \\
        (1\,2) & \ss_2 & \ss_1 & \ss_2 & \ss_1 & \ss_2 & \ss_1 & \ddot{(2\,3)}& \big((1\,2), & \dot{(2\,3)}\big) \\
        (1\,2) & (2\,3) & \ss_1 & \ss_2 & \ss_1 & \ss_2 & \ss_1 & \ss_2 & \big((1\,2), & (2\,3)\big)\\ \hline
    \end{array}\]
    \caption{For $W=\mathfrak{S}_3$ and $\c=(s_1,s_2)$, the bijection between $\Dm_{e,\c^{h+1}}$ and $\NC^{(1)}(W,\c)$.
    As usual, we replace the positions $i$ where $u_i=e$ with the corresponding colored reflection in $\inv_e(\u)$.}
    \label{fig:a21}
\end{figure}

\begin{figure}
    \[ \begin{array}{|cccccccccccccc|}
        \multicolumn{14}{c}{\Dm_{e,\c^{2h+1}}} \\ \hline
        \ss_1 & \ss_2  & \ss_1 & \ss_2 & \ss_1 & \ss_2 & \ss_1 & \ss_2 & \ss_1 & \ss_2 & \ss_1 & \ss_2 & \ddddot{(1\,2)} & \ddddot{(2\,3)} \\
        \ss_1 & \ss_2  & \ss_1 & \ss_2 & \ss_1 & \ss_2 & \ss_1 & \ss_2 & \ddot{(2\,3)} & \ss_2 & \ss_1 & \ss_2 & \ddddot{(1\,3)} & \ss_2 \\
        \ss_1 & \ss_2  & \ss_1 & \ss_2 & \ss_1 & \ss_2 & \ss_1 & \ddot{(1\,3)} & \ss_1 & \ss_2 & \ss_1 & \ddddot{(1\,2)} & \ss_1 & \ss_2 \\
        \ss_1 & \ss_2  & \ss_1 & \ss_2 & \ss_1 & \ss_2 & \ddot{(1\,2)} & \ss_2 & \ss_1 & \ss_2 & \ss_1 & \ss_2 & \ss_1 & \ddddot{(2\,3)} \\
        \ss_1 & \ss_2  & \ss_1 & \ss_2 & \ss_1 & \ss_2 & \ddot{(1\,2)} & \ddot{(2\,3)} & \ss_1 & \ss_2 & \ss_1 & \ss_2 & \ss_1 & \ss_2 \\
        \ss_1 & \ss_2  & (2\,3) & \ss_2 & \ss_1 & \ss_2 & \ss_1 & \ss_2 & \ss_1 & \ss_2 & \ss_1 & \ss_2 & \ddddot{(1\,3)} & \ss_2 \\
        \ss_1 & \ss_2 & (2\,3) & \ss_2 & \ss_1 & \ss_2 & \ddot{(1\,3)} & \ss_2 & \ss_1 & \ss_2 & \ss_1 & \ss_2 & \ss_1 & \ss_2 \\
        \ss_1 & (1\,3)  & \ss_1 & \ss_2 & \ss_1 & \ss_2 & \ss_1 & \ss_2 & \ss_1 & \ss_2 & \ss_1 & \ddddot{(1\,2)} & \ss_1 & \ss_2 \\
        \ss_1 & (1\,3)  & \ss_1 & \ss_2 & \ss_1 & \ddot{(1\,2)} & \ss_1 & \ss_2 & \ss_1 & \ss_2 & \ss_1 & \ss_2 & \ss_1 & \ss_2 \\
        (1\,2) & \ss_2  & \ss_1 & \ss_2 & \ss_1 & \ss_2 & \ss_1 & \ss_2 & \ss_1 & \ss_2 & \ss_1 & \ss_2 & \ss_1 & \ddddot{(2\,3)} \\
        (1\,2) & \ss_2  & \ss_1 & \ss_2 & \ss_1 & \ss_2 & \ss_1 & \ddot{(2\,3)} & \ss_1 & \ss_2 & \ss_1 & \ss_2 & \ss_1 & \ss_2 \\
        (1\,2) & (2\,3)  & \ss_1 & \ss_2 & \ss_1 & \ss_2 & \ss_1 & \ss_2 & \ss_1 & \ss_2 & \ss_1 & \ss_2 & \ss_1 & \ss_2 \\ \hline
    \end{array} \]
    \caption{For $W=\mathfrak{S}_3$ and $\c=(s_1,s_2)$, the 12 elements of $\Dm_{e,\c^{2h+1}}$.
    As usual, we replace the positions $i$ where $u_i=e$ with the corresponding colored reflection in $\inv_e(\u)$.}
    \label{fig:a2m}
\end{figure}

\subsection{Rational Noncrossing Parking Functions}\label{sec:rational_parking}

In the first four sections of~\cite{edelman1980chain}, Edelman proposed a $\m$-generalization of type $A$ noncrossing partitions, which was subsequently generalized to all Coxeter groups by Armstrong~\cite{armstrong2009generalized}. 
In~\cite[Section 5]{edelman1980chain}, Edelman proposed a definition he called ``noncrossing $2$-partitions.''
Inspired by a related construction of \oldemph{nonnesting} parking functions, Armstrong, Reiner, and Rhoades independently proposed a generalized version for all Coxeter groups in~\cite{armstrong2015parking}.
Rhoades gave a Fuss generalization in~\cite{rhoades2014parking}.

Because of the $v$-twisting (cf. \cref{def:parking}), we find it convenient to pass from our canonical factorization definition of $\NC^{(\m)}(W,\c)$ back to $\m$-multichain language.
We refer the reader back to~\Cref{sec:noncrossing_partitions} for a discussion of this equivalence.
Given $\pi=\big(\pi_1 \leq_T \pi_2 \leq_T \cdots \leq_T \pi_\m \big)$ with $\pi_i \in \NC(W,c)$, let $W_{\langle \pi_1 \rangle} \coloneqq \langle t : t \leq_T \pi_1 \rangle$ be the reflection subgroup of $W$ generated by the reflections below $\pi_1$ in absolute order. 
By \cite[Corollary 3.4(ii)]{dyer1990} (see \cref{lem:nc_parabolic} below), every coset $vW_{\langle \pi_1 \rangle}$ has a unique element $z$ of minimal length, characterized by the property that $\inv(z^{-1})$ has no reflections belonging to $W_{\langle \pi_1 \rangle}$.
Here, for $u\in W$, we set
\begin{align*}
    \inv(u)\coloneqq \left\{s_1^{\up 1},s_2^{\up2},\dots,s_m^{\up m}\right\},
\end{align*}
where $m=\ell(u)$ and $\bu=(u_1,u_2,\dots,u_m)$ is any reduced word for $u$.
In other words, $\inv(u)$ is obtained from $\inv(\bu)$ by forgetting the colors and the order of the reflections.
We write $W^{\langle \pi_1\rangle}$ to denote the set of minimal coset representatives of $W/W_{\langle \pi_1\rangle}$.

\begin{definition}[{\cite{armstrong2015parking,rhoades2014parking}}]\label{def:parking_old}
The \defn{$(W,\m)$-noncrossing parking functions} are
\begin{align*}
\Park^{(\m)}(W,c) \coloneqq 
	\left\{\big(v,\big(\pi_1 \leq_T \pi_2 \leq_T \cdots \leq_T \pi_\m \big)\big) \ 
	\middle|\ 
	\pi_i \in \NC(W,c),v \in W^{\< \pi_1\>}\right\}.
\end{align*}
\end{definition}

\noindent
For $W = \mathfrak{S}_3$, the sixteen $(W,1)$-noncrossing parking functions $(v, \pi_1)$ in $\Park^{(1)}(\mathfrak{S}_3,s_1s_2)$ are illustrated in the right column of~\Cref{fig:parking}.

\begin{figure}
    \[\begin{array}{|c||cc!{\graytablecolor\vrule}cc!{\graytablecolor\vrule}cc!{\graytablecolor\vrule}cc||c|} \hline
        v & \ss_1 & \ss_2 & \ss_1 & \ss_2 & \ss_1 & \ss_2 & \ss_1 & \ss_2 & \pi_1 \\ \hline
        &&&&&&&&& \\[\littlespaceforthedots] 
        e & (1\,2) & (2\,3) & \ss_1 & \ss_2 & \ss_1 & \ss_2 & \ss_1 & \ss_2 & (1\,2\,3) \\ 
        e & \ss_1 & (1\,3) & \ss_1 & \ss_2 & \ss_1 & \ddot{(1\,2)} & \ss_1 & \ss_2 & (1\,3) \\
        e & \ss_1 & \ss_2 & (2\,3) & \ss_2 & \ss_1 & \ss_2 & \ddot{(1\,3)} & \ss_2 &(2\,3)\\
        e & (1\,2) & \ss_2 & \ss_1 & \ss_2 & \ss_1 & \ss_2 & \ss_1 & \ddot{(2\,3)} &(1\,2)\\
        e& \ss_1 & \ss_2 & \ss_1 & \ss_2 & \ss_1 & \ss_2 & \ddot{(1\,2)} & \ddot{(2\,3)} & (e) \\\grayhline
        &&&&&&&&& \\[\littlespaceforthedots] 

        s_1 & \ss_1 & (1\,3) & \ddot{(1\,2)} & \ss_2 & \ss_1 & \ss_2 & \ss_1 & \ss_2 & (1\,3) \\ 
        s_1 & \ss_1 & \ss_2 & (2\,3) & \ss_2 & \ss_1 & \ss_2 & \ddot{(1\,3)} & \ss_2 & (2\,3)\\
        s_1 & \ss_1 & \ss_2 & \ss_1 & \ddot{(1\,2)} & \ss_1 & \ss_2 & \ss_1 & \ddot{(2\,3)} & e \\\grayhline
        &&&&&&&&& \\[\littlespaceforthedots] 

        s_2 & (1\,2) & \ss_2 & \ss_1 & \ss_2 & \ddot{(2\,3)} & \ss_2 & \ss_1 & \ss_2 &(1\,2) \\ 
        s_2 & \ss_1 & (1\,3) & \ss_1 & \ss_2 & \ss_1 & \ddot{(1\,2)} & \ss_1 & \ss_2 & (1\,3)\\
        s_2 & \ss_1 & \ss_2 & \ss_1 & \ss_2 & \ss_1 & \ddot{(2\,3)} & \ddot{(1\,3)} & \ss_2  & e\\\grayhline
        &&&&&&&&& \\[\littlespaceforthedots] 

        s_2s_1 & \ss_1 & \ss_2 & (2\,3) & \ddot{(1\,3)} & \ss_1 & \ss_2 & \ss_1 & \ss_2 &(2\,3) \\ 
        s_2s_1 & \ss_1 & \ss_2 & \ss_1 & \ddot{(1\,2)} & \ss_1 & \ss_2 & \ss_1 & \ddot{(2\,3)} &e \\\grayhline
        &&&&&&&&& \\[\littlespaceforthedots] 

        s_1s_2 & (1\,2) & \ss_2 & \ss_1 & \ss_2 & \ddot{(2\,3)} & \ss_2 & \ss_1 & \ss_2 &(1\,2)\\ 
        s_1s_2 & \ss_1 & \ss_2 & \ss_1 & \ss_2 & \ddot{(1\,3)} & \ddot{(1\,2)} & \ss_1 & \ss_2 &e\\ \grayhline
        &&&&&&&&& \\[\littlespaceforthedots] 
        s_1s_2s_1 & \ss_1 & \ss_2 & \ss_1 & \ddot{(1\,2)} & \ddot{(2\,3)} & \ss_2 & \ss_1 & \ss_2 &e \\ \hline
    \end{array}\]
    \caption{The 16 elements of $\Parkcp$ for $W=\Sfr_3$ and $\cbf = (s_1, s_2)$ and $p = 4$, shown together with the corresponding parking functions of $\Park(W,c)$.
    For the wiring diagram representations, see \cref{fig:bij_parking}.}
    \label{fig:parking}
\end{figure}

In the rest of this subsection, we show that the noncrossing parking functions of~\Cref{def:parking_old} are in uniform bijection with the parking objects of~\Cref{def:parking}.

\begin{lemma}[{\cite[Corollary 3.4(ii)]{dyer1990}}]\label{lem:nc_parabolic}
Let $v \in W$ and $\pi \in \NC(W,c)$.
Then $v\in W^{\langle \pi\rangle}$ if and only if we have $t \not \in \inv(v^{-1})$ for all $t \leq_T \pi$.
\end{lemma}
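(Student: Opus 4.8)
The plan is to recognize the statement as an instance of Dyer's description of minimal-length coset representatives for reflection subgroups. Set $W' \coloneqq W_{\langle\pi\rangle} = \langle t : t\leq_T\pi\rangle$. First I would recall from Dyer's theory of reflection subgroups that $W'$ is itself a finite Coxeter group for a canonical choice of simple system, that its set of reflections is $W'\cap T$, and that every left coset $vW'$ contains a unique element of minimal length, characterized by the property of having no right inversion lying in $W'$; this last characterization is \cite[Corollary~3.4(ii)]{dyer1990}.

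Then two elementary translations finish the argument. From the definition of $\inv(\cdot)$, the set $\inv(u)$ is the left inversion set $\{t\in T : \ell(tu)<\ell(u)\}$ of $u$; applying this with $u = v^{-1}$ and using $\ell(tv^{-1}) = \ell(vt)$ shows that $\inv(v^{-1})$ is the right inversion set $\{t\in T : \ell(vt)<\ell(v)\}$ of $v$. Hence $v\in W^{\langle\pi\rangle}$, i.e.\ $v$ is the minimal-length element of $vW'$, is equivalent to $\inv(v^{-1})\cap W' = \emptyset$. The second translation is the set identity $W'\cap T = \{t\in T : t\leq_T\pi\}$: the inclusion $\supseteq$ is immediate since the reflections below $\pi$ generate $W'$, while the reverse inclusion is a standard fact from the structure theory of the absolute order — $W_{\langle\pi\rangle}$ is a parabolic subgroup of $W$ and the reflections it contains are exactly those below $\pi$; see \cite{bessis2003dual, armstrong2009generalized}. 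Combining the two, $\inv(v^{-1})\cap W' = \emptyset$ becomes ``$t\notin\inv(v^{-1})$ for every $t\leq_T\pi$'', as desired.

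The only genuine obstacle is the inclusion $W'\cap T\subseteq\{t\leq_T\pi\}$ — the assertion that the reflection subgroup generated by the reflections $\leq_T\pi$ acquires no further reflections. Everything else is bookkeeping about left versus right inversions, and can be bypassed altogether by quoting \cite[Corollary~3.4(ii)]{dyer1990} directly once $W_{\langle\pi\rangle}$ and a positive system for its roots have been fixed, since Dyer already works with reflection subgroups and their root systems; the identity $W'\cap T = \{t\leq_T\pi\}$ is then the one ingredient that remains specifically about noncrossing partitions.
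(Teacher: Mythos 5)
Your proposal is correct and takes essentially the same route as the paper, which states this lemma purely as a citation of Dyer's Corollary~3.4(ii) with no further argument: the substance is exactly the translation you describe between minimal-length representatives of the cosets of $W_{\langle\pi\rangle}$ and the absence of right inversions of $v$ (i.e.\ elements of $\inv(v^{-1})$) lying in $W_{\langle\pi\rangle}$. You are also right to single out the identity $T\cap W_{\langle\pi\rangle}=\{t\in T: t\leq_T\pi\}$ as the one non-bookkeeping ingredient; the paper leaves it implicit, and it is standard (all reflections $t\leq_T\pi$ have roots in the moved space of $\pi$, and a reflection subgroup acquires no reflections outside the span of its generating roots, cf.\ Brady--Watt/Carter or the parabolicity of $W_{\langle\pi\rangle}$).
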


\begin{example} \label{ex:para1}
For $W=\mathfrak{S}_4$ and $\pi=(1\,2)(3\,4)$, the minimal coset representatives in $W^{\langle\pi\rangle}$ are $e$, $s_2$, $s_1s_2$, $s_3s_2$, $s_1s_3s_2$, $s_2s_1s_3s_2$. The inverses of these permutations have 
inversion sets $\emptyset$, $\{(2\,3)\}$, $\{(1\,3),(2\,3)\}$, $\{(2\,4),(2\,3)\}$, $\{(1\,3),(2\,4),(2\,3)\}$, $\{(1\,4),(1\,3),(2\,4),(2\,3)\}$. The reflections $t\in T$ satisfying $t\leq_T\pi$ are $(1\,2)$ and $(3\,4)$, which are precisely the reflections that never appear in the inversion sets above.
\end{example}

It is natural to use $v \in W$ to twist the color of the colored reflections in $\inv(\su)$ while preserving the reflection itself, generalizing \cref{sec:combin:parking}.
We explain this construction in more detail.
Given a subword $\su=(u_1,u_2,\ldots,u_m)$, recall the colored reflections $\tju_j^\su=(s_j^{\up j},k_j)$ for $1 \leq j \leq m$ that we defined in~\eqref{eq:tju_dfn}.
For $1 \leq j \leq m$, set
\begin{align*}
\tjuv_j^\su \coloneqq (s_j^{\up j},k_j'),
	\quad\text{where}
	\quad k_j' \coloneqq k_j
		+ \begin{cases} 
			1 & \text{if } s_j^{\up j} \in \inv(v^{-1}), \\ 
			0 & \text{ otherwise.} 
			\end{cases}
\end{align*}
Write $\invv(\su)\coloneqq \left(\tjuv_1^{\bu},\tjuv_2^{\bu},\dots,\tjuv_m^{\bu}\right)$ and $\invev(\su)$ for the restriction of $\invv(\su)$ to the indices $j$ for which $u_j = e$. 

In order to state the bijection between $\Parkcp[\mh+1]$ and $\Park^{(\m)}(W,\c)$, we need to understand the behavior of the twisted colored reflections of a subword.
This will allow us to go between chains of noncrossing partitions using only even colors, and certain chains of noncrossing partitions using both even and odd colors.

\begin{lemma}\label{lem:nc_factor}
Fix $v \in W$.
Any $\pi \in \NC(W,c)$ can be uniquely factored as $\pi = \pi_v \cdot \pi^v$, such that
\begin{itemize}
\item $\pi_v,\pi^v \in \NC(W,c)$,
\item $\ell_T(\pi_v)+\ell_T(\pi^v)=\ell_T(\pi)$, and
\item if $t \leq_T \pi_v$, then $t \in \inv(v^{-1})$, whereas
\item if $t \leq_T \pi^v$, then $t \not \in \inv(v^{-1})$.
\end{itemize}
\end{lemma}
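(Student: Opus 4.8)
The plan is to reduce to the case where $\pi$ is the Coxeter element of the ambient group, and then to build the factorization by induction, peeling off one reflection at a time. For the reduction, set $W' \coloneqq W_{\langle\pi\rangle}$; this is a standard parabolic subgroup with reflection set $\{t\in T : t\leq_T\pi\}$, and $\pi$ is a Coxeter element of $W'$. Let $z$ be the minimal-length element of the coset $vW'$, so $v = zv'$ with $v'\in W'$ and $\ell(v) = \ell(z)+\ell(v')$; by \cref{lem:nc_parabolic}, $\inv(z^{-1})$ meets $W'$ trivially. Writing a reduced word for $v^{-1} = v'^{-1}z^{-1}$ as the concatenation of reduced words for $v'^{-1}$ and $z^{-1}$ gives $\inv(v^{-1}) = \inv(v'^{-1})\sqcup v'^{-1}\inv(z^{-1})v'$, whose first part consists of reflections of $W'$ and whose second part, being the conjugate by $v'\in W'$ of a set of reflections avoiding $W'$, avoids $W'$ as well. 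Hence $\inv(v^{-1})\cap W' = \inv(v'^{-1})$, and since every reflection below every $\sigma\leq_T\pi$ lies in $W'$, the entire statement only involves $W'$; so we may assume $\pi = c$ is the Coxeter element of $W = W'$ and abbreviate $N \coloneqq \inv(v^{-1})$.

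I would warn at the outset that the naive guess $\pi_v \coloneqq \bigvee\{t\leq_T c : t\in N\}$ (join in $\NC(W,c)$) does not work: already for $W$ of type $B_2$ with $N$ the inversion set of a length-three element, that join is $c$ itself, while $c$ has a reflection below it outside $N$; the symmetric guess fails too. The correct $\pi_v$ is in general strictly smaller, and the two defining conditions on $\pi_v$ and $\pi^v$ genuinely interact. So instead I would argue by induction, most naturally on $\ell(v)$. In the inductive step one peels off a single simple reflection $s\in N$ (a right descent of $v$, using $\inv(v^{-1}) = \{s\}\sqcup s\,\inv((vs)^{-1})\,s$) together with the Cambrian rotation $c\mapsto scs$ of \cref{sec:rotation} that conjugates the whole picture by $s$; alternatively one peels off the $\leq_\c$-extremal reflection $t_0$ below $\pi$ supplied by the EL-shelling of $\NC(W,c)$ recalled in \cref{sec:noncrossing_partitions}, since then $\pi t_0 \lessdot_T \pi$ and $t_0\not\leq_T \pi t_0$. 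The elementary parts then follow directly from \cref{lem:nc_parabolic}: the base case $\pi = e$ (where $\pi_v = \pi^v = e$), the case $v\in W^{\langle\pi\rangle}$ (where $\pi_v = e$, $\pi^v = \pi$), the identity $\pi_v\pi^v = \pi$, and the additivity of $\ell_T$.

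The step I expect to be the main obstacle is propagating the two ``side'' conditions through the induction: after enlarging a factor $\sigma$ (with, inductively, all reflections below $\sigma$ on the correct side of $N$) to $\sigma t_0$, one must show that the parabolic closure $W_{\langle\sigma t_0\rangle}$ still has all its reflections on that side. This fails for an arbitrary subset of $T$ in place of $N$ --- this is exactly the content of the $B_2$ warning above --- so the argument must invoke the full biconvexity (closed \emph{and} coclosed) of the inversion set $N = \inv(v^{-1})$. Concretely, a newly appearing reflection $t\leq_T\sigma t_0$ that is neither below $\sigma$ nor equal to $t_0$ has a root $\alpha$ lying in $\mov(\sigma)\oplus\mathbb{R}\alpha_0$ with nonzero $\alpha_0$-component ($\alpha_0$ the root of $t_0$); inside the rank-$\le 2$ subsystem spanned by $\alpha_0$ and a suitably chosen root of a reflection below $\sigma$, the root $\alpha$ is a positive real combination of two roots whose reflections lie on the same side of $N$, and (co)closedness forces $t$ onto that side. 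Choosing the companion reflection below $\sigma$ correctly is the delicate point; the same biconvexity also yields uniqueness, since one shows that any factorization of the required type must treat the peeled-off reflection the same way the construction does, whence uniqueness reduces inductively to the one-step-smaller case.
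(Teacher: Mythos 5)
Your reduction to the case where $\pi$ is a Coxeter element of $W_{\langle\pi\rangle}$ is in the right spirit and matches the paper's, but two of its supporting claims are wrong as stated: $W_{\langle\pi\rangle}$ is a parabolic subgroup only in the fixed-space/conjugate sense, not a standard parabolic, and for such a reflection subgroup the additivity $\ell(v)=\ell(z)+\ell(v')$ you use to split $\inv(v^{-1})$ can fail. What is true, and what the paper actually uses, is Dyer's reflection-subgroup statement that $\inv(v^{-1})\cap W_{\langle\pi\rangle}$ equals the inversion set of $v'$ computed in the Coxeter system $(W_{\langle\pi\rangle},\{t_1,\dots,t_a\})$; so the reduction is repairable, and your $B_2$ warning about the naive join is a fair observation.

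The genuine gap is in the core case $\pi=c$, which is exactly where the nontrivial content of the lemma lives, and which your proposal does not prove. The inductive step --- that after adjoining a reflection to one factor, every reflection of its parabolic closure stays on the correct side of $N=\inv(v^{-1})$ --- is only sketched through a rank-$2$/biconvexity heuristic, and you yourself flag the choice of the companion reflection as an unresolved ``delicate point''; uniqueness is likewise deferred to an unproven inductive claim. The induction scheme itself is also shaky: peeling off a right descent $s$ of $v$ conjugates $c$ to $scs$, but the rotation of \cref{sec:rotation} only handles the initial letter of $\c$, so the bookkeeping relating the data for $vs$ to that for $v$ is never set up. The paper sidesteps all of this by invoking Reading's sortable/Cambrian machinery: it defines $\pi_v$ as the $\leq_\c$-ordered product of the cover reflections of the sortable element $\pi_\downarrow^{c^{-1}}(v^{-1})$ and $\pi^v$ as its Kreweras complement coming from the antisortable element $\pi^\uparrow_{c^{-1}}(v^{-1})$, with uniqueness following from the uniqueness of the interval $[\pi_\downarrow^{c^{-1}}(v^{-1}),\pi^\uparrow_{c^{-1}}(v^{-1})]$ and the two side conditions from \cite[Proposition 8.11]{reading2011noncrossing}. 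A self-contained argument along your lines would essentially have to reprove that piece of the theory, which the sketch does not do.
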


\begin{proof}
First, assume that $\pi=c$.
We argue using the theory of Reading's Cambrian lattices~\cite{reading2006cambrian,reading2007clusters}.
These Cambrian lattices are quotients of the weak order induced by the projections $\pi_\downarrow^c: W \to \Sort(W,c)$ and $\pi^\uparrow_c$ from $W$ to the $c$-antisortable elements of $W$.
(Recall that $w$ is called \defn{$c$-antisortable} if $w w_\circ$ is $c^{-1}$-sortable.)
More precisely, for $v \in W$, we define $\pi_\downarrow^c(v)$ is as the largest $c$-sortable element less than $v$ in weak order, while $\pi^\uparrow_c(v)$ is the smallest $c$-antisortable element larger than $v$ in weak order.

Then any $v \in W$ is contained in a unique interval $[\pi_\downarrow^{c}(v),\pi^\uparrow_{c}(v)]$.
The product of the lower cover reflections for $\pi_\downarrow^{c^{-1}}(v^{-1})$ in the order $\leq_c$ defines a noncrossing partition $c_v \in \NC(W,c)$, while the product of the upper cover reflections for $\pi^\uparrow_{c^{-1}}(v)$ in the order $\leq_c$ defines the Kreweras complement $c^v = c_v^{-1}c$.
(Note that here, we use the $c^{-1}$-Cambrian lattice projections $\pi_\downarrow^{c^{-1}}$ and $\pi^\uparrow_{c^{-1}}$ instead of the $c$-Cambrian lattice projections $\pi_\downarrow^{c}$ and $\pi^\uparrow_{c}$, to ensure that the order of the factors in the product $c = c_v \cdot c^v$ use the lower cover reflections \oldemph{before} the upper cover reflections.
Using $\pi_\downarrow^{c}$ and $\pi^\uparrow_{c}$ would instead result in a factorization where the product of the lower cover reflections appear \oldemph{after} the product of the upper cover reflections.)
These $c_v$ and $c^v$ are the desired factors:
Uniqueness follows from uniqueness of the interval, and the last two properties follow from the fact that the Cambrian lattices form stronger partial orders than the noncrossing partition lattices~\cite[Proposition 8.11]{reading2011noncrossing}. 

Now let $\pi$ be a general noncrossing partition with canonical EL-factorization $\pi=t_1 t_2\cdots t_a$, where $t_1<_c t_2 <_c \cdots <_c t_a$.
Consider the parabolic subgroup $W_{\langle \pi \rangle}$ of $W$ in which $t_1,\ldots,t_a$ are the simple reflections.
(We can treat them as simple reflections because they are precisely the lower cover reflections of some sortable element $w \in W$.)
Each element $v \in W$ appears in some coset of $W_{\langle \pi \rangle}$, and each such coset contains a minimal representative in weak order by \cite[Proposition~2.1.1]{geck2000characters}.
After translating by this representative, we can identify the coset with the Coxeter group $W_{\langle \pi \rangle}$.
In this way, we can identify $v$ with an element of $W_{\langle \pi \rangle}$.
The inversions for the latter are obtained from the inversions for $v$ by restricting to the reflections in $W_{\langle \pi \rangle}$.
If we build the Cambrian lattices on the cosets of $W_{\langle \pi \rangle}$, using $t_1,\ldots,t_a$ as the simple reflections and $t_1 t_2\cdots t_a$ as the Coxeter element, then we will have reduced to the previous case.
\end{proof}

\begin{example}\label{ex:para3}
Continuing~\Cref{ex:para1}, take $W=\mathfrak{S}_4$ and $c=s_1s_2s_3=(1\,2\,3\,4)$ and $\pi=c$.
This is the Coxeter-element case of \Cref{lem:nc_factor}, which requires the full Cambrian lattice.
\begin{itemize}
\item 	For each of $v \in \{s_2s_1s_3s_2, s_1s_2, s_1s_3s_2\}$, we have
\begin{align*}
\pi_\downarrow^{c^{-1}}(v^{-1}) &= \text{$s_2s_1$ with lower cover reflections $\{(1\,3)\}$} \\
	\text{and}\qquad
\pi^\uparrow_{c^{-1}}(v^{-1}) &= \text{$s_2s_1s_3s_2$ with upper cover reflections $\{(1\,2),(3\,4)\}$.}
\end{align*}
giving the factorization $(1\,2\,3\,4) = (1\,3) \cdot (1\,2)(3\,4)$.

\item 	For $v \in \{s_3s_2, s_2\}$, we have 
\begin{align*}
\pi_\downarrow^{c^{-1}}(v^{-1}) &= \text{$s_2$ with lower cover reflections $\{(2\,3)\}$}\\
	\text{and}\qquad
\pi^\uparrow_{c^{-1}}(v^{-1}) & = \text{$s_2s_3$ with cover reflections $\{(1\,3),(3\,4)\}$.}
\end{align*}
giving the factorization $(1\,2\,3\,4) = (2\,3) \cdot (1\,3\,4)$.

\item 	For $v = e$, we get the factorization $(1\,2\,3\,4) = e \cdot (1\,2\,3\,4)$, since
\begin{align*}
\pi_\downarrow^{c^{-1}}(e)=\pi^\uparrow_{c^{-1}}(e)=e
\end{align*}
with no lower cover reflections and with the upper cover reflections $\{(1\,2),(2\,3),(3\,4)\}$.
\end{itemize}
\end{example}

\begin{example}
Take $W=\mathfrak{S}_4$ and $c=(1\,2\,3\,4)$ and $\pi=(1\,3\,4)$, a more generic case of~\Cref{lem:nc_factor}.
We have $W_{\langle \pi \rangle} = \langle (1\,3),(3\,4)\rangle \simeq \mathfrak{S}_3$.
The simple reflections of $W_{\langle \pi \rangle}$ are $(1\,3)$ and $(3\,4)$, and the reflections of $W_{\langle \pi \rangle}$ are $\{(1\,3),(1\,4),(3\,4)\}$.
\begin{itemize}
\item 	For $v \in \{e, s_2, s_3s_2\}$, we have $\inv(v^{-1}) \cap W_{\langle \pi \rangle} = \emptyset$, so
\begin{align*}
	\pi_\downarrow^{(1\,4\,3)}(v|_{W_{\langle\pi\rangle}}) = \pi^\uparrow_{(1\,4\,3)}(v|_{W_{\langle\pi\rangle}}) = e,
\end{align*}
giving the factorization $(1\,3\,4) = e \cdot (1\,3\,4)$.

\item 	For $v \in \{s_1s_2, s_1s_3s_2\}$, we have $\inv(v^{-1}) \cap W_{\langle \pi \rangle} = \{(1\,3)\}$, so
\begin{align*}
	\pi_\downarrow^{(1\,4\,3)}(v|_{W_{\langle\pi\rangle}}) 
		&= \text{$(1\,3)$ with lower cover reflection $(1\,3)$}\\
		\text{and}\qquad
	\pi^\uparrow_{(1\,4\,3)}(v|_{W_{\langle\pi\rangle}}) 
		&= \text{$(1\,3)(3\,4)$ with upper cover reflection $(3\,4)$},
\end{align*}
giving the factorization $(1\,3\,4) = (1\,3) \cdot (3\,4)$.

\item 	For $v = s_2s_1s_3s_2$, we have $\inv(v^{-1}) \cap W_{\langle \pi \rangle} = \{(1\,4),(1\,3)\}$, so again,
\begin{align*}
	\pi_\downarrow^{(1\,4\,3)}(v|_{W_{\langle\pi\rangle}}) 
		&= (1\,3)\\
		\text{and}\qquad
	\pi^\uparrow_{(1\,4\,3)}(v|_{W_{\langle\pi\rangle}}) 
		&= (1\,3)(3\,4),
\end{align*}
giving the same factorization $(1\,3\,4) = (1\,3) \cdot (3\,4)$ as in the previous case.
\end{itemize}
\end{example}

\begin{theorem}\label{thm:parking_bijection}
Fix $\m \in \mathbb{N}$.
Then there is a uniform bijection 
\begin{align*}
	\Parkcp[\mh+1] &\simeq \Park^{(\m)}(W,\c).
\end{align*} 
\end{theorem}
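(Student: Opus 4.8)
The plan is to run the proof of \cref{thm:bij_Dm_NC} with a $v$-twist inserted at every stage, and then invoke \cref{lem:nc_factor} to repackage the output. First I would record the $v$-twisted analogue of \cref{prop:even}: a subword $\su$ is $v$-distinguished if and only if every colored reflection in $\invev(\su)$ has even $v$-twisted color. This follows from the definitions exactly as \cref{prop:even} does, the twist $[t\in\inv(v^{-1})]$ accounting for the difference between the walk based at $e$ and the walk based at $v$. Since $\c\w_\circ^{2\m}(\c)=\c^{\mh+1}$ and $2\m$ is even, $\Dm^{(v)}_{e,\c^{\mh+1}}$ is precisely the set of $v$-distinguished elements of $\Clus^{(2\m)}(W,\c)$ (cf.\ \cref{dfn:Clus}). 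Transporting along the bijection $\su\mapsto\inv_e(\su)$, $\Clus^{(2\m)}(W,\c)\xrasim\NC^{(2\m)}(W,\c)$ of \cref{thm:bijections}, which alters neither the reflections nor their (untwisted) colors, identifies $\Dm^{(v)}_{e,\c^{\mh+1}}$ with
\[
	\mathcal{N}_v\coloneqq\left\{\bigl((t_1,k_1),\dots,(t_\rank,k_\rank)\bigr)\in\NC^{(2\m)}(W,\c)\ \middle|\ k_i\equiv[t_i\in\inv(v^{-1})]\pmod{2}\ \text{for all }i\right\}.
\]
In the $2\m$-multichain description $\rho_1\leq_T\cdots\leq_T\rho_{2\m}$ (with $\rho_0=e$, $\rho_{2\m+1}=c$, the color-$j$ reflections being the $\leq_\c$-increasing factorization of $\delta_j\coloneqq\rho_j^{-1}\rho_{j+1}$), membership in $\mathcal{N}_v$ says that the factors of $\delta_j$ lie in $\inv(v^{-1})$ when $j$ is odd and avoid $\inv(v^{-1})$ when $j$ is even.

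Next I would produce a bijection $\bigsqcup_{v\in W}\mathcal{N}_v\xrasim\Park^{(\m)}(W,c)$ by ``halving'' each multichain. Given $(v,(\rho_\bullet))\in\mathcal{N}_v$, set $\pi_i\coloneqq\rho_{2i-1}$ for $1\le i\le\m$; this is a multichain in $\NC(W,c)$. Because the factors of $\delta_0=\pi_1$ are the simple reflections of the parabolic subgroup $W_{\langle\pi_1\rangle}$ and none lies in $\inv(v^{-1})$, the $W_{\langle\pi_1\rangle}$-component of $v$ in the parabolic coset decomposition (as used in the proof of \cref{lem:nc_factor}) has no descent, hence is trivial; so no $t\leq_T\pi_1$ lies in $\inv(v^{-1})$, and $v\in W^{\langle\pi_1\rangle}$ by \cref{lem:nc_parabolic}. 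Thus $(v,\pi_1\leq_T\cdots\leq_T\pi_\m)\in\Park^{(\m)}(W,c)$. Conversely, given $(v,\pi_1\leq_T\cdots\leq_T\pi_\m)$, put $\epsilon_i\coloneqq\pi_i^{-1}\pi_{i+1}\in\NC(W,c)$ (with $\pi_0=e$, $\pi_{\m+1}=c$), factor $\epsilon_i=(\epsilon_i)_v\cdot(\epsilon_i)^v$ by \cref{lem:nc_factor}, and define $\rho_{2i-1}\coloneqq\pi_i$, $\rho_{2i}\coloneqq\pi_i\,(\epsilon_i)_v$; one checks $\rho_1\leq_T\cdots\leq_T\rho_{2\m}$ is a multichain in $\NC(W,c)$ with $\delta_0=\pi_1$, $\delta_{2i-1}=(\epsilon_i)_v$, $\delta_{2i}=(\epsilon_i)^v$, and that the defining properties of \cref{lem:nc_factor} (together with $v\in W^{\langle\pi_1\rangle}$ for the color-$0$ step) are exactly the congruences defining $\mathcal{N}_v$.

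These two maps are mutually inverse, the only delicate point being that, starting from $(\rho_\bullet)\in\mathcal{N}_v$, the factorization $\epsilon_i=\delta_{2i-1}\cdot\delta_{2i}$ already satisfies all four bullet conditions of \cref{lem:nc_factor}, so that by the uniqueness asserted there it must coincide with $(\epsilon_i)_v\cdot(\epsilon_i)^v$ and $\rho_{2i}$ is recovered correctly; here the membership $\delta_{2i-1},\delta_{2i}\in\NC(W,c)$ and the $\ell_T$-additivity are immediate from $\rho_{2i-1}\leq_T\rho_{2i}\leq_T\rho_{2i+1}\leq_T c$, while upgrading the congruence from the cover reflections of $\delta_j$ to all $t\leq_T\delta_j$ uses the same parabolic-component argument as above. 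Composing $\Parkcp[\mh+1]=\bigsqcup_{v\in W}\Dm^{(v)}_{e,\c^{\mh+1}}\xrasim\bigsqcup_{v\in W}\mathcal{N}_v\xrasim\Park^{(\m)}(W,c)$ then yields the asserted uniform bijection; unwinding, it sends $(\su,v)$ to the pair $\bigl(v,\pi_1\leq_T\cdots\leq_T\pi_\m\bigr)$ where $\pi_i$ is the product of the reflections of color at most $2i-2$ in the $\NC^{(2\m)}(W,\c)$-factorization of $c$ attached to $\su$, and for $v=e$ this recovers \cref{thm:bij_Dm_NC}. I expect the main obstacle to be organizational rather than deep: keeping the three models (subwords of $\c^{\mh+1}$, elements of $\NC^{(2\m)}(W,\c)$, $2\m$-multichains) synchronized under the twist, and matching the ad hoc factorization $\delta_{2i-1}\cdot\delta_{2i}$ that appears in $\mathcal{N}_v$ with the canonical $v$-factorization of \cref{lem:nc_factor} — the substantial input, \cref{lem:nc_factor} itself (proved via Reading's Cambrian lattices), being already available.
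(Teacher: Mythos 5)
Your proposal is correct and follows essentially the same route as the paper's proof: decompose $\Parkcp[\mh+1]=\bigsqcup_{v\in W}\Dm^{(v)}_{e,\c^{\mh+1}}$, characterize $\Dm^{(v)}_{e,\c^{\mh+1}}$ inside $\Clus^{(2\m)}(W,\c)$ by the even-twisted-color condition, transport via \cref{thm:bijections} to $\NC^{(2\m)}(W,\c)$ with the parity constraints relative to $\inv(v^{-1})$, take the even-color partial products to get the $\m$-multichain, and use \cref{lem:nc_parabolic} together with the uniqueness in \cref{lem:nc_factor} to verify well-definedness and construct the inverse. Your treatment is, if anything, slightly more explicit than the paper's about the inverse map and about upgrading the parity condition from the canonical factorization reflections of each $\delta_j$ to all $t\leq_T\delta_j$, but the ingredients and structure coincide.
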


\begin{proof}
Any element of $\Parkcp[\mh+1] = \bigsqcup_{v \in W} \Dm_{e,\c^{\mh+1}}^{(v)}$ belongs to $\Dm_{e,\c^{\mh+1}}^{(v)}$ for some $v$.
Now, $\Dm_{e,\c^{\mh+1}}^{(v)}$ is exactly the subset of $\Clus^{(2\m)}(W,\c)$ of those subwords $\u$ for which each colored reflection in $\invev(\u)$ has even color.
Since applying $v$ to a colored reflection $(t,k)$ increases $k$ by one if and only if $t \in \inv(v)$, we deduce that $\Dm_{e,\c^{\mh+1}}^{(v)}$ is in bijection with $(2\m+1)$-tuples of noncrossing partitions $e \leq_T \pi_0 \leq_T \pi_1 \leq_T \cdots \leq_T \pi_{2\m}=c$ such that: 
\begin{itemize}
	\item if $t \leq_T \pi_{i-1}^{-1}\pi_i$ and $i$ is odd, then $t \in \inv(v^{-1})$, whereas

	\item if $t \leq_T \pi_{i-1}^{-1}\pi_i$ and $i$ is even, then $t \not \in \inv(v^{-1})$.%
\end{itemize}
We wish to place such tuples in bijection with the noncrossing parking functions of the form $\big(v,\big(\sigma_1 \leq_T \sigma_2 \leq_T \cdots \leq_T \sigma_\m \big)\big)$ with $v \in W^{\langle \sigma_1\rangle}$, by setting
\begin{align*}
	\sigma_i \coloneqq \pi_{2(i-1)}
		\qquad\text{for $1 \leq i \leq \m$.}
\end{align*}
Note that if $t \leq_T \pi_0$, then $t \not \in \inv(v^{-1})$, so $v$ is a minimal coset representative of $W_{\langle \sigma_1\rangle}=W_{\langle \pi_0\rangle}$ by \Cref{lem:nc_parabolic}.
Thus, the map from tuples to noncrossing parking functions is well-defined.

To see that the map is a bijection, apply~\Cref{lem:nc_factor} to each factor $\sigma_{i}^{-1}\sigma_{i+1}$ in succession.
Working on colors $i=1,2,\ldots,m$, we use the reflections $t \leq_T \sigma_i$ that are also in $\inv(v^{-1})$ to split each noncrossing partition $\sigma_{i}^{-1}\sigma_{i+1}$ into a noncrossing partition in the odd color $2i-3$ and one in the even color $2i-2$, which become the factors of the element in $\Dm_{e,\c^{\mh+1}}^{(v)}$.
By further factoring the noncrossing partitions $c_v$ and $c^v$ uniquely into $<_c$-increasing products of reflections, as in~\Cref{sec:noncrossing_partitions}, we construct the $\inv_e^v$ sequence for the subword in $\Dm_{e,\c^{h+1}}^{(v)}$ with no reflections of color $0$.
So the product of the reflections with twisted color $2$ is $c$ itself.
As we have now identified the colored reflections in $\inv_e^v$, we can reconstruct the subword itself using \Cref{rem:determined}.
\end{proof}

\begin{example}
Continuing~\Cref{ex:para3}, we illustrate \Cref{thm:parking_bijection} by using \Cref{lem:nc_factor} to reconstruct the subwords in $\Dm_{e,\c^{h+1}}^{(v)}$ corresponding to the noncrossing parking functions $\big(v,\big(e\big)\big)$.
\begin{itemize}
\item 	For $v \in \{s_2s_1s_3s_2, s_1s_2, s_1s_3s_2\}$, we need to put $(1\,3)$ in color $1$, whereas we need to put $(1\,2), (3\,4)$ in color $2$ so that after twisting, all reflections have color $2$.
		Using \Cref{rem:determined}, we obtain
		\begin{align*}
		(s_1,s_2,s_3,s_1,s_2,s_3,s_1,\dot{(1\,3)},\ddot{(1\,2)},s_1,s_2,s_3,s_1,s_2,\ddot{(3\,4)}).
		\end{align*}
\item  For $v \in \{s_3s_2, s_2\}$, we need to put $(2\,3)$ in color 1 and $(1\,3), (3\,4)$ in color $2$.
		We obtain 
		\begin{align*}
		(s_1,s_2,s_3,s_1,s_2,s_3,s_1,s_2,\dot{(2\,3)},s_1,s_2,s_3,\ddot{(1\,3)},s_2,\ddot{(3\,4)}).
		\end{align*}
\item 	For $v = e$, we put all reflections $(1\,2)$, $(2\,3)$, and $(3\,4)$ in color $2$, so that
		\begin{align*}
		(s_1,s_2,s_3,s_1,s_2,s_3,s_1,s_2,s_3,s_1,s_2,s_3,\ddot{(1\,2)},\ddot{(2\,3)},\ddot{(3\,4)}).
		\end{align*}
\end{itemize}
\end{example}

\begin{example}
For a larger illustration of \Cref{thm:parking_bijection}, take $W=\mathfrak{S}_6$ and $c=(1\,2\,3\,4\,5\,6)$ and $m=1$.
Fix $v=s_5s_2s_3s_4s_2s_3 \in \in W^{\langle (1\,3\,6) \rangle}$, and consider the noncrossing parking function $\big(v, \big((1\,3\,6)\big)\big)$.

First, $\inv(v^{-1}) = \{(3\,4),(2\,4),(3\,5),(2\,5),(4\,5),(3\,6)\}$.  
The noncrossing partition $(1\,3\,6)$ corresponds to the factorization into noncrossing partitions $c = \pi_0 \cdot \pi_1$, where $\pi_0 = (1\,3\,6)$ and $\pi_1 = (1\,2)(3\,4\,5)$.
To each of $\pi_0$ and $\pi_1$, we apply \Cref{lem:nc_factor}: 
\begin{itemize}
	\item 	Since $v$ is a minimal coset representative of $W^{\langle (1\,3\,6) \rangle}$, we have $\inv(v^{-1}) \cap W_{\langle \pi_0\rangle} = \emptyset$.
			Thus we put $(1\,3), (3\,6)$ in color $0$.

	\item 	
			Since $\inv(v^{-1}) \cap W_{\langle \pi_1\rangle } = \{(3\,4),(3\,5),(4\,5)\}$, we have
			\begin{align*}
			\pi_\downarrow^{(1\,2)(3\,5\,4)}(v|_{W_{\langle\pi_1\rangle}}) = \pi^\uparrow_{(1\,2)(3\,5\,4)}(v|_{W_{\langle\pi_1\rangle}}) = (3\,4)(4\,5)(3\,4)
			\end{align*}
			in $W_{\langle \pi_1\rangle }$, with lower cover reflections $(3\,4), (4\,5)$ and upper cover reflection $(1\,2)$, giving the factorization $(1\,2)(3\,4\,5) =  (3\,4\,5) \cdot (1\,2)$.
				Thus we put $(3\,4), (4\,5)$ in color $1$ and $(1\,2)$ in color $2$.
\end{itemize}
We can finally reconstruct the corresponding subword of $\c^7$ using \Cref{rem:determined}:
\begin{align*}
\begin{array}{ccccc}
	(s_1,&(1\,3),&s_3,&s_4,&(3\,6),\\
	s_1,&s_2,&s_3,&s_4,&s_5,\\
	s_1,&s_2,&s_3,&s_4,&\ddot{(1\,2)},\\
	s_1,&\dot{(3\,4)},&s_3,&s_4,&s_5,\\
	\dot{(4\,5)},&s_2,&s_3,&s_4,&s_5,\\
	s_1,&s_2,&s_3,&s_4,&s_5,\\
	s_1,&s_2,&s_3,&s_4,&s_5).
\end{array}
\end{align*}
\end{example}

\bibliographystyle{alpha}
\bibliography{cat}

\end{document}